\numberwithin{equation}{section}
\newcommand\RR{\mathbb{R}}
\newcommand\ZZ{\mathbb{Z}}
\newcommand\Id{\mathrm{Id}}
\newcommand\Omegazh{{}^0\Omega^{1/2}}
\newcommand\diag{\mathrm{diag}}
\newcommand\diagz{\mathrm{diag}_0}
\newcommand\FL{\mathrm{FL}}
\newcommand\FR{\mathrm{FR}}
\newcommand\FF{\mathrm{FF}}
\newcommand\tL{\tilde \Lambda}
\newcommand\tHL{\tilde H^L}
\newcommand\tHR{\tilde H^R}
\newcommand\tpL{\tilde p_L}
\newcommand\FBR{\mathrm{FBR}}
\newcommand\tFBR{\widetilde{\mathrm{FBR}}}
\newcommand\tgamma{\tilde \gamma}
\newcommand\gammatf{\gamma^{2,\prime}_f}
\newcommand\gammatb{\gamma^{2,\prime}_b}
\newcommand\gammatfb{\gamma^{2,\prime}_{fb}}
\newcommand\dist{\mathrm{dist}}
\newcommand\sgn{\mathrm{sgn}}
\newcommand\blambda{{\boldsymbol\lambda}}
\newcommand\bzeta{{\boldsymbol\zeta
}}
\newtheorem{theorem}{Theorem}
\newtheorem{lemma}[theorem]{Lemma}
\newtheorem{proposition}[theorem]{Proposition}
\newtheorem{corollary}[theorem]{Corollary}
\newtheorem{definition}[theorem]{Definition}
\theoremstyle{remark}
\newtheorem{remark}[theorem]{Remark}
\begin{document}

\title{\textbf{Resolvent and Spectral Measure on Non-Trapping Asymptotically Hyperbolic Manifolds I: 
Resolvent Construction at High Energy}}


\keywords{Asymptotically hyperbolic manifolds, semiclassical analysis, Fourier integral operator, intersecting Lagrangian distribution, resolvent.}

\thanks{This research was supported in part by Australian Research Council Discovery Grants DP1095448 and DP120102019}
\subjclass[2010]{58J40, 58J50}

\author{Xi Chen and Andrew Hassell}

\begin{abstract} This is the first in a series of papers in which we investigate the resolvent and spectral measure on non-trapping asymptotically hyperbolic manifolds with applications to the restriction theorem, spectral multiplier results and Strichartz estimates. In this first paper, we construct the high energy resolvent on general non-trapping asymptotically hyperbolic manifolds, using  semiclassical Lagrangian distributions and semiclassical intersecting Lagrangian distributions, together with the $0$-calculus of Mazzeo-Melrose.

Our results generalize recent work of Melrose, S\'{a} Barreto and Vasy \cite{Melrose-Sa Barreto-Vasy}, which applies to metrics close to the exact hyperbolic metric.  
We note that there is an independent work by Y. Wang \cite{Wang} which also constructs the high-energy resolvent.  \end{abstract}

\maketitle

\section{Introduction}

Given a Riemannian manifold $(M, g)$, the (positive) Laplacian $\Delta$ is defined by  $$\int (\Delta u) v \,  \text{vol}_g = \int \langle \nabla u,  \nabla v \rangle_g  \,  \text{vol}_g, \quad u, v \in C_c^\infty(M). $$ It is essentially self-adjoint on $C_c^\infty(M)$ provided $M$ is complete. 
For a complex number $\sigma \notin \text{spec}\,(\Delta)$, the resolvent $R_\Delta(\sigma)$ at $\sigma$  inverts the Laplacian in the sense $$(\Delta - \sigma) \circ R_\Delta(\sigma) = Id.$$

In this article, we work on an $n + 1$-dimensional manifold $M$ that is the interior $X^\circ$ of a compact manifold $X$ with boundary $\partial X$ and endowed with an asymptotically hyperbolic metric. A basic model is the well-known Poincar\'{e} disc, which is the ball $\mathbb{B}^{n + 1} = \{z \in \mathbb{R}^{n + 1} : |z| < 1\}$ equipped with metric 
\begin{equation}
\frac{4 dz^2}{(1 - |z|^2)^2}. 
\label{Poincare-disc}\end{equation}

Let $x$ be a boundary defining function for $X$. A metric $g$ is said to be conformally compact, if $x^2 g$ is a Riemannian metric and extends smoothly to the closure of $X$. Then the interior $X^\circ$ of $X$ is metrically complete; that is, the boundary is at spatial infinity.  Mazzeo \cite{Mazzeo-JDG-1988} showed its sectional curvature approaches $- |dx|^2_{x^2 g}$ as $x \rightarrow 0$; i.e. at `infinity'. In particular, a conformally compact metric $g$ is said to be \emph{asymptotically hyperbolic} if $- |dx|^2_{x^2 g} = - 1$ at boundary. In a collar neighbourhood of the boundary, with suitable local coordinates $(x, y) \in \RR_+ \times \RR^n$, one can write
\begin{equation}
g = \frac{dx^2}{x^2} + \frac{g_0(x, y, dy)}{x^2},
\label{g-normalform}\end{equation}
 where $x$ is a boundary defining function, and $g_0$ is a metric on the boundary but depending parametrically on $x$. For example, the metric \eqref{Poincare-disc} can be written in this form: we take as boundary defining function $\rho = (1 - |z|)(1+|z|)^{-1}$. Let $\theta$ be coordinates on $S^n$, and write the standard metric on the sphere as $d\theta^2$. Then the Poincar\'e metric takes the form 
$(d\rho^2 + \frac1{4}(1-\rho^2)^{2} d\theta^2) / \rho^2$ near $\rho = 0$, which is of the form \eqref{g-normalform}.  

The asymptotically hyperbolic metric $g$ on $X$ is said to be \emph{nontrapping} if every geodesic reaches spatial infinity (that is, $x \to 0$ along it) both forwards and backwards.

Consider the Laplacian $\Delta$, on an $(n+1)$-dimensional  asymptotically hyperbolic space $(X, g)$. The continuous spectrum of Laplacian is contained in $[n^2/4 , \infty)$, whilst the point spectrum is contained in $(0, n^2/4)$. See \cite{Mazzeo-Melrose}. In particular, the resolvent $(\Delta - n^2/4 - \lambda^2)^{-1}$ on Poincar\'{e} disc $\mathbb{B}^{n + 1}$ for $\lambda \notin \mathbb{C} \setminus [0, \infty)$ is \begin{equation}\label{hyperbolic resolvent} - \frac{1}{2i \lambda} \bigg( - \frac{1}{2\pi \sinh(r)} \frac{\partial}{\partial r} \bigg)^k e^{i \lambda r} \bigg|_{r = d(z, z^\prime)}\end{equation} where $n = 2k$ and $d(z, z^\prime)$ is geodesic distance on Poincar\'{e} disc.

There are several reasons to study the resolvent of the Laplacian on asymptotically hyperbolic spaces near the spectrum. One reason is to study the resonances by analytically continuing through the spectrum. Another reason is to obtain the spectral measure of $\Delta$ as the difference between the limit of the resolvent above and below the spectrum, according to Stone's formula.

Mazzeo and Melrose \cite{Mazzeo-Melrose} introduced the so-called $0$-pseudodifferential operators on $0$-blown-up double space $X \times_0 X$ (or $X^2_0$ for simplicity) to construct the resolvent $$R(\zeta) = \big(\Delta - \bzeta (n - \bzeta)\big)^{-1},$$ where the boundary of diagonal of double space is blown up, for example see Figure \ref{fig: blown-up double space}. 
\begin{figure}
\includegraphics[width=0.5\textwidth]{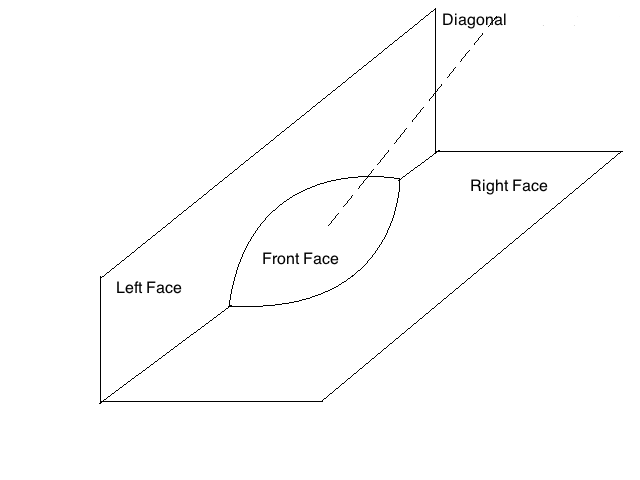}\caption{\label{fig: blown-up double space}The $0$-blown-up double space $X \times_0 X$}
\end{figure}
 The Schwartz kernel of the resolvent lies in the space 
\begin{equation}\Psi^{-2}_0(X) + \rho_L^\bzeta \rho_R^\bzeta C^\infty(X \times_0 X),
\label{MM}\end{equation} 
where $\Psi_0^{-2}(X)$ is the space of $0$-pseudodifferential operators of order $-2$ and $\rho_L$, $\rho_R$ are boundary defining functions for the left and right faces as in Figure~\ref{fig: blown-up double space}. The resolvent has a meromorphic extension in $\bzeta$ except at points $(n + 1)/2 - \mathbb{Z}_+$.  Guillarmou \cite{Guillarmou} showed that those points are at most poles of finite multiplicity if and only if the metric is even.

In terms of the $\bzeta$ variable, the spectrum is located at $\bzeta = n/2 + i \RR$. For definiteness, we shall consider here the \emph{outgoing} resolvent, corresponding to $\bzeta = n/2 - i\blambda$ for $\blambda > 0$. Exactly the same methods apply to construct the incoming resolvent, with $\blambda < 0$. 
As well as the resolvent at fixed $\bzeta = n/2 - i \blambda$, we are concerned about the asymptotic behaviour of the resolvent as parameter $\blambda$ approaches infinity, which is called the high energy limit. We shall introduce $h = \blambda^{-1}$ and multiply through by $h^2$ to write the operator in semiclassical form
\begin{equation}\label{semiclassical eqn}
 P_h A_h = \Id, \quad P_h = h^2 \Delta - h^2 \frac{n^2}{4} - 1.
\end{equation}
We view the operator $P_h$ in \eqref{semiclassical eqn} as a semiclassical differential operator. The semiclassical symbol is defined as follows: given local coordinates $z_1, \dots, z_{n+1}$ and dual coordinates $\zeta_1, \dots, \zeta_{n+1}$ on the cotangent bundle, the symbol of $a(z)(-i h \partial_{z})^\alpha$ is $a(z)\zeta^\alpha$, and is extended to differential operators by linearity. The \emph{principal} symbol, in the semiclassical sense, is obtained by taking only the leading order terms in $h$. So the principal symbol of $P_h$ is $g^{ij} \zeta_i \zeta_j  - 1$.\footnote{Here we use the summation convention. Namely, $g^{ij} \zeta_i \zeta_j$ denotes $\sum_{i, j = 1}^{n + 1}g^{ij} \zeta_i \zeta_j$.}
We define the characteristic variety $\Sigma \subset T^* X^\circ$ to be the zero set of the symbol: $\Sigma = \{ (z, \zeta) \mid p = 0 \}$.
We see from this that the principal symbol $p$ of $P_h$ satisfies $dp \neq 0$ when $p = 0$. Equivalently, the Hamilton vector field $H_p$ is nowhere vanishing when $p=0$. This is clear in the present case, as the Hamilton vector field at $\{ p = 0\}$  is precisely the generator of geodesic flow on the cosphere bundle.
We emphasize that, because of the scaling in $h$, the `true' frequency represented by $\zeta$ is actually $\zeta/h$, which tends to infinity as $h \to 0$ whenever $\zeta \neq 0$. Because of this, semiclassical analysis is the study of the high-frequency or short wavelength limit.

From the perspective of microlocal analysis, we are concerned about the behaviour of a distribution $A$ defined on a manifold $M$ on its wavefront set. Microlocally solving semiclassical equation (\ref{semiclassical eqn}) amounts to understanding the solution $A$ on its semiclassical wavefront set $\text{WF}_h (A)$, which is a subset of the cotangent space giving both the spatial locations and the directions in which  the Schwartz kernel of $A$ is singular. Here, by `singularity' is meant nontrivial behaviour (that is, not $O(h^\infty)$) as $h \to 0$. More precisely,  we say $(z_0, \zeta_0) \notin \text{WF}_h (A)$ if there exist $\phi, \psi \in C_c^\infty$ with $\phi = 1$ near $z_0$ respectively $\psi = 1$ near $\zeta_0$ such that $$\psi \mathcal{F}_h (\phi A) = O(h^\infty), $$
where $\mathcal{F}_h$ is the semiclassical Fourier transform
$$\mathcal{F}_h f = \int_{M} e^{- i \langle z, \zeta \rangle / h} f(z) dz.$$

We ask what is the (semiclassical) wavefront set of a Schwartz kernel $A_h$ satisfying \eqref{semiclassical eqn}. By elliptic estimates, the wavefront set is contained in the union of the wavefront set of the identity operator (that is, the conormal bundle to the diagonal $N^* \diag$, given in local coordinates by $\{ (z, \zeta; z, -\zeta) \}$), together with the characteristic variety $\Sigma$. Under the additional condition that $(X, g)$ is nontrapping, the propagation of singularity theorem due (in the setting of homogeneous operators) to Duistermaat and H\"ormander \cite{Duistermaat-Hormander-Acta-1972} says that there will be (at least microlocal) solutions to \eqref{semiclassical eqn} $A_h^\pm$ with wavefront set 
 \emph{equal to} the wavefront set of the identity operator $N^* \diag$, together with the union of bicharacteristics emanating in the forward (for $A_h^+$) or backward (for $A_h^-$) direction from $N^* \diag \cap \Sigma$. Here, by bicharacteristics we mean the integral curves of the Hamilton vector field $H_p$, i.e. geodesics (in the cotangent bundle) in the present case.  It is crucial fact for us that these sets are smooth Lagrangian submanifolds of the cotangent bundle. This is clear in the case of $N^* \diag$, while in the case of the forward bicharacteristic relation, which may be expressed in the form
\begin{multline} 
\FBR = \{(z, \zeta, z', -\zeta^\prime) \mid (z, \zeta) \in \Sigma, (z, \zeta) \text{ is obtained by flowing along an $H_p$-integral curve }   \\
\mbox{in the forward direction, starting at } (z', \zeta') \}
\label{Lambda1def}\end{multline}
this follows from some standard symplectic geometry, together with the nontrapping condition (which provides the `pseudoconvexity' condition of Duistermaat-H\"ormander \cite{Duistermaat-Hormander-Acta-1972}). We shall show that there is a unique exact solution $A_h^+$ with wavefront set equal to $N^* \diag \cup \FBR$, and it is precisely the outgoing resolvent $R(n/2 - i/h)$. 

Given that the resolvent $A_h$ has wavefront set in the union of two Lagrangian submanifolds, one can expect that the resolvent itself is some sort of Lagrangian distribution, otherwise known as a Fourier integral operator or WKB expression. According to the classical theory of non-degenerate Fourier integral operators formulated by H\"{o}rmander \cite{Hormander-Acta-1971}, a Lagrangian distribution is essentially determined by its phase function and symbol. The phase function, in some sense, is determined by the Lagrangian submanifold. Once one understands the phase, the symbol can be obtained by solving certain transport equations along the Hamilton vector field $H_p$ arising from the symbol $p$ of $P_h$.

In the present case, the phase function for the flow-out Lagrangian $\FBR$ is given by the geodesic distance function, at least close to the diagonal.  A simple example is the Poincar\'e disc $\mathbb{B}^{n + 1}$, on which the resolvent (\ref{hyperbolic resolvent}) is a Lagrangian distribution whose phase function is visibly given by the geodesic distance function. More generally, Melrose, S\'{a} Barreto and Vasy \cite{Melrose-Sa Barreto-Vasy} constructed the high-energy resolvent for hyperbolic metric with small perturbation near boundary, which is a simple version of asymptotically hyperbolic space. The advantage of this condition is that the sectional curvature is globally negative so that geodesic distance function is smooth, away from the diagonal,  and globally parametrizes the bicharacteristic flow. The geometric significance of this is that the forward bicharacteristic relation $\FBR$ is a Lagrangian submanifold which is a graph over the base manifold; analytically, the consequence is that the resolvent is a Fourier integral operator in the form of an oscillatory function like (\ref{hyperbolic resolvent}) on $\mathbb{B}^{2k + 1}$. 

However, in the present setting, the only geometric restriction on our asymptotically hyperbolic manifold is that it is nontrapping. In particular, conjugate points may occur. 
The geodesic distance function fails to be smooth in a neighbourhood of such points. In this case, the geodesic distance function fails to globally parametrize the Lagrangian. In fact, what happens is that the Lagrangian $\Lambda_1^+$ remains smooth, but it no longer projects diffeomorphically to the base, i.e. it is no longer a graph over the base manifold $X^\circ$. In this case,  some other variables are needed in the parametrization. A microlocal solution $A_h$ can then be written locally in the form of a semiclassical oscillatory integral over the extra variables. This is described in great detail in H\"ormander's work \cite{Hormander-Acta-1971} in the homogeneous case. See Appendix~\ref{app:sld} for the (routine) adaptation to the semiclassical setting.

We now state a crude version of the the main result: the full version is stated in  Theorem \ref{asymptotically hyperbolic resolvent}. 

\begin{theorem} The outgoing resolvent, $R(n/2 - i/h)$, for the semiclassical operator $P_h = h^2 \Delta - h^2 n^2 /4 - 1$ is the sum of a semiclassical 0-pseudodifferential operator and a Lagrangian distribution associated to $N^* \diag$ and $\FBR$. 
\end{theorem}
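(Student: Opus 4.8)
The plan is to construct an exact right parametrix for $P_h$ by the usual three-stage strategy: build an approximate inverse with the claimed structure, remove the error by a Neumann series, and identify the result with the outgoing resolvent.

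\textbf{Step 1: the ansatz on the $0$-blown-up double space.} Since $(X,g)$ is asymptotically hyperbolic, one works on $X^2_0$, and seeks the Schwartz kernel in a class interpolating between \eqref{MM} and a semiclassical Lagrangian distribution. The ansatz is a sum $G_h = G_h^{\diag} + G_h^{\mathrm{flow}}$, where $G_h^{\diag}$ is a semiclassical $0$-pseudodifferential operator of order $-2$ handling the conormal singularity along $N^*\diag$ (equivalently, the diagonal singularity on $X^2_0$, whose front-face normal operator is the explicit hyperbolic resolvent \eqref{hyperbolic resolvent}), and $G_h^{\mathrm{flow}}$ is a semiclassical Lagrangian distribution associated to $\FBR$, carrying the $\rho_L^{\bzeta}\rho_R^{\bzeta}$ decay at the left and right faces of $X^2_0$. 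Because $N^*\diag$ and $\FBR$ intersect cleanly along $N^*\diag\cap\Sigma$, the two pieces cannot be treated separately but must be assembled as an intersecting (paired) Lagrangian distribution; see Appendix~\ref{app:sld} for the semiclassical adaptation of the relevant calculus.

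\textbf{Step 2: transport equations and the nontrapping condition.} Applying $P_h$ to the ansatz and using the semiclassical intersecting-Lagrangian symbol calculus reduces $P_h G_h - \Id = O(h^\infty)$ to a hierarchy of transport equations along the Hamilton vector field $H_p$: first along $N^*\diag$, with leading term fixed by the requirement that $G_h$ invert $P_h$ microlocally to leading order near $N^*\diag\cap\Sigma$, and then along the bicharacteristics generating $\FBR$. Near the diagonal these are solved using the geodesic distance function as phase; globally, in neighbourhoods of conjugate points, the phase must be replaced by a local phase function with additional fibre variables parametrizing $\FBR$ (which stays smooth but fails to project diffeomorphically to the base). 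The nontrapping hypothesis ensures every bicharacteristic leaves every compact set in both directions, so the transport equations integrate globally with the correct conormal behaviour as $x\to 0$ — this is exactly the pseudoconvexity input of Duistermaat–H\"ormander \cite{Duistermaat-Hormander-Acta-1972}. Patching the local solutions with a partition of unity on $X^2_0$ and Borel-summing in $h$ yields $G_h$ with $P_h G_h = \Id + E_h$ where $E_h = O(h^\infty)$ in the appropriate weighted operator norms.

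\textbf{Step 3: correction and identification.} For $h$ small, $\Id + E_h$ is invertible via a Neumann series, so $A_h := G_h(\Id+E_h)^{-1}$ solves $P_h A_h = \Id$ exactly; as $(\Id+E_h)^{-1} = \Id + O(h^\infty)$, the operator $A_h$ inherits the structure of $G_h$ up to an $O(h^\infty)$ term absorbed into the $0$-pseudodifferential part. It remains to check that $A_h = R(n/2 - i/h)$: by construction $\mathrm{WF}_h(A_h) \subseteq N^*\diag\cup\FBR$, with only the forward flowout present, so $A_h$ obeys the outgoing radiation condition at spatial infinity, and uniqueness of the solution with this microlocal and asymptotic behaviour — equivalently, the limiting absorption principle — identifies it with the outgoing resolvent.

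\textbf{The main obstacle.} The crux is not a single estimate but the simultaneous control of three interacting structures: the clean intersection of $N^*\diag$ with $\FBR$, which necessitates the intersecting-Lagrangian calculus; the failure of $\FBR$ to be a graph over the base at conjugate points, which forces genuine oscillatory-integral representations with extra phase variables rather than the global WKB form available in \cite{Melrose-Sa Barreto-Vasy}; and the Mazzeo–Melrose $0$-structure at spatial infinity, which must be matched to the semiclassical Lagrangian behaviour near the front, left and right faces of $X^2_0$. Showing that the resulting composite class is well defined and closed under application of $P_h$ and under the Neumann series correction — keeping symbol orders and boundary decay orders consistent across all regions — is the technical heart of the construction.
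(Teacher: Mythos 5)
Your overall strategy coincides with the paper's (intersecting-Lagrangian ansatz at $N^*\diagz\cap\Sigma_L$, transport equations along the flowout, a Neumann-series correction, identification by uniqueness), but two steps that you assert do not go through as stated. First, you treat global solvability of the transport equations ``with the correct conormal behaviour as $x\to 0$'' as a consequence of nontrapping/pseudoconvexity alone. Nontrapping gives solvability on compact subsets of the interior; it says nothing about the regularity of the flowout Lagrangian, or of its symbol, uniformly up to the boundary faces of $X^2_0$. That is the technical heart of the construction, and it is precisely what Section~\ref{sec:flowout} and Proposition~\ref{prop:K} supply: one passes to the shifted Lagrangian (subtracting $d\rho_L/\rho_L + d\rho_R/\rho_R$), proves that the shifted Hamilton vector fields divided by $\rho_L$, $\rho_R$ are smooth and transverse to $\FL$, $\FR$ (Lemma~\ref{lem:shifted-vf}), conjugates $P_L$ to the operator $Q_L$ with smooth coefficients across the boundary (Lemma~\ref{lem:Ql}), uses the right transport equation via the symmetry argument (Lemma~\ref{lem:symm}), and decomposes $\Lambda_+$ into $\Lambda_+^{nd}$ and $\tL_+^*$ (the latter viewed on $T^*X^2$ rather than $T^*X^2_0$) to handle geodesics whose initial and final boundary directions coincide. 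None of this follows formally from the Duistermaat--H\"ormander theory you invoke, and without it your claim that the Lagrangian piece carries the $\rho_L^{\bzeta}\rho_R^{\bzeta}$ behaviour is unsubstantiated.

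Second, your Step 3 inverts $\Id + E_h$ by a Neumann series directly from $E_h = O(h^\infty)$. After the symbolic (Lagrangian) construction, however, the error is $O(h^\infty)$ only in $h$: its kernel still carries a factor $\rho_F^{-(n+1)/2}$ at the front face and only $\rho_R^{n/2-i/h}$-type decay at the right face, so it is not small, nor even obviously compact, on $L^2$. The paper needs two further stages before the Neumann series: removal of the front-face error to all orders by solving normal-operator (hyperbolic resolvent) equations fibre by fibre, using the $h^\infty$-preserving mapping properties of Proposition~\ref{normal operator}, and removal of the left-face error by a Taylor series at $x=0$ using the indicial roots. Only then is the error, after conjugation by $x$, Hilbert--Schmidt with norm $O(h^N)$, so that $\Id + E_5$ can be inverted and the correction term shown to lie in $x^{n/2-i/h}{x'}^{n/2-i/h} h^\infty C^\infty$. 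Your identification step is fine in spirit, though the paper argues by comparison with the Mazzeo--Melrose kernel together with the Graham--Zworski expansion and absence of embedded eigenvalues rather than by an abstract limiting absorption principle; the two omissions above, by contrast, are genuine gaps rather than compressed exposition.
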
 

\begin{remark}
A similar statement is true for the incoming resolvent $R(n/2 + i/h)$, with the backward bicharacteristic relation replacing $\FBR$. We work with the outgoing resolvent in this paper only, to simplify the notation and exposition. 
\end{remark}

 \begin{remark} As the result indicates, we apply two kinds of semiclassical calculus to invert Laplacian $P_h = h^2 \Delta - h^2 n^2 / 4 - 1$. The part away from the  characteristic variety on the $0$-cotangent bundle will be resolved by the semiclassical version of the $0$-calculus due to Mazzeo and Melrose. To deal with the region near $N^* \diag \cap \FBR$, we invoke the semiclassical version of intersecting Lagrangian distribution theory due to Melrose and Uhlmann \cite{Melrose-Uhlmann-CPAM-1979}.
 \end{remark}
 
 \begin{remark} We could equally well deal with a Schr\"odinger operator of the form $h^2 \Delta + V - h^2 n^2/4 -1$, where $V$ is a smooth function on $X$ vanishing at the boundary (and much less regular potentials could also be considered). In this case, the bicharacteristic flow would be with respect to the Hamiltoniam $|\zeta|^2_g + V - 1$, and the nontrapping assumption would apply to such bicharacteristics (rather than the geodesic flow on $X$). However, for simplicity, we shall deal only with the case $V=0$, which suffices for our applications in \cite{Chen-Hassell2} and \cite{Chen-Hassell3}. 
 \end{remark}
 
In the second article in this series, \cite{Chen-Hassell2}, we shall use this result to study the spectral measure for $\Delta$. We do this via Stone's formula, which expresses the spectral measure in terms of the difference between the outgoing and incoming resolvent kernels along the spectrum. Estimates on the kernel of the spectral measure give a variety of results on restriction theorems and spectral multipliers. We will show, for example, 

\begin{theorem}\label{thm:spec-mult}\cite{Chen-Hassell2}
Let $X$ be an asymptotically hyperbolic nontrapping manifold, let $L = (\Delta - n^2/4)_+$,
and let $dE _{\sqrt{L}}(\lambda)$ denote the spectral measure for the operator $\sqrt{L}$. Then  we have the restriction theorem, 
$$\|dE_{\sqrt{L}}(\lambda)\|_{L^p \rightarrow L^{p^\prime}} \leq 
\begin{cases} 
C \lambda^2, \qquad  \qquad \qquad \quad \lambda < 1, \ 1 \leq p < 2 \\
C \lambda^{(n + 1) (1/p - 1/p^\prime) - 1}, \  \lambda \geq 1, \ 1 \leq p \leq \frac{2(n + 2)}{n + 4}, \\  
C \lambda^{n(1/p - 1/2)}, \qquad \quad \lambda \geq 1, \ \frac{2(n+2)}{n+4} \leq p < 2. 
\end{cases}
$$
Here $C$ depends on $(X, g)$ and $p$. 
\end{theorem}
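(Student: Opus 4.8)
The plan, carried out in \cite{Chen-Hassell2}, is to deduce the estimate from Stone's formula together with the resolvent construction of the present paper. By Stone's formula the Schwartz kernel of the spectral measure is, up to a constant, $\lambda$ times the difference of the boundary values of the resolvent from the two sides of the continuous spectrum,
\begin{equation*}
dE_{\sqrt{L}}(\lambda) = \frac{\lambda}{\pi i}\Big(R(n/2-i\lambda)-R(n/2+i\lambda)\Big),
\end{equation*}
so that everything reduces to understanding this difference. In the high-energy regime $\lambda\ge 1$, set $h=1/\lambda$ and apply Theorem~\ref{asymptotically hyperbolic resolvent}, which writes each of $R(n/2\mp i/h)$ as a semiclassical $0$-pseudodifferential operator plus a Lagrangian distribution associated to $N^*\diag$ and $\FBR$. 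Since $R(n/2-i\lambda)$ and $R(n/2+i\lambda)$ are the two boundary values of the inverse of one and the same operator $\Delta-n^2/4-\lambda^2$, their elliptic parametrix parts --- in particular their conormal singularities along $N^*\diag$ away from $\Sigma$ --- agree modulo a smoothing operator and cancel in the difference; hence $dE_{\sqrt L}(\lambda)$ is, microlocally, a semiclassical Lagrangian distribution associated to $\FBR$ alone, which may be written as an oscillatory function with phase $\lambda\,d(z,z')$ away from the diagonal and from conjugate points and requires extra oscillatory variables near the diagonal (as in the Euclidean model $\lambda^{n}\int_{S^n}e^{i\lambda\langle z-z',\omega\rangle}\,d\omega$) and near conjugate points, with symbol of the order that makes the kernel of size $\lambda^{n}$ near the diagonal and of size $\lambda^{n}(1+\lambda\,d(z,z'))^{-n/2}$ off it (with exponential decay once $d(z,z')\gtrsim 1$), plus an $O(\lambda^{-\infty})$ remainder, all carrying the uniform boundary decay $\rho_L^{n/2}\rho_R^{n/2}$ inherited from \eqref{MM}. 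In particular this gives at once $\|dE_{\sqrt L}(\lambda)\|_{L^1\to L^\infty}\le C\lambda^n$.

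Granting this structure, I would obtain the two high-energy ranges by the standard restriction-estimate ($TT^*$) machinery transplanted to $X$. Near the diagonal and at unit and larger spatial scales the kernel is, at the level of its oscillatory phase and its symbol order, indistinguishable from the spectral measure of the Laplacian on $\RR^{n+1}$; decomposing dyadically in $\lambda\,d(z,z')$ and running the usual Stein--Tomas interpolation between the pointwise bound and $L^2$-based bounds on the dyadic pieces --- the latter coming from the semiclassical $L^2$-boundedness of the associated Fourier integral operators, which depends only on the symbol order and is insensitive to the caustics at conjugate points --- reproduces the Stein--Tomas exponent in dimension $n+1$, namely $\lambda^{(n+1)(1/p-1/p')-1}$ for $1\le p\le\tfrac{2(n+2)}{n+4}$. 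For the complementary range $\tfrac{2(n+2)}{n+4}\le p<2$ I would additionally use the factor $\rho_L^{n/2}\rho_R^{n/2}$, which forces the kernel to decay exponentially away from the diagonal --- a Kunze--Stein type phenomenon already present on $\HH^{n+1}$ --- so that it lies in $L^q(X\times_0 X)$ for the appropriate $q$ and, by Young/Schur estimates adapted to the $0$-double space, gives the bound $\lambda^{n(1/p-1/2)}$; the two exponents agree at the junction $p=\tfrac{2(n+2)}{n+4}$.

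For the low-energy regime $\lambda<1$ I would instead use the Mazzeo--Melrose $0$-calculus uniformly down to $\lambda=0$: there $R(n/2-i\lambda)-R(n/2+i\lambda)$ is smoothing, with kernel in $\lambda\,\rho_L^{n/2}\rho_R^{n/2}C^\infty(X\times_0 X)$ and in fact vanishing to one higher order at $\lambda=0$ (reflecting that the continuous spectrum begins at $n^2/4$), whence $\|dE_{\sqrt L}(\lambda)\|\le C\lambda^2$; the strong off-diagonal decay supplied by $\rho_L^{n/2}\rho_R^{n/2}$ again makes this bound uniform over $1\le p<2$ by the same Young/Kunze--Stein argument.

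The hard part will be the region near $N^*\diag\cap\Sigma$ and, more seriously, near conjugate points, where the spectral measure is no longer a plain oscillatory function and one must work with the semiclassical Lagrangian and intersecting-Lagrangian structure of Theorem~\ref{asymptotically hyperbolic resolvent} together with the fold-type caustics it produces at conjugate points. Running the $L^2$ half of the Stein--Tomas argument uniformly across caustics, and patching the microlocal pieces together while keeping careful track of the boundary weights on $X\times_0 X$, is where the genuine work lies.
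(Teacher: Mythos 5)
This theorem is only quoted here from the companion paper \cite{Chen-Hassell2}; the present paper contains no proof of it, stating only that the spectral measure will be obtained via Stone's formula from the difference of the outgoing and incoming resolvents constructed in Theorem~\ref{asymptotically hyperbolic resolvent}. Your outline (Stone's formula, cancellation of the pseudodifferential parts, the Lagrangian/oscillatory description of the kernel with the boundary weight $\rho_L^{n/2}\rho_R^{n/2}$, Stein--Tomas interpolation for the first high-energy range and a Kunze--Stein type argument for the second, plus the $0$-calculus at low energy) is exactly the strategy announced for the sequel, and you correctly identify that the genuine work --- uniform control across conjugate-point caustics and the patching of microlocal pieces with the boundary weights --- lies beyond what can be checked against this paper.
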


In the third article in this series, \cite{Chen-Hassell3},  the first-named author will use the spectral measure estimates to prove Strichartz estimates for the Schr\"odinger equation on $X$, namely
\begin{theorem}\label{thm:Strichartz}\cite{Chen-Hassell3}
Let $X$ be as above, and assume that $\Delta$ has no discrete spectrum.  
For the Cauchy problem of Schr\"{o}dinger equation 
\begin{equation*}\label{schrodinger cauchy} \left\{ \begin{array}{c} i \frac{\partial}{\partial t} u  +  \Delta u = F(t, z)  \\  u(0, z) = f(z) \end{array} \right. ,\end{equation*}  we have 
$$\|u\|_{L^p(\mathbb{R}, L^q(X))} \leq C \Big( \|f\|_{L^2(X)} + \|F\|_{L^{\tilde{p}^\prime}(\mathbb{R}, L^{\tilde{q}^\prime}(X))} \Big)
$$ provided that $(q, r)$ and $(\tilde{q}, \tilde{r})$ are \emph{hyperbolic} Schr\"{o}dinger admissible pairs, that is, such that 
\begin{equation}\label{schrodinger admissibility} \frac{2}{q} + \frac{n + 1}{r} \geq \frac{n + 1}{2}, \quad q, r \geq 2, \quad (q, r) \neq (2, \infty).\end{equation}
\end{theorem}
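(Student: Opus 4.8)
The plan is to deduce Theorem~\ref{thm:Strichartz} from the classical mechanism of passing from dispersive estimates to Strichartz estimates, the one non-standard ingredient being that on $X$ the dispersive estimate must be extracted from the spectral measure bounds of Theorem~\ref{thm:spec-mult}---hence, ultimately, from the resolvent construction of this paper---rather than from an explicit propagator kernel. Since $\Delta$ has no discrete spectrum and $\mathrm{spec}(\Delta)=[n^2/4,\infty)$, the operator $L=(\Delta-n^2/4)_+=\Delta-n^2/4$ is non-negative with purely absolutely continuous spectrum, so that $e^{it\Delta}=e^{itn^2/4}\,e^{itL}$ and, by the spectral theorem,
$$
e^{itL}\ =\ \int_0^\infty e^{it\lambda^2}\,dE_{\sqrt L}(\lambda).
$$
Writing the solution of the Cauchy problem by Duhamel's formula as $u(t)=e^{it\Delta}f-i\int_0^t e^{i(t-s)\Delta}F(s)\,ds$, and invoking the Christ--Kiselev lemma together with the abstract inhomogeneous Strichartz machinery of Keel--Tao (and its extension by Foschi to a wider range of inhomogeneous exponents), the estimate reduces to the homogeneous bounds $\|e^{itL}f\|_{L^q_t(\RR;L^r(X))}\lesssim\|f\|_{L^2(X)}$ for admissible pairs, together with the $L^1\to L^\infty$ dispersive estimates that feed the inhomogeneous term.

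Next I would peel off frequency-localized pieces. The operator $L$ carries a usable Littlewood--Paley theory on $X$: the heat semigroup $e^{-tL}$ obeys good off-diagonal (finite-propagation plus polynomial decay) bounds---equivalently, $L$ has a bounded $H^\infty$ functional calculus on $L^r(X)$, $1<r<\infty$---so that $\|f\|_{L^r}\sim\big\|\big(\sum_N|\chi(\sqrt L/N)f|^2\big)^{1/2}\big\|_{L^r}$, where $N$ ranges over dyadic values $\ge1$ together with one low-frequency block $\{0<\lambda\lesssim1\}$; this is also a byproduct of the spectral multiplier theorem of \cite{Chen-Hassell2}. It then suffices to bound $\|e^{itL}\chi(\sqrt L/N)f\|_{L^q_tL^r_x}$ by a constant which, after the Littlewood--Paley sum, is controlled by $\|f\|_{L^2}$; for admissible $(q,r)$ this follows via the Keel--Tao theorem from the microlocalized $L^1\to L^\infty$ dispersive estimate for $e^{itL}\chi(\sqrt L/N)$ together with the trivial $L^2\to L^2$ bound, the low-frequency block being governed by the $\lambda<1$ case of Theorem~\ref{thm:spec-mult} and being the easier one.

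The crux is therefore the dispersive estimate. The kernel of the localized propagator is
$$
e^{itL}\chi(\sqrt L/N)(z,z')\ =\ \int_0^\infty e^{it\lambda^2}\,\chi(\lambda/N)\,dE_{\sqrt L}(\lambda)(z,z'),
$$
and by Stone's formula $dE_{\sqrt L}(\lambda)$ is a multiple of $\lambda\big(R(n/2-i\lambda)-R(n/2+i\lambda)\big)$, which by the resolvent construction of this paper has an oscillatory kernel: near the diagonal it is governed by the semiclassical $0$-calculus and sees only the locally Euclidean behaviour of the metric, while away from the diagonal it is a semiclassical Lagrangian distribution associated to $N^*\diag$ and $\FBR$ with phase $\pm\lambda\,\dist(z,z')$ and symbol controlled uniformly up to $\partial X$. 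Substituting this and performing stationary phase in $\lambda$ about the critical point $\lambda_\ast=\pm\dist(z,z')/(2t)$ of the phase $t\lambda^2\pm\lambda\,\dist(z,z')$, and combining with the near-diagonal term, I expect a dispersive estimate of the standard hyperbolic type: $\lesssim N^{n+1}(1+N^2|t|)^{-(n+1)/2}$ for $|t|$ small, with the decisive extra gain to a bound decaying like $|t|^{-3/2}$ (times a power of $N$) for $|t|$ large, reflecting the exponential volume growth of $X$ through the factors generated by $\dist(z,z')$. Feeding the short-time bound into the Keel--Tao theorem recovers the Euclidean admissible line $\tfrac2q+\tfrac{n+1}r=\tfrac{n+1}2$ on unit time windows, while the large-time gain, summed over a dyadic decomposition in $t$, enlarges the range to the full admissible family \eqref{schrodinger admissibility}; for admissible $(q,r)$ the frequency weight produced by Keel--Tao stays bounded in $N$, so the Littlewood--Paley sum converges and the homogeneous estimates hold. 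Combined with the inhomogeneous bounds of the first paragraph, this yields Theorem~\ref{thm:Strichartz}.

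The main obstacle will be the dispersive estimate itself. The stationary-phase argument in $\lambda$ must be carried out \emph{uniformly in $(z,z')$ all the way to $\partial X$}, and \emph{through regions containing conjugate points}, where $dE_{\sqrt L}(\lambda)$ is a genuine intersecting Lagrangian distribution rather than a single WKB term; one must therefore track precisely the symbol orders furnished by the Lagrangian and $0$-pseudodifferential descriptions of this paper, and verify that the curvature-driven $|t|^{-3/2}$ gain at large times holds \emph{globally} on $X$ and not merely in a collar of the boundary. Subsidiary technical points are the matching between the near-diagonal ($0$-pseudodifferential) and far ($\FBR$) parts of the kernel and the analysis of the low-frequency block as $\lambda\to0$; the Littlewood--Paley and Christ--Kiselev reductions, by contrast, are routine once a spectral multiplier theorem for $L$ is in hand.
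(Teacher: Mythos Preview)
The paper does not contain a proof of Theorem~\ref{thm:Strichartz}. This theorem is announced in the introduction as a result to be established in the third paper of the series, \cite{Chen-Hassell3}; the present paper only supplies the resolvent construction, while even the spectral measure bounds of Theorem~\ref{thm:spec-mult} are deferred to \cite{Chen-Hassell2}. There is therefore no ``paper's own proof'' against which to compare your proposal.

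That said, your outline is consistent with the strategy the paper signals: the introduction says explicitly that \cite{Chen-Hassell3} ``will use the spectral measure estimates to prove Strichartz estimates,'' and your route---Stone's formula for $dE_{\sqrt{L}}(\lambda)$ from the resolvent, stationary phase in $\lambda$ to obtain a microlocalized dispersive estimate with the extra $|t|^{-3/2}$ large-time gain, Littlewood--Paley reduction, and Keel--Tao---is the standard mechanism by which such bounds are converted into Strichartz estimates with an enlarged admissible range on negatively curved spaces. Your identification of the main obstacle (uniform stationary phase through conjugate points and up to $\partial X$, where the spectral measure is a genuine intersecting Lagrangian distribution rather than a single oscillatory function) is exactly the place where the detailed structure proved in this paper is needed. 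Whether the precise form of the dispersive estimate and the handling of the low-frequency block match what \cite{Chen-Hassell3} actually does cannot be assessed from the present paper.
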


\begin{remark} In both Theorems~\ref{thm:spec-mult} and \ref{thm:Strichartz}, the results hold for a greater range of spaces compared to Euclidean space. In Theorems~\ref{thm:spec-mult}, it is well known that the corresponding result on $\RR^{n+1}$ holds only in the range $1 \leq p \leq \frac{2(n + 2)}{n + 4}$, while for the Euclidean Strichartz estimates, a necessary condition is to have equality in \eqref{schrodinger admissibility}. 
\end{remark}

Our approach is symbolic and essentially in the flavour of H\"{o}rmander \cite{Hormander-Acta-1971}, Duistermaat and  H\"{o}rmander \cite{Duistermaat-Hormander-Acta-1972}, Melrose and Uhlmann \cite{Melrose-Uhlmann-CPAM-1979}, Hassell and Wunsch \cite{Hassell-Wunsch}, and Melrose, S\'{a} Barreto and Vasy \cite{Melrose-Sa Barreto-Vasy}.
We also remark that  there is an independent work by Yiran Wang \cite{Wang} based on  S\'a Barreto-Wang \cite{SBY}, also studying the semiclassical resolvent on asymptotically hyperbolic space with application to radiation fields.

The paper is organized as follows. First of all, the $0$-geometry and $0$-calculus full  is briefly reviewed  in Section \ref{sec:calculus}. We shall understand the smoothness and parametrization of the flow-out Lagrangian $\FBR$ near the boundary and determine the form of the phase function in Section \ref{sec:flowout}.  In Section \ref{sec:parametrix} we shall construct the full parametrix and the resolvent.  For completeness, we establish the framework of semiclassical Fourier integral operators and semiclassical intersecting Lagrangian distributions in the appendices.

The authors would like to thank C. Guillarmou, F. Rochon and A. Vasy for various helpful discussions while working on this paper.

\section{The $0$-geometry and $0$-calculus}\label{sec:calculus}

Introduced by Mazzeo-Melrose, $0$-geometry is the geometry of a conformally compact metric, which shares the fundamental singularity at the boundary of an asymptotically hyperbolic metric. The boundary behaviour leads to a discussion of the $0$-cotangent bundle and corresponding theory of $0$-pseudodifferential operators.

\subsection{The $0$-cotangent and $0$-tangent bundles}\label{subsec:0bundle}



To motivate the $0$-cotangent bundle, consider the hyperbolic Laplacian on the upper half plane $\mathbb{H}^{n + 1}$, with respect to the standard hyperbolic metric $(dx^2 + dy^2)/x^2$. This is  $$- \bigg( x\frac{\partial}{\partial x} \bigg)^2 - \sum_{j = 1}^n \bigg( x\frac{\partial}{\partial y_j} \bigg)^2 + n x \frac{\partial}{\partial x}.$$  In the usual sense, its symbol is $$x^2\xi^2 + x^2|\eta|^2 \quad \mbox{on $T^\ast X$.}$$ Since it is not elliptic as $x \rightarrow 0$, the standard elliptic theory does not apply at the boundary of $T^\ast \mathbb{H}^{n + 1} $.
However if we work on a larger bundle, the $0$-cotangent bundle ${}^0T^\ast \mathbb{H}^{n + 1}$, whose sections are spanned by the basis  $$\bigg\{\frac{dy_1}{x}, \cdots, \frac{dy_n}{x}, \frac{dx}{x}\bigg\},$$ the symbol of the Laplacian on $\mathbb{H}^{n + 1}$ is uniformly elliptic on ${}^0 T^\ast X$. In fact, any cotangent vector $\xi dx + \eta dy$ can be viewed as a $0$-cotangent vector $\lambda dx/x + \mu dy/x$ under the natural inclusion $T^\ast X \rightarrow {}^0 T^\ast X$, from which we see that $\xi = \lambda/x$ and $\eta = \mu/x$. So the symbol of the hyperbolic Laplacian is transformed to $$\lambda^2 + |\mu|^2  \quad \mbox{on ${}^0 T^\ast X$,}$$ which is elliptic uniformly down to $x=0$. 

For any manifold $X$ with boundary, and boundary defining function $x$, we thus define the $0$-cotangent bundle to be that bundle whose sections are
spanned by one-forms of the form $\alpha/x$, where $\alpha$ is a smooth one-form on $X$. In a similar way, we find that the symbol of the Laplacian with respect to any conformally compact metric on $X$ is uniformly elliptic on the $0$-cotangent bundle.

The $0$-tangent bundle is the dual of the $0$-cotangent bundle  ${}^0TX$. Its sections are smooth vector fields that vanish at the boundary, i.e. can be written in the form of $x V$ 
where $V$ is a smooth vector field on $X$. 
Such vector fields we call $0$-vector fields, which form a Lie algebra.  A $0$-differential operator is then defined to be one that can be expressed as a sum of $k$-fold products of $0$-vector fields. The supremum of $k$ in the sum is defined to be the order of the operator. So the hyperbolic Laplacian is a $0$-differential operator of order $2$.

\subsection{The $0$-blowup on $X \times X$}\label{subsec:blowup}

Thinking of the resolvent in terms of its Schwartz kernel, which is a distribution on the product of the manifold with itself, we work on $X \times X$. In order to write the resolvent near the boundary of the diagonal in geometrically natural coordinates,  we perform the so-called $0$-blowup as mentioned in the introduction. 

Generally, given a p-submanifold\footnote{A p-submanifold of a manifold with corners is a submanifold $S$ with the following property. In a neighbourhood of any point $s \in S$, there are local coordinates of the form  $x_1, \dots, x_k, y_1, \dots, y_l$, where $x_i$ are boundary defining functions and $y_i \in (-\epsilon, \epsilon)$ such that $S$ is locally given as the vanishing of some subset of these coordinates} $Y$ of a manifold with corners $Z$, the real blown-up space $[Z; Y]$ is defined (as a set) by $(Z \setminus Y) \cup  SN_+\, Y,$ where $SN_+$ denotes the inward-pointing
part of the spherical normal bundle. There is a natural differential structure on $[Z;Y]$ making it a smooth manifold with corners. 
For more information, see \cite{DiffAnal}. 

In our circumstance of double asymptotically hyperbolic space $X \times X$, we blow up the boundary of diagonal $\partial \text{diag}$ locally expressed by $\partial \text{diag} = \{(0, y, 0, y)\}$ to produce the manifold $$X \times_0 X = SN_{+} (\partial \text{diag}) \cup \big((X \times X) \setminus \partial \text{diag}\big).$$ 
Since $\partial \text{diag}$ lies in a codimension 2 face of $X^2$, the fibres of $SN_+ (\partial \text{diag})$ are quarter-spheres of dimension $n+1$.  

The space $X^2_0$ is a manifold with corners of codimension three. The boundary hypersurfaces we denote $\FL$, for left face (the lift to $X^2_0$ of $\partial X \times X$); $\FR$, for right face (the lift to $X^2_0$ of $X \times \partial X$); and $\FF$, for front face, created by the blowup. See Figure~\ref{fig: blown-up double space}. We also denote by $\diagz$ the lift of the diagonal to $X^2_0$; notice that $\diagz$ is a p-submanifold of $X^2_0$. 
In contrast, $\text{diag} \subset X^2$  is not a p-submanifold. 

The $0$-blowdown map is the natural map $$\beta : X \times_0 X \longrightarrow X \times X.$$ We will often use the notation $\rho_L$, $\rho_R$, $\rho_F$ for boundary defining functions for the left boundary $\FL$, the right boundary $\FR$ and the front face $\FF$, respectively, without necessarily specifying a particular function.

We now write down coordinate systems in various regions of $X^2_0$, in terms of coordinates $(x,y) = (x, y_1, \dots, y_n)$ near the boundary of $X$, or $z = (z_1, \dots, z_{n+1})$ in the interior of $X$.  
Below, the unprimed coordinates indicate those lifted from the left factor of $X$, and primed coordinates indicate those lifted from the right factor. 
We label these different regions as follows:
\begin{itemize}
\item{Region 1:} In the interior of $X^2_0$. Here we use coordinates $$(z, z') = (z_1, \dots, z_{n+1}, z'_1, \dots, z'_{n+1}).$$ 

\item{Region 2a:} Near $\FL$ and away from $\FF$ and $\FR$. 
In this region, we use $(x, y, z')$. 
\item{Region 2b:} Near $\FR$ and away from $\FF$ and $\FL$. 
Symmetrically,  we use $(z, x', y')$. 

\item{Region 3:} Near $\FL \cap \FR$ and away from $\FF$. 
Here we use $(x, y, x', y')$. 

\item{Region 4a:} Near $\FF$ and away from $\FR$.
This is near the blowup. In this region we can use $s = x/x'$ for a boundary defining function for $\FF$. 
$$
s = \frac{x}{x'}, \ x', \ y, \ Y = \frac{y' - y}{x'}. 
$$
\item{Region 4b:} Near $\FF$ and away from $\FL$.
Symmetrically, we use 
$$
s' = \frac{x'}{x}, \ x, \ y', \ Y' = \frac{y - y'}{x}. 
$$
\item{Region 5:} Near the triple corner $\FL \cap \FF \cap \FR$.
In this case, a boundary defining function for $\FF$ is $|y'-y|$. By rotating the $y$ coordinates, we can assume that $|y'_1 - y_1| \geq c |y' - y|$ in a neighbourhood of any given point in the triple corner. Assuming this, we use coordinates
$$
s_1 = \frac{x}{y_1' - y_1}, \ s_2 = \frac{x'}{y_1' - y_1}, \ t = y_1' - y_1, \ Z_j = \frac{y_j^\prime - y_j}{y_1^\prime - y_1} \, (j > 1).
$$
\end{itemize}

\subsection{$0$-pseudodifferential operators}

To invert $P_h$ as a $0$-differential operator of order $2$, we shall employ the $0$-calculus, due to Mazzeo and Melrose \cite{Mazzeo-Melrose}. To make this paper more  self-contained, we briefly review their arguments and develop the corresponding semiclassical theory as it will be needed.

Recall the $0$-vector fields, the Lie algebra generated by the smooth sections of $0$-tangent bundle. The space of $k$-th order $0$-differential operators, ${}^0 \text{Diff}^k$, consists of the sum of at most $k$-fold products of $0$-vector fields. To be more explicit, one can write a $0$-differential operator of $k$-th order near the boundary as $$P = \sum_{j + |\alpha| = 0}^k p_{j, \alpha}(x, y) \bigg(x \frac{\partial}{\partial x}\bigg)^j \bigg(x\frac{\partial}{\partial y}\bigg)^\alpha.$$ 
Such an operator has a symbol which is a smooth function on the $0$-cotangent bundle, polynomial in each fibre. Conversely, each such function on the $0$-cotangent bundle is the symbol of a $0$-differential operator. 
 Clearly the Laplacian on asymptotically hyperbolic space is a $0$-differential operator of order $2$. 

So-called $0$-pseudodifferential operators are the microlocal generalization of $0$-differential operators, essentially obtained by replacing polynomial symbols on the $0$-cotangent bundle with general symbols. However, we shall give a definition in terms of the Schwartz kernel. The Schwartz kernel of a $0$-pseudodifferential operator is a distributional section of the half density bundle on the blown up double space. Let $\tilde g(x, y)$ be a usual Riemannian metric on manifold $M$ and $x \in C^\infty(M)$ be a positive defining function for the boundary. Consider the metric in the interior of $M$, 
$$
g_{ij}(x, y) = x^{-2}\tilde g_{ij}(x, y).
$$ 
The Riemannian density is of the form 
$$
|dg| := \sqrt{\det \tilde g_{ij}(x, y)} \Big| \frac{dx}{x}\frac{dy}{x^n} \Big|,
$$ 
which is singular at the boundary. The $C^\infty$ multiples of such a density are the smooth sections of a vector bundle, ${}^0 \Omega$, while the $C^\infty$ multiples of the half density
$$
|dg|^{1/2} = {\det \tilde g_{ij}(x, y)}^{1/4}\bigg|\frac{dx}{x}\frac{dy}{x^n}\bigg|^{1/2}
$$ 
are the smooth sections of the $0$-half-density bundle ${}^0 \Omega^{1/2}$. This half density bundle can be written in terms of ordinary half density bundle $\Omega^{1/2}(M)$ with a boundary defining function $\rho$ as $${}^0 \Omega^{1/2}(M) = \rho^{- n -1}\Omega^{1/2}(M).$$ 

We let ${}^\Phi T^* X^2_0$ be the bundle ${}^0 T^* X \times {}^0 T^* X$ lifted to $X^2_0$ via the blowdown map, and denote by ${}^\Phi \pi : {}^\Phi T^* X^2_0 \to X^2_0$ the bundle projection. 
With some abuse of notation, we denote by $\Omegazh(X^2)$ the tensor product $\Omegazh(X) \otimes \Omegazh(X)$, and the lift of this bundle to $X^2_0$ we denote $\Omegazh(X^2_0)$. This is related to $\Omega(X^2_0)$ by 
\begin{equation}
\Omegazh(X^2_0) = (\rho_{\FL}\rho_{\FR}\rho_{\FF})^{-(n+1)} \Omega(X^2_0). 
\label{Omega}\end{equation}

A $m$-th order $0$-pseudodifferential operator acting on half-densities is defined in terms of its Schwartz kernel: it is a distributional section of ${}^0 \Omega^{1/2}(X \times_0 X)$ conormal, of order $m$, to the $0$-diagonal, and vanishing to infinite order at all faces except $\FF$. The space of such operators is denoted ${}^0 \Psi^m(X \times_0 X)$. 

To be more explicit, a $0$-pseudodifferential operator of $m$-th order has the usual oscillatory integral representation locally near the diagonal away from $\FF$, and  a local expression near $\FF$ of the form 
$$
\int_{X} e^{i \big( (x - x^\prime) \cdot \lambda + (y - y^\prime) \cdot \mu \big) / x} a(x, y, \lambda, \mu) \, d\lambda d\mu |dg dg'|^{1/2}, 
$$ 
with $a (x, y, \lambda, \mu) \in S^m({}^0T^* X)$, which can be thought of as the (boundary rescaled) Fourier transform of symbol $a$. Since $(x-x')/x$ and $(y-y')/x$ are smooth defining functions for $\diagz$ near $\diagz \cap \FF$,  it is easy to see the conormality of the $0$-pseudodifferential operator at $\diagz$, as well as the rapid vanishing at the left and right boundaries $\FL$ and $\FR$, because the phase is non-stationary there.

We extend the usual notion of the symbol of a pseudodifferential operator. For any space of conormal distributions, there is a principal symbol isomorphism  map, $$\sigma_l : {}^0 \Psi^l(X \times_0 X) / {}^0 \Psi^{l - 1} \longrightarrow S^l\big({}^0T^\ast (X \times_0 X)\big) / S^{l - 1},$$ and we have the $0$-symbol calculus $${}^0\sigma_{l_1 + l_2}(A_1 \circ A_2) = {}^0\sigma_{l_1}(A_1){}^0\sigma_{l_2}(A_2),$$ where $A_1 \in {}^0 \Psi^{l_1}$ and $A_2 \in {}^0 \Psi^{l_2}$. The symbol map gives an exact sequence $$0 \longrightarrow {}^0\Psi^{l - 1}(X) \longrightarrow {}^0\Psi^l(X) \longrightarrow S^l\big({}^0T^\ast (X \times_0 X)\big) / S^{l - 1} \longrightarrow 0 .$$

A semiclassical $0$-pseudodifferential operator on $X$ of differential order $m$ and semiclassical order $k$ has a Schwartz kernel depending parametrically on $h \in (0, h_0)$, which has the usual semiclassical form locally near the diagonal and away from $\FF$ (see Appendix~\ref{app:sld}). Near $\diagz \cap \FF$, it  takes the form  
$$
h^{ - (n + 1+k)} \int_{X} e^{i \big( (x - x^\prime) \cdot \lambda + (y - y^\prime) \cdot \mu \big) / (x h)} a(h, x, y, \lambda, \mu) \, d\lambda d\mu \, |dg dg'|^{1/2} 
$$ 
with $a (h, x, y, \lambda, \mu)$ an element of $S^m({}^0T^* X)$ uniformly in $h$. The space ${}^0 \Psi^{m, k}_h(X)$ consists of such operators.  Here the first superscript denotes the differential order, whilst the second denotes the semiclassical order.

\subsection{Boundary terms}

The Laplacian with respect to a $0$-metric is elliptic in the $0$-calculus, and the usual construction therefore produces an inverse modulo an error term in ${}^0 \Psi^{-\infty}(X \times_0 X)$. 
However, such an error term is not compact; to construct a parametrix with compact error, boundary terms (that is, nontrivial expansions at the left and right boundary) are required. 

We define, ${}^0\Psi^{- \infty, m_l, m_r}(X)$, the space conormal distributions of order $(m_l, m_r)$ to left and right boundaries as the tensor product of $$\bigg\{u \in C^{-\infty}(X) : \prod_{j = 1}^N L_j u \in \rho_L^{m_l} \rho_R^{m_r} L^\infty(X), \forall N \in \mathbb{N} \bigg\}$$ with $C^\infty(X \times_0 X; {}^0\Omega^{1/2})$, where $L_j$s are vector fields tangent to left and right boundaries.

Then the full space of $0$-pseudodifferential operators of order $(m, m_l, m_r)$ is defined as $${}^0 \Psi^{m, m_l, m_r}(X, {}^0\Omega^{1/2}) = {}^0 \Psi^m(X, {}^0\Omega^{1/2}) + {}^0 \Psi^{- \infty, m_l, m_r}(X, {}^0\Omega^{1/2}).$$ It can be composed with differential operators $${}^0\text{Diff}^k(X, \Omega^{1/2}) \cdot {}^0\Psi^{m, m_l, m_r}(X, {}^0\Omega^{1/2}) \subset  {}^0\Psi^{m + k, m_l, m_r}(X, {}^0\Omega^{1/2}).$$

\subsection{Normal operator}

One may note there is no index family at the front face. That is because the error at the front face can be solved away by solving iterated normal operator equations.

Roughly speaking, the normal operator is the restriction of the kernel of  the operator on $X \times_0 X$ to the front face. To state the results, we have to introduce some notions. The front face $\FF$ is a bundle over the boundary of $X$.  We denote the fibre over $p \in \partial X$ by $F_p$, and its interior by $F^\circ_p$. 
Then $F_p^\circ$ has extra structure. 
To describe this, we let $X_p$ denote the inward pointing half of the tangent space $T_pX$, that is, the inward pointing connected component of $T_pX \setminus T_p(\partial X)$. 
Let $G_p$ be the subgroup of linear transformations of $T_pX$ consisting of the elements which preserve $X_p$ and leave the boundary $T_p(\partial X)$ fixed pointwise. This group $G_p$ is isomorphic to the semidirect product $\mathbb{R}^+_s \times \mathbb{R}^n_v,$ and acts on $X_p$ with coordinates $(x, y), x > 0, y \in \RR^n$ as follows: if $\gamma = (s, v) \in G_p$ then 
$$
\gamma \cdot (x, y) = (sx, y + xv).
$$ 
This action is transitive on $X_p$.

It turns out that $F_p^\circ$ has two natural identifications with $X_p$, given in local coordinates by the coordinates $(s, Y) \mapsto s \partial_x + Y \cdot \partial_y$ or $(s', Y') \mapsto s' \partial_{x'} + Y' \cdot \partial_{y'}$ in regions 4a and 4b above. 
We can think of these as identifications with $X_p^l$ or $X_p^r$, the left or right copy of $X_p$, respectively. 
As $G_p$ acts transitively on $X_p$, and $F_p^\circ$ has a distinguished point, namely the intersection of $F_p$ with $\diagz$, this gives us two natural group structures on $F_p^\circ$. 
Moreover, using the group structure derived from the identification with $X_p^l$, we find that the left-invariant metric on $F_p^\circ$ is (up to scaling) the standard hyperbolic metric on the upper half space model $\{ (s, Y) \mid s \geq 0 \}$ of hyperbolic space. 
For more details, see Sections 2 and 3 of \cite{Mazzeo-Melrose}.

If $B \in {}^0\Psi^{- \infty, m_l, m_r}(X, {}^0\Omega^{1/2})$, the normal operator is defined as $$N_p(B) = B|_{{F}_p} \in {}^0\Psi^{- \infty, m_l, m_r}({F}_p) \otimes {}^0\Omega^{1/2}(X^l_p) \otimes {}^0\Omega^{1/2}(X^r_p).$$

For each $p \in \partial X$, the normal operator can be interpreted as an operator on $F_p^\circ$. For instance, the normal operator of Laplacian on $X$ with respect to metric $g$ is indeed the Laplacian on hyperbolic space $F_p^\circ$.  Moreover, Mazzeo and Melrose showed \begin{proposition}[\cite{Mazzeo-Melrose}] The normal operator defines an exact sequence \begin{eqnarray*}0 \longrightarrow \rho_F \, {}^0\Psi^{- \infty, m_l, m_r}(X, {}^0\Omega^{1/2}) \longrightarrow  {}^0\Psi^{- \infty, m_l, m_r}(X, {}^0\Omega^{1/2}) \\\longrightarrow  {}^0\Psi^{- \infty, m_l, m_r}(F_p) \otimes {}^0\Omega^{1/2}(X^l_p) \otimes {}^0\Omega^{1/2}(X^r_p) \longrightarrow 0\end{eqnarray*} such that 
\begin{equation}
N_p(P \circ B) = N_p(P) \circ N_p(B),
\label{normalop-comp}\end{equation}
 provided $P \in {}^0\text{Diff\,}^m (X, {}^0 \Omega^{1/2})$. 
 \end{proposition}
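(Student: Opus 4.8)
The plan is to localize near the front face $\FF$ and reduce the statement to two elementary facts — a smooth kernel vanishing on $\FF$ equals $\rho_F$ times a smooth kernel, and $0$-vector fields on $X$ lift under $\beta$ to vector fields on $X^2_0$ tangent to $\FF$ — with everything else being bookkeeping with the $0$-half-density bundle. I would work in the region~4a coordinates $(s,x',y,Y)$ of Section~\ref{subsec:blowup}, in which $\FF=\{x'=0\}$, the fibre $F_p$ over $p=(0,y)$ is $\{x'=0\}$ with coordinates $(s,Y)$, $s\geq 0$, $Y\in\RR^n$, and the faces $\FL\cap\FF$, $\FR\cap\FF$ compactify $F_p$. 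First I would check that $B\mapsto B|_{F_p}$ really lands in ${}^0\Psi^{-\infty,m_l,m_r}(F_p)\otimes\Omegazh(X^l_p)\otimes\Omegazh(X^r_p)$: since $B$ has differential order $-\infty$ it is smooth across $\diagz$ near $\FF$, so $B|_{F_p^\circ}$ is a smooth kernel with the same conormal orders $m_l,m_r$ at the boundary faces of $\overline{F_p}$ inherited from $\FL$ and $\FR$; and a computation with \eqref{Omega} in the region~4a coordinates shows how the singular powers of $\rho_L,\rho_R,\rho_F$ redistribute, on restriction to $\FF$, into a $0$-half-density on each copy of $X_p$ together with an ordinary fibre half-density on $\FF$, so the target bundle is exactly as stated. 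This also exhibits $N_p$ as $C^\infty(\partial X)$-linear, so the family $\{N_p\}$ is the restriction of the kernel to the fibred front face.

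For exactness, the sequence is clearly a complex and the left map is the inclusion of the subspace $\rho_F\,{}^0\Psi^{-\infty,m_l,m_r}$, hence injective, so the content is at the other two spots. At the middle: if $B|_{\FF}=0$ then Taylor's theorem gives $B=\rho_F B'$ with $B'$ in the same space, and conversely $\rho_F B'$ vanishes on $\FF$, so $\ker N_p=\rho_F\,{}^0\Psi^{-\infty,m_l,m_r}(X,\Omegazh)$. For surjectivity, given $A$ in the target I would extend it off $F_p$ — first along $\FF$ using local triviality of the bundle $\FF\to\partial X$, then transversally into $X^2_0$ along a collar $\FF\times[0,\epsilon)_{\rho_F}$, taking the extension constant in $\rho_F$ — and cut off near $\FF$; the coordinate picture makes it routine that the result is smooth across $\diagz$, has conormal orders $m_l,m_r$ at $\FL,\FR$ and the correct half-density type, hence is an element of ${}^0\Psi^{-\infty,m_l,m_r}(X,\Omegazh)$ mapping to $A$.

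For the composition identity, interpret $P\circ B$ as the differential operator $P$ applied in the left variable to the kernel of $B$ and lifted to $X^2_0$ as $\beta^*P$. In region~4a, $x\partial_x$ lifts to $s\partial_s$ and $x\partial_{y_j}$ lifts to $sx'\partial_{y_j}-s\partial_{Y_j}$, both tangent to $\FF=\{x'=0\}$ (the first tangential to $\FF$, the second a tangential field plus $\rho_F$ times a smooth field), and on restriction to $\FF$ they become $s\partial_s$ and $-s\partial_{Y_j}$, which under the left identification $F_p^\circ\cong X^l_p$ are precisely the generators of ${}^0TX^l_p$; the analogous computation holds in region~4b and near the triple corner. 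Since the lift of a $0$-differential operator is a polynomial in such lifted $0$-vector fields with $C^\infty$ coefficients, $\beta^*P$ is tangent to $\FF$ and $(\beta^*P)|_{\FF}=N_p(P)$, acting fibrewise on the left copy of $X_p$. Tangency of a $C^\infty$-coefficient operator $Q$ to the hypersurface $\FF$ means exactly that $(Qu)|_{\FF}=(Q|_{\FF})(u|_{\FF})$, so taking $Q=\beta^*P$ and $u=\beta^*B$ gives
$$N_p(P\circ B)=\big(\beta^*P\cdot\beta^*B\big)\big|_{\FF}=\big(\beta^*P\big|_{\FF}\big)\big(\beta^*B\big|_{\FF}\big)=N_p(P)\circ N_p(B),$$
the last equality being the definition of the composition of the differential operator $N_p(P)$ with the kernel $N_p(B)$ on $F_p^\circ$; the half-density factors ride along unchanged since $P$ acts only on left base variables. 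As a by-product this identifies $N_p(\Delta_g)$ with the hyperbolic Laplacian on $F_p^\circ$, as asserted in the text.

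The main obstacle is not conceptual but a matter of bookkeeping: keeping the $0$-half-density accounting honest, so that restriction to $\FF$ genuinely produces $\Omegazh(X^l_p)\otimes\Omegazh(X^r_p)$ with no residual power of $\rho_F$, and verifying that the lift of a \emph{general} $0$-differential operator — not merely a single $0$-vector field — is tangent to $\FF$ with the stated restriction, uniformly across every coordinate chart near $\FF$ including the triple corner. The latter follows because $0$-vector fields are closed under the lift and tangency to $\FF$ is preserved under composition of operators, but this does need to be recorded.
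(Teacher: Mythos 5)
The paper itself offers no proof of this proposition --- it is quoted from \cite{Mazzeo-Melrose} --- so the only meaningful comparison is with the argument in that reference, and your sketch is essentially that argument: restriction to the front face with the $0$-half-density bookkeeping of \eqref{Omega}, Taylor expansion at $\rho_F=0$ for exactness in the middle together with an extension argument for surjectivity, and tangency to $\FF$ of the lifted $0$-vector fields (e.g.\ $x\partial_x\mapsto s\partial_s$, $x\partial_{y_j}\mapsto sx'\partial_{y_j}-s\partial_{Y_j}$ in region 4a) to obtain $N_p(P\circ B)=N_p(P)\circ N_p(B)$. Your reading is correct, including the point you flag explicitly: exactness holds with the normal operator understood as restriction to the whole fibred front face (the family over $\partial X$), which is how the kernel comes out as $\rho_F\,{}^0\Psi^{-\infty,m_l,m_r}$.
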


\begin{remark} The composition of two normal operators can be considered convolution with respect to the group structure on $F_p^\circ$. 
\end{remark}

This suggests that  the boundary behaviour of the resolvent kernel is governed by the hyperbolic Laplacian. We work on Poincar\'{e} disc model $\mathbb{B}^{n + 1}$ with boundary defining function $\rho$. By explicit calculation over Green function, Mazzeo and Melrose proved that for $k \geq 1 \in \ZZ$, 
\begin{proposition}[\cite{Mazzeo-Melrose}] The hyperbolic resolvent $R_{\mathbb{B}^{n+1}}(\zeta)$  is analytic near $\zeta = n/2 \pm i/h$ and maps \begin{eqnarray*}\dot{C}^\infty(\mathbb{B}^{n + 1}) &\longrightarrow& \rho^\zeta C^\infty(\mathbb{B}^{n + 1})\\ \rho^{\zeta + k} C^\infty(\mathbb{B}^{n + 1}) &\longrightarrow& \rho^\zeta C^\infty(\mathbb{B}^{n + 1}). \end{eqnarray*}\end{proposition}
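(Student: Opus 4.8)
The plan is to compute directly from the explicit Green's function \eqref{hyperbolic resolvent}, which we may read on the isometric upper half-space model $\mathbb{H}^{n+1}$ (in odd dimensions $n$ one uses the harmless variant of \eqref{hyperbolic resolvent} in which the integer power of $-\frac{1}{2\pi\sinh r}\partial_r$ is replaced by a Legendre function of $\cosh r$; the argument is unchanged). Fix $\zeta = n/2 - i\lambda$ with $\lambda = 1/h$, the case $\lambda = -1/h$ being identical. Analyticity is immediate from the formula: $e^{i\lambda r}$ and $\bigl(-\frac{1}{2\pi\sinh r}\partial_r\bigr)^{k}e^{i\lambda r}$ are entire in $\lambda$ and $-(2i\lambda)^{-1}$ is holomorphic for $\lambda \neq 0$, so the kernel is holomorphic in $\zeta$ near $\zeta = n/2 \pm i/h$; although these points lie on the continuous spectrum, where $R(\zeta)$ fails to be $L^2$-bounded, the formula \eqref{hyperbolic resolvent} — which for $\operatorname{Re}\zeta > n/2$ represents the $L^2$-resolvent — continues holomorphically in $\zeta$ across $n/2 + i\RR$ to these values as a map between the weighted spaces in the statement, the factor $e^{i\lambda r}$ selecting the outgoing branch $\rho^{\zeta}$ rather than $\rho^{n - \zeta}$.

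Next I would extract the structure of the Schwartz kernel on the blown-up double space $X^2_0$. Using $\cosh r = (x^2 + x'^2 + |y - y'|^2)/(2xx')$, and its expressions in the coordinate charts of Section~\ref{subsec:blowup}, one checks that $e^{-r}$ extends to a smooth function on $X^2_0$, vanishing simply at $\FL$ and at $\FR$ and nowhere on $\FF$; thus $e^{-r} = \rho_L\rho_R G$ with $G \in C^\infty(X^2_0)$ positive away from $\diagz$. Then $e^{i\lambda r} = (\rho_L\rho_R)^{-i\lambda}G^{-i\lambda}$ with $G^{-i\lambda}$ smooth, and applying $-\frac{1}{2\pi\sinh r}\partial_r$ a total of $k$ times (using $\partial_r e^{-r} = -e^{-r}$) gives, away from $\diagz$, $\bigl(-\tfrac{1}{2\pi\sinh r}\partial_r\bigr)^{k}e^{i\lambda r} = (\rho_L\rho_R)^{k - i\lambda}\cdot C^\infty(X^2_0\setminus\diagz)$, which for $k = n/2$ equals $(\rho_L\rho_R)^{\zeta}\cdot C^\infty(X^2_0\setminus\diagz)$; at the diagonal the kernel instead carries the standard conormal singularity of order $-2$ of a Laplace resolvent, supported away from all boundary faces. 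In other words, the explicit formula exhibits $R(\zeta)$ as an element of the Mazzeo--Melrose space \eqref{MM}.

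Finally I would deduce the mapping statements from $R(\zeta)u(z) = \int_X R(\zeta)(z,z')\,u(z')\,dg(z')$ for $z$ near $\partial X$. For $u \in \dot C^\infty$ the integrand vanishes to infinite order wherever $z'$ approaches $\partial X$, so only the $\rho_L^{\zeta}$-factor of the kernel (valid where $z'$ stays interior) contributes and $R(\zeta)u \in \rho^{\zeta}C^\infty$ — the interior $\Psi^{-2}$-part maps $\dot C^\infty$ into $\dot C^\infty \subset \rho^{\zeta}C^\infty$. For $u = \rho^{\zeta + k}v$ with $v \in C^\infty$ and $k \geq 1$, I would split the $z'$-integral: where $z' \to \partial X$ away from $z$ the kernel is $(\rho_L\rho_R)^{\zeta}$ times smooth and $dg(z') = \rho_R^{-(n+1)}(\text{smooth})$, so the net power of $\rho_R$ is $2\zeta + k - (n+1)$, with real part $k - 1 \geq 0$, the integral converges, and one gets $\rho^{\zeta}C^\infty$; where $z' \to \partial X$ near $z$ — the neighbourhood of $\FF$ and of the corners $\FL\cap\FF$ and $\FL\cap\FF\cap\FR$ — the $\rho'^{\zeta + k}$-decay of $u$ against the normal-operator behaviour of the kernel, equivalently Melrose's pushforward theorem for the projection $X^2_0 \to X$, again produces exactly the factor $\rho^{\zeta}$. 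The hard part is this last region: one must substitute the explicit formula for $r$ in the coordinates near $\FF$ and near the triple corner of Section~\ref{subsec:blowup}, differentiate, and verify that the iterated $\sinh$-derivatives of $e^{i\lambda r}$ admit polyhomogeneous expansions there with no logarithmic terms and with the $\rho_L^{\zeta}$-prefactor uniform up to the corner, so that the $z'$-integration is legitimate and the output genuinely lies in $\rho^{\zeta}C^\infty$ rather than being merely bounded by it — and it is precisely the convergence used above that forces the hypothesis $k \geq 1$.
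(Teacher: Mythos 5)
You have correctly located the standard Mazzeo--Melrose strategy: read off from the explicit kernel \eqref{hyperbolic resolvent} that the resolvent lies in the space \eqref{MM}, and then deduce the mapping properties by integrating against the kernel, with the restriction $k \geq 1$ entering exactly where you say it does (the exponent $2\zeta + k - (n+1)$ at the right face has real part $k-1$). Note, however, that the paper itself offers no proof of this proposition --- it is quoted from \cite{Mazzeo-Melrose} --- and the closest argument it does contain, the proof of Proposition~\ref{normal operator}, handles the boundary behaviour by a different device: for $r \geq 1$ it abandons the iterated-derivative formula and uses the hypergeometric representation \eqref{eqn: hypergeometric function}, in which $(\cosh r/2)^{-2}$ is visibly a boundary defining function, so that the factor $\rho^{\zeta}$ with smooth coefficient is manifest uniformly in all dimensions; the second and third mapping properties are then referred to the power-series (indicial/normal operator) arguments of \cite{Mazzeo-Melrose} rather than to a raw pushforward.

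The genuine gap in your write-up is that the decisive step is precisely the one you defer in your last sentence. The entire content of the proposition is that (i) the kernel is $(\rho_L\rho_R)^{\zeta}$ times a function smooth on $X^2_0$ \emph{uniformly up to $\FF$ and the corners} (in odd dimensions your ``harmless variant'' is not harmless: there is no finite iterated-derivative formula, and one must control an integral or special-function representation near the corner, which is why the hypergeometric form is used), and (ii) the $z'$-integration yields exactly $\rho^{\zeta}C^\infty$, with no logarithmic terms and no contributions such as $\rho^{\zeta+j}\log\rho$ arising from coincident index sets in the pushforward --- a crude application of the pushforward theorem only gives a polyhomogeneous expansion in which such terms are a priori allowed, and excluding them is where the power-series argument (or an exact computation) is needed. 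Writing ``one must substitute \dots and verify'' acknowledges this but does not carry it out, so the proposal is an outline of the right approach rather than a proof. A minor correction: the conormal singularity at the diagonal is not ``supported away from all boundary faces'' --- $\diagz$ meets $\FF$, and the correct statement is that the diagonal term is a $0$-pseudodifferential operator, conormal to $\diagz$ uniformly up to the front face.
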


Each fibre $F_p$ of the front face is actually a quarter-sphere of dimension $n+1$, which we denote $\mathbb{Q}$; it can be obtained from $\mathbb{B}^{n + 1}$ by blowing up any point at the boundary of $\mathbb{B}^{n + 1}$.  Mazzeo and Melrose also showed 
\begin{proposition}[\cite{Mazzeo-Melrose}] For any $j \in \ZZ$ and $k = 1, 2, 3, \dots$, the hyperbolic resolvent $R_{\mathbb{B}^{n+1}}(\zeta)$ maps 
 \begin{eqnarray*}\rho_l^{\zeta + k} \rho_r^{\zeta + j} C^\infty(\mathbb{Q}) & \longrightarrow & \rho_l^{\zeta} \rho_r^{\zeta+j} C^\infty(\mathbb{Q})\end{eqnarray*} \end{proposition}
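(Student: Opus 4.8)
The plan is to reduce the mapping statement on the quarter-sphere $\mathbb{Q}$ to the previous proposition about $R_{\mathbb{B}^{n+1}}(\zeta)$ acting on the Poincar\'e ball, by exploiting the fact that $\mathbb{Q}$ is obtained from $\mathbb{B}^{n+1}$ by blowing up a single boundary point $q_0 \in \partial \mathbb{B}^{n+1}$. The blowdown map $\beta \colon \mathbb{Q} = [\mathbb{B}^{n+1}; q_0] \to \mathbb{B}^{n+1}$ is a diffeomorphism on interiors and intertwines the hyperbolic Laplacian (and hence its resolvent, which is determined by the Laplacian as an operator on a fixed Hilbert space) with itself. So the real content is: how does pulling back under $\beta$, and taking the product $\mathbb{Q}\times\mathbb{Q}$ picture implicit in ``$\rho_l^{a}\rho_r^{b} C^\infty(\mathbb{Q})$'', convert the ball-weight statements of the preceding proposition into the claimed $\mathbb{Q}$-weight statements. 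I would set up the correspondence between the boundary defining functions: on $\mathbb{Q}$ there are two boundary hypersurfaces, the lift $\mathrm{FR}$ of $\partial\mathbb{B}^{n+1}$ (with bdf $\rho_r$) and the front face $\mathrm{FF}$ of the blowup (with bdf $\rho_l$), and near their corner one has $\rho_{\partial\mathbb{B}^{n+1}} = \beta^*\rho \sim \rho_l\rho_r$ while the ``distance to $q_0$'' factor is comparable to $\rho_r$ alone (or $\rho_l$ alone, depending on which copy).

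The key steps, in order: (i) Recall from the preceding proposition that $R_{\mathbb{B}^{n+1}}(\zeta)$ is, as a $0$-pseudodifferential operator plus the Mazzeo--Melrose residual term \eqref{MM}, of the form $\Psi^{-2}_0(\mathbb{B}^{n+1}) + \rho_L^\zeta\rho_R^\zeta C^\infty(\mathbb{B}^2_0)$, and in particular has the stated mapping properties on powers of $\rho$ on each of the left and right ball. (ii) Observe that the quarter-sphere statement is a statement about this same kernel after performing the extra blowup of $q_0$ in both factors; since blowup does not destroy polyhomogeneous conormality, the residual term lifts to an element of $\rho_{\mathrm{FL}}^{?}\rho_{\mathrm{FR}}^{?} C^\infty$ of the further blown-up space, with exponents read off from the factorization $\beta^*\rho \sim (\text{front face bdf})(\text{lifted }\partial\mathbb{B}\text{ bdf})$. (iii) Translate the action: a section $u \in \rho_l^{\zeta+k}\rho_r^{\zeta+j} C^\infty(\mathbb{Q})$ pushes forward under $\beta$ to a section of $\rho^{\zeta+j} C^\infty(\mathbb{B}^{n+1})$ that, in addition, has a prescribed order of vanishing/behavior at $q_0$ encoded by the $k$; apply the ball-level proposition (which gains from $\rho^{\zeta+k}$ down to $\rho^\zeta$ on the left) and then lift back, checking that the front-face behavior of the output is exactly $\rho_l^{\zeta}$. (iv) Verify the $j$-index is simply carried along untouched on the right factor, which is immediate since $\beta$ is a local diffeomorphism away from $q_0$ and the right boundary defining function $\rho_r$ is just $\beta^*\rho$ away from the corner. (v) Finally note that $k = 1, 2, 3, \dots$ (strictly positive) is exactly what is needed for the gain at the front face to be nonnegative so that the composition/convolution makes sense, mirroring the analogous hypothesis in the previous proposition.

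The main obstacle I expect is step (iii): keeping careful track of the interplay between the two weight factors under blowup, i.e. making precise that ``$\rho_l^{\zeta+k}$ behavior near the front face of $\mathbb{Q}$'' corresponds under $\beta$ to the correct combination of ``$\rho^{\zeta}$ decay away from $q_0$'' and ``sufficiently regular behavior approaching $q_0$,'' and conversely that the output of $R_{\mathbb{B}^{n+1}}(\zeta)$, which a priori is only known to be $\rho^{\zeta} C^\infty(\mathbb{B}^{n+1})$, actually lifts to be smooth (not merely conormal with a logarithmic or lower-order loss) down to the front face of $\mathbb{Q}$ with leading weight exactly $\zeta$. This requires either invoking the explicit form of the hyperbolic Green's function — as Mazzeo and Melrose do — to see that the kernel, expressed in the blown-up coordinates $(s, Y)$ near $\mathrm{FF}$, is a smooth function of those coordinates times $\rho_l^\zeta$, or alternatively arguing via the normal operator / group-invariance structure on $F_p^\circ$ described above, using that the left-invariant metric there is the hyperbolic one and so the convolution kernel inherits the same boundary regularity. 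I would carry out the explicit-kernel route, since \eqref{hyperbolic resolvent} gives the Green's function in closed form and one can simply substitute the coordinates of Region 4a/4b and read off the claimed conormal-smooth structure, with the powers $\zeta$ appearing from the $e^{i\lambda r} \sim (\cosh r)^{-\zeta}$-type asymptotics as $r \to \infty$ in the appropriate directions.
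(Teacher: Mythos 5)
Your central reduction does not work, and the gap is precisely at your step (iii). The point of the quarter-sphere statement is that the class $\rho_l^{\zeta+k}\rho_r^{\zeta+j}C^\infty(\mathbb{Q})$ is \emph{strictly larger} than anything covered by the preceding ball-level proposition: $C^\infty(\mathbb{Q})$ strictly contains $\beta^*C^\infty(\mathbb{B}^{n+1})$ (smooth functions on the blowup may depend on the angular variables along the front face and then do not push forward to functions smooth at $q_0$), and when $k\neq j$ the weight itself, $\rho_l^{\zeta+k}\rho_r^{\zeta+j}=(\rho_l\rho_r)^{\zeta+j}\rho_l^{k-j}$, is not of the form $\rho^{a}\times(\text{smooth on }\mathbb{B}^{n+1})$ since $\beta^*\rho\sim\rho_l\rho_r$. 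So the pushforward of an input $u\in\rho_l^{\zeta+k}\rho_r^{\zeta+j}C^\infty(\mathbb{Q})$ is only polyhomogeneous conormal at $q_0$; it is \emph{not} an element of $\rho^{\zeta+j}C^\infty(\mathbb{B}^{n+1})$, and the proposition ``$\rho^{\zeta+k}C^\infty(\mathbb{B}^{n+1})\to\rho^\zeta C^\infty(\mathbb{B}^{n+1})$'' simply does not apply to it. Extending that proposition to the finer conormal-at-$q_0$ class is exactly the content of the statement you are trying to prove, so the blowdown reduction is circular rather than a simplification. Your step (v) also misidentifies the role of $k\geq 1$: what matters is that the input exponents $\zeta+k+m$, $m\geq 0$, at the $\rho_l$-face avoid the indicial roots $\zeta$ and $n-\zeta$ of $\Delta-\zeta(n-\zeta)$ there, so that the output picks up the indicial behaviour $\rho_l^{\zeta}$ without logarithmic terms; the $\rho_r^{\zeta+j}$ behaviour (any $j\in\ZZ$) is merely carried along because the operator acts in the left factor.

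For comparison: the paper does not prove this proposition at all — it is quoted from Mazzeo--Melrose — and when the authors need its $h^\infty$ refinement (Proposition \ref{normal operator}) they say explicitly that it follows ``by following the power series arguments in \cite{Mazzeo-Melrose} verbatim.'' That is the intended mechanism: first solve away the Taylor series of $f$ at the $\rho_l$-face by a formal series with exponents $\zeta+k+m$ (possible precisely because these are not indicial roots), reducing to an input vanishing to infinite order at $\rho_l$ but still only $\rho_r^{\zeta+j}$ at the other face; then analyse the convolution with the explicit hyperbolic Green's function directly, in the blown-up coordinates near the corner, to see that the output is $\rho_l^{\zeta}\rho_r^{\zeta+j}$ times a smooth function. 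Your closing suggestion of substituting the explicit kernel in the Region 4a/4b coordinates is the germ of that second step, but in your plan it is only used as a final smoothness check on the output; in a correct proof it (or an equivalent pushforward/indicial argument) has to carry the whole weight of the estimate near the corner $\rho_l=\rho_r=0$, since nothing can be delegated to the unblown-up ball statement.
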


In order to utilize these mapping properties to remove the error from the front face in Section~\ref{subsec:ff},  we show 

\begin{proposition}\label{normal operator}If one denotes $\zeta = n/2 \pm i/h$, the hyperbolic resolvent at $\zeta$ maps 
\begin{eqnarray*}
h^\infty \dot{C}^\infty(\mathbb{B}^{n + 1}) &\longrightarrow& h^\infty \rho^\zeta C^\infty(\mathbb{B}^{n + 1})\\ 
h^\infty \rho^{\zeta + k} C^\infty(\mathbb{B}^{n + 1}) &\longrightarrow& h^\infty\rho^\zeta C^\infty(\mathbb{B}^{n + 1}), \quad k = 1, 2, \dots  \\ h^\infty \rho_l^{\zeta + k} \rho_r^{\zeta + j} C^\infty(\mathbb{Q}) & \longrightarrow & h^\infty\rho_l^{\zeta} \rho_r^{\zeta+j} C^\infty(\mathbb{Q}), \quad k = 1, 2, \dots, \ j = 0, 1, 2, \dots
\end{eqnarray*} as $h \rightarrow 0$.
\end{proposition}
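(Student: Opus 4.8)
The plan is to revisit the explicit Green's-function computations of Mazzeo and Melrose underlying the three preceding propositions, tracking the dependence on $h$ --- equivalently on $\lambda = \pm 1/h = \pm\,\mathrm{Im}\,\zeta$ --- at each step. The starting point is the closed formula for the hyperbolic resolvent kernel: in the even-dimensional case $n = 2k$ it is \eqref{hyperbolic resolvent}, and for $n$ odd there is an analogous closed expression (a half-integer analogue of \eqref{hyperbolic resolvent}, or a hypergeometric function of $\cosh r$) with the same qualitative structure. By \eqref{MM}, lifted to $\mathbb{B}^{n+1}\times_0\mathbb{B}^{n+1}$ (resp.\ to $\mathbb{Q}$) this kernel is the sum of a $0$-pseudodifferential part conormal to $\diagz$ of fixed order and vanishing rapidly at $\FL, \FR$, and a boundary part of the form $\rho_l^\zeta \rho_r^\zeta$ times a function smooth up to all faces. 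The crucial observation is that the $\zeta$-dependence is completely explicit: it enters only through the oscillatory factor $e^{i\lambda r}$ (with $r$ geodesic distance, or the relevant boundary combination), through prefactors polynomial in $\lambda$ of degree at most $k$ coming from the $(\sinh r)^{-1}\partial_r$ differentiations, and through the overall scalar $\lambda^{-1}$. Since $\zeta = n/2 \pm i/h$ has fixed real part, $|\rho_l^\zeta \rho_r^\zeta| = (\rho_l \rho_r)^{n/2}$ is $h$-independent, and $\rho_l^{\pm i/h}$, $\rho_r^{\pm i/h}$, $e^{\pm i r/h}$ all have modulus one.

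Granting this, each Mazzeo--Melrose mapping statement is proved by pairing the kernel against the input and reading off the form of the output, and our refinement follows by inserting the $h$-bookkeeping. Cancelling $\rho_l^\zeta$ against the weight of the target space removes all $\rho_l$-oscillation, so $\rho_l^{-\zeta} R_{\mathbb{B}^{n+1}}(\zeta) f$ is a genuine smooth function of the left variable whenever $f \in \dot{C}^\infty$, and similarly in the weighted and $\mathbb{Q}$ settings. To estimate a fixed $C^m$ seminorm of this function, the neighbourhood of $\diagz$ is handled by the semiclassical $0$-calculus (the operator has fixed differential order, so its seminorm bounds are polynomial in $h^{-1}$), while away from $\diagz$ and near the boundary only the $\rho_l^\zeta \rho_r^\zeta$-times-smooth part survives (modulo $O(\rho^\infty)$), and there each derivative that lands on $e^{i\lambda r}$ costs $\lambda\,\partial r$, with $\partial^\alpha r$ locally bounded and $h$-independent. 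Combining with the polynomial prefactors gives, for each pair of seminorms, a uniform bound
\begin{equation*}
\big\| R_{\mathbb{B}^{n+1}}(n/2 \pm i/h) \big\| \;\leq\; C\, h^{-N}
\end{equation*}
between the source and target spaces in the three lines of the earlier propositions, for some $N$ depending on $n$ and on the seminorm. Since $h^\infty \cdot h^{-N} = h^\infty$, an input in $h^\infty \dot{C}^\infty$ (resp.\ $h^\infty \rho^{\zeta+k} C^\infty$, resp.\ $h^\infty \rho_l^{\zeta+k}\rho_r^{\zeta+j} C^\infty$) is carried to an output in $h^\infty \rho^\zeta C^\infty$ (resp.\ $h^\infty \rho^\zeta C^\infty$, resp.\ $h^\infty \rho_l^\zeta \rho_r^\zeta C^\infty$), as claimed.

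The one delicate point is the bookkeeping near the boundary and near the corner $\FL \cap \FR$ of $\mathbb{Q}$, where $\rho_l$, $\rho_r$ and $e^{i\lambda r}$ interact, to confirm that controlling a given $C^m$ seminorm of the output costs only finitely many powers of $h^{-1}$ without accumulation. Because only the crude bound $O(h^{-N})$ for \emph{some} $N$ is needed --- not a sharp power --- this is routine once the explicit kernel is in hand, with no stationary-phase analysis required. (Alternatively, the uniform polynomial bound can be obtained from positive-commutator a priori estimates for the semiclassical operator $h^2 \Delta_{\mathbb{B}^{n+1}} - h^2 n^2/4 - 1$ on weighted $L^2$ spaces, together with elliptic regularity and an analysis of the boundary asymptotics; but the explicit-kernel route is the most direct given that those computations are already in play.)
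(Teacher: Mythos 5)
Your core reduction is exactly the one the paper uses: because the target spaces carry an $h^\infty$ prefactor, it suffices to prove that each $C^k$ (or weighted $C^k$) seminorm of the output is bounded by $C_k\,h^{-N(k)}$ times seminorms of the input, and this crude polynomial bound is extracted from the explicit hyperbolic Green function, split into a near-diagonal and a far-from-diagonal piece; the second and third mapping properties are then obtained by rerunning the Mazzeo--Melrose power-series arguments, as you indicate. Two of your shortcuts, however, do not work as stated. First, the near-diagonal region cannot be dispatched by citing the semiclassical $0$-calculus: at $\zeta = n/2 \pm i/h$ the operator $h^2\Delta - h^2n^2/4 - 1$ is not semiclassically elliptic, the resolvent on the spectrum is not a semiclassical $0$-pseudodifferential operator (its kernel oscillates like $e^{ir/h}$ up to the diagonal), and ``fixed differential order'' does not by itself give polynomial-in-$h^{-1}$ seminorm bounds. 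The paper instead bounds the $L^1$ norm of the truncated Green function $G_\zeta \mathbf{1}_{r\le 1}$ by $Ch^{-N}$ directly; for $n$ even this is immediate from \eqref{hyperbolic resolvent}, but for $n$ odd it requires the change of variables and the splitting of the integral carried out in the paper, so it is elementary but not a formal consequence of ``the same qualitative structure''. Second, \eqref{MM} is a fixed-$\zeta$ statement: the assertion that the smooth factor multiplying $\rho_l^{\zeta}\rho_r^{\zeta}$ has all derivatives growing only polynomially in $1/h$ is precisely what must be proved, and the paper establishes it via the hypergeometric representation of the kernel (the Gamma-factor coefficient costs $h^{-n/2+1/2}$ and each $r$-derivative of the hypergeometric integral costs $h^{-1}$). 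With these two points repaired --- both along lines you already gesture at in your explicit-kernel fallback --- your argument coincides with the paper's proof.
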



\begin{proof}
We only show the first one, since the other two can be obtained from the first one by following the power series arguments in \cite{Mazzeo-Melrose} verbatim. 

We note that the first mapping property is established if for any integer $k$ we can show the hyperbolic resolvent maps 
\begin{equation}
 \dot{C}^\infty(\mathbb{B}^{n + 1}) \longrightarrow  h^{- N(k)} \rho^\zeta C^k(\mathbb{B}^{n + 1}),
 \label{hypresmap}\end{equation}
  where $N(k)$ depends on $k$. 

To this end, consider $G_\zeta(z, z')$, the Green function for the resolvent. This is a function only of the hyperbolic distance $r$ between $z$ and $z'$. It has the exact expression 
from \cite[p. 105]{Taylor2} \begin{eqnarray*}
G_\zeta  = \left\{ \begin{array}{ll} -\frac{h}{2i} \big( - \frac{1}{2\pi} \frac{1}{\sinh r} \frac{\partial}{\partial r} \big)^{n /2}e^{ir/h} & \mbox{if $n$ even}, \\ - \frac{h}{\sqrt{2} i} \int_{r}^\infty \big(- \frac{1}{2\pi} \frac{1}{\sinh s} \frac{\partial}{\partial s} \big)^{(n + 1)/2} e^{i s/h} (\cosh s - \cosh r)^{-1/2} \sinh s \, ds  & \mbox{if $n$ odd},\end{array} \right.
\end{eqnarray*}

Let us decompose this Green function into $G_\zeta^{near}(r) = G_\zeta(r) 1_{r \leq 1}$ and $G_\zeta^{far}(r) = G_\zeta(r) 1_{r \geq 1}$. To prove \eqref{hypresmap} for $G_\zeta^{near}$, it suffices to show that this is an integrable function of $r$ with $\| G_\zeta^{near}(r) \|_{L^1} \leq C h^{-N}$ for some $N$; if this is so, then acting with $G_\zeta^{near}$ in fact maps $ \dot{C}^\infty(\mathbb{B}^{n + 1})$ to $h^{-N}  \dot{C}^\infty(\mathbb{B}^{n + 1})$, which is much stronger than \eqref{hypresmap}.

In the case $n$ even, this expression of $G_\zeta$ takes the form 
$$
\sum_{j=0}^{n} h^{-j} b_j(r),
$$
so it suffices to show that each $b_k$ is locally integrable. But this is clear, since the $b_k$ are smooth for $r \in (0, 1]$, and the singularity at $r=0$ is at worst $r^{1-n}$, which is integrable. 

When $n$ is odd, we use the change of variables $\tilde{s} = \cosh s$ and $\tilde{r} = \cosh r$ to convert the expression of $G_\zeta$ to 
$$
- \frac{h}{\sqrt{2} i} \int_{\tilde r}^\infty \big(- \frac{1}{2\pi}  \frac{\partial}{\partial \tilde s} \big)^{(n + 1)/2} e^{i \cosh^{-1}(\tilde s)/h} (\cosh \tilde s - \cosh \tilde r)^{-1/2}  \, d\tilde s.
$$
This is bounded by a finite sum of the form 
$$
\sum_{k=0}^{(n-1)/2} h^{-k} \int _{\tilde{r}}^\infty \frac{1}{(\tilde{s}^2 - 1)^{n/2 - k} (\tilde{s} - \tilde{r})^{1/2}  }  \,d\tilde{s}.
$$
 One may divide this integral into two parts $$\bigg(\int_{\tilde{r}}^{2\tilde{r}}   + \int_{2\tilde{r}}^\infty\bigg) \frac{\tilde{r}^{n/2}}{(\tilde{s} + 1)^{n/2} (\tilde{s} - 1)^{n/2} (\tilde{s} - \tilde{r})^{1/2}  }  \,d\tilde{s}. $$ The first one may be crudely bounded by 
 $$
 \int_{\tilde{r}}^{2\tilde{r}}   \frac{1}{ (\tilde{r}^2 - 1)^{n/2} (\tilde{s} - \tilde{r})^{1/2}  }  \,d\tilde{s} \approx  
 \tilde{r}^{1/2 } (\tilde{r}^2 - 1)^{-n/2} \approx r^{-n}, \quad r \leq 1, 
 $$ 
 which is a locally integrable function on $\mathbb{H}^{n + 1}$. The other one is bounded by 
 $$
 \int_{2\tilde{r}}^\infty \frac{1}{(\tilde{r} + 1)^{n/2} (\tilde{s} - \tilde{r})^{(n + 1)/2} }  \,d\tilde{s} \approx  \frac{1}{(\tilde{r} + 1)^{n/2} \tilde{r}^{(n - 1)/2}  } \approx r^{-(n-1)}, \quad r \leq 1,$$ which is locally integrable on $\mathbb{H}^{n + 1}$ as well.
%
%

To prove \eqref{hypresmap} for $G_\zeta^{far}$, we use instead the expression of the hyperbolic resolvent as a hypergeometric function. This takes the form 
 \begin{equation}\label{eqn: hypergeometric function}\frac{2^{-2\zeta - 1} \pi^{- n /2} \Gamma(\zeta)}{\Gamma(\zeta - n/2 + 1) (\cosh r/2)^{2\zeta}} F(\zeta, \zeta - (n-1)/2, 2\zeta - n + 1, (\cosh r/2)^{-2})
 \end{equation} 
 for $d(z, z') > 0$, where $F(\zeta, \zeta - (n-1)/2, 2\zeta - n + 1, (\cosh r/2)^{-2})$ is the Gauss hypergeometric function with expression 
 \begin{equation}\begin{gathered}
 \frac{\Gamma(2 \zeta - n + 1)}{\Gamma(\zeta - (n - 1)/2) \Gamma(\zeta - (n - 1)/2)} \int_0^1 \frac{t^{\zeta - n/2 - 1/2} (1 - t)^{\zeta - n/2 - 1/2}}{\big(1 - t (\cosh r/2)^{-2})^\zeta} \,dt \\
 =  \frac{\Gamma(1 \pm 2i/h)}{\Gamma(1/2 \pm i/h) \Gamma(1/2 \pm i/h)} \int_0^1 \frac{t^{ - 1/2 \pm i/h} (1 - t)^{- 1/2 \pm i/h}}{\big(1 - t (\cosh r/2)^{-2})^{n/2 \pm i/h}} \,dt
\end{gathered} \label{Fabc}\end{equation}
  where $\Gamma$ is the gamma function and  $\zeta = n/2 \pm i/h$.

 In the Poincar\'e ball model, if $r$ is hyperbolic distance to the origin, then 
 $$(\cosh r/2)^{-2} = \frac{1-|z|^2}{|z|^2}
 $$
 which is a boundary defining function, say $\rho$, for the ball. This expression makes it clear that $G_\zeta^{far}(r)$ has the form $\rho^{\zeta}$ times a $C^\infty$ function of $\rho$ as $\rho = (\cosh r/2)^{-2}$ tends to $0$. It remains to estimate the $C^k$ norm of this $C^\infty$ function. To do this, we differentiate \eqref{Fabc} $k$ times in $r$, and estimate. 
On one hand, applying the formulas $$|\Gamma(1/2 + iy)|^2 = \frac{\pi}{\cosh (\pi y)}  \quad \mbox{and} \quad |\Gamma(1 + iy)|^2 = \frac{\pi y}{\sinh (\pi y)},$$ we gain, for both even and odd $n$, \begin{equation}\label{eqn: coefficient of hypergeometric function}\frac{\Gamma(n/2 \pm i/h)}{\Gamma(1 \pm i/h)} \frac{\Gamma(1 \pm 2i/h)}{\Gamma(1/2 \pm i/h) \Gamma(1/2 \pm i/h)} \leq C h^{-n/2 + 1/2} \quad \mbox{as $h \rightarrow 0$.}\end{equation}
On the other hand, we have to estimate the integral $$ \big( \frac{\partial}{\partial r} \big)^k \int_0^1 \frac{t^{-1/2 \pm i/h} (1 - t)^{-1/2 \pm i/h}}{\big(1 - t(\cosh r/2)^{-2})^{n/2 \pm i/h}} \,dt.$$  
Notice that for $G_\zeta^{far}$, we always have $r \geq 1$, therefore $(\cosh r/2)^{-2}$ is always less than and bounded away from $1$. Therefore the $k$th $r$ derivative of the integrand is absolutely integrable for all $k$, so we may differentiate under the integral sign. We see that the integral is bounded by $C_k h^{-k}$ uniformly in $h$ (where the negative powers of $h$ arise from the exponent in the denominator). This establishes \eqref{hypresmap}, and hence completes the proof.  
\end{proof}

\subsection{Geodesics}\label{subsec:geo}
Let $p$ be the symbol of $P_h$ in \eqref{semiclassical eqn}.
We consider the structure of (null) bicharacteristics --- that is, integral curves of the Hamilton vector field $H_p$ inside the characteristic variety $p = 0$ --- on the single $0$-cotangent space ${}^0 T^* X$. The Hamilton vector field for any Hamiltonian $p$ (a smooth real-valued function on ${}^0 T^* X$) is
$$x \frac{\partial p}{\partial \lambda} \frac{\partial}{\partial x} + x \frac{\partial p}{\partial \mu} \cdot \frac{\partial}{\partial y} - \bigg(\mu \cdot \frac{\partial p}{\partial \mu} + x \frac{\partial p}{\partial x}\bigg) \frac{\partial}{\partial \lambda} + \bigg(\frac{\partial p}{\partial \lambda}\mu - x \frac{\partial p}{\partial y}\bigg) \cdot \frac{\partial}{\partial \mu}.$$
Consider the Hamiltonian $p$ near the boundary of ${}^0 T^* X$. Assume that we have coordinates $(x, y)$ such that the metric $g$ takes the form \eqref{g-normalform}.
Then $p$ takes the form  $p = \lambda^2 + g_0^{ij}(x, y) \mu_i \mu_j - 1$. We obtain
\begin{equation}\label{eqn: hamilton flow-out}
\left\{ \begin{array}{ccl}
\dot{x} &=& 2x \lambda  \\
\dot{y_i} & = & 2x g_0^{ij} \mu_j  \\
\dot{\lambda} & = & -\Big(2g_0^{ij} +  x \partial_x g_0^{ij}  \Big) \mu_i \mu_j    \\
\dot{\mu_i} & = & \Big( 2\lambda\mu_i - (x \partial_{y_i}  g_0^{jk}) \mu_j \mu_k \Big) \end{array} \right..
\end{equation}
Of course, this is just geodesic flow (viewed in the cotangent bundle) written in these coordinates.
Let $\gamma$ be a bicharacteristic (that is, geodesic) over the interior of $X$. We claim that $\gamma$ extends smoothly to a compact curve in ${}^0 T^* X$. To see this, we first notice that $\lambda \to \pm1$ and $\mu \to 0$ as $x \to 0$ along $\gamma$. In fact, as $x \rightarrow 0$ along the flow, $\dot{x} < 0$, then $\partial p / \partial \lambda = 2 \lambda < 0$; in the mean time, $\dot{\lambda} \leq 0$ when $x$ is sufficiently small. On the other hand, the energy condition $\lambda^2 + |\mu|_{g_0}^2 = 1$ gives
$$
(g_0^{ij} \mu_i \mu_j)^\cdot = 2 \lambda (g_0^{ij} \mu_i \mu_j)     + 2 \lambda  x \frac{\partial g_0^{ij}}{\partial x} \mu_i \mu_j = 4 \lambda (g_0^{ij} \mu_i \mu_j)\big(1 + O(x) \big),
$$
the right hand side of which is bounded above by $- C g_0^{ij} \mu_i \mu_j$ for some $C > 0$ when $x$ is small. A simple application of Gronwall's inequality shows $g_0^{ij} \mu_i \mu_j \rightarrow 0$, namely, $\lambda \rightarrow - 1$, which completes the proof. Similarly, $\lambda \to +1$ as $x \to 0$ backwards along $\gamma$. In fact, we have $g_0^{ij} \mu_i \mu_j / x^{2\delta} \to 0$ for any $\delta < 1$; this follows by computing that
$$
\Big( \frac{g_0^{ij} \mu_i \mu_j}{x^{2\delta}} \Big)^\cdot < 0
$$
when $\lambda$ is sufficiently close to $-1$ (depending on $\delta$), and $x$ is sufficiently small. Using once again the energy condition $\lambda^2 + |\mu|_{g_0}^2 = 1$, we see that $\lambda - 1 = O(x^{2\delta})$ as $x \to 0$.

We now `shift' the bicharacteristic\footnote{See Section \ref{ssec:shift} for more information on this.} so that it meets the boundary at $\lambda = 0$ rather than $\lambda = -1$ (in the forward direction). To do this, we apply the symplectic transformation $q \mapsto q + dx/x = q \mapsto q + d\log x$ in the $0$-cotangent bundle (which is just $\lambda \mapsto \lambda + 1$ in these coordinates), and then introduce the coordinates $\xi = \lambda/x$ and $\eta = \mu/x$, which just amounts to going back to the standard cotangent bundle rather than the $0$-cotangent bundle (since then $\xi, \eta$ are the dual coordinates to $(x, y)$ --- see Section~\ref{subsec:0bundle}).  Combining these two operations means that we substitute $\lambda = -1 + x\xi$ and $\mu = x \eta$ in the Hamiltonian. We obtain the new Hamiltonian
$$
\tilde p = (x \xi)^2 - 2 x \xi + x^2 g_0^{ij} \eta_i \eta_j,
$$
which we note has an overall factor of $x$. Consider the Hamilton vector field for $\tilde p/x$:
\begin{equation}
\left\{ \begin{array}{ccl}
\dot{x} &=& 2 x \xi - 2  \\
\dot{y_i} & = &  x g_0^{ij} \eta_j  \\
\dot{\xi} & = & - \xi^2 - \Big(2g_0^{ij} +x \partial_x g_0^{ij}  \Big) \eta_i \eta_j     \\
\dot{\eta_i} & = &  - x \partial_{y_i}  g_0^{jk} \eta_j \eta_k  \end{array} \right..
\label{shvf}\end{equation}
On the set $\{ \tilde p = 0 \} = \{ \tilde p/x = 0 \}$, this is just the Hamilton vector field for $\tilde p$ divided by $x$, since $H_{\tilde p/x} = x^{-1} H_{\tilde p} + \tilde p H_{1/x}$. Moreover, since the map $\lambda \mapsto \lambda + 1$ is symplectic, this is the pushforward of the Hamilton vector field of $p$ under this map. Therefore, the integral curve $\tgamma$ of this flow is the image of $\gamma$ under a symplectic transformation.  The flow of this vector field is smooth down to $x=0$. To see this it is enough to check that
the flow line reaches $x=0$. Note that the nontrapping condition implies that $x$ becomes arbitrarily small along $\gamma$. Then, since we observed above that $\lambda + 1 = x\xi = O(x^{2\delta})$, we have, using \eqref{shvf}, $\dot x \to -2$ as $x \to 0$. So it is enough to check that the RHS of \eqref{shvf} stays bounded as $x \to 0$. This follows since we have $\lambda \geq -1$ which implies $\xi \geq 0$,  and $\dot \xi \leq 0$ for small $x$. So clearly $\xi$ remains bounded. As for $\eta$, we have $x \eta_j \eta_k= x^{-1} \mu_j \mu_k = O(x^{2\delta - 1})$ as $x \to 0$, so this also remains bounded. It follows that $\tgamma$ is smooth in the standard cotangent bundle. As the inverse map $\lambda = -1 + x\xi$, $\mu = x \eta$ is smooth, we see that also $\gamma$ is smooth in ${}^0 T^* X$.

From now on, we will take $\gamma$ or $\tgamma$ to be the closure of the actual integral curve, that is, including the initial and final endpoints at $x = 0$.
One advantage of considering $\tgamma$ instead of $\gamma$ is that the $\tgamma$ are all disjoint (considered as subsets of $T^*X$), while the $\gamma$ are not (considered as subsets of ${}^0 T^* X$). In fact, all the bicharacteristics with a fixed initial direction $y_{-\infty}$ or final direction $y_\infty$ meet at their initial or final endpoints, since we have $x=0, \lambda = \pm 1$, $\mu = 0$ there. On the other hand, the endpoints of shifted bicharacteristics are all different, as follows from the nonvanishing of the vector field \eqref{shvf} at $x=0$.

\subsection{Leaves}\label{subsec:leaves}

We now consider the product of a bicharacteristic $\gamma$ with itself in the double cotangent space. The (forward and backward) bicharacteristic relation is  foliated by these leaves $\gamma$ emanating from the diagonal conormal bundle $N^* \diag$. Understanding these leaves gives us a good microlocal view of $\FBR$.

Initially we work with the product $({}^0 T^* X)^2$. We first introduce some notation.
Let $\gamma^2$ denote $\gamma \times \gamma$, and let
$\gamma^{2, \prime}$ denote the same space with the second fibre coordinate negated:
\begin{equation}
\gamma^{2,\prime} = \{ (q, q') \mid (q, -q') \in \gamma \times \gamma \}.
\end{equation}
We denote by $\gammatf$ half of this space corresponding to forward propagation. Let $r : [0, \pi]$ be a parametrization of $\gamma$, so that $\dot r > 0$ under forward propagation.
\begin{equation}
\gammatf = \{ (q, q') \in \gamma^{2,\prime} \mid q = \gamma(r), \ -q' = \gamma(r'), \ r \geq r' . \}.
\end{equation}
Finally, $\gammatb$ is $\gamma^{2,\prime}$ blown up at the diagonal corners. Let $\partial \gamma = \{ \gamma(0), \gamma(\pi) \}$. We define
\begin{equation}
\gammatb = \big[ \gamma^{2,\prime}; \{ (\gamma(0), -\gamma(0)) \} \cup \{ (\gamma(\pi), -\gamma(\pi)) \} \big].
\end{equation}
and similarly,
\begin{equation}
\gammatfb = \big[ \gammatf; \{ (\gamma(0), -\gamma(0)) \} \cup \{ (\gamma(\pi), -\gamma(\pi))  \} \big].
\end{equation}

Clearly, $\gamma^{2,\prime}$ is a smooth p-submanifold  of $({}^0 T^* X)^2$. Next consider the structure of $\gamma^{2,\prime}$ as a subset of ${}^\Phi T^* X^2_0$, which is obtained from $({}^0 T^* X)^2$ by blowing up $\{ x = x' = 0, y = y' \}$. Notice that $\gamma^{2, \prime}$ meets this set at the diagonal endpoints, corresponding to $r=r'=0$ and $r=r'=\pi$, provided that $y(0) \neq y(\pi)$. 
Then the two boundary hypersurfaces $x=0$ and $x' = 0$, the set $\{ x = x' = 0, y = y' \}$ and $\gamma^{2,\prime}$ intersect cleanly, in the sense that near any point of $({}^0 T^* X)^2$ one can find local coordinates such that each of these submanifolds is given by the vanishing of a subset of such coordinates. It follows that the lift of $\gamma^{2,\prime}$ to the blowup ${}^\Phi T^* X^2_0$ is a p-submanifold of ${}^\Phi T^* X^2_0$ naturally diffeomorphic to $\gammatb$ (see \cite[Proposition 5.7.2]{DiffAnal}). Moreover, after this blowup, then the forward half $\gammatfb$ of this is also a p-submanifold. We refer to $\gammatfb$ as a (forward) leaf. See Figure \ref{fig: leaves}.

\begin{remark}The argument above isn't valid at the anti-diagonal corners in Figure \ref{fig: leaves} in the case that $y(0) = y(\pi)$, since then the antidiagonal corners would also need to be blown up. This causes some inconvenience in the argument in Section~\ref{sec:flowout}. 
We deal with this issue in Section~\ref{subsec:Lambdastar}. 
\end{remark}

\begin{center}\begin{figure}
 \includegraphics[width=0.7\textwidth]{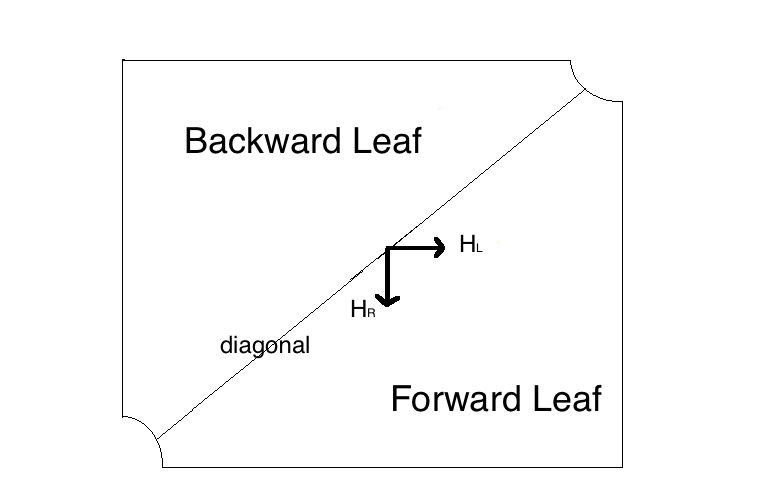}\caption{\label{fig: leaves}A leaf $\gamma_b^{2, \prime}$: the part below the diagonal is $\gammatfb$.}\end{figure} \end{center}
 
Let $\tgamma$ be a shifted bicharacteristic. We define the corresponding leaf $\tgamma^{2, \prime}$ to be the subset of $T^* X \times T^* X$ given by
\begin{equation}
\tgamma^{2,\prime} = \{ (q, q') \mid (q, -q') \in \tgamma \times \tgamma \}.
\end{equation}
Since the $\tgamma$ are all disjoint, so are the corresponding leaves $\tgamma^{2, \prime}$. We have the following

\begin{lemma}\label{lem:int-shifted-leaves}
Let $\epsilon > 0$, and let $\tilde\Lambda$ be the subset of $T^* X \times T^* X$ given by
$$
\tilde\Lambda = \bigcup_{\tgamma} \  \tgamma^{2, \prime}
$$
where the union is over all $\tgamma$ that intersect the set $\{ x > \epsilon \}$. Then $\tilde\Lambda$ is a (p-)submanifold of $T^* X \times T^* X$ which is transverse to each boundary hypersurface.
\end{lemma}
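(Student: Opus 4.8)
The plan is to realise $\tL$, locally, as a fibre product of two submersions onto a local leaf space — equivalently as the preimage of a diagonal under the ``doubled'' leaf--space projection of the shifted geodesic flow — so that it is automatically a submanifold, and then to read off transversality to the boundary hypersurfaces from the transversality of the shifted flow to $\partial(T^*X)$ that was established in Section~\ref{subsec:geo}.

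First I would record the ambient structure. With $\tilde p/x = x\xi^2 - 2\xi + x g_0^{ij}(x,y)\eta_i\eta_j$ a smooth function on $T^*X$ (near the boundary, extended over the interior), set $W := H_{\tilde p/x}$ and $\tilde\Sigma := \{\tilde p/x = 0\}$. On $\{x>0\}$ one has $W|_{\tilde\Sigma} = x^{-1}H_{\tilde p}$, the reparametrised pushforward of the geodesic flow, hence nonvanishing; at $x=0$ one has $\partial_\xi(\tilde p/x) = 2x\xi-2 = -2 \neq 0$, so $W\neq 0$ and $d(\tilde p/x)\neq 0$ on all of $\tilde\Sigma$. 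Thus $\tilde\Sigma$ is a smooth hypersurface; since $\tilde\Sigma\cap\{x=0\} = \{x=0,\ \xi=0\}$ while $d(\tilde p/x)|_{x=0}$ has nonzero $d\xi$--component, $\tilde\Sigma$ is a p--submanifold meeting the boundary hypersurface $\{x=0\}$ transversally, and $Wx=-2$ there shows $W$ transverse to $\{x=0\}$. The curves $\tgamma$ are the integral curves of $W|_{\tilde\Sigma}$; by Section~\ref{subsec:geo} each $\tgamma$ meets the boundary hypersurface of $T^*X$ only at its forward endpoint, these forward endpoints are pairwise distinct, and the backward end of $\tgamma$ leaves every compact subset of $T^*X$. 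Finally, $\mathcal G := \bigcup_{\tgamma}\tgamma$, the union over the $\tgamma$ meeting $\{x>\epsilon\}$, is the flow--saturation of the open set $\tilde\Sigma\cap\{x>\epsilon\}$, hence open in $\tilde\Sigma$; so $\mathcal G$ is a p--submanifold of $T^*X$, foliated by the $\tgamma$.

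Next I would build flow boxes on $\mathcal G$ adapted to the boundary. Over an interior point the ordinary flow--box theorem gives $\mathcal G\cong(-\delta,\delta)_t\times S$ with $S$ a transversal and local leaf space. Over a boundary point $q_0\in\mathcal G\cap\{x=0\}$ — necessarily a forward endpoint — transversality of $W$ to $\{x=0\}$ together with the uniqueness of forward endpoints lets one straighten the flow using $x$ as a reparametrised flow variable: $\mathcal G\cong[0,\delta)_x\times\mathcal N$, where $\mathcal N := \mathcal G\cap\{x=0\}$ is open in the smooth $2n$--manifold $\tilde\Sigma\cap\{x=0\}$, and the leaf through $(x,n)$ is $[0,\delta)\times\{n\}$. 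Now write $\iota(z,\zeta):=(z,-\zeta)$ and observe $\tgamma^{2,\prime}=\tgamma\times\iota(\tgamma)$, so
$$
\tL \;=\; \bigcup_{\tgamma}\ \tgamma\times\iota(\tgamma)\ \subset\ \mathcal G\times\mathcal G',\qquad \mathcal G':=\iota(\mathcal G),
$$
and $\iota$ carries leaves of $\mathcal G$ bijectively to leaves of $\mathcal G'$, identifying their local leaf spaces. Fix $(q_0,q_0')\in\tL$, with $q_0$ and $\iota(q_0')$ on a common leaf $\tgamma_0$; pick flow boxes $\mathcal G\cong I\times S$ near $q_0$ and $\mathcal G'\cong I'\times S'$ near $q_0'$ as above ($I,I'$ equal to $(-\delta,\delta)$ or $[0,\delta)$ according as the base point is interior or boundary), arranged so that the leaf--matching map $\kappa:S\to S'$ — obtained by transporting $S$ along the flow, applying $\iota$, and transporting along the flow of $\iota_*W$, which is possible since $q_0$ and $\iota(q_0')$ lie on the connected curve $\tgamma_0\cap T^*X$ — is a diffeomorphism. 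In these coordinates the condition $(q,q')\in\tL$, that the leaf of $q$ equals $\iota$ of the leaf of $q'$, reads $s'=\kappa(s)$ and imposes nothing on the flow variables; hence, near $(q_0,q_0')$,
$$
\tL \;=\; \{\,(t,s,t',s')\in(I\times S)\times(I'\times S') : s'=\kappa(s)\,\},
$$
a smooth submanifold of $\mathcal G\times\mathcal G'\subset T^*X\times T^*X$ of dimension $(2n+1)+(2n+1)-2n=2n+2$. When $q_0$ (resp.\ $q_0'$) lies over $\partial X$ the relevant flow variable is the boundary defining function $x$ (resp.\ $x'$), which does not enter $s'=\kappa(s)$; so near such points $\tL$ is cut out by equations independent of the boundary defining functions, i.e.\ it is a p--submanifold transverse to $\{x=0\}\times T^*X$, to $T^*X\times\{x'=0\}$, and to their corner, as claimed.

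The step I expect to be the main obstacle is the boundary analysis: ensuring that the flow box on $\mathcal G$ near $\partial(T^*X)$ can be taken adapted to the boundary. This rests on three facts about the shifted flow \eqref{shvf} — that $W$ is transverse to $\{x=0\}$, that each $\tgamma$ reaches $\{x=0\}$ inside $T^*X$ only once, and that distinct $\tgamma$ reach distinct points — all of which are exactly what Section~\ref{subsec:geo} supplies; without them one could not rule out several sheets of $\mathcal G$ colliding over a boundary point. Away from the boundary the argument is the routine one that the preimage of a diagonal under a submersion is a submanifold, with the dimension count (and the eventual Lagrangian interpretation) read off afterwards.
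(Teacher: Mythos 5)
Your proof follows essentially the same route as the paper's: the paper chooses local coordinates $(\tilde p, t, w)$ on $T^*X$ with $w$ constant along the shifted bicharacteristics and with the flow coordinate taken to be $t = x$ near the boundary, so that $\tilde\Lambda$ is locally $\{\tilde p = \tilde p' = 0,\ w = w'\}$ — exactly your flow-box/leaf-space fibre-product description with the boundary defining functions as the flow variables. Your write-up merely makes explicit some details the paper leaves implicit (the sign flip $\iota$, the leaf-matching map $\kappa$, and the adapted flow box at $x=0$ using transversality of the shifted flow), so it is correct and essentially identical in approach.
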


\begin{proof} Locally in $T^* X$, the shifted bicharacteristics foliate the set $\{ \tilde p = 0 \}$, where $\tilde p$ is the shifted symbol of $P_h$. It follows that we can choose coordinates $(\tilde p, t, w)$ where $w$ is constant along shifted bicharacteristics, and $t$ is a coordinate along each bicharacteristic. Near the boundary, we can take $t=x$.  Then in $T^* X \times T^* X$, we have coordinates $(\tilde p, t, w; \tilde p', t', w')$ where the primed/unprimed coordinates are lifted from the right/left factor of $T^* X$. The subset in the lemma is given in these coordinates by
$$
\{ \tilde p = \tilde p' = 0, w = w' \},
$$
and is clearly a p-submanifold.
\end{proof}

\begin{center}\begin{figure}
\includegraphics[width=0.8\textwidth]{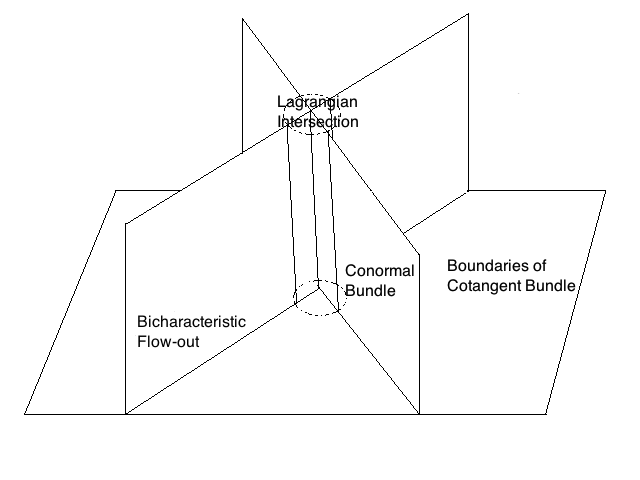}\caption{\label{fig: cotangent bundle decomposition}The Lagrangian intersection  $N^* \diagz \cap \FBR$}\end{figure}\end{center}

\subsection{Outline of proof}\label{subsec:outline}
We wish to invert the  operator $P_h$ given by \eqref{semiclassical eqn}, which is an operator of semiclassical order $0$ and differential order $2$. To do this we first construct a parametrix $G$, that is, an operator such that the error term $E_h := P_h G_h - \Id$ is very mild. 
We will successively solve away the differential singularity as $\zeta \rightarrow \infty$ (so that the Schwartz kernel of the error term $E_h$ is smooth), the semiclassical singularity as $h \rightarrow 0$ (so that $E_h$ is $O(h^\infty)$) and the boundary singularity as $x \to 0$ (so that $E_h$ is $O(x^\infty)$). 

We view $P_h G_h = \Id$ as a PDE on $X^2_0$; thus, $P_h$ is viewed as a differential operator $P_L$ on $X^2_0$, acting in the left variables. We denote  the characteristic variety of $P_L$ by $\Sigma_L$, and the conormal bundle of $\diagz \subset X^2_0$ by $N^* \diagz$. 
Select a fibre-compact neighbourhood of the Lagrangian intersection $N^* \diagz \cap \Sigma$ up to boundary, say $\Gamma$, such that $|\zeta|^2  > 1$ in $N^* \diagz \setminus \Gamma$. The set $\Gamma$ will meet the front face, but will be disjoint from the other boundary hypersurfaces. The operator $P_L$ is thus elliptic on $N^* \diagz \setminus \Gamma$. The theory of $0$-pseudodifferential operator is applicable to solve away the singularity on $N^* \diagz \setminus \Gamma$.

On the other hand, the singularities in $N^* \diagz \cap \Sigma$ propagate according to the Hamilton vector field of $P_L$. The forward integral curves of this Hamilton vector field, starting at $N^* \diagz \cap \Sigma$, sweeps out the Lagrangian submanifold $\FBR$, by definition of $\FBR$ (see \eqref{Lambda1def}). 
This motivates using an intersecting Lagrangian ansatz for $G_h$ associated to the Lagrangian submanifolds $(N^* \diagz, \FBR)$, following Melrose-Uhlmann \cite{Melrose-Uhlmann-CPAM-1979}, to solve these errors away. We also need good control on the closure of $\FBR$, i.e. the way it meets the boundary of ${}^\Phi T^* X^2_0$, so that the singularities of $G_h$ are controlled uniformly to infinity. 
 The geometric structure of the closure $\Lambda_+$ of $\FBR$ is somewhat intricate, and describing this is the subject of Section~\ref{sec:flowout}.

Thus the resolvent construction, carried out in Section~\ref{sec:parametrix}, proceeds in several stages. We solve away first the elliptic singularities, then the singularity at the Lagrangian intersection, then the singularities globally over $\Lambda_+$, then at $x=0$ (both at the front face, using the normal operator, and at $\FL$), and finally end up with a very benign error term $E_h$, which can be inverted with precise control over the structure of its inverse. 


\section{Geometry of the forward bicharacteristic relation}\label{sec:flowout}

The key to our parametrix construction is understanding the geometry of the Lagrangian $\FBR$, defined in \eqref{Lambda1def}, as well as the Hamilton vector field which is tangent to it, as we approach the boundary of $\Phi$-cotangent bundle ${}^\Phi T^* X^2_0$, which we recall is the bundle obtained by pulling back the product bundle ${}^0 T^* X \times {}^0 T^* X$ to $X^2_0$ by the blowdown map $\beta : X^2_0 \to X^2$. 

Before proceeding we introduce some notation. The operator $P_L$ is the differential operator on $X^2_0$ (depending on the parameter $h$) given by $P = P_h$ acting in the left variables, and $\Sigma_L$ is the zero set of the principal symbol $p_L$ of $P_h$. 
When the operator acts in the right variables, this will be denoted $P_R$, with principal symbol $p_R$ and zero set $\Sigma_R$.
We define $H^L$ and $H^R$ to be the Hamilton vector field of $p_L$, resp. $p_R$.

%

\subsection{Shifting the Lagrangian}\label{ssec:shift}

We start by recalling how the question of regularity of the forward bicharacteristic relation was tackled by Melrose, S\'{a} Barreto and Vasy \cite{Melrose-Sa Barreto-Vasy}, in the case that $X$ is a small perturbation of the standard metric on hyperbolic space. Their essential idea is to pass from the $\Phi$-cotangent bundle, ${}^\Phi T^* X^2_0$, to the $b$-cotangent bundle. 
They showed that the $b$-cotangent bundle of $X^2_0$ could be obtained by blowing up three submanifolds in ${}^\Phi T^* X^2_0$. Viewing the left Hamilton vector field $H^L$ as living in the $b$-cotangent bundle (which is certainly valid over the interior of $X^2_0$), they show that it is of the form $H^L = \rho_L V^L$, where $\rho_L$ is a boundary defining function for $\FL$, and $V_L$ is a smooth vector field on ${}^b T^* X^2_0$ that is transverse to $\FL$. As a consequence, $\Lambda_+$, the closure of $\FBR$, in the $b$-cotangent bundle, is smooth up to $\FL$, and meets it transversally. Exactly the same analysis applied to the right Hamilton vector field shows smoothness up to the right boundary $\FR$. 

We shall perform a very similar analysis, but with a slight twist. Instead of passing to the $b$-cotangent bundle, we shall pass to the usual cotangent bundle $T^* X^2_0$. One problem with doing this is that the closure of $\Lambda_+$ cannot possibly be smooth viewed as a subset of $T^* X^2_0$. Indeed, in the case of a small perturbation of hyperbolic space, Melrose, S\'{a} Barreto and Vasy showed that $\Lambda_+$ is the graph of the differential of the distance function on $X^2_0$, away from the diagonal, which takes the form $-d\rho_L/\rho_L - d\rho_R/\rho_R$ plus the differential of a smooth function. This is smooth on the $b$-cotangent space, but not on the usual cotangent space. 

On the other hand, this result suggests the following strategy. We `shift' the Lagrangian $\Lambda_+$ by modifying it precisely so as to remove the explicit divergence noted above. That is, we define a shifted relation $\tFBR$, which we view as living in the standard cotangent bundle $T^* X^2_0$ of $X^2_0$, as follows:
\begin{equation}
\tFBR = \{ q \in T^* X^2_0 \mid q = q' + \frac{d\rho_L}{\rho_L} + \frac{d\rho_R}{\rho_R}, \ q' \in \FBR \}. 
\label{tL1}\end{equation}
Based on the results of Melrose, S\'{a} Barreto and Vasy, we can hope that the closure of $\tFBR$ is smooth, as a subset of the standard cotangent bundle $T^* X^2_0$. That is essentially what we shall show, although there is a subtlety involving the possibility of geodesics in the interior of $X$ such that their initial and final directions coincide\footnote{This possibility cannot occur in the geometric setting considered by Melrose, S\'{a} Barreto and Vasy}. We deal with this by decomposing $\tFBR$ into two parts and considering each separately. The second part, $\tFBR^*$, we view as living in $T^* X^2$, rather than $T^* X^2_0$ (see Prop~\ref{prop:flowout-structure} and Remark~\ref{rem:comparison} below). 

\begin{definition} The set $\Lambda_+ \subset {}^\Phi T^* X^2_0$ is the closure of $\FBR$ in ${}^\Phi T^* X^2_0$. We similarly define $\tL_+$ to be the closure of the $\tFBR$ in $T^* X^2_0$. 
\end{definition}

Observe that \eqref{tL1} can be written $\tFBR = T^{-1}(\FBR)$, where the transformation $T: q' \mapsto q' - df$, $f =  \log (\rho_L \rho_R)$,  is a symplectic transformation (this is true for any smooth $f$). It follows that $\tFBR$ is another Lagrangian submanifold, and indeed it is given by the flowout of a shifted Hamilton vector field $\tHL$, that of the shifted Hamiltonian $\tpL$ which is the pullback of the Hamiltonian $p_L$ by the map $T$. 

\begin{remark} In practice, to simplify our calculations, we shall choose a different boundary defining function $\rho_L$ or $\rho_R$ in each local coordinate patch. In spite of this, we shall continue to refer to `the' shifted Lagrangian, and denote it by a single symbol $\tL_+$. On the overlaps of different coordinate patches, we will  therefore get different shifted Lagrangians, but they will be related by a smooth transformation on the standard cotangent bundle, so this will not affect the validity of our results.  
\end{remark}

The key to understanding the regularity properties of $\tL_+$ is the following regularity statement for the shifted Hamilton vector fields on $T^* X^2_0$:

\begin{lemma}\label{lem:shifted-vf} The left and right shifted Hamilton vector fields $\tHL$, $\tHR$, restricted to the shifted characteristic variety, lift to smooth vector fields on $T^* X^2_0$ tangent to $\partial_{\FF}T^* X^2_0$, such that $\tHL/\rho_L$ is tangent to  $\partial_{\FR}T^* X^2_0$ and transverse to  $\partial_{\FL}T^* X^2_0$, while $\tHR/\rho_R$ is tangent to  $\partial_{\FL}T^* X^2_0$ and transverse to  $\partial_{\FR}T^* X^2_0$.
\end{lemma}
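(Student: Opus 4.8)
The plan is to work in the local coordinate systems on $X^2_0$ introduced in Section~\ref{subsec:blowup} (Regions 1--5), and in each patch compute the shifted Hamiltonian $\tpL$ and its Hamilton vector field $\tHL$ explicitly. The starting point is the calculation already carried out in Section~\ref{subsec:geo}: in the single-space coordinates $(x,y)$ where $g$ has the normal form \eqref{g-normalform}, the Hamiltonian $p_L = \lambda^2 + g_0^{ij}\mu_i\mu_j - 1$, after shifting $\lambda \mapsto \lambda+1$ and passing to the standard cotangent bundle via $\xi = \lambda/x$, $\eta = \mu/x$, becomes $\tpL = x\big((x\xi^2) - 2\xi + x g_0^{ij}\eta_i\eta_j\big)$, which has an overall factor of $x = x$ (the boundary defining function for $\FL$ in Regions 2a, 3). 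The explicit Hamilton vector field for $\tpL/x$ is \eqref{shvf}, and one reads off directly that it is smooth down to $x=0$, that $\dot x \to -2 \neq 0$ there (so $\tHL/\rho_L$ is transverse to $\FL$), and that it does not differentiate $x'$, $y'$ or the right fibre variables at all (so, trivially, $\tHL/\rho_L$ is tangent to $\FR$, which in Region~3 is $\{x'=0\}$). This handles Regions 2a and 3 for $\tHL$; by the left/right symmetry of the whole construction, the statements about $\tHR/\rho_R$ in Regions 2b, 3 follow identically with the roles of primed and unprimed coordinates exchanged.

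The remaining work is near the front face $\FF$, i.e.\ Regions 4a, 4b, 5. Here one must redo the computation in the blown-up coordinates. In Region 4a, with $s = x/x'$, $x'$, $y$, $Y = (y'-y)/x'$, a boundary defining function for $\FF$ is $x'$ and for $\FL$ is $s$; I would take $\rho_L = s$, $\rho_R = x'$ (or some smooth positive multiples) in the shift \eqref{tL1}, so that $f = \log(\rho_L\rho_R) = \log(s x') = \log x$. The shift is then exactly the \emph{same} substitution $\lambda = -1 + x\xi$, $\mu = x\eta$ as in the single-space case, now performed on the left variables only, and the resulting $\tpL$ again carries an overall factor. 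One then checks: (i) $\tpL$ vanishes to first order at $\FL$, with the vector field $\tHL/\rho_L$ smooth and transverse to $\FL$ just as in \eqref{shvf}; (ii) since the left Hamiltonian involves only $x = sx'$ and $y$, $\xi$, $\eta$, differentiating it produces no $\partial/\partial(\text{right variables})$ and, crucially, the coefficient of $\partial/\partial x'$ — which is the only way to leave $\FF$ or $\FR$ — must be checked to vanish at $\FF = \{x'=0\}$ and at $\FR$; tangency to $\FF$ at $x' = 0$ is what keeps $\tHL$ (not merely $\tHL/\rho_L$) a genuine vector field on $T^*X^2_0$. The triple-corner Region~5 is handled the same way with the coordinates $s_1, s_2, t, Z_j$ and $\rho_L$ proportional to $s_1$, $\rho_R$ proportional to $s_2$; there one must be slightly more careful because the rotation of the $y$-coordinates is involved, but the structure of the computation — overall factor of $\rho_L$, transversality from $\dot s_1 \neq 0$, tangency to $\FF$ and $\FR$ from the fact that the left Hamiltonian does not differentiate $s_2$, $t$ in a way that reaches the relevant boundary — is unchanged.

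Two points of bookkeeping remain. First, one must verify the patching: since in different coordinate patches we allow different choices of $\rho_L$, $\rho_R$ (cf.\ the Remark after \eqref{tL1}), the shifted vector fields on overlaps differ by the pushforward under $q \mapsto q - d(\text{smooth function})$, which is a smooth symplectomorphism of $T^*X^2_0$ preserving each boundary hypersurface, so the qualitative statements (smoothness, tangency to $\FF$, the $\rho_L$-factor, transversality to $\FL$) are coordinate-independent. Second — and this is the step I expect to be the main obstacle — one must confirm that $\tHL$ restricted to $\{\tpL = 0\}$ really does extend \emph{smoothly} all the way down to $\FF$, including at the triple corner $\FL\cap\FF\cap\FR$ and along the closure of $\tL_+$ there; the delicate issue, flagged in the Remark after the leaf discussion in Section~\ref{subsec:leaves} and deferred to Section~\ref{subsec:Lambdastar}, is the case $y(0) = y(\pi)$ (geodesics whose initial and final directions coincide), where the naive coordinates degenerate. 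For the present lemma, however, the statement is about the vector fields $\tHL$, $\tHR$ on all of $T^*X^2_0$ (not yet about $\tL_+$ as a submanifold), and the required smoothness is local in the base and follows from the explicit formulas above; the genuinely global/geometric difficulties are postponed to the subsequent subsections. So the proof reduces to: (a) recall \eqref{shvf} for Regions 2a, 3; (b) repeat the substitution $\lambda = -1+x\xi$, $\mu = x\eta$ in the left variables in Regions 4a, 5 and read off the overall $\rho_L$-factor, smoothness, tangency to $\FF$ and $\FR$, transversality to $\FL$; (c) invoke left/right symmetry for $\tHR$; (d) note patching-invariance.
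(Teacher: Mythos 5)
Your plan is essentially the paper's own proof: the paper proves Lemma~\ref{lem:shifted-vf} exactly by going through Regions 1--5 in the coordinates of Section~\ref{subsec:blowup}, writing $\tpL$ in each patch (shift $\lambda\mapsto\lambda-1$ in 2a/3, $\sigma\mapsto\sigma-1/s$ in 4a, $\sigma_1\mapsto\sigma_1-1/s_1$, $\sigma_2\mapsto\sigma_2-1/s_2$ in 5), dividing by the $\FL$-defining function on $\{\tpL=0\}$, and reading off smoothness, transversality to $\FL$ (from the $-2\partial_s$ or $-2\partial_{s_1}$ term) and tangency to $\FF$ and $\FR$ (the $\partial_{x'}$, $\partial_t$, $\partial_{s_2}$ coefficients vanish at the respective faces), with the right vector field handled by symmetry. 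The only slip is your choice $\rho_R=x'$ in Region 4a: there $x'$ defines $\FF$, not $\FR$, and since that region is away from $\FR$ the shift \eqref{tL1} should use $\rho_R\equiv 1$ (as the paper does in the analogous Region 4b with $\rho_L\equiv1$); this is harmless for $\tHL$ because $p_L$ is independent of the variable dual to $x'$, but it is not literally the shift \eqref{tL1}.
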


\begin{proof} To prove this, we need to check the structure of the left and right Hamilton vector fields in the cotangent bundle over  the various regions of $X^2_0$ listed in Section~\ref{subsec:blowup}.
By symmetry, it is only necessary to prove the statements for the left Hamilton vector field. We consider each of the regions above in turn.

$\bullet \ $ In region 1, there is nothing to prove other than smoothness of the Hamilton vector field, which is clear.

$\bullet \ $ In region 2a, we use local coordinates $(x, y, z'; \lambda, \eta, \zeta')$. In terms of these, the left Hamiltonian is
$$
\lambda^2 + g_0^{ij} \mu_i \mu_j - 1,
$$
and the shift is implemented by pulling back $\Lambda_+$ by the map $\lambda \mapsto \lambda - 1$.  Therefore, the shifted Hamiltonian is
\begin{equation}
\tpL = (\lambda-1)^2 +  g_0^{ij} \mu_i \mu_j - 1 = \lambda^2 - 2\lambda + g_0^{ij} \mu_i \mu_j .
\label{shifted-Ham-r1}\end{equation}
Fibre coordinates $(\lambda, \mu)$ on ${}^0 T^* X$ are related to coordinates $(\xi, \eta)$ on $T^* X$ by
$$
\lambda \frac{dx}{x} + \sum_i \mu_i \frac{dy^i}{x} = \xi dx + \sum_i \eta_i dy^i,
$$
which implies that $\lambda = x \xi$ and $\mu_i = x \eta_i$. Therefore, the shifted Hamiltonian $\tpL$,  viewed on the standard cotangent bundle, is (as in Section~\ref{subsec:geo})
$$
(x \xi)^2 - 2 x \xi + x^2 g_0^{ij} \eta_i \eta_j.
$$
We want to compute the shifted Hamilton vector field,  divided by $x$. On the zero set of the symbol, this is the same as the Hamilton vector field of $\tpL/x$ (using $H_{\tpL/x} = x^{-1}  H_{\tpL} + \tpL H_{1/x}$). Since
$$
\frac{\tpL}{x} = x \xi^2 - 2  \xi + x g_0^{ij}(x,y) \eta_i \eta_j,
$$
We find that, on $\{ \tpL = 0 \}$,
$$\begin{gathered}
x^{-1} \tHL = \big( - 2 + 2x\xi \big) \frac{\partial}{\partial x} - \big(2\xi^2 +2 g_0^{ij} \eta_i \eta_j + x \frac{\partial g_0^{ij}}{\partial x}\eta_i\eta_j \big)   \frac{\partial}{\partial \xi} \\ + 2x g_0^{ij}\eta_j \frac{\partial}{\partial y_i}  - x\frac{\partial g_0^{ij}}{\partial y_k}\eta_i\eta_j\frac{\partial}{\partial \eta_k}\end{gathered},
$$
which is a smooth vector field transverse to $\FL$.

$\bullet \ $ In region 2b, there is almost nothing to prove. In this region, we can assume that $\rho_L = 1$ is constant, and the left Hamiltonian is independent of $\lambda'$, so the shift has no effect on the Hamilton vector field which is independent of the primed variables. The Hamilton vector field is clearly smooth and tangent to $\FR$.

$\bullet \ $ In region 3, the calculation is essentially the same as in 2a.

$\bullet \ $ In region 4a, we use coordinates
$$s = \frac{x}{x^\prime}, x^\prime, y, Y = \frac{y^\prime - y}{x^\prime}.
$$
To relate the fibre coordinates, we equate
$$
\lambda \frac{dx}{x} + \lambda' \frac{dx'}{x'} + \sum_i \mu_i \frac{dy^i}{x} + \sum_i \mu'_i \frac{{dy'}^i}{x'}=
\sigma ds + \xi' dx' + \sum_i \eta_i dy^i + \sum_i N_i dY^i,
$$
to obtain
$$
\lambda = s \sigma, \quad \mu = s x'\eta - sN .
$$
This shows that the left Hamiltonian is
\begin{equation}
p_L = (s\sigma)^2 + s^2 g_0^{ij}(x's, y) (x' \eta -  N)_i (x' \eta -  N)_j  - 1.
\label{lH-ff}\end{equation}
The shift transformation $T$ here is $\sigma \mapsto \sigma - 1/s$. So the shifted left Hamiltonian is
\begin{equation}
\tpL = (s \sigma)^2 - 2 s \sigma + s^2 g_0^{ij}(x's, y) (x' \eta - N)_i (x' \eta - N)_j
\label{shifted-Ham-r4a} \end{equation}
and
\begin{equation}
\frac{\tpL}{s} = s \sigma^2 - 2  \sigma + s g_0^{ij}(x's, y) (x' \eta - N)_i (x' \eta - N)_j.
\label{shifted-Ham-4aa} \end{equation}
As above, the Hamilton vector field of $\tpL/s$ is equal to $s^{-1}$ times the Hamilton vector field of $\tpL$, on the zero set of $\tpL$.
Therefore, on the zero set of $\tpL$ we have
\begin{equation}\begin{gathered}
\frac{\tHL}{s}  = 2(s\sigma - 1) \frac{\partial}{\partial s}\\ - \Big(   \sigma^2  + g_0^{ij}(x' \eta - N)_i (x' \eta - N)_j+ x' s \frac{\partial g_0^{ij}}{\partial x}(x' \eta - N)_i (x' \eta - N)_j\Big) \frac{\partial}{\partial \sigma} \\ + 2s g_0^{ij}(x' \eta - N)_j \frac{\partial}{\partial Y_i} +  2x's  g_0^{ij} (x' \eta_j - N_j) \frac{\partial}{\partial y_i} \\ - s \frac{\partial g_0^{ij}}{\partial y_k}(x' \eta - N)_i (x' \eta - N)_j\frac{\partial}{\partial \eta_k}.
\end{gathered}\end{equation}
Therefore, on the characteristic variety we have
$$
\frac{\tHL}{s} = - 2\partial_s + \text{smooth vector field on cotangent bundle tangent to $\FL$ and $\FF$.}$$

$\bullet \ $ In region 4b, we use coordinates
$$
x, \quad s' = \frac{x'}{x}, \quad Y' = \frac{y-y'}{x}, \quad y'
$$
with dual coordinates
$$
\xi, \sigma', N', \eta'
$$
on the fibres of the standard cotangent bundle.
We relate these coordinates with $(\lambda, \mu, \lambda', \mu')$ by equating
$$
\lambda \frac{dx}{x} + \lambda' \frac{dx'}{x'} + \sum_i \mu_i \frac{dy^i}{x} + \sum_i \mu'_i \frac{{dy'}^i}{x'}=
\xi dx + \sigma' ds'  + \sum_i N'_i d{Y'}^i+ \sum_i \eta'_i {dy'}^i .
$$
This shows that
$$
\lambda = \xi x - \sigma' s' - N' \cdot Y', \quad \mu = N' .
$$
Since we are away from $\FL$, we can assume that $\rho_L= 1$,
so we only need to pull back by the shift $\lambda' \mapsto \lambda' - 1$; this does not affect the left Hamiltonian which is independent of $\lambda'$. So the shifted Hamiltonian in this case is
$$
(\xi x - \sigma' s' - N' \cdot Y')^2 + g_0^{ij} N'_i N'_j - 1.
$$
The Hamilton vector field in this case is smooth and is such that $\dot x = O(x)$ and $\dot s' = O(s')$, so it is a smooth vector field tangent to $\FF$ and $\FR$.

$\bullet \ $ In region 5, we use coordinates
$$
s_1 = \frac{x}{y_1^\prime - y_1}, \quad s_2 = \frac{x^\prime}{y_1^\prime - y_1}, \quad t = y_1^\prime - y_1,\quad Z_j = \frac{y_j^\prime - y_j}{y_1^\prime - y_1} \, (j > 1), \quad y
$$
near the corner of front face,  where we assume (by permuting the $y_i$ coordinates as necessary) that  $y_1^\prime - y_1$ dominant, that is, that $|y_i - y_i'| \geq c |y- y'|$ locally for some $c > 0$. We use dual coordinates
$$
\sigma_1, \sigma_2, \tau, \zeta_j, \eta
$$
on the fibres of the cotangent bundle. We can relate these coordinates with $(\lambda, \mu, \lambda', \mu')$ by equating
$$
\lambda \frac{dx}{x} + \lambda' \frac{dx'}{x'} + \sum_i \mu_i \frac{dy^i}{x} + \sum_i \mu'_i \frac{{dy'}^i}{x'}=
\sigma_1 ds_1  + \sigma_2 ds_2  + \tau dt +  \sum_i \zeta_i dZ_i + \sum_i \eta_i {dy}^i .
$$
This shows that
$$
\lambda = \sigma_1 s_1, \quad \mu_1 = s_1 \big( s_1 \sigma_1  + s_2 \sigma_2  - t \tau +  \zeta \cdot Z +  t \eta_1 \big) , \quad
\mu_j = - s_1 \zeta_j .
$$
So the left Hamiltonian in these coordinates is
\begin{equation}\begin{gathered}
(s_1 \sigma_1 )^2 + s_1^2 g_0^{11} \big(s_1 \sigma_1  + s_2 \sigma_2  - t \tau +  \zeta \cdot Z +  t \eta_1\big)^2 \\ + s_1^2 \sum_{j \geq 2} g_0^{1j} \big( s_1 \sigma_1  + s_2 \sigma_2  - t \tau +  \zeta \cdot Z +  t \eta_1 \big)  \zeta_j + s_1^2 \sum_{i, j \geq 2} g_0^{jk} \zeta_i \zeta_j - 1.
\end{gathered}\end{equation}
The shift in these coordinates is pullback by $\sigma_1 \mapsto \sigma_1 - 1/s_1$,  $\sigma_2 \mapsto \sigma_2 - 1/s_2$. So the shifted left Hamiltonian is
\begin{equation}\begin{gathered}
\tpL = (\sigma_1 s_1)^2 - 2 \sigma_1 s_1 + s_1^2 g_0^{11} \big(s_1 \sigma_1  + s_2 \sigma_2 -2 - t \tau +  \zeta \cdot Z +  t \eta_1
\big)^2 \\ + s_1^2 \sum_{j \geq 2} g_0^{1j} \big(s_1 \sigma_1  + s_2 \sigma_2 -2 - t \tau +  \zeta \cdot Z +  t \eta_1 \big)  \zeta_j + s_1^2 \sum_{i, j \geq 2} g_0^{jk} \zeta_i \zeta_j.
\end{gathered}\label{shifted-Ham-r5} \end{equation}
Therefore
\begin{equation}\begin{gathered}
\frac{\tpL}{s_1}  = s_1\sigma_1^2 - 2 \sigma_1  + s_1 g_0^{11} \big(s_1 \sigma_1  + s_2 \sigma_2 -2 - t \tau +  \zeta \cdot Z +  t \eta_1
\big)^2 \\ + s_1 \sum_{j \geq 2} g_0^{1j} \big(s_1 \sigma_1  + s_2 \sigma_2 -2 - t \tau +  \zeta \cdot Z +  t \eta_1 \big)  \zeta_j + s_1 \sum_{i, j \geq 2} g_0^{jk} \zeta_i \zeta_j.
\end{gathered}\label{shifted-Ham-r55} \end{equation}
As above, we can compute the shifted left Hamilton vector field, divided by $s_1$ on $\{ \tpL = 0 \}$, by the Hamilton vector field of $\tpL/s_1$. A straightforward calculation shows that this has the form
\begin{equation}\begin{gathered}
\frac{\tHL}{s_1} = 2(s_1 \sigma_1 -1)\frac{\partial}{\partial s_1}
+ O(s_1 t) \frac{\partial}{\partial t} + O(s_1 s_2) \frac{\partial}{\partial \sigma_2} \\
+ \text{ $C^\infty$-linear combination of }
\frac{\partial}{\partial \sigma_2}, \
 \frac{\partial}{\partial \tau}, \  \frac{\partial}{\partial Z_j}, \
 \frac{\partial}{\partial y},  \  \frac{\partial}{\partial \zeta_j} , \ \frac{\partial}{\partial \eta_j}.
\end{gathered}\end{equation}
 We see that $\tHL/s_1$, restricted to $\tpL = 0$, is a smooth vector field transverse to $\FL$ and tangent to $\FF$ and $\FR$.

\end{proof}

\subsection{Structure of $\Lambda_+$ near $N^* \diagz$} 
We start with some properties of the left and right Hamilton vector fields.

\begin{lemma}\label{lem:commute} The left and right Hamilton  vector fields $H^L$ and $H^R$, viewed as vector fields either on $T^* X^2_0$ or ${}^\Phi T^* X^2_0$,  satisfy
\begin{itemize}
\item[(i)] $H^L$ and $H^R$ commute; 
\item[(ii)] Both vector fields are tangent to $\Sigma_L \cap \Sigma_R$;
\item[(iii)] On $\Sigma_L \cap \Sigma_R$,  both vector fields are transverse to $\Sigma \cap N^* \diagz$;
\item[(iv)] $H^L - H^R$ is tangent to $\Sigma \cap N^* \diagz$;
\item[(v)] Both vector fields are tangent to $\partial_{\FF} {}^\Phi T^* X^2_0$.
\end{itemize}
\end{lemma}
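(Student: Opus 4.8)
The plan is to verify all five statements essentially coordinate-by-coordinate, exploiting the facts already established in Section~\ref{subsec:geo} and in the proof of Lemma~\ref{lem:shifted-vf}. Statement (i) is purely formal: $H^L$ and $H^R$ are Hamilton vector fields of $p_L$ and $p_R$, and since $p_L$ depends only on the left variables while $p_R$ depends only on the right variables, their Poisson bracket $\{p_L, p_R\}$ vanishes identically; by the Jacobi identity $[H^L, H^R] = H_{\{p_L,p_R\}} = 0$. This holds on ${}^\Phi T^* X^2_0$ over the interior, and since both vector fields extend smoothly up to $\partial_{\FF}$ (for $H^L$ this is the content of Lemma~\ref{lem:shifted-vf} combined with the fact that $H^L = \tHL + \rho_L^{-1}(\text{stuff})$—more precisely one checks the unshifted field is smooth on ${}^\Phi T^*X^2_0$ up to $\FF$, or argues directly as in Section~\ref{subsec:geo}), the commutation persists to the front face by continuity. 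Statement (ii) is immediate from the general fact that a Hamilton vector field $H_p$ is tangent to $\{p = 0\}$: $H^L p_L = 0$ so $H^L$ is tangent to $\Sigma_L$, and $H^L p_R = \{p_L, p_R\} = 0$ so $H^L$ is tangent to $\Sigma_R$ as well; symmetrically for $H^R$. Statement (v) follows from the analysis in Lemma~\ref{lem:shifted-vf} of regions 4a, 4b, 5, where $\dot s = O(s)$ (equivalently $\dot x = O(x)$, $\dot{x'} = O(x')$), showing tangency of both $H^L$ and $H^R$ to $\FF = \partial_\FF {}^\Phi T^* X^2_0$; note that since tangency to $\FF$ is unaffected by the shift (the shift only involves $\rho_L, \rho_R$, not $\rho_F$), the statement transfers between $\tHL$ and $H^L$.

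For (iii) and (iv), I would work near $N^* \diagz \cap \Sigma$ using the coordinates of Region~4a (or equivalently Region~1 in the interior), where $\diagz = \{ s = 1, Y = 0 \}$ and $N^* \diagz$ is cut out additionally by $\{ \xi' = 0, N = 0 \}$ in the appropriate fibre coordinates (the conormal directions being those dual to $s$ and $Y$ only, i.e. $\zeta = 0$ when reduced to directions transverse to the diagonal, with the remaining fibre variables free). The key computation is that $H^L$ moves one off $\diagz$ transversally: from the explicit form of $H^L$ (unshifted version of \eqref{shifted-Ham-r4a}-type formulas, or simply geodesic flow in the interior), $\dot s \neq 0$ on the relevant set because the flow is noncharacteristic there—at a point of $\Sigma \cap N^* \diagz$ the geodesic has a definite direction and $H^L$ is the generator of geodesic flow in the left factor, which is nonvanishing and points off the diagonal. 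More carefully, one checks that $H^L$ applied to the defining functions of $N^* \diagz$ (namely $s - 1$, $Y$, and the conormality conditions) produces a vector not lying in the tangent space $T(N^*\diagz \cap \Sigma)$; this is where the hypothesis that we are \emph{on} $\Sigma_L \cap \Sigma_R$ matters, since there $p_L = p_R = 0$ forces the geodesic to be a genuine null geodesic. For (iv), the crucial observation is that on $N^* \diagz \cap \Sigma$ one has $H^L = H^R$ restricted to that set after the natural identification—because a geodesic leaving $\diagz$ in the "left" sense and the "right" sense differ only by the orientation convention, and the conormal bundle to the diagonal is exactly the locus where left and right Hamilton vector fields agree up to this reflection; hence $H^L - H^R$ annihilates the defining functions of $N^* \diagz$, i.e. is tangent to it, and since it is automatically tangent to $\Sigma$ by (ii), it is tangent to $\Sigma \cap N^* \diagz$.

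The main obstacle will be statement (iv), specifically making precise the sense in which "$H^L$ and $H^R$ agree on $N^* \diagz$." The cleanest route is to use the characterization of $N^* \diagz$ as the fixed-point set (up to the fibre reflection $q' \mapsto -q'$) of the exchange involution on $T^* X \times T^* X$, together with the observation that this involution intertwines $H^L$ and $H^R$ (since it swaps $p_L$ and $p_R$); restricting to the fixed locus then forces $H^L - H^R$ to be tangent there. I would carry this out by writing, in diagonal-adapted coordinates, $N^* \diagz$ as $\{ w = w', \, \xi_{\text{trans}} = 0 \}$ (where $w$ are base coordinates and $\xi_{\text{trans}}$ the fibre directions conormal to $\diag$), computing $H^L$ and $H^R$ on these functions, and checking that the difference vanishes on the set—a short calculation once the reflection symmetry is invoked, but one that needs to be organized carefully so that the boundary behaviour near $\FF$ (where the coordinates of Region~4a rather than Region~1 are in force) is also covered. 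The tangency claims to $\FF$ in (v) and the transversality in (iii), being local and already implicit in Lemma~\ref{lem:shifted-vf}, should then follow by inspection of the formulas there together with the remark that conormality to $\diagz$ and tangency/transversality properties are preserved under the shift transformation, which differs from the identity by $-df$ with $f = \log(\rho_L\rho_R)$ smooth away from $\FL \cup \FR$ and hence irrelevant near the interior of $\FF$.
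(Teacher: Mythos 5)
Your proposal is correct and essentially reproduces the paper's argument: (i)--(ii) are the formal Poisson-bracket facts; (iii) is the coordinate check that the left (right) geodesic flow has nonvanishing base velocity while the other factor is frozen --- where, as your hedged version anticipates, near $\FF$ one must argue that \emph{either} $\dot s \neq 0$ \emph{or} $\dot Y \neq 0$ (your opening claim that $\dot s \neq 0$ fails when $\sigma = 0$); (iv) is exactly the computation you describe, namely applying $H^L - H^R$ to the defining functions of $N^*\diagz$, noting the result vanishes at $z = z'$, $\zeta = -\zeta'$, and extending to $\rho_F = 0$ by continuity (your anti-symplectic exchange-plus-fibre-negation involution is a valid alternative gloss, but it reduces to the same check); and (v) amounts to the observation that $H^L$, $H^R$ applied to a defining function of $\FF$ give $O(\rho_F)$ (the paper simply uses $H^L(x') = 0$). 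The only slip to fix is bookkeeping: in region 4a the fibre conditions cutting out $N^*\diagz$ are $\xi' = 0$ and $\eta = 0$ (the components dual to the variables tangent to $\diagz$), with $\sigma$ and $N$ left free --- not $N = 0$ as written --- though this does not affect your transversality argument, which only uses the base defining functions $s - 1$ and $Y$.
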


\begin{proof}
Statement (i)  is a direct consequence of the fact that the differential operators $\Delta_L$ and $\Delta_R$, the left and right Laplacians, commute, as they operate in different sets of variables.

Since $\Sigma_L = \{ p_L = 1 \}$, the left vector field $H^L$ is tangent to $\Sigma_L$. On the other hand, as $[H^L, H^R] = 0$ we have $\{ p_L, p_R\} = 0$ which implies $H^R(p_L) = 0$. Hence $H^R$ is also tangent to $\Sigma_L$. By symmetry, both vector fields are tangent to $\Sigma_R$. This proves (ii).

Statement (iii) is
easily checked in local coordinates. By symmetry, it is enough to check the left vector field. In local coordinates away from $\FF$, this has the form
\begin{equation}
H^L = 2 g^{ij} \zeta_j \frac{\partial}{\partial z_i} - \frac{\partial g^{jk}(z)}{\partial z_i} \zeta_j \zeta_k \frac{\partial}{\partial \zeta_i}.
\label{lhvf}\end{equation}
Since $g^{ij} \zeta_i \zeta_j = 1$ on $\Sigma_L$, the length of the $\partial_z$ component is $1$ (it traces a unit-speed geodesic in $X^\circ$). On the other hand, $z'$ is fixed under the flow of $H^L$. It follows that $H^L$ is transverse to $\{ z = z'\}$. By symmetry, $H^R$ is also transverse to $\{ z = z'\}$.

A similar argument is valid near $\rho_F = 0$. In that case, the Hamiltonian $p_L$, viewed as a function on the standard cotangent bundle,  takes the form \eqref{lH-ff}. Since, according to \eqref{lH-ff}, we have
$$
\sigma^2 + g_0^{ij}(0, y) N_i N_j = 1
$$
at $\partial_{\FF} (N^* \diagz \cap \Sigma_L)$ (since $s=1$ at $\diagz$), we see that either $\dot s \neq 0$ or $\dot Y \neq 0$ along the flow of $H^L$ at $\partial_{\FF} (N^* \diagz \cap \Sigma_L)$. This shows transversality near $\rho_F = 0$. If we view the Hamilton vector field instead as living in the $\Phi$-cotangent bundle, then we use fibre coordinates $(\lambda, \mu, \lambda', \mu')$ and the expression for the left Hamilton vector field becomes
\begin{equation}\begin{gathered}
H^L = 2 \lambda s \frac{\partial}{\partial s} - 2s g_0^{ij}(x's, y) \mu_j\frac{\partial}{\partial Y_i} + 2x's g_0^{ij}(x's, y) \mu_j\frac{\partial}{\partial y_i} \\
- \Big( 2g_0^{ij}(x's, y) + (x's) \partial_x g_0^{ij}(x's, y) \Big) \mu_i \mu_j \frac{\partial}{\partial \lambda}
+ \Big( 2\lambda \mu_i - x' s \frac{\partial g_0^{jk}}{\partial y_i} \mu_j \mu_k \Big) \frac{\partial}{\partial \mu_i} .
\end{gathered}\label{lHvf-ff-0}\end{equation}
Again, we see that that, as $\lambda^2 + g_0^{ij} \mu_i \mu_j = 1$ on $\Sigma_L$, that  either $\dot s \neq 0$ or $\dot Y \neq 0$ along the flow of this vector field at $\partial_{\FF} (N^* \diagz \cap \Sigma_L)$. This establishes (iii).

To check (iv), we compute the time derivative of $z - z'$ and $\zeta + \zeta'$ under the flow of the vector field
$H^L - H^R$ in the interior of $T^* x^2_0$. Using \eqref{lhvf}, we obtain
\begin{equation}\begin{gathered}
\dot {(z-z')}_i = g_0^{ij}(z) \zeta_j + g_0^{ij}(z') \zeta'_j \\
\dot {(\zeta + \zeta')}_i = - \frac{\partial g^{jk}(z)}{\partial z_i} \zeta_j \zeta_k + \frac{\partial g^{jk}(z')}{\partial z'_i} \zeta'_j \zeta'_k.
\end{gathered}\end{equation}
The right hand side vanishes when $z=z'$ and $\zeta = -\zeta'$, showing tangency to $N^* \diagz \cap \Sigma_L$. By continuity this holds down to $\rho_F = 0$.

To prove (v), we note that $x'$ is a boundary defining function for the interior of $\FF$. Since $H^L(x') = 0$, this shows tangency of $H^L$ to $\partial_{\FF} {}^\Phi T^* X^2_0$. By symmetry, the same is true for the right Hamilton vector field.
\end{proof}

It follows from Lemma~\ref{lem:commute} that the interior $\Lambda_+^\circ$ of $\Lambda_+$ is foliated by 2-dimensional leaves, given by integral surfaces for $H^L$ and $H^R$. Those leaves that lie in the interior of ${}^\Phi T^* X^2_0$ are exactly the forward leaves $\gammatfb$ defined in Section~\ref{subsec:leaves}. 

Recall that $\Lambda_+$ is the forward flowout, by the left Hamilton vector field, from $N^* \diagz \cap \Sigma_L$.  It follows that $N^* \diagz \cap \Sigma_L$ is a boundary hypersurface of $\Lambda_+$, which we denote $\partial_{\diagz} \Lambda_+$. 
Due to Lemma~\ref{lem:commute}, it is also the forward flowout, by the right Hamilton vector field, from $N^* \diagz \cap \Sigma_R$ (noting that $N^* \diagz \cap \Sigma_L = N^* \diagz \cap \Sigma_R$).\footnote{It is the forward, rather than backward, flowout by the right Hamilton vector field due to the change in sign in the right fibre coordinate}

\begin{proposition}\label{prop:nd}
In a neighbourhood $U$ of $N^* \diagz \cap \Sigma_L$, $\Lambda_+$ is a smooth manifold with corners of codimension 2. The projection ${}^\Phi \pi: {}^\Phi T^* X^2_0 \to X^2_0$, restricted to $\Lambda_+ \cap U$, is a diffeomorphism from the interior of $\Lambda_+ \cap U$ to its image in $X^2_0$, while at  $\partial_{\diagz} \Lambda_+$,  the projection drops rank by $n$. Moreover, it does so nondegenerately, in the sense that $\det d({}^\Phi\pi |_{\Lambda_+ \cap U})$ vanishes to order precisely $n$ at $\partial_{\diagz} \Lambda_+$. 
\end{proposition}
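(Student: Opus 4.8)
The plan is to realize $\Lambda_+$ near $N^*\diagz\cap\Sigma_L$ as a flowout and then compute the projection by hand. First I would establish smoothness. Set $B := N^*\diagz\cap\Sigma_L$; this is a smooth manifold with boundary $\partial_{\FF}B = B\cap\partial_{\FF}{}^\Phi T^* X^2_0$, being a sphere bundle over $\diagz$ (the fibre of $N^*\diagz$ over $q\in\diagz$ is $(n+1)$-dimensional, and intersecting it with $\Sigma_L$, on which $p_L$ restricts to a nondegenerate quadratic minus $1$, gives a copy of $S^n$). By Lemma~\ref{lem:commute}, $H^L$ is a smooth vector field on ${}^\Phi T^* X^2_0$ which, by (iii), is transverse to $B$ on $\Sigma_L\cap\Sigma_R\supset B$, and, by (v), is tangent to $\partial_{\FF}{}^\Phi T^* X^2_0$. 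Since $\Lambda_+$ is the forward $H^L$-flowout of $B$, the standard flowout construction (flowout of a manifold with boundary by a vector field transverse to it and tangent to the boundary face) shows that, in a neighbourhood $U$ of $B$, $\Lambda_+$ is a smooth manifold with corners of codimension $2$: one boundary hypersurface is $\partial_{\diagz}\Lambda_+ = B = \{t = 0\}$, where $t$ is the $H^L$-flow parameter, the other is contained in $\partial_{\FF}$, and $t$ is a boundary defining function for $\partial_{\diagz}\Lambda_+$ inside $\Lambda_+$.

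Next I would compute the projection in the interior of $X^2_0$. There $B = \{(z,\zeta,z,-\zeta)\mid |\zeta|_{g(z)}=1\}$, and $\Lambda_+$ is parametrized near a point of $B$ by $(z',\zeta',t)\mapsto (\Phi_t(z',\zeta');z',-\zeta')$, with $\Phi_t$ the $H^L$-flow and $\zeta'$ ranging over the unit cosphere at $z'$. Composing with ${}^\Phi\pi$ gives $F(z',\zeta',t) = (\exp_{z'}(t\,\sharp\zeta'),z')$, where $\sharp$ raises the index. A block expansion of $dF$ (the $z'$-variables contribute the identity to the right factor, and $(\zeta',t)$ affect only the left factor) yields $\det dF = \pm\det\big(\partial_{(\zeta',t)}\exp_{z'}(t\,\sharp\zeta')\big)$, which is exactly the Jacobian of $(\zeta',t)\mapsto\exp_{z'}(t\,\sharp\zeta')$. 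I would then factor this as $(\zeta',t)\mapsto t\,\sharp\zeta'\in T_{z'}X$ followed by $\exp_{z'}$: the first is, up to the linear isomorphism $\sharp$, polar coordinates on $T_{z'}X$, with Jacobian $c(z',\zeta')\,t^n$, $c$ smooth and nonvanishing; the second has Jacobian $\det d(\exp_{z'})_{t\sharp\zeta'}$, equal to $1$ at $t=0$, hence nonzero for small $t$ since there are no conjugate points at small geodesic distance. Thus $\det dF = t^n e$ with $e$ smooth and nonvanishing near $t = 0$, which delivers all the assertions at once: the order of vanishing of $\det d({}^\Phi\pi|_{\Lambda_+})$ at $\partial_{\diagz}\Lambda_+$ is exactly $n$; on $\{t>0\}$ the projection is a local diffeomorphism, and since it is injective for small $t$ (the right factor recovers $z'$, then small-time injectivity of $\exp_{z'}$ recovers $(t,\zeta')$) it is a diffeomorphism onto its open image; and at $t=0$ the differential $dF|_{t=0}$ has columns $\{(e_i,e_i)\}_{i=1}^{n+1}\cup\{(\sharp\zeta',0)\}$ and $n$ zero columns, so its rank is $n+2$, i.e.\ it drops by $n$.

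Finally I would treat the remaining case of a point of $\partial_{\FF}B$, using the Region~4a coordinates and the expression \eqref{lHvf-ff-0} for $H^L$ on ${}^\Phi T^* X^2_0$. The picture is identical, with the Euclidean exponential map replaced by the family (over $\partial X$, depending smoothly on $x'$) of exponential maps of the front-face model: at $x'=0$ the flow \eqref{lHvf-ff-0} restricts to geodesic flow on the upper half-space model $\{(s,Y)\}$ of hyperbolic space, so the projection of $\Lambda_+$ to the $(s,Y)$ variables is the exponential map of hyperbolic space based at the distinguished point $\{s=1,Y=0\}$, which has no conjugate points and whose polar-coordinate Jacobian is $\sim\sinh^n(r)\sim t^n$ near the diagonal. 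As everything is smooth on ${}^\Phi T^* X^2_0$ up to $\FF$, we again obtain $\det d({}^\Phi\pi|_{\Lambda_+}) = t^n e$ with $e$ smooth; $e$ is nonvanishing along $\partial_{\diagz}\Lambda_+$ both in the interior and at $\FF$ by the two computations, hence, after shrinking $U$, nonvanishing on all of $\partial_{\diagz}\Lambda_+$, finishing the proof. I expect the one point genuinely requiring care to be matching the interior computation with the one at the front face — i.e.\ verifying that, in the $\Phi$-bundle coordinates near $\FF$, the restriction of ${}^\Phi\pi$ to the flowout really does degenerate like $t^n$ with nonvanishing coefficient, so that the absence of conjugate points and the polar factor persist uniformly up to $x'=0$; everything else is the classical Gauss-lemma/Jacobi-field calculation together with the flowout structure.
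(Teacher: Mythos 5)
Your argument is correct and follows essentially the same route as the paper: parametrize $\Lambda_+$ near $N^* \diagz \cap \Sigma_L$ by a point of the intersection together with the flow time, and show that the Jacobian of the projection vanishes to order exactly $n$, treating interior points and points over $\FF$ separately. The differences are only organizational — you package the interior computation as polar coordinates composed with the exponential map where the paper Taylor-expands the flow directly, and the front-face uniformity you flag at the end is precisely what the paper settles by its explicit expansions \eqref{detdpi2} in region 4a coordinates (split into the cases $|\sigma|>1/2$ and $|N|\geq 1/2$), using that the flow \eqref{lHvf-ff-0} is smooth up to $x'=0$.
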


\begin{proof}
First, we consider a neighbourhood of a point $q$ in the interior of $N^* \diagz$. Here, the $\Phi$-cotangent bundle of $X^2_0$ is locally isomorphic to the standard cotangent bundle of $X^2$. We use local coordinates $(z, z')$ near $\pi(q)$, with dual cotangent coordinates $(\zeta, \zeta')$. The left Hamilton vector field takes the form \eqref{lhvf}.
Suppose, by rotating the $z$ coordinates, that $\zeta' = (\zeta'_1, 0, \dots, 0)$ at $q$. Then we can use coordinates $z'_1, \dots, z'_{n+1}, \zeta'_2, \dots, \zeta'_{n+1}$ on $\partial_{\diagz} \Lambda_+ = N^* \diagz \cap \Sigma$ near $q$. Let $r$ be a time parameter along the flow; then $(z', \overline{\zeta}' = (\zeta'_2, \dots, \zeta'_{n+1}), r)$ locally furnish coordinates on $\Lambda_+$, where $r$ is a defining function for the boundary. In terms of these coordinates, $\zeta'_1$ is given by the positive root of the quadratic equation $g^{ij}(z') \zeta'_i \zeta'_j = 1$, and
 the $z$ coordinates satisfy
\begin{equation}
z_i = z'_i + r g^{ij}(z') \zeta'_j + O(r^2).
\end{equation}
This implies that
\begin{equation}
\frac{\partial z_1}{\partial r} = g^{11} \text{ at } q,  \quad \frac{\partial z_i}{\partial \zeta'_j} = r g^{ij} + O(r^2), \ i, g  = 2 \dots n+1.
\label{detdpi}\end{equation}
In these coordinates, $\pi$ is the map $(z', \overline{\zeta}', r) \mapsto (z, z')$. Since $g^{11}(q) > 0$, and  the matrix $g^{ij}$ for $i, j = 2 \dots n+1$ is positive definite, \eqref{detdpi} shows that for small $r$ we have
\begin{equation}
\det d\pi = O(r^n), \text{ and } \det d\pi  \geq c r^n \text{ for small } s.
\label{rn}\end{equation}
This proves the Proposition in a neighbourhood of an interior point $q \in N^* \diagz \cap \Sigma_L$.

A very similar argument proves the proposition in a neighbourhood of a point $q$ on the boundary of $N^* \diagz \cap \Sigma_L$, that is, such that $\pi(q) \in \FF$. In this case, we use coordinates as in region 4a above, assuming that $g_0^{ij}(y_0) = \delta^{ij}$ where $\pi(q) \in F_{y_0}$. In terms of these coordinates, the left Hamiltonian is
\begin{equation}
(s\sigma)^2 + g_0^{ij} (x \eta - s N)_i (x \eta - s N)_j  - 1
\label{lh-ff}\end{equation}
(we do not consider the shifted Hamiltonian here as we are working away from $\rho_L = 0$ or $\rho_R = 0$).
Under the Hamilton flow we have
$$
\dot s = 2 \sigma s^2, \quad \dot Y_i = 2g_0^{ij} (x \eta - s N)_j.
$$
Now we divide into two cases. Since \eqref{lh-ff} vanishes on $\Sigma_L$ by definition, we have either $|\sigma| > 1/2$ or $|N| > 1/2$ at $q$ (since $g_0^{ij} = \delta^{ij}$ at $q$). First suppose that $|\sigma| > 1/2$ at $q$. Then coordinates on $N^* \diagz \cap \Sigma_L$ can be taken to be $x', y, N$. In terms of these, $\sigma$ is given as the appropriate root of \eqref{lh-ff}, while $s, Y$ satisfy
\begin{equation}\begin{aligned}
s &= 1 + 2 \sigma s^2 r + O(r^2), \\
Y_i &= -2 g_0^{ij}(0, y') s N_jr + O(r^2 + r x')
\end{aligned}\label{detdpi2}\end{equation}
where again, $r$ is a time parameter along the flow.
In these coordinates, the map $\pi$ is given by $(x', y, N) \mapsto (x', y, s, Y)$.
Since $s$ is close to $1$, $\sigma$ is bounded away from $0$, and  the matrix $g_0^{ij}$ is positive definite, \eqref{detdpi2} shows that for small $r$ we have \eqref{rn}, proving the proposition in this case. The case when $|N| \geq 1/2$ is similar. In this case, we can, by rotating the $y$ coordinates, assume that $N_1 \geq 1/2$ at $q$. Then we use $(x', y, \sigma, N_2, \dots, N_n)$ and make a similar calculation. This completes the proof.
\end{proof}

We next record a standard result about Riemannian manifolds that will be useful. 

\begin{proposition}\label{prop:dist}
Let $(M,g)$ be any Riemannian manifold, and let $H$ be the Hamiltonian defined on $T^*(M \times M)$ by the (dual) metric $g^{-1}$ on the left factor, that is, $H(m, \xi, m', \xi') = |\xi|^2_{g^{-1}(m)}$. Let 
$S$ be the subset of $T^*(M \times M)$ given by the intersection of $N^* \diag$ and $\{ H = 1 \}$. Then the local forward Hamiltonian flowout from $S$ is given, in a deleted neighbourhood of $S$, by the graph of the differential of the geodesic distance function $\dist(m, m')$; that is, the graph of the differential of the distance function coincides with the flowout in a deleted neighbourhood of $S$. 
\end{proposition}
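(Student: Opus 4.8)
The plan is to compute both sides of the claimed identity explicitly and observe that they agree. First I would note that $H$ depends only on the left variables $(m,\xi)$, so along its Hamilton flow the right variables $(m',\xi')$ are frozen, while $(m,\xi)$ evolves by the cogeodesic flow $\Phi_r$ of $g$, which moves at constant nonzero speed on $\{H=1\}$. Since $S=\{(m,\xi,m,-\xi):|\xi|_{g^{-1}(m)}=1\}$, the local forward flowout from $S$ consists of the points $(\Phi_r(m_0,\xi_0),m_0,-\xi_0)$ with $|\xi_0|_{g^{-1}(m_0)}=1$ and $0<r<\epsilon$; if $\sigma$ denotes the geodesic with $\sigma(0)=m_0$ and $\dot\sigma(0)^\flat=\xi_0$, this is the point $(\sigma(r),\dot\sigma(r)^\flat,m_0,-\xi_0)$.

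Next I would invoke the first variation formula for arclength (equivalently the Gauss lemma): for $m$ in a punctured geodesic ball about $m'$, where $\dist(\cdot,m')$ is smooth, $d_m\dist(m,m')$ is the metric dual of the unit velocity at $m$ of the minimizing geodesic from $m'$ to $m$, and symmetrically $d_{m'}\dist(m,m')$ is the metric dual of the unit velocity at $m'$ of that same geodesic run in the reverse direction. Thus, over a punctured neighbourhood of the diagonal, the graph of $d\,\dist$ is exactly the set of endpoint data of short minimizing geodesics. Comparing: for the flowout point $(\sigma(r),\dot\sigma(r)^\flat,m_0,-\xi_0)$ with $r$ small, $\sigma|_{[0,r]}$ is the minimizing geodesic from $m_0$ to $\sigma(r)$ and $r=\dist(\sigma(r),m_0)$, so the previous sentence gives $\dot\sigma(r)^\flat=d_m\dist$ and $-\xi_0=-\dot\sigma(0)^\flat=d_{m'}\dist$ at $(\sigma(r),m_0)$; conversely, every point of the graph over a deleted neighbourhood of the diagonal arises in this way from the unique short geodesic joining its base point. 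As both sides are smooth submanifolds of $T^*(M\times M)$ of dimension $2\dim M$, they coincide on a deleted neighbourhood of $S$.

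The step I expect to need the most care is the choice of neighbourhood: one must work near a point of $S$ (which lies over a diagonal point $(m_0,m_0)$) on a set small enough that any two nearby points are joined by a unique short geodesic that is both minimizing and free of conjugate points, so that the flow parameter $r$ equals $\dist$ exactly and the ``graph of $d\,\dist$'' description is actually valid. This uses only the local lower semicontinuity of the injectivity radius and does not require completeness of $M$. The rest is bookkeeping of the identifications, the one point to watch being the sign of the right fibre coordinate, which is built into the definition of $N^*\diag$ and is exactly what makes the \emph{incoming} (rather than outgoing) momentum appear on the right.

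Finally, a more conceptual alternative I would keep in reserve: the graph of $d\,\dist$ is a Lagrangian submanifold of $T^*(M\times M)$ lying in $\{H=1\}$ (this last being the eikonal equation $|\nabla_m\dist|_g=1$), hence is invariant under the Hamilton flow of $H$; since its closure contains $S$, and the flow direction can be checked by inspecting a single geodesic, it must equal the forward flowout near $S$. The geometric input is the same in either approach: smoothness of $\dist$ off the diagonal, the first-variation formula for its differential, and local uniformity of the injectivity radius.
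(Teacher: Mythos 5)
Your proposal is correct, and its main argument takes a more elementary route than the paper's. The paper proceeds via Hamilton--Jacobi theory: since $\Psi=\dist$ satisfies the eikonal equation $|d_m\Psi|^2_{g^{-1}(m)}=1$ for each fixed $m'$, the graph of $d\Psi$ is swept out by the characteristic (Hamiltonian) flow inside $\{H=1\}$, and a Legendre-transform step identifies the projected characteristics as geodesics and $\Psi$ as (twice) the flow parameter. You instead compute the flowout directly --- the right factor is frozen, the left factor follows the cogeodesic flow --- and invoke the first variation formula (Gauss lemma) for $d\,\dist$ in \emph{both} slots, so that both sets are identified with the endpoint data of the unique short minimizing geodesics; this makes the sign in the right fibre (coming from $N^*\diag$) explicit and needs only standard local Riemannian facts (totally normal neighbourhoods, local uniformity of the injectivity radius), with no Hamilton--Jacobi machinery. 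Your ``reserve'' argument (the graph of $d\,\dist$ is Lagrangian, lies in $\{H=1\}$, hence is invariant under the flow and has $S$ in its closure) is essentially the paper's argument, so you have both routes in hand. One cosmetic point: with the Hamiltonian $|\xi|^2$ rather than $\tfrac12 |\xi|^2$, the flow time is half the arclength, so $\Phi_r(m_0,\xi_0)$ lies at distance $2r$ along the unit-speed geodesic; since the proposition concerns the flowout as a set this reparametrization is immaterial, but your identification of the flow parameter with $\dist$ should be read modulo this factor of $2$ (the paper records the same factor).
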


\begin{proof} This is standard, so we provide just a quick sketch. We consider the geodesic distance function $\Psi(m, m') = \dist(m, m')$, which is smooth in a deleted neighbourhood of the diagonal. For fixed $m'$, this satisfies $|d_m \Psi|^2 = 1$. Therefore, by Hamilton-Jacobi theory, the graph of the differential of $\Psi$ can be constructed as a union of integral curves of the flow 
\begin{equation}\begin{aligned}
\dot x_i &= 2 g^{ij}(x) \xi_j \\
\dot \xi_i &= - \partial_{x_i} g^{jk}(x) \xi_j \xi_j \\
\dot \Psi &= 2 g^{ij} \xi_i \xi_j
\end{aligned}\end{equation}
which is tangent to the set $\{ H = 1 \}$. We see from this that $(x, \xi)$ move according to Hamiltonian flow, while $\dot \Psi = 2$. Thus $\Psi$ is (up to this factor of 2) the time parameter along the flow. 

Performing a Legendre transform, each integral curve for the Hamilton vector field becomes a curve in $TM$ (for fixed $m'$) which is a lift to the tangent bundle of a curve $c$ on $M$ of (locally) shortest length for the Lagrangian function corresponding to our Hamiltonian $h$  (see e.g. \cite[Chapter 3]{Arnold}). In this case, the Lagrangian is the dual metric, i.e. the original metric $g$. Thus, $c$ is a geodesic, with speed $2$. Since $\Psi$ is twice the time parameter along each curve, it follows that $\Psi$ is given by the geodesic distance. 
\end{proof}

\begin{corollary} In a deleted neighbourhood of $\partial_{\diagz} \Lambda_+$, and away from $\rho_F = 0$, $\Lambda_+$ is parametrized by the geodesic distance function. 
\end{corollary}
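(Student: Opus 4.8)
The plan is to deduce the Corollary directly from Proposition~\ref{prop:dist} and Proposition~\ref{prop:nd}, after matching conventions. First I would localise. Since $\diagz$ meets $\partial X^2_0$ only along $\diagz \cap \FF$, working in a neighbourhood of $\partial_{\diagz}\Lambda_+ = N^* \diagz \cap \Sigma_L$ with $\rho_F > 0$ places us over the interior of $X^2_0$, where ${}^\Phi T^* X^2_0$ is canonically identified with the standard cotangent bundle $T^*(X^\circ \times X^\circ)$. In local coordinates $(z, z')$ with dual coordinates $(\zeta, \zeta')$, the left Hamiltonian is $p_L = g^{ij}(z)\zeta_i \zeta_j - 1$, so $\Sigma_L = \{ p_L = 0\}$ is exactly the level set $\{ H = 1 \}$ of the Hamiltonian $H(z,\zeta,z',\zeta') = |\zeta|^2_{g^{-1}(z)}$ appearing in Proposition~\ref{prop:dist} (taken with $M = X$), and $H^L = H_{p_L} = H_H$, since the two Hamiltonians differ by the constant $1$. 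Likewise $N^* \diagz \cap \Sigma_L$ coincides with the set $S$ of Proposition~\ref{prop:dist}.

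Next I would invoke the definition of $\Lambda_+$: by \eqref{Lambda1def} it is the forward Hamilton flowout of $N^* \diagz \cap \Sigma_L$ under $H^L$, and by Proposition~\ref{prop:nd} this flowout is, on the interior of $\Lambda_+ \cap U$ --- that is, precisely on a deleted neighbourhood of $\partial_{\diagz}\Lambda_+$ --- a smooth graph over the base $X^2_0$. Proposition~\ref{prop:dist} then identifies this flowout, in a deleted neighbourhood of $S$, with the graph of the differential of $\dist(z, z')$. Since the sign convention $N^* \diag = \{ (z, \zeta; z, -\zeta)\}$ is matched by the graph $\{ (z, d_z \Psi; z', d_{z'}\Psi) \}$ with $\Psi = \dist$ (because $d_{z'}\dist = - d_z \dist$ in the limit $z \to z'$), it follows that $\Psi(z,z') = \dist(z, z')$ is a variable-free, non-degenerate phase function parametrising $\Lambda_+$ in a deleted neighbourhood of $\partial_{\diagz}\Lambda_+$, away from $\rho_F = 0$. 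The required smoothness of $\dist$ off the diagonal is standard (and is in any case already contained in the smoothness of $\Lambda_+$ furnished by Proposition~\ref{prop:nd}).

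There is essentially no hard step here: the two substantive points needing care are (a) confirming that ``away from $\rho_F = 0$ and in a deleted neighbourhood of $\partial_{\diagz}\Lambda_+$'' genuinely puts us in the interior of $X^2_0$, where the $\Phi$-cotangent bundle collapses to the ordinary cotangent bundle and Proposition~\ref{prop:nd} supplies the graph property, and (b) bookkeeping the fibre-sign conventions so that the graph of $d\,\dist$ lands in $\Lambda_+$ itself, with the orientation determined by $\FBR$, rather than in its reflection. Everything else is a direct citation of Propositions~\ref{prop:dist} and~\ref{prop:nd}.
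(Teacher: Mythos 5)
Your proposal is correct and follows the same route as the paper, which states this corollary as an immediate consequence of Proposition~\ref{prop:dist}: away from $\rho_F=0$ one is over the interior of $X^2_0$, where ${}^\Phi T^* X^2_0$ is the usual cotangent bundle and $\Lambda_+$ is exactly the forward flowout treated in that proposition, hence the graph of $d\,\dist$. The extra appeal to Proposition~\ref{prop:nd} and the sign bookkeeping are harmless but not needed beyond what Proposition~\ref{prop:dist} already provides.
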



\subsection{Structure of $\Lambda_+$ near $T^*_{\FF} X^2_0$}

We next investigate the properties of $\Lambda_+$ at, and near, the boundary over $\FF$. Since both $H^L$ and $H^R$ are tangent to the boundary over $\FF$, the flowout from $\Sigma \cap N^* \diagz \cap \{ \rho_F = 0 \}$ remains at $\rho_F = 0$. 

Recall that the boundary hypersurface $\FF$  fibres over $\partial X$, with fibres that are quarter-spheres of dimension $n+1$.  We temporarily use coordinates $(y, z, \rho)$ near the interior of $\FF$, where $z = (x/x', Y = (y'-y)/x')$ is a coordinate on each fibre, $y'$ are coordinates on the base of the fibration, and $\rho = \rho_F$ is a boundary defining function for $\FF$. 

We first show that, in some sense, $\Lambda_+$ restricts to a Lagrangian submanifold  over the interior $F_{y_0}^\circ$ of each fibre $F_y$ of $\FF$. 

\begin{lemma}\label{lem:Lag-fibre}
The Lagrangian submanifold $\Lambda_+$, restricted to ${}^\Phi \pi^{-1} F_{y_0}$, is, in a natural way, a Lagrangian submanifold $\Lambda_{y_0}$ of $T^* F_{y_0}^\circ$. 
\end{lemma}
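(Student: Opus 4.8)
The plan is to exploit the fact, established in Lemma~\ref{lem:commute}, that both $H^L$ and $H^R$ are tangent to $\partial_\FF\,{}^\Phi T^* X^2_0$, so that the forward flowout from $\Sigma\cap N^*\diagz\cap\{\rho_F=0\}$ stays in this boundary face; combined with the normal operator discussion of Section~\ref{subsec:leaves}, this means that over a fixed fibre $F_{y_0}$ the relevant geometry is exactly that of the product $F_{y_0}^\circ\times F_{y_0}^\circ$ of two copies of the half-space model of hyperbolic space. First I would make precise what ``restricts to $T^* F_{y_0}^\circ$'' means: the $\Phi$-cotangent bundle over $\FF$, using fibre coordinates $(\lambda,\mu,\lambda',\mu')$, is literally $({}^0T^* X)\times({}^0T^* X)$ pulled back, and over a single fibre $F_{y_0}$ the left and right $0$-cotangent data give $T^*$ of the left and right copies $X_{y_0}^l$, $X_{y_0}^r$ of the inward-pointing tangent space $X_{y_0}$ — which, as recalled in the normal-operator subsection, carries (up to scale) the standard hyperbolic metric. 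Thus ${}^\Phi\pi^{-1}F_{y_0}$ is naturally $T^*\big(F_{y_0}^\circ\times F_{y_0}^\circ\big)$, up to the usual sign flip in the right fibre that turns the bicharacteristic relation into a genuine (twisted) graph.

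The key steps, in order, are: (1) Identify ${}^\Phi\pi^{-1}F_{y_0}$ with $T^*(F_{y_0}^\circ\times F_{y_0}^\circ)$ as above, observing that under this identification the restriction to $F_{y_0}$ of the left Hamiltonian $p_L$ from \eqref{lh-ff} (equivalently \eqref{lHvf-ff-0}) becomes precisely the hyperbolic Hamiltonian $|\xi|^2_{g_{y_0}}-1$ on the left factor, because $g_0^{ij}(x's,y)\to g_0^{ij}(0,y_0)$ as $\rho_F\to0$, and this is the hyperbolic dual metric on $X_{y_0}$; similarly for $p_R$. (2) Conclude that $H^L|_{\rho_F=0}$ and $H^R|_{\rho_F=0}$ are the left and right hyperbolic Hamilton vector fields on $T^*(F_{y_0}^\circ\times F_{y_0}^\circ)$. (3) Identify $\Sigma\cap N^*\diagz\cap\{\rho_F=0\}$ with $S_{y_0}:=N^*\diag_{F_{y_0}}\cap\{|\xi|^2_{g_{y_0}}=1\}$ inside $T^*(F_{y_0}^\circ\times F_{y_0}^\circ)$ — note $\diagz$ meets $F_{y_0}$ in the fibre diagonal and $s=1$ there. (4) Since $\Lambda_+$ is by definition the forward $H^L$-flowout of $\Sigma\cap N^*\diagz$, and this flowout is tangent to $\rho_F=0$, its restriction to ${}^\Phi\pi^{-1}F_{y_0}$ is exactly the forward $H^L|_{F_{y_0}}$-flowout of $S_{y_0}$ within $T^*(F_{y_0}^\circ\times F_{y_0}^\circ)$; this is a flowout of a homogeneous-degree-two Hamiltonian vector field from an isotropic (in fact Lagrangian-in-$\Sigma$-up-to-fibre-direction) submanifold, hence by the standard theory (Hörmander \cite{Hormander-Acta-1971}, or Proposition~\ref{prop:dist}) it is a Lagrangian submanifold $\Lambda_{y_0}$ of $T^*(F_{y_0}^\circ\times F_{y_0}^\circ)\cong T^* F_{y_0}^\circ$ — here I use the natural identification of $T^*$ of a product with $T^*$ of one factor that sends the (twisted) graph of a canonical relation to a Lagrangian, exactly as one does for the canonical relation of a Fourier integral operator.

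The main obstacle, and the step I would be most careful about, is step (4): making rigorous that ``restricting a Lagrangian to a boundary face of the cotangent bundle'' yields a Lagrangian. The point is that $\Lambda_+$ is a Lagrangian with corners in ${}^\Phi T^* X^2_0$, and $\partial_\FF\,{}^\Phi T^* X^2_0$ is not a cotangent bundle but a bundle over $\FF$ whose fibres are $T^* X_{y_0}\times T^* X_{y_0}$; one must check that the restriction $\Lambda_+\cap{}^\Phi\pi^{-1}F_{y_0}$ is transverse within this boundary face to the individual fibres $T^*(F_{y_0}^\circ\times F_{y_0}^\circ)$, so that the ``slice'' over a single fibre is a submanifold of the expected dimension, and then that the ambient symplectic form of ${}^\Phi T^* X^2_0$ restricts on $\partial_\FF$ to (the pullback of) the fibrewise symplectic form, so that the slice is isotropic; the dimension count (half of $\dim T^*(F_{y_0}^\circ\times F_{y_0}^\circ)$ after the graph identification) then gives Lagrangian. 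Transversality to the fibres follows from part (iii) of Lemma~\ref{lem:commute} near $\partial_{\diagz}\Lambda_+$ (the flow moves transversally off $N^*\diagz$, in particular in the fibre direction of $\FF$), and propagates along the flow by tangency of $H^L,H^R$ to $\FF$; the symplectic-form claim is the observation that $x'$ (a defining function for $\FF$) and its conormal direction form a symplectically conjugate pair that splits off, leaving the fibrewise form on the quotient. Once these two points are in place the rest is the routine structure theory of flowout Lagrangians applied fibrewise, and the naturality in $y_0$ is automatic since everything was constructed from $H^L, H^R$ which vary smoothly.
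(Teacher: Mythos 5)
There is a genuine gap, and it starts with your step (1): the identification of ${}^\Phi\pi^{-1}F_{y_0}$ with $T^*(F_{y_0}^\circ\times F_{y_0}^\circ)$ does not exist. The space ${}^\Phi T^* X^2_0$ restricted over a single fibre $F_{y_0}$ of $\FF$ is the $(n+1)$-dimensional quarter-sphere $F_{y_0}$ times the fixed $2(n+1)$-dimensional vector space ${}^0T^*_{(0,y_0)}X\times{}^0T^*_{(0,y_0)}X$, i.e.\ it has dimension $3(n+1)$, whereas $T^*(F_{y_0}^\circ\times F_{y_0}^\circ)$ has dimension $4(n+1)$. You are conflating the front-face fibre (one copy of the model hyperbolic space, on which the normal operator lives as a convolution kernel) with the double space of the model problem (two copies). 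The same confusion propagates: $\diagz$ meets $F_{y_0}$ in a single point $e$, not in a ``fibre diagonal'' of $F_{y_0}\times F_{y_0}$, so your $S_{y_0}$ is not what $\Sigma\cap N^*\diagz\cap\{\rho_F=0\}$ restricts to; and over a fixed $y_0$ the set $\partial_{\FF}\Lambda_+$ is only $(n+1)$-dimensional (parametrized by $(r,r',N_0)$ as in \eqref{rr'map}), not the $2(n+1)$-dimensional bicharacteristic relation of the model Laplacian. Your closing appeal to ``the natural identification of $T^*$ of a product with $T^*$ of one factor that sends the twisted graph of a canonical relation to a Lagrangian'' is not a valid operation (a twisted graph in $T^*(M\times M)$ has dimension $2\dim M$, a Lagrangian in $T^*M$ has dimension $\dim M$); what would be needed here is a reduction by the left-invariance of the model structure, which you neither state nor prove.

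The actual content of the lemma, which your argument never touches, is that the components of $\Lambda_+$ dual to the base directions $\partial_{y}$ of the fibration of $\FF$ and to the normal direction $\rho\partial_\rho$ (the coordinates $\overline\mu$ and $\omega$ in the paper) vanish on $\Lambda_+$ at $\rho_F=0$; only then can the restriction over $F_{y_0}$ be regarded as a subset of $T^*F_{y_0}^\circ$, and the dimension count ($2(n+1)-(n+1)=n+1$, since $\overline\mu=\omega=0$ are automatic) together with the vanishing of $\sum_j d\zeta_j\wedge dz_j$ gives Lagrangian. This is not a routine ``splitting off of a conjugate pair'': the $\Phi$-symplectic form \eqref{sympl} contains the singular cross term $-\overline\mu_i\,\rho^{-2}\,dy_i\wedge d\rho$, and the argument of the paper extracts $\overline\mu=O(\rho)$ and then $\omega=0$ on $\Lambda_+$ precisely from this singular structure, using that $dy'_i$ and $d\rho$ remain linearly independent along the flowout because $y'$ and $x'$ are annihilated by $H^L$. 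Without establishing these vanishings (or an equivalent invariance/reduction argument), the proposed proof cannot be completed.
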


\begin{proof} This is more or less an abstract result about Lagrangian submanifolds on spaces with fibred boundary; see for example a very similar result in \cite[Proposition 4.3]{Hassell-Wunsch}.

We view $\Lambda_+$ here as a submanifold of the $\Phi$-cotangent bundle ${}^\Phi T^* X^2_0$. The bundle ${}^\Phi T^* X^2_0$ is the dual space of the bundle ${}^0 TX \times {}^0 TX$ lifted to $X^2_0$ via the blowdown map. This bundle is generated by vector fields that vanish at $\FL$ and $\FR$, and are tangent to the fibres of $\FF$. That is, using the coordinates $(y, z, \rho)$, they are vector fields of the form
$$
\rho \frac{\partial}{\partial y_i}, \quad \frac{\partial}{\partial z_j}, \quad \rho \frac{\partial}{\partial \rho}.
$$
Thus the $\Phi$-cotangent bundle ${}^\Phi T^* X^2_0$ is, in these coordinates, spanned by one-forms of the form
$$
\frac{dy_i}{\rho}, \quad dz_j, \quad \frac{d\rho}{\rho}.
$$
It follows that we have coordinates $(y, z, \rho; \overline{\mu}, \zeta, \omega)$ on ${}^\Phi T^* X^2_0$ over $FF^\circ$, where we write points of  ${}^\Phi T^* X^2_0$ near $\rho = 0$ in the form
$$
\sum_i \overline{\mu}_i \frac{dy_i}{\rho} + \sum_j \zeta_j  dz_j + \omega \frac{d\rho}{\rho}.
$$
In these coordinates, the symplectic form is
\begin{equation}
\sum_i \Big( d\overline{\mu}_i \wedge \frac{dy_i}{\rho} - \overline{\mu}_i   \frac{dy_i}{\rho}  \wedge \frac{d\rho}{\rho} \Big) + \sum_j d\zeta_j  \wedge  dz_j + d\omega \wedge \frac{d\rho}{\rho}
\label{sympl}\end{equation}
and this vanishes when restricted to $\Lambda_+$ as $\Lambda_+$ is Lagrangian.

Notice that at $\Sigma \cap N^* \diagz \cap \{ \rho_F = 0 \}$,
the differentials $dy'_1, \dots, dy'_n$ and $d\rho$ are linearly independent. Also, taking $\rho = x'$, we have $\mathcal{L}_{H^L} y'_i = \mathcal{L}_{H^L} \rho = 0$. It follows that $dy'_i$ and $d\rho$ pull back to themselves under the flow generated by $H^L$. Therefore, these differentials are linearly independent on the whole flowout from $\Sigma \cap N^* \diagz \cap \{ \rho_F = 0 \}$.

Using this fact, we multiply \eqref{sympl} through by $\rho^2$ and restrict to $\Lambda_+$. This is identically zero. At $\rho = 0$ the only term without a factor $\rho$ is $\sum_i \overline{\mu}_i dy_i d\rho$. Since the $dy_i$ and $d\rho$ are linearly independent on the flowout from $\Sigma \cap N^* \diagz \cap \{ \rho_F = 0 \}$, it follows that $\overline{\mu}_i$ vanishes on $\Lambda_+$ when $\rho = 0$.
We can therefore write $\overline{\mu}_i = \rho \overline{\eta_i}$ on $\Lambda_+$, where $\overline{\eta_i}$ are smooth functions of
$(y, z, \rho; \overline{\mu}, \zeta, \omega)$. In these coordinates we know
\begin{equation}
\sum_i  d\overline{\eta}_i dy_i + \sum_j d\zeta_j  dz_i + d\omega \frac{d\rho}{\rho} \text{ vanishes when restricted to the flowout at } \rho = 0 .
\label{sympl-2}\end{equation}
We now multiply \eqref{sympl-2} by $\rho$. Since $d\rho \neq 0$ at the flowout at the boundary, we see that $d\omega = 0$ there. Since $\omega=0$ at $\Sigma \cap N^* \diagz \cap \{ \rho_F = 0 \}$, we find that $\omega = 0$ on the flowout at the boundary.

Now consider the map from smooth vector fields on $X^2_0$ tangent to the fibres of $\FF$, to smooth vector fields on  $F_{y_0}^\circ$, obtained by restriction to this fibre. This induces a map from ${}^\Phi T_{F_{y_0}^\circ}X^2_0$ to $T F_{y_0}^\circ$.
There is a dual map from $T^* F_{y_0}^\circ$ to ${}^\Phi T^* X^2_0$, which in our local coordinates is given by
$$
(z, \zeta) \mapsto (y_0, z, 0; 0, \zeta, 0).
$$
We have seen that at $\rho = 0$, we have $\overline{\mu} = 0, \omega = 0$. Therefore we can pull $\Lambda_+ \cap \pi^{-1} (F_{y_0}^\circ)$ back to a subset $\Lambda_{y_0}$ of $T^* F_{y_0}^\circ$.

It follows from \eqref{sympl-2}, and the fact that $\omega = 0$ on $\Lambda_+$ when $\rho = 0$,  that $\sum_j d\zeta_j dz_j = 0$ when restricted to $\Lambda_{y_0}$.  Moreover, $\Lambda_{y_0}$  has dimension $n+1$, since $\Lambda_+$ has dimension $2(n+1)$, and we lose $n+1$ dimensions by restricting to $y = y_0, \rho = 0, \overline{\mu} = 0, \omega = 0$ (as $\overline{\mu} = 0$ and $\omega = 0$ automatically when $\rho = 0$). It follows that $\Lambda_{y_0}$ is Lagrangian.
\end{proof}

We now determine the nature of this Lagrangian $\Lambda_{y_0}$. Denote by $e$ the distinguished point on $F_{y_0}$ given by the intersection with $\diagz$. 

\begin{proposition}\label{prop:LambdaFF}
The left Hamiltonian determines a hyperbolic metric on $F_{y_0}^\circ$, and the Lagrangian $\Lambda_{y_0}$ is that generated by the graph of the differential of the function $\Psi$ given by hyperbolic distance to $e$. 
\end{proposition}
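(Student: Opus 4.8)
The plan is to reduce the statement to Proposition~\ref{prop:dist}, applied to the single Riemannian manifold $F_{y_0}^\circ$ with the second (``$m'$'') variable frozen at the distinguished point $e$. Three ingredients are needed: (a) that the left Hamiltonian, transported to $T^* F_{y_0}^\circ$ via the identification constructed in the proof of Lemma~\ref{lem:Lag-fibre}, equals $|\cdot|^2_{k^{-1}} - 1$ for a hyperbolic metric $k$ on $F_{y_0}^\circ$; (b) that the initial manifold $N^*\diagz \cap \Sigma_L$, restricted to ${}^\Phi\pi^{-1}(F_{y_0}^\circ)$, corresponds to the unit cosphere of $(F_{y_0}^\circ, k)$ at $e$; and (c) that the forward flowout of this cosphere is the graph of $d\Psi$.

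For (a), recall from Section~\ref{subsec:0bundle} and the discussion of the normal operator that the normal operator of $\Delta$ on $(X,g)$ is the Laplacian of the left-invariant metric $k$ on $F_{y_0}^\circ$, which, up to a scaling that is irrelevant below, is the standard hyperbolic metric on the upper half-space model with coordinates $(s, Y)$. Concretely, in the region 4a coordinates, choosing $y_0$ so that $g_0^{ij}(0, y_0) = \delta^{ij}$, the coordinate expression \eqref{lH-ff} of the left principal symbol restricts at $\rho_F = 0$ (where moreover $\overline\mu = 0$ on $\Lambda_+$) to $s^2(\sigma^2 + |N|^2) - 1 = |\zeta|^2_{k^{-1}} - 1$, with $(\sigma, N) = \zeta$ dual to $(s, Y)$. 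In the proof of Lemma~\ref{lem:Lag-fibre} we showed that on $\Lambda_+$ over $\rho_F = 0$ the extra fibre coordinates $\overline\mu$ and $\omega$ both vanish; combined with the tangency of $H^L$ to $\Lambda_+$, with its tangency to the boundary over $\FF$ (Lemma~\ref{lem:commute}(v)), and with $\mathcal{L}_{H^L} y' = \mathcal{L}_{H^L}\rho_F = 0$, this forces the $\partial_{\overline\mu}$ and $\partial_\omega$ components of $H^L$ to vanish along $\Lambda_+ \cap \{\rho_F = 0\}$. Hence $H^L$ restricted there pushes forward, under the map $(z,\zeta) \mapsto (y_0, z, 0; 0, \zeta, 0)$ of Lemma~\ref{lem:Lag-fibre}, to the Hamilton vector field of $|\zeta|^2_{k^{-1}} - 1$ on $T^* F_{y_0}^\circ$, i.e. to the generator of the geodesic flow of $(F_{y_0}^\circ, k)$.

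For (b), note that $\diagz \cap F_{y_0} = \{e\}$ and, in region 4a coordinates, $\diagz = \{s = 1,\ Y = 0\}$, so $N^*\diagz = \{\xi' = 0,\ \eta = 0\}$ over $\diagz$; restricting to $\rho_F = 0$ and pushing forward by the map of Lemma~\ref{lem:Lag-fibre} identifies $N^*\diagz \cap {}^\Phi\pi^{-1}(F_{y_0}^\circ)$ with the full cotangent space $T^*_e F_{y_0}^\circ$, and intersecting with $\Sigma_L$ gives, by (a), the unit cosphere $S^*_e F_{y_0}^\circ$. Since $\Lambda_+$ is by definition the forward $H^L$-flowout of $N^*\diagz \cap \Sigma_L$, and $H^L$ is tangent to ${}^\Phi\pi^{-1}(F_{y_0}^\circ)$, it follows that $\Lambda_{y_0}$ is precisely the forward flowout, under the geodesic flow of $(F_{y_0}^\circ, k)$, of $S^*_e F_{y_0}^\circ$. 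Given (a) and (b), (c) is now exactly Proposition~\ref{prop:dist} with the right factor held fixed at $e$: Hamilton--Jacobi theory identifies the forward flowout of $S^*_e F_{y_0}^\circ$ with the graph of $d\Psi$, $\Psi = \dist_k(\cdot, e)$, in a deleted neighbourhood of $e$; and since $(F_{y_0}^\circ, k)$ is (a constant multiple of) hyperbolic space, which has no conjugate points, this persists globally on $F_{y_0}^\circ \setminus \{e\}$, so that $\Lambda_{y_0}$ is the closure of this graph. I expect the only delicate point to be (a) --- verifying that $H^L$ genuinely descends to the stated Hamilton vector field on $T^* F_{y_0}^\circ$, with nothing hidden in the $\overline\mu$ and $\omega$ directions --- which is precisely where the vanishing statements from the proof of Lemma~\ref{lem:Lag-fibre} are used.
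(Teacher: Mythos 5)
Your proposal is correct and follows essentially the paper's own route: it uses the vanishing of $\overline{\mu}$ and $\omega$ on $\Lambda_+$ over $\rho_F=0$ (established in the proof of Lemma~\ref{lem:Lag-fibre}) to restrict the left Hamiltonian to $s^2(\sigma^2+|N|^2)-1$ on $T^*F_{y_0}^\circ$, recognizes the induced flow as hyperbolic geodesic flow, and concludes via Proposition~\ref{prop:dist}. The only differences are cosmetic: where you argue somewhat abstractly that $H^L$ descends to the fibre Hamilton vector field, the paper verifies this by the explicit computation \eqref{lHvf-ff-0-2}, and your steps (b) and (c) (the identification of the initial data with the unit cosphere at $e$ and the global validity on the conjugate-point-free fibre) are left implicit in the paper's appeal to Proposition~\ref{prop:dist}.
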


\begin{proof}
In terms of the coordinates $(y, z, \rho; \overline{\mu}, \zeta, \omega)$, where we now specify $z = (s = x/x', Y = (y'-y)/x')$, $\rho = x'$, and $\zeta = (\sigma, N)$ (so that $\zeta \cdot dz = \sigma ds + N \cdot dY$), we have 
$$
\mu = s\overline{\mu} - s N, \quad \mu' = N, \quad \lambda = s \sigma, \quad \lambda' = \omega - s \sigma - N \cdot Y. 
$$
Of course, by choosing coordinates appropriately, we can (and from now on, will) assume that $g_0^{ij}(y_0) = \delta^{ij}$. 
We now write the left  Hamiltonian vector field in these coordinates at $x' = 0$. As we saw in the proof of Lemma~\ref{lem:Lag-fibre}, at $x' = 0$, we have $\overline{\mu} = 0$ and $\omega = 0$. So we have, at $x' = 0$ on $\Lambda_+$, 
\begin{equation}
\mu = -sN, \quad, \lambda = s \sigma.
\label{mul}\end{equation}
It follows from \eqref{lHvf-ff-0} that at $x' = 0$, 
\begin{equation}\begin{gathered}
H^L = 2  s^2 \sigma \frac{\partial}{\partial s} - 2s^2 N_i\frac{\partial}{\partial Y_i} 
- 2s (\sigma^2 + |N|^2) \frac{\partial}{\partial \sigma} 
\end{gathered}\label{lHvf-ff-0-2}\end{equation}
(bearing in mind the transformation law $s \partial_s \mapsto s \partial_s - \sigma \partial_\sigma - N \cdot \partial_N$ when we move from $(s, Y, \lambda, \mu)$ coordinates to $(s, Y, \sigma, N)$ coordinates). We recognize this as the geodesic flow equations for the Hamiltonian 
\begin{equation}
s^2 \big( \sigma^2 + |N|^2 \big) - 1
\label{hyp-metric}\end{equation}
which is an exact hyperbolic metric on $F_{y_0}^\circ$ (in the upper half space model $(s, Y) \in \RR_+ \times \RR^n$). 

The proof is now completed by Proposition~\ref{prop:dist}. 
\end{proof}

We next determine the structure of the flowout from a neighbourhood of $\partial_{\FF}(N^* \diagz \cap \Sigma_L)$.

\begin{proposition}\label{prop:near-ff}
There is a neighbourhood $U$ of ${}^\Phi T^*_{\FF} X^2_0$
such that the part of the forward flowout from $N^* \diagz \cap \Sigma_L$ by the vector field $H^{L}/\rho_L + H^R/\rho_R$ lying in $U$ is the graph of the differential of a function $\Psi$ defined on ${}^\Phi \pi(U)$ (except at $\partial_{\diagz} \Lambda_+$, where Proposition~\ref{prop:nd} applies). This function $\Psi$ is equal to the Riemannian distance function on the interior of $X^2_0$, and restricts to the hyperbolic distance on each fibre $F_{y_0}$. It has the form (where defined) 
\begin{equation}
\Psi = -\log \rho_L - \log \rho_R + \tilde \Psi, \quad \tilde \Psi \in C^\infty({}^\Phi \pi(U) \setminus \diagz). 
\end{equation}
\end{proposition}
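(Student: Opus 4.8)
The plan is to transfer the statement to the shifted Lagrangian $\tL_+\subset T^*X^2_0$, where the regularity up to the boundary over $\FF$ is controlled by Lemma~\ref{lem:shifted-vf}, and then to translate back to $\Lambda_+$.

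First I would observe that near $T^*_{\FF}X^2_0$ the set $\Lambda_+$ coincides with the forward flowout of $N^*\diagz\cap\Sigma$ by $W:=H^L/\rho_L+H^R/\rho_R$. Indeed, by Lemma~\ref{lem:commute} and the discussion following it, $H^L$ and $H^R$ are tangent to $\Lambda_+$ and transverse to $N^*\diagz\cap\Sigma$, while $H^L-H^R$ is tangent to the latter; hence on $N^*\diagz\cap\Sigma$ we have $W\equiv(\rho_L^{-1}+\rho_R^{-1})H^L$ modulo a tangential term, and since $\rho_L,\rho_R>0$ near $\FF$ the field $W$ is there transverse to $N^*\diagz\cap\Sigma$, tangent to $\Lambda_+$, and points in the same direction as $H^L$, so it produces the same flowout as $\Lambda_+$. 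Next, recall from \eqref{tL1} and the discussion after it that $\tFBR=T^{-1}(\FBR)$, where the shift symplectomorphism is $T\colon q\mapsto q-d\log(\rho_L\rho_R)$; it is fibre-preserving, so it fixes $\rho_L,\rho_R$, and $T^{-1}$ carries $H^L,H^R$ (and hence $W$) to $\tHL,\tHR$ (and hence to $V:=\tHL/\rho_L+\tHR/\rho_R$). Therefore $\tL_+=T^{-1}(\Lambda_+)$ is the forward flowout of $T^{-1}(N^*\diagz\cap\Sigma)$ by $V$, and since $T$ merely adds the closed one-form $-d\log(\rho_L\rho_R)$, it suffices to prove that near $T^*_{\FF}X^2_0$ and away from $\partial_{\diagz}\Lambda_+$ the set $\tL_+$ is the graph of $d\tilde\Psi$ for some $\tilde\Psi\in C^\infty$; then $\Lambda_+=\{\,q-d\log(\rho_L\rho_R):q\in\tL_+\,\}$ is the graph of $d\Psi$ with $\Psi=\tilde\Psi-\log\rho_L-\log\rho_R$.

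Now I would establish first smoothness and then horizontality of $\tL_+$ in a neighbourhood $U$ of $T^*_{\FF}X^2_0$. By Lemma~\ref{lem:shifted-vf}, $V$ is (on the shifted characteristic variety) a smooth vector field on $T^*X^2_0$ near $T^*_{\FF}X^2_0$, tangent to $\partial_{\FF}T^*X^2_0$ and transverse to $\partial_{\FL}$ and to $\partial_{\FR}$; the initial surface $T^{-1}(N^*\diagz\cap\Sigma)$ is a smooth p-submanifold meeting $\partial_{\FF}$ transversally (its projection is $\diagz$, which is transverse to $\FF$), and $V$ is transverse to it away from $\partial_{\diagz}\Lambda_+$. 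Hence, after removing $\partial_{\diagz}\Lambda_+$ from $U$ (there Proposition~\ref{prop:nd} applies instead), $\tL_+$ is a smooth Lagrangian manifold with corners, smooth up to $\partial_{\FF},\partial_{\FL},\partial_{\FR}$, and $\tL_+\cap\{\rho_F=0\}$ is the flowout inside $\partial_{\FF}T^*X^2_0$. For horizontality I would check that $\pi\colon T^*X^2_0\to X^2_0$ restricts to a local diffeomorphism on $\tL_+\cap U$. On the interior this is Proposition~\ref{prop:dist} and its corollary ($\Lambda_+$ is there the graph of $d\,\dist_g$, and $T$ preserves horizontality). On $\{\rho_F=0\}$ the flowout fibres over $\partial X$ with fibre over $y_0$ equal to $T^{-1}(\Lambda_{y_0})$; by Proposition~\ref{prop:LambdaFF}, $\Lambda_{y_0}$ is the graph of $d$ of the hyperbolic distance to $e$, and since $\rho_L,\rho_R$ restrict to boundary defining functions of $F_{y_0}=\mathbb{Q}$ and the hyperbolic distance from an interior point of $\mathbb{Q}$ equals $-\log\rho_L-\log\rho_R$ plus a function smooth up to $\partial\mathbb{Q}\setminus\{e\}$ (an elementary computation on hyperbolic space, equivalent to the resolvent mapping properties quoted before Proposition~\ref{normal operator}), the shifted fibre Lagrangian $T^{-1}(\Lambda_{y_0})$ is horizontal over $F_{y_0}^\circ\setminus\{e\}$. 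Finally, parametrizing $\tL_+$ near a point of $\partial_{\FF}\tL_+\setminus\partial_{\diagz}\Lambda_+$ by (initial point, flow time), the inward normal to $\partial_{\FF}\tL_+$ corresponds to varying the $\rho_F$-coordinate of the initial point, which $\pi$ sends to a vector with nonzero $\rho_F$-component; together with horizontality along the face this makes $d(\pi|_{\tL_+})$ invertible on $\partial_{\FF}\tL_+$, hence, by continuity, on $\tL_+\cap U$ after shrinking $U$ in the $\rho_F$-direction.

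Assembling these, $\tL_+\cap U$ (minus $\partial_{\diagz}\Lambda_+$) is the graph of a closed one-form on ${}^\Phi\pi(U)$; this form is $d\tilde\Psi$ for a single-valued $\tilde\Psi$, since on the interior it equals $d(\dist_g+\log(\rho_L\rho_R))$ and $\dist_g$ is single-valued and smooth off the diagonal for $U$ small. Translating by $T$ gives $\Lambda_+\cap U=\{d\Psi\}$ with $\Psi=-\log\rho_L-\log\rho_R+\tilde\Psi$, $\tilde\Psi\in C^\infty({}^\Phi\pi(U)\setminus\diagz)$; that $\Psi=\dist_g$ on the interior is the corollary to Proposition~\ref{prop:dist}, and that it restricts on each fibre $F_{y_0}$ to the hyperbolic distance to $e$ is Proposition~\ref{prop:LambdaFF}. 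I expect the genuinely delicate part to be the behaviour near the corners $\FF\cap\FL$, $\FF\cap\FR$ and the triple corner $\FF\cap\FL\cap\FR$, where the $-\log\rho_L$ and $-\log\rho_R$ divergences must be tracked simultaneously with the front-face structure; checking that $\tL_+$ remains a smooth graph there is precisely what the region-by-region descriptions of the shifted vector fields in the proof of Lemma~\ref{lem:shifted-vf} (their tangency to $\FF$ and transversality to $\FL$, $\FR$ in the patches $4a$, $4b$, $5$) are designed to supply.
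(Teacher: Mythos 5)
Your overall strategy---pass to the shifted Lagrangian, use Lemma~\ref{lem:shifted-vf} for regularity at the boundary over $\FF$, read off the fibre structure from Proposition~\ref{prop:LambdaFF}, and recover $\Psi$ via Proposition~\ref{prop:dist}---is the same as the paper's, but the two steps you pass over quickly are exactly where the paper has to do real work, and as written they are gaps. First, you assert that the flowout by $V=\tHL/\rho_L+\tHR/\rho_R$ is smooth up to $\partial_{\FL}$, $\partial_{\FR}$ and coincides with $\tL_+$ near $T^*_{\FF}X^2_0$ directly from the smoothness and transversality of $V$ given by Lemma~\ref{lem:shifted-vf}. But the initial surface $N^*\diagz\cap\Sigma_L$ lies at finite distance from $\FL$ and $\FR$, so transversality of $V$ \emph{at} those faces does not by itself show that the flow actually reaches them (in uniformly bounded time, before leaving $U$); nor does it show that every limit point of $\FBR$ near $\FF$ is captured by this flowout --- recall that $\Lambda_+$ and $\tL_+$ are defined as \emph{closures}, not as flowouts, and their identification with the flowout is part of what must be proved. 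The paper settles both points with the explicit parametrization \eqref{rr'map} of the boundary leaves $L_{y_0,N_0}$ (the model leaf of Figure~\ref{fig:modelleaf}), showing that every integral curve of $V^L+V^R$ reaches $\{\rho_L=0\}\cup\{\rho_R=0\}$ in uniformly finite time, followed by a continuity argument and a compactness argument on sequences of geodesics; your proof needs some substitute for this. (Your opening claim that the $W$-flowout equals $\FBR$ also really uses the two-dimensional leaf structure, not just transversality and tangency, though that is a minor point.)

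Second, and more seriously, your passage from ``$\pi|_{\tL_+\cap U}$ is a local diffeomorphism'' to ``$\tL_+\cap U$ is the graph of $d\tilde\Psi$ for a single-valued $\tilde\Psi$'' is justified only by the remark that $\dist_g$ is single-valued and smooth off the diagonal for $U$ small. This assumes precisely what has to be proved: local invertibility of the projection does not rule out the flowout being a multi-sheeted cover of a neighbourhood of $\FF$ (two distinct near-boundary geodesics joining the same pair of points would give two sheets over the same base point), and smoothness of $\dist_g$ off the diagonal is not automatic --- its failure at conjugate points is the very issue motivating the whole construction, so one must show conjugate points and multiple connecting geodesics cannot occur for geodesics confined to a thin collar. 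The paper reduces this global one-sheet property to the statement that any two points of a small neighbourhood $V'$ of a boundary point are joined by a unique geodesic lying in $V'$, and that this geodesic is minimizing; that is Lemma~\ref{lem:one}, proved by a genuine convexity argument (a domain with positively curved boundary plus the gradient-flow argument of Jost). Without an argument of this kind, your $\tilde\Psi$ is only locally defined, and the identification of $\Psi$ with the Riemannian distance on all of ${}^\Phi\pi(U)$ --- rather than only in the deleted neighbourhood of $\partial_{\diagz}\Lambda_+$ covered by the corollary to Proposition~\ref{prop:dist} --- is not established.
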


\begin{remark}The reason we choose neither $H^L$ nor $H^R$ to generate the bicharacteristic flow-out is that $H^L$ ($H^R$) isn't transverse to $\FR$ ($\FL$). However $H^{L}/\rho_L + H^R/\rho_R$ is transverse to both $\FL$ and $\FR$, as we show below.
\end{remark}

\begin{proof}
Let us define vector fields $V^L$ and $V^R$ by $V^L = H^L/\rho_L$ and $V^R = H^R/\rho_R$. We claim that the flow corresponding to the sum of vector fields $V^L + V^R$  reaches the boundary of $\FF$ (that is, either $\rho_L = 0$ or $\rho_R = 0$ in uniformly finite time).

Recall that the flowout from $N^* \diagz \cap \Sigma_L$ is a union of two-dimensional leaves, given by the joint flowout of the commuting vector fields $H_L$ and $H_R$.  At  $\FF$, these lie over hyperbolic planes contained in a given fibre $F_{y_0}$ of $\FF$, in which $Y$ and $N$ are multiples of a fixed unit vector $N_0$. At $\rho_F = 0$, these can be parametrized explicitly.  Consider the map 
\begin{equation}\begin{gathered}
(r, r') \mapsto \big( x' = 0, \ y = y_0,  \ s = \frac{\sin r}{\sin r'}, \ Y = \frac{\cos r' - \cos r}{\sin r'} N_0,  \\
\lambda = \cos r, \ \lambda' = -\cos r', \ \mu = \sin r N_0, \ \mu' = - \sin r' N_0 
 \big). 
\end{gathered}\label{rr'map}\end{equation}
The right hand side solves the system \eqref{eqn: hamilton flow-out} at $FF$ satisfied by the bicharacteristic flow-out.
This extends from $\{ (r, r') \mid r \geq r', 0 < r, r'  < \pi \}$ to a smooth map $\kappa$  from the model leaf $L_{model}$ shown in Figure~\ref{fig:modelleaf}  (in which the corners $\{ r = r' = 0 \}$ and $\{ r = r' = \pi \}$ have been blown up) into ${}^\Phi T^*_{F_{y_0}} X^2_0$. Let $L_{y_0, N_0}$ be the image of $\kappa$ in ${}^\Phi T^* X^2_0$. Then \eqref{rr'map} extends to a diffeomorphism from $L_{model}$ onto $L_{y_0, N_0}$. Moreover, the union of all the $L_{y_0, N_0}$ gives $\partial_{\FF} \Lambda_+$ (it is not a disjoint union, however)\footnote{In fact, over a given  $F_{y_0}$, all the leaves join together at the sets $\{ (x' = 0, y = y_0, s \in \RR, Y = 0, \lambda = \pm 1, \lambda' = \mp 1, \mu = 0, \mu' = 0) \}$, which correspond to the edges AE and CD in Figure~\ref{fig:modelleaf}. A similar phenomenon happens in the asymptotically conic case; see the remark at the end of Section 11 of \cite{Hassell-Wunsch}}. 

It is easy to see that the edge DE is mapped into $\partial_{\diagz} \Lambda_+$. Moreover, plugging the right hand side of \eqref{rr'map} in \eqref{lHvf-ff-0-2}, one can see the vector field $\sin r \partial_r$ pushes forward to $H^L$, while $-\sin r' \partial_{r'}$ pushes forward to $H^R$. Under the map \eqref{rr'map}, the inverse image of $L_{y_0, N_0} \cap \{ \rho_R = 0 \}$ in $L_{model}$ is the boundary segment AB, while the inverse image of $L_{y_0, N_0} \cap \{\rho_L = 0\}$ is the boundary segment BC. 
For a vector field on $L_{model}$ that pushes forward to $H^L/\rho_L$ one can take $\sin(r/2) \cos(r'/2) \partial_r$, and for a vector field that pushes forward to $H^R/\rho_R$ one can take $-\sin(r/2) \cos(r'/2) \partial_{r'}$. It is easy to check that, on $L_{model}$, every integral curve of the vector field $\sin(r/2) \cos(r'/2) ( \partial_r - \partial_{r'})$ reaches either AB or BC in uniformly finite time. Consequently, on $L_{y_0, N_0}$, every integral curve of the vector field $V^L + V^R$ reaches the boundary in uniformly finite time. 

\begin{center}\begin{figure}
\includegraphics[width=0.7\textwidth]{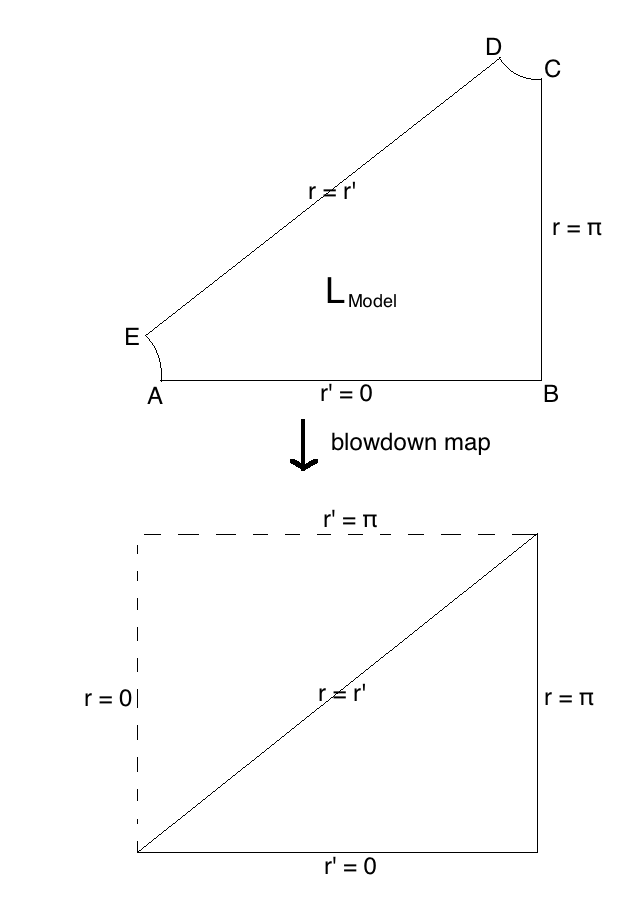}
\caption{\label{fig:modelleaf}The model leaf at the boundary}
\end{figure}
\end{center}

We now consider the flowout from a neighbourhood  of $\partial_{\FF} ( N^* \diagz \cap \Sigma_L)$ by the vector field $V^L + V^R$. By continuity, if the neighbourhood is sufficiently small, the flowout reaches the boundary in finite time. Therefore the flowout is smooth up to the boundary and is transversal to the boundary (transversality at the corner follows since the flowout is invariant under $V^L$ and $V^R$ separately). 

It is now clear that the closure $\Lambda_+$ of $\FBR$ coincides with this flowout in $U$.
Indeed, this flowout contains $\FBR \cap U$ and is closed, due to the finiteness of time in which the flowout reaches the boundary at $\rho_{\FL} = 0$ or $\rho_{\FR} = 0$. Conversely, suppose that $q \in {}^\Phi T^* X^2_0 \cap U$ is a limit point of $\FBR$. 
Let $q_n$ be a sequence in $\FBR$ converging to $q$. Each $q_n$ is associated to a geodesic $\gamma_n$ in ${}^0T^* X$. If the sequence $\gamma_n$ approaches the boundary ${}^0T^*_{\partial X} X$ uniformly, then $q$ lies over $\FF$, and is contained in the flowout from $\partial_{\FF} ( N^* \diagz \cap \Sigma_L)$ by the vector field $V^L + V^R$. If there is a subsequence $\gamma_{n_j}$ with $\sup x(\gamma_{n_j}(t)) \geq x_0 > 0$, then a further subsequence converges to an interior geodesic $\gamma^*$. Then $q$ lies on the  leaf corresponding to $\gamma^*$, which is contained in $\Lambda_+$.

We next show that $\Lambda_+ \cap U$ is the graph of a differential of a function, at least away from $N^* \diagz$. We have seen in Proposition~\ref{prop:LambdaFF} that at $\rho_F = 0$, the flowout is a graph over $\FF$, or in other words, if $w_1, \dots, w_{n+1}$ are local coordinates on $\FF$ (either in the interior or near the boundary), then $dw_i$ are linearly independent on $\partial_{\FF} \tL_+$; also, it was shown that $d\rho_F$ is also linearly independent on $\Lambda_+$ (and therefore also $\tL_+$) at $\rho_F = 0$. By continuity, this remains true for the flowout from $U$, if it is a  sufficiently small neighbourhood. 
This shows that, at least \emph{locally},  this flowout is given by the graph of the differential of a smooth function (away from $N^* \diagz \cap \Sigma_L$).  To show that this is \emph{globally} true (near $\rho_F = 0$, but globally along $\FF$), it remains to show that there is a neighbourhood $V$ of $\FF$ in $X^2_0$, such that for each point $m$ in $V$ there is exactly one point in ${}^\Phi T^*_m X^2_0$ in $\Lambda_+$ which can be reached by flowing from $N^* \diagz \cap \Sigma_L$ by the vector field $V^L + V^R$, while staying uniformly close to $\rho_F = 0$.  This is equivalent to the condition that for any point $y_0$ of $\partial X$, there is a small neighbourhood $V' \subset X$ of $(0, y_0)$ such that for any two points of $V'$,  there is a unique geodesic\footnote{Here, and in Lemma~\ref{lem:one},  we use the term `geodesic' in its traditional meaning as a curve in $X$ (rather than $T^* X$).} contained in $V'$ joining them. 
We prove this fact in Lemma~\ref{lem:one} below. 


To prove the last statement, we consider the shifted Lagrangian $\tL_+$. As a consequence of the argument above, the part of $\tL_+$ that corresponds to $\Lambda_+ \cap U$  is the graph of the differential of a function $\tilde \Psi$, defined in a neighbourhood of $\FF$ in $X^2_0$, that is smooth away from $\diagz$. Therefore, the unshifted Lagrangian $\Lambda_+$ in this region is the differential of the function $\Psi = \tilde \Psi - d\rho_L/\rho_L - d\rho_R/\rho_R$. By Proposition~\ref{prop:dist}, $\Psi$ is the Riemannian distance function. This completes the proof of the proposition. 
\end{proof}

\begin{remark} The leaf $L_{y_0, N_0}$ is, not surprisingly in view of Figure~\ref{fig:modelleaf}, the limit of a sequence of interior leaves having the property that the associated sequence of bicharacteristics approach the boundary uniformly. 
\end{remark} 

\begin{lemma}\label{lem:one}
Let $\epsilon$ be sufficiently small. Then for each $y_0 \in \partial X$, 
there is a neighbourhood $V'$ of $(0, y_0) \in X$, which contains 
$\{ x < \epsilon, d(y, y_0) < \epsilon \}$ and is contained in $\{ x < 2\epsilon, d(y, y_0) < 2\epsilon \}$, such that for each pair of points $(x_1, y_1)$ and $(x_2, y_2)$ in $V'$, there is a unique geodesic  joining them that is contained in $V'$. Moreover this is the shortest geodesic joining $(x_1, y_1)$ and $(x_2, y_2)$. 
\end{lemma}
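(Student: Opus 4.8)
The plan is to rescale the collar to reduce the lemma to a statement about a \emph{fixed} size ball in a metric that is close to the exact hyperbolic metric, prove that statement using the explicit description of hyperbolic geodesics, and transfer back to $g$ by a perturbation argument; the one genuinely delicate point is uniformity as $x\to 0$. First I would fix coordinates: in the collar write $g=x^{-2}\big(dx^2+g_0(x,y,dy)\big)$ as in \eqref{g-normalform}, and take the $y$-coordinates to be geodesic normal coordinates for the boundary metric $g_0(0,\cdot)$ centred at $y_0$, so that $g_0(x,y,dy)=|dy|^2+O\big(x+|y-y_0|^2\big)$. Let $g_{\mathrm{hyp}}=x^{-2}(dx^2+|dy|^2)$ and $\Phi_\epsilon(x,y)=\big(\epsilon x,\,y_0+\epsilon(y-y_0)\big)$. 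Then $\Phi_\epsilon$ is an isometry of $g_{\mathrm{hyp}}$, and a one-line computation gives $\Phi_\epsilon^*g=x^{-2}\big(dx^2+g_0(\epsilon x,y_0+\epsilon(y-y_0),dy)\big)$, so that $\Phi_\epsilon^*g\to g_{\mathrm{hyp}}$ in $C^\infty$ — uniformly in the $0$-sense, i.e.\ with all derivatives taken in the frame $\{x\partial_x,x\partial_{y_j}\}$ and including $x=0$ — on the fixed manifold with corners $\{0\le x\le 3,\ |y-y_0|\le 3\}$ as $\epsilon\to 0$. After applying $\Phi_\epsilon^{-1}$ the lemma becomes: for every metric $\tilde g$ sufficiently $0$-close to $g_{\mathrm{hyp}}$, there is an open set $V'$ with $\{x<1,\ d(y,y_0)<1\}\subset V'\subset\{x<2,\ d(y,y_0)<2\}$ in which every two points are joined by a unique, necessarily shortest, $\tilde g$-geodesic contained in $V'$.

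In the model $g_{\mathrm{hyp}}$, geodesics are vertical half-lines and hemispheres centred on $\{x=0\}$, and they project under $(x,y)\mapsto y$ to straight segments; from this one checks directly that the $g_{\mathrm{hyp}}$-geodesic between two points of $\{x<1,\ |y-y_0|<1\}$ stays in $\{x<\sqrt 3,\ |y-y_0|<1\}$, and that the $g_{\mathrm{hyp}}$-convex hull $C$ of the closed box $\{x\le 1,\ |y-y_0|\le 1\}$ satisfies $\{x<1,\ |y-y_0|<1\}\subset\operatorname{int}C\subset\{x^2+|y-y_0|^2<2\}\subset\{x<2,\ |y-y_0|<2\}$ (the middle bound because the box lies under the totally geodesic hemisphere of radius $\sqrt 2$, which is convex). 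Since $g_{\mathrm{hyp}}$ is a Hadamard metric, $W:=\operatorname{int}C$ has the desired property for $g_{\mathrm{hyp}}$. For the perturbed metric I would \emph{define} the region intrinsically: let $V'$ be the interior of the $\tilde g$-convex hull of $\{x\le 1,\ d_{\tilde g}(y,y_0)\le 1\}$. Convexity is then built in, and since $\tilde g$ has sectional curvature $\le 0$ on the region in question once $\epsilon$ is small (Mazzeo's result that curvature $\to -1$ at the boundary) and $V'$ is simply connected, standard comparison geometry — no conjugate points in nonpositive curvature, Cartan–Hadamard on the convex region — yields uniqueness of the geodesic and that it is the shortest curve in $V'$. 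The remaining content is the inclusion $V'\subset\{x<2,\ d(y,y_0)<2\}$, i.e.\ an upper bound on where $\tilde g$-geodesics between points of the box can go; this should follow by perturbing the geodesic boundary-value problem: along each $g_{\mathrm{hyp}}$-geodesic the endpoint map is a submersion (again no conjugate points), so for $\tilde g$ close to $g_{\mathrm{hyp}}$ the $\tilde g$-geodesic between the same endpoints exists and is $C^2$-close to the model one, which we have confined to $\{x<\sqrt 3,\ |y-y_0|<1\}$.

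The step I expect to be the main obstacle is making this last inclusion uniform \emph{down to $x=0$}. The difficulty is structural rather than technical: any convex set containing the box necessarily meets $\{x=0\}$ in a set with non-empty interior, hence its boundary there is totally geodesic — not strictly convex — so convexity of a \emph{fixed} model region is not stable under perturbation and no off-the-shelf stability-of-convex-sets statement applies. The way I would deal with it is to exploit the finer structure of the perturbation near the boundary: in the $0$-frame, $\Phi_\epsilon^*g-g_{\mathrm{hyp}}=x^{-2}\big(g_0(\epsilon x,y_0+\epsilon(y-y_0),dy)-|dy|^2\big)$ is not merely $O(\epsilon)$ but, by the choice of normal coordinates, is $O(\epsilon x)+O(\epsilon^2)$ along $\{x=0\}$; and the only $g_{\mathrm{hyp}}$-geodesics that come near $\{x=0\}$ are the almost-hemispherical ones joining pairs of points that are themselves near $\{x=0\}$, for which the binding constraint is on the $y$-projection, where $\tilde g$ and $g_{\mathrm{hyp}}$ agree to $O(\epsilon^2)$. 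Quantifying this — proving, say, that $\Phi_\epsilon^*g$-geodesics between points of $\{x<1,\ d(y,y_0)<1\}$ stay in $\{x<\tfrac{19}{10},\ d(y,y_0)<\tfrac32\}$ uniformly in $\epsilon$ — and combining it with the explicit model computation of the previous paragraph closes the argument.
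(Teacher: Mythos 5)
Your opening move — the rescaling $\Phi_\epsilon$ and the reduction to a metric that is $0$-close to $g_{\mathrm{hyp}}$ on a fixed compact region — is exactly the ``scaling and perturbation'' step of the paper's proof, but from there your route diverges and leaves its central step unproven. Defining $V'$ as the $\tilde g$-convex hull of the box builds in convexity only nominally: to know the hull is a controlled set at all, and in particular that it lies in $\{x<2,\ d(y,y_0)<2\}$ where the negative-curvature/Cartan--Hadamard arguments are available, you must in effect exhibit a $\tilde g$-convex set of controlled size containing the box — which \emph{is} the content of the lemma. Your plan for that containment (perturbing the geodesic boundary-value problem via submersivity of the endpoint map) is precisely the part you flag as not yet done, and it is genuinely delicate: two box points at height $x\sim\delta$ are joined by a geodesic of length $\sim 2\log(1/\delta)$, so naive ODE/Gronwall perturbation estimates degrade as the endpoints approach $x=0$; and even with geodesics between box points controlled, the convex hull is an iterated join, so you still need a barrier to confine it. Finally, the last sentence of the lemma — that the geodesic in $V'$ is the shortest among \emph{all} geodesics in $X$ joining the two points, including those that leave $V'$ — is not addressed by ``shortest curve in $V'$''; the paper disposes of it by a separate homotopy-plus-length comparison (any other geodesic must exit a region of size $O(\epsilon)$ and is therefore longer).

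The structural obstacle you invoke to reject the direct approach — that any convex set containing the box meets $\{x=0\}$ in a set with interior, whose boundary there is totally geodesic, so convexity of a fixed model region is unstable — does not actually bite, and the direct approach is exactly what the paper does. The portion of $\partial V'$ on $\{x=0\}$ is at infinite distance: a geodesic segment between interior points, and every curve appearing in the energy-gradient-flow argument (Jost, Theorem 6.11.3) with fixed interior endpoints and bounded length, stays in $\{x\ge \delta_0\}$ for some $\delta_0>0$, so only the \emph{finite} part of the boundary needs strict convexity. The paper takes $V'=\{|y-y_0|^2+(x-\epsilon/2)^2\le 2\epsilon^2\}$ in boundary normal coordinates: for the exact hyperbolic metric its finite boundary piece is an umbilic equidistant hypersurface with constant principal curvatures bounded away from zero, and this strict convexity persists under the perturbation uniformly down to $x=0$ precisely because, as you note yourself, $\Phi_\epsilon^*g\to g_{\mathrm{hyp}}$ with all $0$-derivatives on the fixed compact region. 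So your proposal has a genuine gap at its key containment step (and omits the global minimality claim), while the worry that led you away from the simpler fixed-region barrier argument is unfounded.
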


\begin{proof}
It suffices to find a neighbourhood $V'$ whose boundary is smooth and has nonnegative curvature, in the sense that for any smooth curve $c(s)$ contained in the closure of $V'$, such that $c(s_0)$ is in the boundary of $V'$ and $c'(s_0)$ is tangent to the boundary of $V'$, then the acceleration vector points inwards. Then we can run the argument of \cite[Theorem 6.11.3]{Jost} to show that there is a unique geodesic joining the two points. This argument uses the gradient flow for the energy functional on curves, and the positive curvature condition on $V'$ ensures that if a curve starts in $V'$, then the gradient flow remains in $V'$. 

We give an explicit example of such a $V'$. Let $y$ be normal coordinates for the metric $h |_{x=0}$ on $\partial X$ centred at $y_0$, extended into the interior such that the metric takes the form \eqref{g-normalform}. Then we let 
$$
V'  =  \{ (x, y) \mid |y-y_0|^2 + (x - \epsilon/2)^2 \leq 2\epsilon^2 \}.
$$
We now explain why  $V'$ has the positive curvature property. On hyperbolic space, with metric $x^{-2}(dx^2 + dy^2)$, the set $\{ (x, y) \mid |y-y_0|^2 + (x - \epsilon/2)^2 = 2\epsilon^2\}$ is a hypersphere, with constant positive curvature. A scaling and perturbation argument shows that is also has positive curvature for a general asymptotically hyperbolic metric (in $y$-normal coordinates) for sufficiently small $\epsilon$. 

The claim that this geodesic is the shortest between $(x_1, y_1)$ and $(x_2, y_2)$ follows from a homotopy argument and a simple length comparison. Any other geodesic between these two points must either be non-homotopic to the geodesic within $V'$, or reach a region with some nonnegative sectional curvature, or otherwise the homotopy argument of \cite[Theorem 6.11.3]{Jost} applies. But
if $\epsilon$ is sufficiently small, this means that the geodesic must reach the region
$$
\{ (x, y) \mid |y - y_0| \geq 1000 \epsilon \text{ or } x \geq 1000\epsilon \}.
$$
If this is so, trivial length estimates show that it must have greater length than the geodesic constructed within $V'$.  \end{proof}


\subsection{Structure of $\Lambda_+$ away from $\partial_{\diagz} \Lambda_+$ and $\partial_{\FF} \Lambda_+$}\label{subsec:Lambdastar}

We now divide $\Lambda_+$ into a union of two open sets. The first, denoted $\Lambda_+^{nd}$ (for `near-diagonal'), will be a neighbourhood of $\partial_{\diagz} \Lambda_+$ in which Proposition~\ref{prop:nd} applies, together with a neighbourhood of $\partial_{\FF} \Lambda_+$ for which Proposition~\ref{prop:near-ff} applies. The second, which we will denote $\Lambda_+^*$, will be an open set disjoint from $\partial_{\diagz} \Lambda_+$ and $\partial_{\FF} \Lambda_+$. We similarly divide $\tL_+$ into two open sets $\tL_+^{nd}$ and $\tL_+^*$. 
It remains to analyze $\Lambda_+^*$, or $\tL_+^*$. 

To do this, we shall view $\Lambda_+ \setminus \partial_{\FF} \Lambda_+$ as a union of (interior) forward leaves. Each leaf corresponds to an interior bicharacteristic (geodesic, in our case) of ${}^0T^* X$. We note that some of those geodesics stay uniformly close to the boundary of $X$ --- say, where $x \leq \epsilon$ along the entire geodesic. The forward leaves corresponding to such geodesics will lie wholly within the part of $\Lambda_+$ treated in Proposition~\ref{prop:near-ff}, for sufficiently small $\epsilon$. Thus, it remains to consider forward leaves for which the underlying geodesic reaches the region $x > \epsilon$. 

It turns out that it is undesirable to view such leaves as living over the blown-up space $X^2_0$. This is because there may well be geodesics in which the limiting forward direction $y_\infty \in \partial X$ is equal to the limiting backward direction $y_{-\infty}$. In this case, the `antidiagonal' corners of the leaf would return to the front face $\FF$ of $X^2_0$. However, there seems no reason to suppose that, at such points, the set $\Lambda_+$ (or $\tL_+ \subset T^* X^2_0$) would have  a nice structure  (such as being a submanifold).

To avoid such difficulties, we simply observe that the blowup of the boundary of the diagonal plays no useful role at the `antidiagonal' corners of the forward leaves, and should be avoided. Therefore, we will view this part $\Lambda_+^*$ of $\Lambda_+$ as living, not on ${}^\Phi T^* X^2_0$, but on ${}^0 T^* X \times {}^0 T^* X$. Moreover, we shall immediately pass to the shifted leaves, for which we have Lemma~\ref{lem:int-shifted-leaves} (note that the corresponding result is not true for the unshifted leaves --- see Remark~\ref{rem:conicsing}). We recall here that the shift on ${}^0 T^* X \times {}^0 T^* X$ takes the form 
\begin{equation}
q \mapsto q + dx/x + dx'/x'. 
\label{shiftXX}\end{equation}

To be more precise, we consider an shifted interior leaf (for brevity, we omit the `forward' in `forward leaf' from now on). It has five boundary hypersurfaces, labelled AB, BC, CD, DE and EA in Figure~\ref{fig:intleaf}.  Here, DE is the intersection of the leaf with $N^* \diagz$, AB is the intersection with ${}^\Phi\pi^{-1} \FR$,  BC is the intersection with ${}^\Phi\pi^{-1} \FL$, and CD, EA are the intersections with ${}^\Phi\pi^{-1} \FF$, which has two components labelled by the value of $\lambda \in \{ -1, 1 \}$. 

\begin{center}\begin{figure}
\includegraphics[width=0.8\textwidth]{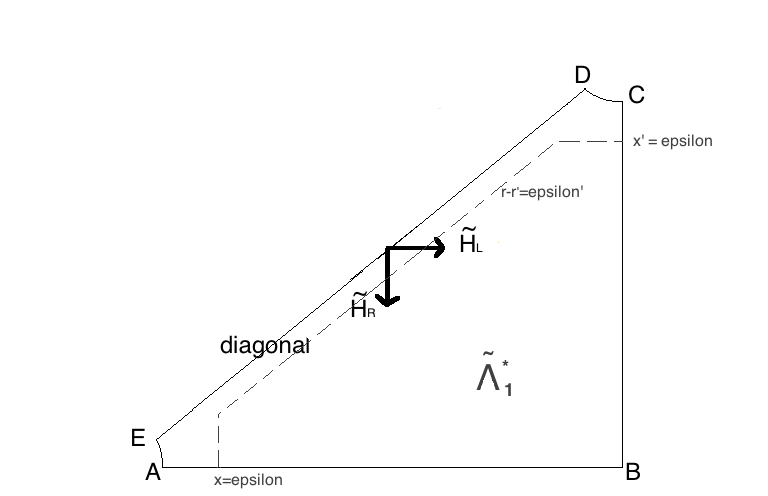}
\caption{\label{fig:intleaf}An shifted interior forward leaf. The part contained in $\tL_+^*$ is that part to the right and down from the dotted line.}
\end{figure}
\end{center}

%

Assume that the bicharacteristic contains a point where $x \geq \epsilon$ (otherwise, for $\epsilon$ sufficiently small, we may assume that the leaf is entirely contained in $\Lambda_+^{nd}$). 
We note that along each bicharacteristic, $\ddot x \leq 0$ for small $x$, say $x \geq \epsilon/2$. It follows that the part of each bicharacteristic where $x \leq \epsilon/2$ consists of two intervals, each containing one endpoint. 

We define $\tL_+^*$ to be the union, over all shifted leaves, of that part of the leaf to the right of the line $x \leq \epsilon/2$, below the line $x' \leq \epsilon/2$ and to the right of the line $r - r' = \epsilon'$ as indicated in the figure. By choosing $\epsilon, \epsilon'$ sufficiently small,  we arrange that $\tL_+^*$ contains $\tL_+ \setminus \tL_+^{nd}$. 

Notice that, in Figure~\ref{fig:intleaf}, horizontal motion represents motion in the left variables, and vertical motion represents motion in the right variables. Thus, the left Hamilton vector field $H^L$ restricted to this leaf is a horizontal vector field, pointing to the right, and the right Hamilton vector field $H^R$ restricted to this leaf is a vertical vector field,  pointing downward. 
(Notice that this means that $H^L - H^R$ is tangent to the diagonal, in accordance with Lemma~\ref{lem:commute}.) 
Because of this, our choice of $\tL_+^*$ also has the following property:
\begin{equation}
\text{Once a flow line of either $\tHL$ or $\tHR$ enters $\tL_+^*$, it stays in $\tL_+^*$ thereafter.}
\label{flowlineproperty}\end{equation}
This is useful in our analytic construction of the next section. The point is to make solving transport equations on $\Lambda_+$ as easy as possible. The initial condition for the transport equation on $\Lambda_+$ is at $\partial_{\diagz} \Lambda_+$, that is, in $\Lambda_+^{nd}$. As we move along the bicharacteristic, we may leave $\Lambda_+^{nd}$ and enter $\Lambda_+^*$. The procedure we follow is to cut off the solution in the overlap region $\Lambda_+^{nd} \cap \Lambda_+^*$ and complete the construction within $\Lambda_+^*$. Thus the condition \eqref{flowlineproperty} for each flow line in $\Lambda_+^*$ to stays in $\Lambda_+^*$ thereafter is to ensure that we do not have to repeatedly cut off and pass back to $\Lambda_+^{nd}$.

%
%


To summarize the results of this section, we have proved

\begin{proposition}\label{prop:flowout-structure}
The forward bicharacteristic relation \eqref{Lambda1def} 
can be expressed as the union of two relatively open subsets $\FBR^{nd} \cup \FBR^*$ , having the following properties. 
\begin{itemize}
\item Let $\Lambda_+^{nd}$ denote the lift of $\FBR^{nd}$ to ${}^\Phi T^* X^2_0$, together with its limit points lying over $\FF$, $\FL$ and $\FR$. 
Then this is a manifold $\Lambda_+^{nd}$ with codimension three corners, having the properties listed in Proposition~\ref{prop:nd} and Proposition~\ref{prop:near-ff}. The boundary hypersurfaces are  $\partial_{\diagz} \Lambda_+^{nd} = \Lambda_+^{nd} \cap N^* \diagz$ lying over the diagonal, and $\partial_{\FF}\Lambda_+^{nd}$, lying over $\FF$,  $\partial_{\FL}\Lambda_+^{nd}$, lying over $\FL$ and $\partial_{\FR}\Lambda_+^{nd}$, lying over $\FR$. 

\item The image $\tL_+^{nd}$ of $\Lambda_+^{nd}$ under the shift \eqref{tL1}  is a Lagrangian submanifold of $T^* X^2_0$ with codimension three corners, having the properties listed in Proposition~\ref{prop:nd} and Proposition~\ref{prop:near-ff}. The boundary hypersurfaces are $\partial_{\diagz} \tL_+^{nd} = \tL_+^{nd} \cap N^* \diagz$ lying over the diagonal, and $\partial_{\FF}\tL_+^{nd}$, lying over $\FF$,  $\partial_{\FL}\tL_+^{nd}$, lying over $\FL$ and $\partial_{\FR}\tL_+^{nd}$, lying over $\FR$. 

\item Let $\widetilde{\FBR^*} \subset T^* X^2$ denote the image of $\FBR^*$ under the shift \eqref{shiftXX}, and let $\tL_+^*$ denote $\widetilde{\FBR^*}$ together with its limit points in $T^* X^2$ lying over the left and right boundaries. Then $\tL_+^*$ 
 is a Lagrangian submanifold of $T^* X^2$  with codimension two corners. Moreover, flow lines of the vector fields $\tHL$ and $\tHR$ that enter $\tL_+^*$ remain in $\tL_+^*$.  
\end{itemize}

\end{proposition}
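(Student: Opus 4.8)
The plan is to \emph{assemble} the proposition from the structural results already proved in this section; the remaining work is bookkeeping, namely pinning down the two pieces $\FBR^{nd}$ and $\FBR^*$, checking that together they exhaust $\FBR$, and transferring the relevant local statements.

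First I would fix the decomposition. Let $U_{\diagz}$ be a neighbourhood of $N^*\diagz \cap \Sigma_L$ on which Proposition~\ref{prop:nd} applies, let $U_{\FF}$ be a neighbourhood of ${}^\Phi T^*_{\FF} X^2_0$ on which Proposition~\ref{prop:near-ff} applies, and set $\FBR^{nd} = \FBR \cap (U_{\diagz} \cup U_{\FF})$. For $\FBR^*$ I would use the shifted leaves of Section~\ref{subsec:Lambdastar}: with $\epsilon, \epsilon'$ chosen small enough (as arranged there) that $\tL_+^*$ contains $\tL_+ \setminus \tL_+^{nd}$, let $\FBR^*$ be the relatively open subset of $\FBR$ whose image under the shift \eqref{shiftXX} is the retained region $\widetilde{\FBR^*}$, i.e.\ the forward half of the interior leaves over geodesics meeting $\{ x > \epsilon \}$, cut by $\{ x \le \epsilon/2 \}$, $\{ x' \le \epsilon/2 \}$ and $\{ r - r' \ge \epsilon' \}$. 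To see $\FBR = \FBR^{nd} \cup \FBR^*$: a point of $\FBR$ lies on a leaf over an interior geodesic $\gamma$; if $\gamma$ stays in $\{ x < \epsilon \}$ the whole leaf lies in $U_{\FF}$ by Proposition~\ref{prop:near-ff}, while if $\gamma$ reaches $\{ x > \epsilon \}$ then, using the convexity $\ddot x \le 0$ for small $x$ recorded in Section~\ref{subsec:Lambdastar}, the part of the leaf outside $U_{\diagz} \cup U_{\FF}$ is exactly the part retained by the cutoffs, hence in $\FBR^*$.

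For the first two bullets, the manifold-with-corners statement and the projection behaviour of $\Lambda_+^{nd}$ near $\partial_{\diagz}\Lambda_+$ are Proposition~\ref{prop:nd}, and near $\partial_{\FF}\Lambda_+$ --- including the corners where $\FF$ meets $\FL$ and $\FR$ --- they are Proposition~\ref{prop:near-ff}, whose conclusion $\Psi = -\log\rho_L - \log\rho_R + \tilde\Psi$ with $\tilde\Psi$ smooth shows in particular that $\Lambda_+^{nd}$ is smooth up to $\FL$ and $\FR$ in ${}^\Phi T^* X^2_0$ (since $d\rho_L/\rho_L$ and $d\rho_R/\rho_R$ are smooth sections there), while the finite-time arrival at $\rho_L = 0$ or $\rho_R = 0$ proved there yields the codimension-three corner structure inherited from $X^2_0$. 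For the shifted version $\tL_+^{nd}$: away from $\FL$ and $\FR$ the shift \eqref{tL1} is a local symplectomorphism of cotangent bundles, so the structure is unchanged; near $\FL$ or $\FR$ the relevant input is Lemma~\ref{lem:shifted-vf}, which gives that $\tHL/\rho_L$ and $\tHR/\rho_R$ lift to smooth vector fields on $T^* X^2_0$, transverse to $\FL$ resp.\ $\FR$ and tangent to the remaining boundary hypersurfaces. Flowing out by $\tHL/\rho_L + \tHR/\rho_R$ from $\partial_{\diagz}\tL_+^{nd}$ therefore produces a smooth submanifold with codimension-three corners, and it is Lagrangian because it is the image of the Lagrangian $\Lambda_+^{nd}$ under the symplectic shift $T$.

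For the third bullet, Lemma~\ref{lem:int-shifted-leaves} says that the union $\tL$ of shifted leaves over geodesics meeting $\{ x > \epsilon \}$ is a p-submanifold of $T^* X \times T^* X = T^* X^2$ transverse to every boundary hypersurface, hence a manifold with corners of codimension at most two (the codimension of the corners of $X^2$); crucially this is carried out in $T^* X^2$, where the troublesome `antidiagonal' returns to $\FF$ do not arise. Then $\tL_+^*$ is obtained from $\tL$ by passing to the forward half $\{ r \ge r' \}$ and imposing the three cutoffs; since $\{ x = \epsilon/2 \}$, $\{ x' = \epsilon/2 \}$ and $\{ r - r' = \epsilon' \}$ are mutually transverse and transverse to $\FL$ and $\FR$, and the retained region avoids $N^*\diag$ (by $r - r' \ge \epsilon'$), $\tL_+^*$ is again a manifold with codimension-two corners, and it is Lagrangian as an open subset of $\tL$, which is the shift of an open subset of the flowout. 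The invariance property \eqref{flowlineproperty} is what was checked in Section~\ref{subsec:Lambdastar}: on a leaf, $\tHL$ and $\tHR$ are the horizontal and vertical fields of Figure~\ref{fig:intleaf}, both increasing $r - r'$ and pushing across each cutoff hypersurface into the retained region, so a flow line of either field that enters $\tL_+^*$ cannot leave.

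I expect the only genuine difficulty to be the covering/overlap step: verifying that a single choice of $\epsilon, \epsilon'$, uniform over $\partial X$ and over all geodesics, really makes $\FBR^{nd}$ and $\FBR^*$ exhaust $\FBR$ with the stated overlap, and confirming that after taking the forward half and imposing the artificial cutoffs one still has a bona fide manifold with corners. Both follow from the convexity $\ddot x \le 0$ near the boundary and the transversality statements already established, but they are where care is needed; everything else is a direct appeal to Propositions~\ref{prop:nd} and \ref{prop:near-ff} and Lemmas~\ref{lem:shifted-vf} and \ref{lem:int-shifted-leaves}.
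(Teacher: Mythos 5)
Your proposal is correct and follows essentially the same route as the paper: Proposition~\ref{prop:flowout-structure} is stated there as a summary of Section~\ref{sec:flowout}, with the decomposition, cutoffs and flow-line property taken verbatim from Section~\ref{subsec:Lambdastar} and the structural claims supplied by Propositions~\ref{prop:nd} and \ref{prop:near-ff} and Lemmas~\ref{lem:shifted-vf} and \ref{lem:int-shifted-leaves}, exactly as you assemble them. Your added care about the exhaustion $\FBR = \FBR^{nd} \cup \FBR^*$ and the corner structure after the cutoffs is consistent with (indeed slightly more explicit than) the paper's own discussion.
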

 
\begin{remark}\label{rem:comparison} In the case studied by Melrose, S\'{a} Barreto and Vasy \cite{Melrose-Sa Barreto-Vasy}, a small perturbation of the standard metric on hyperbolic space, the situation where the initial and final directions of an interior geodesic coincide cannot occur. Thus, in their case, they do not need to decompose the Lagrangian $\Lambda_+$ into pieces. In fact, in that case, the Lagrangian $\Lambda_+$ is globally given by the graph of the differential of the distance function (except at $\partial_{\diagz} \Lambda_+$ where the structure is given by Proposition~\ref{prop:nd}). If we make the assumption here that this situation does not occur, then it is not necessary to decompose $\tL_+$, which will be a manifold with corners of codimension 3 in $T^* X^2_0$. 
\end{remark}

\begin{remark}\label{rem:conicsing} The statement of Lemma~\ref{lem:int-shifted-leaves} does not hold for the unshifted leaves. In fact, all the bicharacteristics with a given final direction $y_\infty \in \partial X$ meet at the point $(x=0, y_\infty, \lambda = -1, \mu = 0)$. Consequently, the leaves are not disjoint, and there may be a conic singularity at the site where different leaves, viewed in ${}^0 T^* X \times {}^0 T^* X$,  intersect. This conic singularity is eliminated by blowup of the sets $\{ x = 0, \lambda = \pm 1, \mu = 0\}$ and $\{ x' = 0, \lambda' = \pm 1, \mu' = 0\}$. By passing to the shifted leaves and viewing them on the standard cotangent bundle, we are implicitly performing such a blowup. Passing to the b-cotangent bundle, as was done in \cite{Melrose-Sa Barreto-Vasy}, amounts to blowing up the larger sets $\{ x = 0,  \mu = 0\}$ and $\{ x' = 0,  \mu' = 0\}$ which also resolves the conic singularity. 

Similar conic singularities appear in the construction of the resolvent on asymptotically conic spaces at high energy \cite{Hassell-Wunsch}. Indeed, there they play a much greater role, and cannot be avoided just by passing to a different cotangent bundle as we do here. 

We also remark that the shift followed by passing to a different cotangent bundle is very strongly analogous to the procedure in \cite[Lemma 3.3]{Hassell-Wunsch-2005},
where one passes from the \emph{quadratic} scattering cotangent bundle to the usual  scattering cotangent bundle. 
\end{remark}


\section{Full parametrix and resolvent at high energy}\label{sec:parametrix}
We now construct a parametrix for the resolvent kernel, or more precisely the limit of the resolvent on the spectrum, as we approach it from above or below. We proceed in stages. The general idea, as outlined in Section~\ref{subsec:outline}, is to solve the PDE
\begin{equation}
P_h G_h = \delta,
\label{PGd}\end{equation}
where $\delta$ is the kernel\footnote{In this section, `kernel' always means `Schwartz kernel', not `nullspace'.} of the identity operator, that is, the delta function supported at the diagonal $\diag \subset X^2$, times the Riemannian half-density in both factors, which for brevity we write  $|dg dg'|^{1/2}$. (Note that we will take our operators to act on half-densities, which we can always identify with functions via the Riemannian half-density.) 

There is more than one solution to \eqref{PGd}. To specify a unique solution, we impose a microlocal condition. Notice that $\delta$ is a Lagrangian distribution associated to $N^* \diagz$ (when viewed as living on the $0$-double space $X^2_0$), while $P$, viewed as acting in the left variables, is a real principal type operator whose (semiclassical) characteristic variety is $\Sigma_L$, that intersects 
$N^* \diagz$. General microlocal theory (H\"ormander's propagation of singularities theorem, in its semiclassical version --- see \cite[Section 12.3]{zworski})  says that any microlocal solution of \eqref{PGd} in a microlocal neighbourhood of $N^* \diagz$ will have (semiclassical) wavefront set contained in $N^* \diagz$ together with the bicharacteristic flowout from $N^* \diagz \cap \Sigma_L$ (either forward or backward in time, or possibly both, perhaps depending on the location within $N^* \diagz \cap \Sigma_L$). We impose the microlocal condition that the semiclassical wavefront set is contained only in the \emph{forward}  flowout from $N^* \diagz \cap \Sigma_L$, which gives a unique solution. At the end of the construction, by comparing our result to the construction of Mazzeo-Melrose, we shall see that we have constructed the outgoing resolvent $R(n/2 - i/h) = (h^2 \Delta - h^2 n^2/4 - (1 - i0))^{-1}$. In this way, we avoid a priori considerations concerning which solution we should choose to obtain the outgoing resolvent. 

In our construction, the parametrix $G$ will be built up in several stages. These intermediate operators will be denoted $G_1, G_2, \dots$ and the corresponding error terms $P G_i - \delta$ will be denoted $E_i$. Our goal is to construct $E_i$ that is as `small' as possible: in particular, we want $\Id + E_i$ (thinking of $E_i$ as the kernel of an operator) to be invertible, and to have a good understanding of the kernel of the inverse.

\subsection{Elliptic construction}

In the first stage, we use a pseudodifferential construction to solve away the symbol of $\delta$ on $N^* \diagz$ away from the propagating region (that is, away from $\Sigma_L$). This will solve away the singularities of $\delta$ completely for $h > 0$, but there will still be a compact region of semiclassical wavefront set to be solved away, which will happen in the second stage. 

The operator $P = P_h$ is a semiclassical 0-differential operator with symbol $g^{ij} \zeta_i \zeta_j - 1$ (in local coordinates in the interior) or $\lambda^2 + g_0^{ij} \mu_i \mu_j - 1$ (near the boundary). Thus the symbol is elliptic for $|\zeta|^2_g \geq 2$, or $\lambda^2 + |\mu|_{g_0}^2 \geq 2$. We find an elliptic parametrix for $P_h$ in this region: that is, a semiclassical 0-pseudodifferential operator $G_1$, say with symbol supported where $|\zeta|^2_g \geq 3/2$, or $\lambda^2 + |\mu|_{g_0}^2 \geq 3/2$ such that the symbol of $E_1 = P G_1 - \Id$ is order $-\infty$ for $|\zeta|^2_g \geq 2$ or $\lambda^2 + |\mu|_{g_0}^2 \geq 2$ (see \cite[Section 4.7]{zworski}). We may assume that the kernel of $E_1$ is supported in any given neighbourhood of $\diagz$. We can view $E_1$ as a pseudodifferential operator, or alternatively, as a Lagrangian distribution in $I^{0, -\infty}(N^* \diagz)$ with semiclassical order $0$ and differential order $-\infty$; indeed, it has compact microsupport. Thus the kernel is smooth for $h > 0$, but not uniformly as $h \to 0$.

\subsection{Intersecting Lagrangian construction}

We next look for a $G_2'$ that solves away the error at $N^* \diagz$ completely. To do this, we use the Melrose-Uhlmann construction (or rather its semiclassical version --- see Appendix~\ref{sec:appB}). Thus we look for an intersecting Lagrangian distribution $G_2'$ associated to the pair of Lagrangian submanifolds $(N^* \diagz, \Lambda_+^{nd})$ (here we work microlocally near $N^* \diagz$, and will cut off the symbol on $\Lambda_+$ so that it is supported in $\Lambda_+^{nd}$). 

We note that the geometric conditions for $(N^* \diagz, \Lambda_+^{nd})$ together with the Hamilton vector field $H^L$ in Theorem~\ref{principal symbol semiclassical intersecting} are satisfied. Namely, the submanifolds $(N^* \diagz, \Lambda_+^{nd})$ meet cleanly, and the Hamilton vector field is transverse to $N^* \diagz$ and tangent to $\Lambda_+^{nd}$. Of course, we defined $\Lambda_+$ as the bicharacteristic flowout from $N^* \diagz \cap \Sigma_L$ precisely so that this is true. The key point is the non-tangency of $H^L$ to $N^* \diagz$ at its intersection with $\Sigma_L$, which was checked in Lemma~\ref{lem:commute}.

The Melrose-Uhlmann construction (or its semiclassical version described in Appendix~\ref{sec:appB}) shows that there is $G_2' \in I^{1/2}(N^* \diagz, \Lambda_+^{nd})$ such that  $E_2 = P G_2' + E_1$ is microsupported away from $N^* \diagz$. In fact we will have $E_2 \in I^{-1/2}(\Lambda_+^{nd})$ where this error comes from cutting off the symbol outside a neighbourhood of $N^* \diagz$. 

Defining $G_2 = G_1 + G_2'$, we have $P G_2 - \delta = E_2$, where $E_2$ is as just described. 

\subsection{Solving away errors on $\Lambda_+$}
The third stage is to solve away the errors on $\Lambda_+$ completely. The error term is a Lagrangian distribution on $\Lambda_+$, microsupported near, but not at,  $N^* \diagz$. 

We solve this error away by iteratively solving transport equations along $\Lambda_+$, using \eqref{vanishing-symbol-calculus}. There is no difficulty in doing this on the interior of $\Lambda_+$. Indeed, the nontrapping assumption on $(M, g)$ is equivalent to pseudoconvexity of the operator in the sense of \cite[Section 6.5]{Duistermaat-Hormander-Acta-1972}, guaranteeing that we can find global parametrices. We use the symbol calculus for Lagrangian distributions. To solve away the error $E_2 \in I^{-1/2}(\Lambda_+)$, at least microlocally, we find $G_{3, 1}^{nd} \in I^{1/2}(\Lambda_+^{nd})$ satisfying the transport equation\footnote{Note that the subprincipal symbol of the Laplacian is zero, so the zeroth order term in \eqref{vanishing-symbol-calculus} vanishes.}
\begin{equation*}
\mathcal{L}_{H^L} g_{3,1}^{nd}  = \sigma^{-1/2}(E_2),
\end{equation*}
which we solve `forward' along the bicharacteristics (so that the support of $g_{3,1}^{nd}$ is forward along the bicharacteristic relative to the symbol of $E_2$).  We then cut off this symbol within $\Lambda_+^{nd} \cap \Lambda_+^*$, and then find $G_{3,1}^*$ with symbol $g_{3,1}^*$ solving away the resulting error. (Here we make use of \eqref{flowlineproperty}, guaranteeing that when we solve this error away, we remain in $\Lambda_+^*$ microlocally.) Let $G_{3,1} = G_{3,1}^{nd} + G_{3,1}^*$. 
This reduces the order of the error at $\Lambda_+$ to $-3/2$. Inductively, given an error term $E_{2, k}$ in $I^{-k-1/2}$, we can solve this away with a term $G_{3, k} \in I^{-k+1/2}(\Lambda_+^{nd}) + I^{-k+1/2}(\Lambda_+^*)$ satisfying the transport equation
\begin{equation}
\mathcal{L}_{H^L} g_{3,1}  = \sigma^{-k-1/2}(E_{2,k}),
\label{trl}\end{equation}
reducing the error term to $E_{2, k+1} \in I^{-(k+1) - 1/2}(\Lambda_+)$. 

This works perfectly on compact subsets of the interior of $\Lambda_+$, but we must address the regularity of the symbol at the boundary. To this end, let $G_3'$ be an asymptotic sum of the $G_{3, j}$.
At least microlocally on compact subsets of the interior of $\Lambda_+$, this solves away the error term $E_2$. 
We define the kernel  $K$ by 
\begin{equation}
K = \rho_L^{1/2} \rho_R^{1/2}  e^{i \log \rho_L/h } e^{i \log \rho_R/h } G_3'.
\label{Kdefn}\end{equation}


\begin{proposition}\label{prop:K} The kernel $K$ is $\rho_F^{-(n+1)/2}$ times a smooth Lagrangian distribution associated to $\tL_+$, in the sense that its symbol is $\rho_F^{-(n+1)/2}$ times a smooth function on $\tL_+$ times a smooth half-density on $\tL_+$. That is, 
$$
\rho_F^{(n+1)/2} K \in  I^{1/2}(X^2_0, \tL_+^{nd}; \Omega^{1/2}) + I^{1/2}(X^2, \tL_+^*; \Omega^{1/2}). 
$$
\end{proposition}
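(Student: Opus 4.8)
The plan is to track the effect of the change of variables \eqref{Kdefn} on the three ingredients of a Lagrangian distribution: the underlying Lagrangian, the phase, and the symbol (including the half-density factor and the overall power of $h$). We have by construction $G_3' \in I^{1/2}(X^2_0, \Lambda_+^{nd}; {}^0\Omega^{1/2}) + I^{1/2}(X^2_0, \Lambda_+^*; {}^0\Omega^{1/2})$, where the Lagrangians $\Lambda_+^{nd}$, $\Lambda_+^*$ live in ${}^\Phi T^* X^2_0$ (respectively in ${}^0 T^* X \times {}^0 T^* X$ for the starred piece); note the half-densities are $0$-half-densities. The multiplication by $\rho_L^{1/2}\rho_R^{1/2}$ converts the $0$-half-density bundle ${}^0\Omega^{1/2}$, which carries the divergent weights $(\rho_{\FL}\rho_{\FR}\rho_{\FF})^{-(n+1)/2}$ relative to $\Omega^{1/2}$ by \eqref{Omega}, partway towards the ordinary half-density bundle; I will check that after this multiplication the remaining singular weight is exactly $\rho_{\FL}^{-(n+1)/2 + 1/2}\rho_{\FR}^{-(n+1)/2 + 1/2}\rho_{\FF}^{-(n+1)/2}$ and then see that the oscillatory factors contribute the missing $\rho_{\FL}$, $\rho_{\FR}$ powers in the symbol sense — this is exactly the bookkeeping that makes $\tL_+$ rather than $\Lambda_+$ the correct Lagrangian. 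More precisely, the factor $e^{i\log\rho_L/h}e^{i\log\rho_R/h}$ is an oscillatory factor whose phase is $\log\rho_L + \log\rho_R$, so multiplying by it pulls back the Lagrangian by the symplectic shift $q \mapsto q - d(\log\rho_L + \log\rho_R) = q - d\rho_L/\rho_L - d\rho_R/\rho_R$; comparing with \eqref{tL1} and \eqref{shiftXX} this is precisely the transformation carrying $\Lambda_+$ (resp. $\FBR^*$) to $\tL_+$ (resp. $\tL_+^*$). Since the shift is a symplectomorphism of the relevant cotangent bundles, it preserves the class of Lagrangian distributions and their differential/semiclassical orders, and merely transports the symbol.

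Next I would handle the regularity of the resulting symbol up to the boundary faces. On $\tL_+^{nd}$, Proposition~\ref{prop:nd} and Proposition~\ref{prop:near-ff} give that $\tL_+^{nd}$ is a smooth manifold with corners of codimension three in $T^* X^2_0$, transverse to $\FF$, $\FL$, $\FR$ in the appropriate senses, with the distance-function description $\Psi = -\log\rho_L - \log\rho_R + \tilde\Psi$, $\tilde\Psi$ smooth; on $\tL_+^*$ the analogue holds with codimension two corners by Proposition~\ref{prop:flowout-structure}. The key regularity input is Lemma~\ref{lem:shifted-vf}: the shifted Hamilton vector fields $\tHL/\rho_L$ and $\tHR/\rho_R$ are smooth on $T^* X^2_0$, transverse to $\FL$ (resp. $\FR$) and tangent to the other faces. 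This means the transport equations \eqref{trl} governing the symbol of $G_3'$, when rewritten after the shift, become transport equations along $\tHL/\rho_L$ and $\tHR/\rho_R$ — smooth, transverse-to-the-boundary vector fields — so the symbol extends smoothly up to $\FL$ and $\FR$ with the appropriate behaviour, provided the initial data at $\partial_{\diagz}\tL_+$ (supplied by the Melrose-Uhlmann construction near $N^*\diagz$, which is a standard compactly-supported object) is smooth. Here I would invoke \eqref{flowlineproperty} to ensure the iterative solution stays in $\tL_+^*$ and never needs to be cycled back, so the asymptotic summation producing $G_3'$ respects the boundary structure. The only face at which a genuine singular weight survives is $\FF$, and there the normal-operator structure (the fibrewise hyperbolic geometry of Proposition~\ref{prop:LambdaFF}, and the identification $\Omegazh(X^2_0) = (\rho_{\FL}\rho_{\FR}\rho_{\FF})^{-(n+1)}\Omega(X^2_0)$) pins the weight down to exactly $\rho_F^{-(n+1)/2}$ after the $\rho_L^{1/2}\rho_R^{1/2}$ normalization, which is where the claimed factor $\rho_F^{-(n+1)/2}$ comes from. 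Assembling these, $\rho_F^{(n+1)/2} K$ has a symbol that is a smooth function times a smooth half-density on $\tL_+^{nd}$ (resp. $\tL_+^*$), i.e. lies in $I^{1/2}(X^2_0, \tL_+^{nd}; \Omega^{1/2}) + I^{1/2}(X^2, \tL_+^*; \Omega^{1/2})$, with the order $1/2$ inherited unchanged from $G_3' \in I^{1/2}$.

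The main obstacle I anticipate is the careful verification that the three weight contributions — the $0$-to-ordinary half-density conversion via \eqref{Omega}, the boundary vanishing/growth forced by the oscillatory factors $e^{i\log\rho_L/h}$, and the symbol's own boundary behaviour dictated by Lemma~\ref{lem:shifted-vf} — combine to leave precisely $\rho_F^{-(n+1)/2}$ at $\FF$ and nothing singular at $\FL$, $\FR$. In particular one must check that the oscillatory factor, while formally contributing $d\rho_L/\rho_L$ to the phase and hence shifting the Lagrangian, interacts correctly with the symbol's own $\rho_L$-dependence so that no net positive or negative power of $\rho_L$ is left over in the symbol class; this is where the transversality clause of Lemma~\ref{lem:shifted-vf} ($\tHL/\rho_L$ transverse to $\FL$) does the real work, since it guarantees the transported symbol is a nonvanishing smooth multiple of a reference half-density right up to $\FL$. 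A secondary technical point is checking that the decomposition into $\tL_+^{nd}$ and $\tL_+^*$ pieces, and the cutting-off in the overlap $\Lambda_+^{nd}\cap\Lambda_+^*$, does not introduce errors outside the claimed class — but this follows from \eqref{flowlineproperty} together with the fact that cutoffs act within the Lagrangian distribution calculus without changing the order.
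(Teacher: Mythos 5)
Your outline correctly identifies the two geometric inputs (the shift implemented by the oscillatory factor in \eqref{Kdefn}, and the transversality statement of Lemma~\ref{lem:shifted-vf}), but as written it has a genuine gap at the right face. You assert that the transport equations \eqref{trl}, ``rewritten after the shift, become transport equations along $\tHL/\rho_L$ \emph{and} $\tHR/\rho_R$.'' The construction of $G_3'$ only imposes the \emph{left} transport equations, and $\tHL/\rho_L$ is merely tangent to $\partial_{\FR}T^*X^2_0$, so transversality at $\FL$ buys you nothing at $\FR$ (and likewise in regions 2b and 4b). That the symbol also satisfies the right transport equation is a nontrivial fact: the paper proves it as Lemma~\ref{lem:symm}, showing the partial sums are microlocally symmetric by a uniqueness/propagation argument (the homogeneous transport equation forces any discrepancy to propagate into the backward flowout, which is excluded by the nontrapping hypothesis and the absence of periodic geodesics). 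Without this step, or some substitute for it, you cannot conclude smoothness of the symbol up to $\FR$.

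The second weak point is that your argument operates only at the level of the principal transport equation, whereas the statement concerns the full symbol (all orders in $h$) being smooth up to the corners. The paper's mechanism is different and stronger: it conjugates $P_L$ as in \eqref{QlPl} to an operator $Q_L$ and shows by explicit computation (Lemma~\ref{lem:Ql}, and its analogues in regions 3, 4a, 5) that after dividing by $\rho_L$ the conjugated operator has \emph{smooth} coefficients --- three cancellations that occur precisely because the conjugating power is $(\rho_L\rho_R)^{n/2}$ together with the factor $e^{i\log(\rho_L\rho_R)/h}$, i.e.\ because $n/2 \pm i/h$ are the indicial roots. One then extends $Q_L$ (and $Q_R$, via Lemma~\ref{lem:symm}) smoothly across $\FL$, $\FR$, and extends $K$ across the boundary as a Lagrangian solution of $Q_LK=O(h^\infty)$; smoothness of the whole symbol, and the residual weight $\rho_F^{-(n+1)/2}$ at $\FF$, then follow from the standard interior theory. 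Your ``bookkeeping'' paragraph anticipates exactly this issue but does not supply the cancellation: transversality of $\tHL/\rho_L$ alone does not rule out negative powers of $\rho_L$ appearing in the subprincipal and lower-order terms of the transport hierarchy, which is where the argument would fail if the exponent were anything other than $n/2 - i/h$. To close the gap you need either the conjugation computation of Lemma~\ref{lem:Ql} (so that the problem extends across the boundary) or an equally explicit verification that every equation in the symbol hierarchy has coefficients smooth down to $\FL$, $\FR$, $\FF$.
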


\begin{definition}
We define the space $I^{k}(X^2_0, \Lambda_+^{nd}; \Omegazh)$ by
$$
I^{k}(X^2_0, \Lambda_+^{nd}; \Omegazh) = \rho_F^{-(n+1)/2} (\rho_L \rho_R)^{-(n+1)/2-i/h} I^{k}(X^2_0, \tL_+^{nd}; \Omega^{1/2}).
$$
\end{definition}

\begin{remark} This definition takes account of the relation between the $0$-density bundle and the standard density bundle in \eqref{Omega}. 
\end{remark}

We can thus write the conclusion of Proposition~\ref{prop:K} in terms of $G$ and the geometrically more natural 0-half-densities:  
\begin{equation}
G_3' \in (\rho_L \rho_R)^{n/2-i/h} I^{1/2}(X^2_0, \tL_+^{nd}; \Omegazh) + (xx')^{n/2 - i/h} I^{1/2}(X^2, \tL_+^*; \Omegazh).
\end{equation}


The proof of this proposition will occupy the rest of this subsection. 
To prove the smoothness statement in the proposition we will  need to use the right as well as the left transport equation, so we first show

\begin{lemma}\label{lem:symm}
Consider the sum $S_{3, N}$ of the first $N$ terms of the $G_{3, k}$ defined above. Then the kernel of $S_{3, N}$ is microlocally symmetric, in the sense if $S^t_{3, N}$ denotes the transpose of $S_{3, N}$, we have, over the interior $X^\circ$ of $X$, 
$$
S_{3, N} - S^t_{3, N} \in I^{1/2 - N}(N^* \diagz, \Lambda_+).
$$
\end{lemma}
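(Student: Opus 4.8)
The plan is to exploit the fact that $P_h = h^2\Delta - h^2 n^2/4 - 1$ is (formally) self-adjoint, so that the kernel $\delta$ of the identity is symmetric, and the symmetry of the parametrix can only be broken by the choice of a preferred (forward) direction along the bicharacteristics. Concretely, I would argue by induction on $N$. The base case is $S_{3,0}=G_2'$: the Melrose--Uhlmann intersecting Lagrangian parametrix for $(N^*\diagz,\Lambda_+^{nd})$ is constructed so that its principal symbol on $N^*\diagz$ is that of $\delta$ (times the inverse of $p_L$ near the intersection), which is symmetric; the only asymmetry is in the leading symbol on $\Lambda_+$, which is of order $1/2$. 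Hence $G_2' - (G_2')^t \in I^{1/2}(N^*\diagz,\Lambda_+)$, and since the symbols on $N^*\diagz$ agree, the difference actually lies in $I^{-1/2}(\Lambda_+)$ — but for the statement we only need $I^{1/2}$. Wait, let me keep it clean: at level $N$ we claim the difference is in $I^{1/2-N}$.

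For the inductive step, suppose $S_{3,N-1}-S^t_{3,N-1}\in I^{1/2-(N-1)}(N^*\diagz,\Lambda_+)$ over $X^\circ$. Apply $P_L$: since $P_h$ is self-adjoint, $P_R(S^t_{3,N-1}) = (P_L S_{3,N-1})^t = (E_{\text{error}})^t$, so the errors $E$ and $E^t$ differ, as Lagrangian distributions on $\Lambda_+$, by a term of order $1/2-(N-1)-1 = -1/2-N$ (applying $P_L$, a semiclassical operator of order $0$ whose symbol vanishes simply on $\Sigma_L\supset\Lambda_+$, lowers the Lagrangian order by one, by \eqref{vanishing-symbol-calculus}). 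Now the key point: the transport equation \eqref{trl} for the left symbol $g_{3,N}$, solved forward along $H^L$, has as its right-hand side $\sigma(E_{2,N})$; the \emph{same} kernel, when we instead solve the right transport equation along $H^R$ forward, produces a symbol whose leading part agrees with that of $g_{3,N}$ up to the order-$(-1/2-N)$ ambiguity just identified. Because $H^L - H^R$ is tangent to $N^*\diagz$ (Lemma~\ref{lem:commute}(iv)) and the two vector fields commute (Lemma~\ref{lem:commute}(i)), the forward flowouts used in the left and right constructions trace out the same leaves of $\Lambda_+$ with compatible parametrizations, so the transpose of the left-transport solution solves the right-transport equation up to an error one order lower. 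Adding $G_{3,N}$ to $S_{3,N-1}$ and its transpose to $S^t_{3,N-1}$ therefore cancels the order-$(3/2-N)$ part of the difference, leaving $S_{3,N}-S^t_{3,N}\in I^{1/2-N}(N^*\diagz,\Lambda_+)$.

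The main obstacle I anticipate is making precise the claim that "the transpose of the forward left-transport solution is the forward right-transport solution modulo lower order." This requires care with (i) the identification of the symbol bundle of a Lagrangian distribution with half-densities on $\Lambda_+$ and how transposition acts on it — in particular checking that transposition intertwines $\mathcal{L}_{H^L}$ and $\mathcal{L}_{H^R}$ on the symbol, using that $H^L$ and $H^R$ are exchanged under the swap of left and right factors composed with the fibre-negation defining $\FBR$; and (ii) verifying that "forward" for $H^L$ corresponds to "forward" for $H^R$ under this operation, which is exactly the sign bookkeeping recorded in the footnote after Lemma~\ref{lem:commute} (the flowout is forward, not backward, for $H^R$ because of the sign change in the right fibre variable). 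Once that symmetry of the transport problem is established, the induction is routine. I would also note that the restriction to the interior $X^\circ$ in the statement lets me avoid any subtlety at $\FF$, $\FL$, $\FR$ at this stage — the boundary behaviour is handled separately in Proposition~\ref{prop:K}.
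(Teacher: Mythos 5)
Your strategy (formal self-adjointness of $P_h$ plus the left/right symmetry of the flowout) points in the right direction, but the inductive step conceals the actual content of the lemma and, as written, does not close. The identity $P_R S^t_{3,N}=(P_L S_{3,N})^t$ only says that the transpose of your left parametrix is a good \emph{right} parametrix — that direction is automatic. What is needed, and what you do not establish, is the converse: that the left-constructed $S_{3,N}$ is itself an approximate \emph{right} parametrix (equivalently, that $S^t_{3,N}$ approximately solves the left equation), after which one can invoke uniqueness of the forward solution of $P_L u-\delta\in I^{-1/2-N}(N^*\diagz,\Lambda_+)$ in $I^{1/2}(N^*\diagz,\Lambda_+)$ modulo $I^{1/2-N}$. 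This is precisely where the paper's proof does its work: one applies $P_L$ to $P_R S_{3,N}-\delta$, uses $[P_L,P_R]=0$ and $P_L\delta=P_R\delta$ to see the result lies in $I^{-N-3/2}(\Lambda_+)$, concludes that the leading symbol of $P_R S_{3,N}-\delta$ on $\Lambda_+$ satisfies the \emph{homogeneous} transport equation $\mathcal{L}_{H^L}\sigma=0$, and then kills it by propagation: a nonzero solution would be nonzero along an entire bicharacteristic and hence produce wavefront set in the \emph{backward} flowout, contradicting the forward microsupport of $S_{3,N}$ and $\delta$ (here nontrapping is used, so that the backward flowout does not meet the forward one). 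Iterating, and using ellipticity on $N^*\diagz\setminus\Sigma_L$, gives $P_R S_{3,N}-\delta\in I^{-1/2-N}(N^*\diagz,\Lambda_+)$; only then do transposition and uniqueness yield the lemma. Your appeal to ``compatible parametrizations of the leaves'' and the intertwining of $\mathcal{L}_{H^L}$ and $\mathcal{L}_{H^R}$ under transposition replaces this propagation-plus-uniqueness argument with an assertion — the very point you flag as the main obstacle — and it is not a bookkeeping matter but the heart of the proof.

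There is also a problem with the mechanics of your induction. With the orders used in the construction ($G_{3,k}\in I^{-k+1/2}$ on $\Lambda_+$), the correction $G_{3,N}-G^t_{3,N}$ already lies in the target class $I^{1/2-N}$ and therefore cannot ``cancel the order-$(3/2-N)$ part'' of $S_{3,N-1}-S^t_{3,N-1}$; and even under the more generous indexing where such a cancellation is order-consistent, the claimed cancellation \emph{is} the nontrivial statement that the leading antisymmetric symbol vanishes. That vanishing does not follow from matching the left and right transport problems leaf by leaf (initial data at $\partial_{\diagz}\Lambda_+$ and the successive inhomogeneous terms would themselves have to be known symmetric, which is circular); it follows, as above, from showing the antisymmetric leading symbol solves the homogeneous transport equation and has only forward microsupport. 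I would recast your argument accordingly: prove $P_R S_{3,N}-\delta$ is of order $-1/2-N$ via the commutator trick and the propagation argument, then conclude by the uniqueness of the forward intersecting-Lagrangian solution modulo $I^{1/2-N}(N^*\diagz,\Lambda_+)$.
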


\begin{proof}
We have, with $P_L$ indicating the operator $P$ in the left variables, acting on $X^2_0$, 
$$
P_L S_{3, N} - \delta = E^{3, N+1} \in I^{-N-3/2}(\Lambda_+).
$$
We apply $P_R$, the same operator in the right variables:
$$
P_R P_L S_{3, N} - P_R \delta = P_R (E^{3, N+1}) \in I^{-N-3/2}(\Lambda_+).
$$
We know that $P_R$ and $P_L$ commute, as differential operators on $X^2_0$. Also, $P_R \delta$ is the Schwartz kernel of $P^* = P$, where $P^*$ is the formal adjoint. Also, $P_L \delta$ is the Schwartz kernel of $P$. So $P_R \delta = P_L \delta$. We deduce that 
\begin{equation}
P_L (P_R S_{3, N} - \delta) = P_R (E^{3, N+1}) \in I^{-N-3/2}(\Lambda_+).
\label{PlPr}\end{equation}
Since $S_{3, N}$ is an element of $I^{1/2}(N^* \diagz, \Lambda_+)$, so also $P_R S^t_{3, N} - \delta$ is an element of $I^{1/2}(N^* \diagz, \Lambda_+)$. Equation \eqref{PlPr} says that after applying $P_L$, the order at $\Lambda_+$ is reduced to $-N-3/2$. Therefore, the leading symbol $\sigma(P_R S_{3, N} - \delta)$ of $P_R S_{3, N} - \delta$ on $\Lambda_+$,  of order $1/2$, must satisfy the transport equation 
\begin{equation}
\mathcal{L}_{H^L} \sigma(P_R S_{3, N} - \delta)  = 0 .
\label{tr-eqn-sigma}\end{equation}
Suppose that $\sigma(P_R S_{3, N} - \delta)$ were nonzero at some point. 
Then, as it solves the homogeneous equation \eqref{tr-eqn-sigma}, it would be nonzero along the whole bicharacteristic through that point. This bicharacteristic passes into the \emph{backward} flowout from $N^* \diagz \cap \Sigma_L$, which would mean that there are points in the backward flowout from $N^* \diagz \cap \Sigma_L$ in the semiclassical wavefronts set of $P_R S_{3, N} - \delta$. This is impossible: $S_{3, N}$ and $\delta$ are microsupported at $N^* \diagz$ together with the \emph{forward} flowout from\footnote{The backward flowout does not meet the forward flowout: if it did, there would be a periodic geodesic, contradicting the nontrapping hypothesis} $N^* \diagz \cap \Sigma_L$. So the leading symbol of $P_R S_{3, N} - \delta$ is zero. Inductively, we see that the symbols on $\Lambda_+$ of every order down to $-N+1/2$ vanish. 

In addition, the ellipticity of $P_L$ on $N^* \diagz \setminus \Sigma_L$ shows that the symbol of $P_R S_{3, N} - \delta$ at $N^* \diagz$ vanishes to all orders larger than $-N-1$. It follows that $P_R S_{3, N} - \delta$ is an element of $I^{-1/2-N}(N^* \diagz, \Lambda_+)$.

Now we take the transpose, obtaining $P_L S^t_{3, N} - \delta$ is an element of $I^{-1/2-N}(N^* \diagz, \Lambda_+)$. However, ellipticity (at $N^* \diagz$) and the transport equation (at $\Lambda_+$) show that there is a \emph{unique} solution $u$ in the space $I^{1/2}(N^* \diagz, \Lambda_+)$  to the equation 
$$P_L u -  \delta \in I^{-1/2-N}(N^* \diagz, \Lambda_+),
$$
modulo $I^{1/2-N}(N^* \diagz, \Lambda_+)$. We conclude that $S_{3, N}$ is equal to  $S^t_{3, N}$ modulo $I^{1/2 - N}(N^* \diagz, \Lambda_+)$, as claimed. 

\end{proof}

\begin{proof}[Proof of Proposition~\ref{prop:K}]
The only issue here is the boundary regularity of the symbol of $K$. We investigate this in turn in the regions listed in Lemma~\ref{lem:shifted-vf}. 

To do this, we transform the operator $P_L$ to a more convenient operator. Notice that $P_L$ is formally self-adjoint on $L^2(X^2_0; dg dg')$.  For the purposes of symbol calculus, it is convenient to work on an $L^2$ space with respect to Lebesgue measure $d\mu$ in local coordinates. For example, in region 1 the Lebesgue measure takes the form $dz dz'$, in region 2, it takes the form $dx dy dz'$, and so on.  We may write $d\mu = (\rho_L \rho_R \rho_F)^{n+1} a^2 dg dg'$, where $a$ is smooth and nonvanishing. Then multiplication by $(\rho_L \rho_R \rho_F)^{(n+1)/2} a$ is a unitary transformation from $L^2(d\mu)$ to $L^2(dg dg')$. We conclude that the operator
\begin{equation}
a^{-1} (\rho_L \rho_R \rho_F)^{-(n+1)/2} P_L (\rho_L \rho_R \rho_F)^{(n+1)/2} a
\label{Plconj}\end{equation}
is formally self-adjoint on $L^2(d\mu)$. This remains true if we multiply by $(\rho_L \rho_R)^{-1/2}$ on each side. In addition, we can conjugate by a complex function of norm one, as this is also a unitary transformation. Thus we define the formally self-adjoint operator  $Q_L$ (on $L^2(d\mu)$) by 
\begin{equation}
Q_L = e^{i\log (\rho_L \rho_R)/h} a^{-1} (\rho_L \rho_R)^{-(n+2)/2}  \rho_F^{-(n+1)/2} P_L (\rho_L \rho_R)^{n/2}  \rho_F^{(n+1)/2} a \, e^{-i\log (\rho_L \rho_R)/h}.
\label{QlPl}\end{equation}

We now interpret the operator $Q_L$ in terms of  half-densities. We regard $P_L$ as acting on half-densities by letting
$$
P_L^g ( f |dgdg'|^{1/2}) = (P_L f) |dgdg'|^{1/2},
$$
where here $P_L$ on the RHS operates on functions on $X^2_0$, and $P_L^g$ on the LHS acts on half-densities. In other words, we define $P_L^g$ on half-densities via the flat connection on the half-density bundle that annihilates the Riemannian half-density $|dg|^{1/2}$. We define $Q_L^g$ the same way. However, it is more convenient for the purposes of calculations to write the operators with respect to the connection that annihilates the coordinate half-density $|d\mu|^{1/2}$ (this is implicitly done in \cite{Duistermaat-Hormander-Acta-1972}). If we switch to this connection, then we obtain operators $P_L^\mu$, $Q_L^\mu$ defined by 
$$
P_L^\mu ( f |d\mu|^{1/2}) = (P_L f) |d\mu|^{1/2}, \quad Q_L^\mu ( f |d\mu|^{1/2}) = (Q_L f) |d\mu|^{1/2}. 
$$
These are clearly related by conjugation with $(\rho_L \rho_R \rho_F)^{(n+1)/2} a$: 
$$
P_L^g = (\rho_L \rho_R \rho_F)^{-(n+1)/2} a^{-1} P_L^\mu (\rho_L \rho_R \rho_F)^{(n+1)/2} a .
$$
That is, \eqref{Plconj} is the correct expression for our operator $x' P_L^g$,  where we use the connection that annihilates the coordinate half-density $|d\mu|^{1/2}$ for computational convenience. We usually denote $P_L^g$ by $P_L$ below (unless emphasis is required); hopefully it will be clear from context  whether we are thinking of $P_L$ as acting on functions or half-densities. 

Clearly, \eqref{QlPl} implies that
$$
Q_L^\mu = e^{i\log (\rho_L \rho_R)/h} a^{-1} (\rho_L \rho_R)^{-(n+2)/2}  \rho_F^{-(n+1)/2} P_L^\mu (\rho_L \rho_R)^{n/2}  \rho_F^{(n+1)/2} a \, e^{-i\log (\rho_L \rho_R)/h}.
$$
Thus, we have 
\begin{equation}
Q_L^\mu =  e^{i\log (\rho_L \rho_R)/h} (\rho_L \rho_R)^{-1/2} P_L^g (\rho_L \rho_R)^{-1/2} e^{-i\log (\rho_L \rho_R)/h}.
\label{QmuPg}\end{equation}

Notice that $Q_L$ is constructed so that 
\begin{equation}
Q_L^\mu K = O(h^\infty),
\end{equation}
which follows immediately from $P_L G_3' = O(h^\infty)$ and from \eqref{Kdefn}, \eqref{QmuPg}. 
Moreover, the operator $Q_L$ is directly related to the calculations in Lemma~\ref{lem:shifted-vf}. Multiplication by $e^{i\log (\rho_L \rho_R)/h}$ has the effect of shifting the Lagrangian submanifold associated to $K$ from $\Lambda_+$ to $\tL_+$. (One can think of multiplication by $e^{i\log (\rho_L \rho_R)/h}$ as an FIO associated to the shifting transformation \eqref{tL1}.) Then the symbol of $\rho_R Q_L$ is equal to the symbol of $P_L$, pulled back by $T^{-1}$, and then divided by $\rho_L$; that is, the symbol of $\rho_R Q_L$ is $\tpL/\rho_L$. 

We now compute the explicit form of $Q_L$ in the various regions. 
The crucial point in each case is that $\rho_R Q_L$ is a differential operator with smooth coefficients, despite the division by a power of $\rho_L$.

$\bullet$ In region 2a, the operator $P_L$ takes the form (writing $D_x = - i \partial/\partial x$, etc)
\begin{equation}
 \big(hx D_x\big)^2 +  in h \big(hx D_x \big) - \sum_{i, j = 1}^n g_0^{i j}\big(h x D_{y_i}\big)\big(h x D_{y_j}\big) + hx \bigg( \sum_{k = 1}^n b_k hx \frac{\partial}{\partial y_k} + b_0 hx \frac{\partial}{\partial x} \bigg) - \frac{h^2 n^2}{4} - 1
\label{Pl2a}\end{equation}
for some real coefficient functions $b_j$. 
In this region, we may take $\rho_L = x, \rho_R =  \rho_F = 1$. Thus, $Q_L$ is given by 
$$
Q_L = e^{i\log x/h} a^{-1} x^{-(n+2)/2} P_L x^{n/2} a e^{-i\log x/h}.
$$
It is crucial that this operator $Q_L$ is smooth --- that is, involves no negative powers of $x$ --- so we provide full details in the following  calculation. 

\begin{lemma}\label{lem:Ql} The differential operator $Q_L$ is given in region 2a by
\begin{equation}
Q_L = 
 (hD_x) x (hD_x) - 2 h D_x + 
 x\sum_{i, j = 1}^n (h  D_{y_i}) g_0^{ij} (h  D_{y_j})  + f
\label{Ql}\end{equation}
for some $C^\infty$ real function $f$. 
\end{lemma}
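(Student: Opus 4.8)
The plan is to prove \eqref{Ql} by a direct computation of the conjugation defining $Q_L$, keeping track of all lower-order terms. First I would record the conjugation rules used repeatedly: for a constant $\beta$ one has $x^{-\beta}(hxD_x)x^{\beta} = hxD_x - i\beta h$, while $x^{\pm\beta}$ commutes with $hxD_{y_i}$ and with $g_0^{ij}$; for a smooth nonvanishing real function $a$ one has $a^{-1}(hxD_x)a = hxD_x + hx\gamma$ and $a^{-1}(hxD_{y_i})a = hxD_{y_i} + hx\delta_i$ with $\gamma = -i\partial_x\log a$, $\delta_i = -i\partial_{y_i}\log a$; and $e^{i\log x/h}(hxD_x)e^{-i\log x/h} = hxD_x - 1$, with $e^{\pm i\log x/h}$ fixing $hxD_{y_i}$ and multiplication operators. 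I would also use the operator identities $(hxD_x)^2 = x\,(hD_x)x(hD_x)$ and $(hxD_{y_i})g_0^{ij}(hxD_{y_j}) = x^2(hD_{y_i})g_0^{ij}(hD_{y_j})$. Multiplying \eqref{Ql} on the left by $x$ and using these two identities converts it into the statement $xQ_L = (hxD_x)^2 - 2hxD_x + \sum_{i,j}(hxD_{y_i})g_0^{ij}(hxD_{y_j}) + xf$; since $xQ_L = e^{i\log x/h}\bigl(a^{-1}x^{-n/2}P_L x^{n/2}a\bigr)e^{-i\log x/h}$ and $e^{-i\log x/h}(hxD_x)e^{i\log x/h} = hxD_x + 1$, this is in turn equivalent to
\[
a^{-1}x^{-n/2}P_L\,x^{n/2}a \;=\; (hxD_x)^2 + \sum_{i,j}(hxD_{y_i})g_0^{ij}(hxD_{y_j}) - 1 + xf .
\]

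It remains to verify this by expanding from \eqref{Pl2a}. Conjugating by $x^{n/2}$ turns $(hxD_x)^2$ into $(hxD_x - inh/2)^2 = (hxD_x)^2 - inh(hxD_x) - n^2h^2/4$ and $inh(hxD_x)$ into $inh(hxD_x) + n^2h^2/2$; together with the constant $-h^2n^2/4$ already present in \eqref{Pl2a}, the first-order term $-inh(hxD_x)$ cancels $+inh(hxD_x)$ and the three $h^2$-constants cancel. Thus $x^{-n/2}P_L x^{n/2}$ equals $(hxD_x)^2 + \sum g_0^{ij}(hxD_{y_i})(hxD_{y_j}) - 1$ plus terms each carrying an explicit factor $x$ (these come from $hx(\sum_k b_k\,hx\partial_{y_k} + b_0\,hx\partial_x)$). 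I would then conjugate by $a$ and symmetrise the $y$-second-order term, using $\sum(hxD_{y_i})g_0^{ij}(hxD_{y_j}) = \sum g_0^{ij}(hxD_{y_i})(hxD_{y_j}) + hx\sum_j\bigl(-i\sum_i\partial_{y_i}g_0^{ij}\bigr)(hxD_{y_j})$. The result is that $a^{-1}x^{-n/2}P_L x^{n/2}a$ equals $(hxD_x)^2 + \sum(hxD_{y_i})g_0^{ij}(hxD_{y_j}) - 1$, plus $hx$ times a first-order operator whose coefficient on $hxD_x$ is $2\gamma + ib_0$ and whose coefficient on $hxD_{y_i}$ is $2\sum_j g_0^{ij}\delta_j + ib_i + i\sum_j\partial_{y_j}g_0^{ij}$, plus $x$ times a smooth function. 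Both first-order coefficients vanish identically: the Laplace--Beltrami operator fixes $b_0 = -\tfrac12\partial_x\log\det g_0$ and $b_k = -\sum_i\partial_{y_i}g_0^{ik} - \tfrac12\sum_i g_0^{ik}\partial_{y_i}\log\det g_0$ (from the volume density $\sqrt{\det g_0}\,x^{-(n+1)}$), while $a$, being the density transition factor of \eqref{Omega}, equals $(\det g_0)^{-1/4}$ times a function of $z'$, so $\gamma = \tfrac i4\partial_x\log\det g_0$ and $\delta_i = \tfrac i4\partial_{y_i}\log\det g_0$; substituting gives $2\gamma + ib_0 = 0$ and $2\sum_j g_0^{ij}\delta_j + ib_i + i\sum_j\partial_{y_j}g_0^{ij} = 0$. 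Hence the displayed identity holds with $f$ the remaining zeroth-order term, and $f$ is real because $P_L$ has real coefficients in the relevant sense, $a$ is real, and $e^{\pm i\log x/h}$ has modulus one; equivalently, $Q_L$ and the three operator terms in \eqref{Ql} are each formally self-adjoint on $L^2(d\mu)$ (cf.\ the construction preceding \eqref{QlPl}).

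The main obstacle is the bookkeeping through the three conjugations, and in particular the verification that no negative power of $x$, and no residual first-order operator, survives in $Q_L$. Both issues reduce to the two coefficient identities above. Morally these say that conjugating $P_L$ by $x^{n/2}a$ symmetrises it with respect to the coordinate density, eliminating the ``odd'' first-order part; but formal self-adjointness alone does not force a first-order operator to be a multiple of the identity, so the identities must really be checked by hand, using the explicit forms of $b_0,b_k$ and of $\gamma,\delta_i$. The fact that they match is precisely the analytic shadow of the geometric input: the exponent $n/2$ being half the volume-growth rate $n$ of the hyperbolic end (which disposes of the $inh(hxD_x)$ and $h^2n^2/4$ terms), and $a$ being the $0$-half-density versus standard half-density transition factor in \eqref{Omega}.
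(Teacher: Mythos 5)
Your proposal is correct: the reduction to the identity $a^{-1}x^{-n/2}P_L x^{n/2}a = (hxD_x)^2 + \sum(hxD_{y_i})g_0^{ij}(hxD_{y_j}) - 1 + xf$ is right, the cancellations at exponent $n/2$ are the same ones the paper exhibits, and your explicit values $b_0 = -\tfrac12\partial_x\log\det g_0$, $b_k = -\sum_i\partial_{y_i}g_0^{ik} - \tfrac12\sum_i g_0^{ik}\partial_{y_i}\log\det g_0$, $a = (\det g_0)^{-1/4}\cdot(\text{function of }z')$ do make the two first-order coefficients vanish, as you claim. Where you diverge from the paper is in how the residual first-order terms are disposed of. The paper first observes that conjugation by $a$ only redefines the real coefficients $b_0, b_k$, so it sets $a=1$ and never computes $a$ or the $b$'s; after the conjugation by $x^{n/2 - i/h}$ and division by $x$ it kills the leftover first-order operator purely structurally, using that $Q_L$ is formally self-adjoint on $L^2(d\mu)$ by construction while a first-order operator $\sum b_k\, hx\partial_{y_k} + b_0\, hx\partial_x$ with \emph{real} coefficients is anti-self-adjoint modulo zeroth order. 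Your remark that ``formal self-adjointness alone does not force a first-order operator to be a multiple of the identity'' is literally true (e.g.\ $hD_x$), but it slightly undersells the paper's argument: the reality of the (redefined) $b_k$, which the paper tracks, is exactly what makes the self-adjointness shortcut conclusive, so the hand computation is not logically necessary. What your route buys is a fully explicit, checkable verification that also identifies the geometric origin of the cancellation (the volume-growth exponent $n/2$ and the half-density factor from \eqref{Omega}); what the paper's route buys is brevity and robustness, since it needs no formula for $a$ or for the first-order part of the Laplace--Beltrami operator. Your appeal to self-adjointness for the reality of $f$ coincides with the paper's. One incidental point: you implicitly use the correct sign $+\sum g_0^{ij}(hxD_{y_i})(hxD_{y_j})$ in \eqref{Pl2a}, which as printed carries a spurious minus sign; that is a typo in the paper, and your usage is the consistent one.
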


\begin{proof} We first note that conjugation of $P_L$ by the $a$ factor does not change the form of \eqref{Pl2a}; it only changes the coefficients $b_0$ and $b_j$. So without loss of generality, we set $a = 1$. Writing $e^{i\log x/h} = x^{i/h}$, we compute 
\begin{equation}\begin{gathered}
x^{-c+i/h}P_L x^{c - i/h} =  \Bigg[ 
 \bigg(hx D_x -1 -ihc \bigg)^2 +  n h \bigg(ihx D_x -i + hc \bigg) + \sum_{i, j = 1}^n g_0^{i j}(h x D_{y_i})(h x D_{y_j}) \\ + hx \bigg( \sum_{k = 1}^n b_k hx \partial_{y_k} + b_0 \Big( hx \partial_x - i + hc \Big)  \bigg)  - \frac{h^2 n^2}{4} - 1 \Bigg] \\
= (hxD_x)^2 - 2(1 + ihc)(hxD_x) + (1 + ihc)^2 + inh (hx D_x) + nh(-i + hc) + \sum_{i, j = 1}^n g_0^{i j}(h x D_{y_i})(h x D_{y_j}) \\ 
+ hx \bigg( \sum_{k = 1}^n b_k hx \partial_{y_k} + b_0 \Big( hx \partial_x - i + hc \Big)  \bigg)  - \frac{h^2 n^2}{4} - 1  \\
= (hxD_x)^2 - 2 hx D_x + ih (n - 2c) hxD_x + ih(2c - n) - h^2 \big( c- \frac{ n}{2} \big)^2 \\
+ \sum_{i, j = 1}^n g_0^{i j}(h x D_{y_i})(h x D_{y_j}) +
hx \bigg( \sum_{k = 1}^n b_k hx \partial_{y_k} + b_0 \Big( hx \partial_x - i + hc \Big)  \bigg).
\end{gathered}\label{Pl2a-2}\end{equation}
We see from this that three cancellations occur when $c = n/2$, and we can then divide by a factor of $x$, obtaining 
\begin{equation}\begin{gathered}
Q_L = x^{-1} x^{-n/2+i/h}P_L x^{n/2 - i/h} =  
 (hD_x) x (hD_x) - 2 h D_x + 
 x\sum_{i, j = 1}^n (h  D_{y_i}) g_0^{ij} (h  D_{y_j}) \\ +
h \bigg( \sum_{k = 1}^n b_k hx \partial_{y_k} + b_0 \Big( hx \partial_x - i + hc \Big)  \bigg).
\end{gathered}\label{Pl2a-3}\end{equation}
Note that we wrote the term with second $y$-derivatives in divergence form above (which requires a further redefinition of the $b_k$ coefficients). Since $Q_L$ is self-adjoint, and the (redefined) $b_k$ coefficients are real, they must vanish. We see that 
\eqref{Ql} holds 
for some real function $f$. 
\end{proof}

\begin{remark} This is very similar to Vasy's algebraic manipulations of an asymptotically hyperbolic Laplacian in \cite{Vasy}. The difference is that Vasy assumes that the metric is even in $x$. This allows one to divide $Q_L$ by a further factor in $x$ and express in terms of the function $\mu = x^2$, taken to be the boundary defining function for a new differentiable structure on $X$. By analyzing the Hamiltonian flow for this operator on an extension of $X$ into $\mu < 0$, and in particular the flow near the radial sets, Vasy shows analytic continuation of the resolvent without the need of a parametrix construction. However, for our applications in Theorems~\ref{thm:spec-mult} and \ref{thm:Strichartz}, the parametrix construction is indispensable. 
\end{remark}

We have shown that $Q_L$ is a differential operator with smooth coefficients. Thus,  $Q_L$ can be extended as a smooth differential operator across the boundary at $x=0$. Indeed,  all we need to do is to extend the functions $g_0^{ij}$ and the function $f$ in \eqref{Ql} in some smooth manner. 
Moreover, the Hamilton vector field is transverse to $\{ x = 0\}$, as shown in Lemma~\ref{lem:shifted-vf} (of course, this is due to the $-2h D_x$ term in \eqref{Ql}). It follows that $K$ can be extended to a smooth Lagrangian solution to $Q_L K = O(h^\infty)$ through the boundary. In particular, its symbol is a smooth half-density on $\tL_+$. This completes the proof of Proposition~\ref{prop:K} in region 2a. 

In the remaining regions, the structure of the proof is exactly the same, and is related to the calculations of Lemma~\ref{lem:shifted-vf} in exactly the same way.  So we only give brief details in the remaining regions. 

$\bullet$ Region 2b. This works in exactly the same way as Region 2a, using the right Hamilton vector field instead of the left (taking advantage of Lemma~\ref{lem:symm}). 

$\bullet$ Region 3. We first note that this case applies not only to the neighbourhood of a point $q$ lying over a point in $\FL \cap \FR$ and away from $\FF$ in $X^2_0$, but also (in the case where we are working on $\tL_+^*$ --- see Section~\ref{subsec:Lambdastar}) to the neighbourhood of a point $q \in \tL_+^*$ lying over any point in $\FL \cap \FR$  in $X^2$.

In this region, we take $\rho_L = x, \rho_R = x', \rho_F = 1$. So we have to conjugate simultaneously in the left and right variables. That is, we conjugate the operator by $x^{(n+1)/2} {x'}^{(n+1)/2} a a'$. This gives us two operators $Q_L$ and $Q_R$, such that 
$$
Q_L K = O(h^\infty), \quad Q_R K = O(h^\infty).
$$
Both $x' Q_L$ and $x Q_R$ have the form given in Lemma~\ref{lem:Ql}. So they can be extended smoothly to a local extension of $X^2_0$ or ($X^2$) across the boundary near the corner $x = x' = 0$. As we have seen, the Hamilton vector field of $Q_L$ is transverse to $x=0$ and the Hamilton vector field of $Q_R$ is transverse to $x'=0$. Since we have smoothness of the symbol for $x, x' > 0$, this shows smoothness across the boundary. In particular, the symbol of $K$ is a smooth half-density on $\tL_+$.  

$\bullet$ Region 4a. In this region, using coordinates as in Lemma~\ref{lem:shifted-vf} (so $\rho_L = s$, $\rho_R = 1$, $\rho_F = x'$),  we compare the Riemannian density to the coordinate density $d\mu = |ds dx' dy dY|$. Clearly, $dg = x^{(n+1)/2} a d\mu$ for some smooth positive $a$, as in Region 2a. Thus, $P_L$ is given by the same formula \eqref{Plconj}. Of course, conjugating $P_L$ by $x^{(n+1)/2}$ is the same as conjugating by $s^{(n+1)/2}$, since $P_L$ commutes with multiplication by $x'$. Therefore, in this region, 
$$
Q_L = e^{i\log s/h} a^{-1} s^{-(n+2)/2} P_L s^{n/2} a e^{-i\log s/h},
$$
which is self-adjoint with respect to the coordinate Lebesgue measure. Since $P_L$ has the form 
\begin{equation}
- \bigg(hs \frac{\partial}{\partial s}\bigg)^2 +  n h \bigg(hs \frac{\partial}{\partial s}\bigg) - \sum_{i, j = 1}^n g_0^{i j}\bigg(h s \frac{\partial}{\partial y_i}\bigg)\bigg(h s \frac{\partial}{\partial y_j}\bigg)  - \frac{h^2 n^2}{4} - 1 + O(x's),
\label{Pl2a-4}\end{equation}
the calculation looks identical to that in region 2a, with $s$ replacing $x$ (up to an error $O(x's)$). We see that 
\begin{equation}
Q_L = 
 (hD_s) s (hD_s) - 2 h D_s + 
 s\sum_{i, j = 1}^n (h  D_{y_i}) g_0^{ij} (h  D_{y_j})  + f + O(x'). 
\label{Ql-4a}\end{equation}
The rest of the argument proceeds as in region 2a: we have 
$$
Q_L K = - e^{i\log s/h} a^{-1} s^{-(n+2)/2} E_2 + O(h^\infty),
$$
and $Q_L$ extends across the boundary. Since $E_2$ is in $I^{-1/2}(X^2_0, \Lambda_+; \Omegazh)$, supported in a deleted neighbourhood of $\partial_{\diagz} \Lambda_+$, $E_2$ is ${x'}^{-(n+1)/2} |d\mu|^{1/2}$ times a Legendre distribution of order $-1/2$ associated to $\tL_+$ (the factor ${x'}^{-(n+1)/2}$ adjusting for the ratio between the Riemannian half-density and $|d\mu|^{1/2}$). It follows from the standard theory of Lagrangian distributions that  there is a solution $K \in {x'}^{-(n+1)/2} I^{1/2}(X^2_0, \tL_+; \Omega^{1/2})$ that extends across the boundary. It follows that the symbol of $K$ is ${x'}^{-(n+1)/2}$ times a $C^\infty$ half-density on $\tL_+$. 

$\bullet$ Region 4b. This works just as for region 4a, using the right operators $P_R, Q_R$ instead of the left operators. 

$\bullet$ Region 5. In this region we set $d\mu$ to be the coordinate density $d\mu = ds_1 ds_2 dt dZ dy$. It then follows that
$$
dg = \frac{a}{(s_1 s_2 t)^{n+1}} d\mu,
$$
where $a$ is smooth in local coordinates. We therefore define $Q_L$ in this region (acting on functions) by 
$$
Q_L = e^{i\log s_1s_2/h} a^{-1} s_1^{-(n+2)/2} s_2^{-(n+2)/2} t^{-(n+1)/2} P_L^\mu s_1^{n/2} s_2^{n/2} t^{(n+1)/2} a e^{-i\log s_1s_2/h}.
$$
Since $s_1 = x/t$, $s_2 = x'/t$, this is the same as $(s_1 s_2)^{-1}$ times 
$$
  e^{i\log x/h} e^{-2i\log(y'_1 - y_1)/h}  a^{-1} x^{-n/2} (y_1' - y_1)^{(n-1)/2} P_L x^{n/2}  (y'_1 - y_1)^{-(n-1)/2} a e^{-i\log x/h} e^{2i\log(y'_1 - y_1)/h}  .
$$
For the same reason as above, we can neglect the $a$ term. Conjugation by the factor 
$$ 
x^{n/2}  (y'_1 - y_1)^{-(n-1)/2}  e^{-i\log x/h} e^{2i\log(y'_1 - y_1)/h}$$ 
has the effect
$$
hxD_x \mapsto hxDx -1 -\frac{ihn}{2}, \quad hx D_{y_1} \mapsto hxD_{y_1} -2s_1 -\frac{ihs_1(n-1)}{2}.
$$
Then changing to coordinates $(s_1, s_2, t, Z, y)$ has the effect
$$
h x D_x \mapsto h s_1 D_{s_1}, \quad h x D_{y_1} \mapsto h \Big( s_1 D_{s_1} +  s_2 D_{s_2} - t D_t  + Z \cdot D_Z + t D_{y_1}\Big).
$$
It follows that, in the new coordinates,
\begin{equation}\begin{gathered}
s_2 Q_L = s_1^{-1} \bigg[  \bigg(hs_1 D_{s_1} -1 -\frac{ihn}{2} \bigg)^2 +  n h \bigg(ihs_1 D_{s_1} -i - \frac{hn}{2} \bigg) \\ + 
g_0^{11} s_1^2 \Big( h s_1 D_{s_1} +  hs_2 D_{s_2} - ht D_t -2 + hZ \cdot D_Z + ht D_{y_1} - \frac{ih(n-1)}{2} \Big)^2 \\
+2 s_1^2 \sum_{j=2}^n g_0^{1j}(y) \Big(h s_1 D_{s_1} +  hs_2 D_{s_2} - ht D_t -2 + hZ \cdot D_Z + ht D_{y_1}  - \frac{ih(n-1)}{2} \Big)D_{Z_j} \\
+ s_1^2 \sum_{i, j = 2}^n g_0^{ij}(y)  h^2D_{Z_i} D_{Z_j} - \frac{h^2 n^2}{4} - 1 +  O(s_1 t)  \Bigg]  \\
= h D_{s_1} s_1 (h D_{s_1}) - 2 h D_{s_1} + s_1 \Big(h s_1 D_{s_1} +  hs_2 D_{s_2} - ht D_t -2 + hZ \cdot D_Z + ht D_{y_1}  - \frac{ih(n-1)}{2} \Big)^2  \\
+ 2 s_1 \sum_{j=2}^n g_0^{1j}(y) \Big( h s_1 D_{s_1} +  hs_2 D_{s_2} - ht D_t -2 + hZ \cdot D_Z + ht D_{y_1}  - \frac{ih(n-1)}{2} \Big)^2  D_{Z_j}  \\
+ s_1 \sum_{i, j = 2}^n g_0^{ij}(y) h^2D_{Z_i} D_{Z_j} 
+ O(t) + O(h). 
\end{gathered}\label{Ql-5}\end{equation}
In particular, $s_2 Q_L$ is an operator with smooth coefficients. Also, the principal symbol of $s_2 Q_L$ is $\tpL/s_1$, by the same argument as in the other regions above (or just by comparing \eqref{Ql-5} with the calculation in Lemma~\ref{lem:shifted-vf}). Thus, as shown in Lemma~\ref{lem:shifted-vf}, the Hamilton vector field for $s_2 Q_L$ is transverse to $s_1 = 0$ (this is clear from the above form, due to the term $2 h D_{s_1}$). Similarly, the right operator $Q_R$ is such that $s_1 Q_R$  has a smooth symbol and a Hamilton vector field transverse to $s_2 = 0$. We have arranged that $s_2 Q_L K = O(h^\infty)\rho_F^{-(n+1)/2}$ and $s_1 Q_R K = O(h^\infty)\rho_F^{-(n+1)/2}$. Extending $s_2 Q_L$ and $s_1 Q_R$ across the boundaries at $s_1 = 0$ and $s_2 = 0$, we see that $K$ can be extended as an $O(h^\infty)\rho_F^{-(n+1)/2}$ solution. Therefore the symbol of $K$ is $\rho_F^{-(n+1)/2}$ times a Lagrangian distribution on $\tL_+$. This completes the proof of Proposition~\ref{prop:K}. 
\end{proof}

%
%


\subsection{Infinite decay at front face}\label{subsec:ff}

The idea to solve the front face error away is to reduce to iterative normal operator equations for fixed $h$ and retain the $O(h^\infty)$ vanishing property.
Essentially speaking, we need only borrow Mazzeo and Melrose's normal operator arguments in \cite{Mazzeo-Melrose}. For the sake of completeness, we outline the proof.

So far, we have constructed $G_3 = G_1 + G_2' + G_3'$ such that the error term $E_3$ has no microlocal singularities on $N^* \diagz$ or $\Lambda_+$. We have 
\begin{equation}\begin{gathered}
P_L^g G_3'  =  \bigg( (\rho_L \rho_R)^{1/2} e^{-i\log(\rho_L \rho_R)/h} Q_L^\mu (\rho_L \rho_R)^{-1/2} e^{i\log(\rho_L \rho_R)/h} \bigg) \bigg( (\rho_L \rho_R)^{1/2} e^{-i\log(\rho_L \rho_R)/h} K \bigg) \\
= \rho_L^{1/2} \rho_R^{-1/2} e^{-i\log(\rho_L \rho_R)/h} (\rho_L Q_L^\mu) K \\
= -E_2 + E_3, 
\end{gathered}\end{equation}
where $E_3$ is in the space 
\begin{equation}\begin{gathered}
 \rho_L^{1/2-i/h} \rho_R^{-1/2-i/h} \rho_F^{-(n+1)/2} h^\infty C^\infty(X^2_0; \Omega^{1/2}) + x^{1/2-i/h} {x'}^{-1/2-i/h} h^\infty C^\infty(X^2; \Omega^{1/2}) \\
=   \rho_L^{n/2+1-i/h} \rho_R^{n/2-i/h}h^\infty C^\infty(X^2_0; \Omegazh) + x^{n/2+1-i/h} {x'}^{n/2-i/h} h^\infty C^\infty(X^2; \Omegazh) . 
\end{gathered}\end{equation}

Our next task is to remove errors at the front face (in the first term in the expression for $E_3$ above), up to $O(\rho_F^\infty)$ errors. To do this, we look for a correction term $G_4'$ of the form 
$$
G_4' = \sum_{j=0}^\infty \rho_F^j G_{4, j},
$$
that solves away the error at $\FF$ order by order. We want $G_4'$ to be $O(h^\infty)$, so as not to disturb the fact that we already have an error term that is $O(h^\infty)$. 

The first term, $G_{4, 0}$, must satisfy
\begin{equation}
N_{y_0}(P) N_{y_0}(G_{4, 0}) = -N_{y_0} (E_3) \in \rho_{L}^{n/2 + 1 -i/h} \rho_R^{n/2 -i/h} h^\infty C^\infty(F_{y_0}; \Omega_0).
\end{equation}

Using Proposition \ref{normal operator}, we see that there is a solution with 
$$
G_{4, 0} \in \rho_{L}^{n/2 + 1 -i/h} \rho_R^{n/2 -i/h} h^\infty C^\infty(X^2_0; \Omegazh).
$$
The error term is now reduced to 
$$
E_{3, 1} \in \rho_F \rho_{L}^{n/2 + 1 -i/h} \rho_R^{n/2 -i/h}  h^\infty C^\infty(X^2_0; \Omegazh) = x' \rho_{L}^{n/2 + 1 -i/h} \rho_R^{n/2-1 -i/h}  h^\infty C^\infty(X^2_0; \Omegazh) .
$$
Notice that $x'$ commutes with $P_L$. So to solve this term away to leading order at $\FF$, we take $G_{4, 1}$ such that 
\begin{equation}
N_{y_0}(P) N_{y_0}(G_{4, 0}) = -N_{y_0} (E_{3, 1}/x') \in \rho_{L}^{n/2 + 1 -i/h} \rho_R^{n/2 -1 -i/h} h^\infty C^\infty(F_{y_0}; \Omega_0).
\end{equation}
Proposition \ref{normal operator} guarantees that there is a solution with 
$$
G_{4, 1} \in \rho_{L}^{n/2 + 1 -i/h} \rho_R^{n/2-1 -i/h} h^\infty C^\infty(X^2_0; \Omegazh).
$$
This reduces the error to 
$$
E_{3, 2} \in x' \rho_F \rho_{L}^{n/2 + 1 -i/h} \rho_R^{n/2-1 -i/h}  h^\infty C^\infty(X^2_0; \Omegazh),
$$
which can either be viewed as an element of 
$${x'}^2 \rho_{L}^{n/2 + 1 -i/h} \rho_R^{n/2-2 -i/h}  h^\infty C^\infty(X^2_0; \Omegazh) .
$$
or  
$$
\rho_F^2 \rho_{L}^{n/2 + 1 -i/h} \rho_R^{n/2 -i/h}  h^\infty C^\infty(X^2_0; \Omegazh). 
$$
We proceed in this way, and take an asymptotic summation to obtain the desired correction term $G_4'$.  Setting $G_4 = G_3 + G_4'$, we find that the new error term
$
E_4 := P_L G_4 - \delta
$
satisfies
$$
E_4 \in \rho_{L}^{n/2 + 1 -i/h} \rho_F^{\infty} \rho_R^{n/2 -i/h} h^\infty C^\infty(X^2_0; \Omegazh) .
$$


\subsection{Left boundary behaviour}

Having removed the error term (up to $O(\rho_F^\infty)$) at the front fact, we may view $E_4$ as living on $X^2$ instead of $X^2_0$; then we have
$$
E_4 \in \rho_{L}^{n/2 + 1 -i/h}  \rho_R^{n/2 -i/h} h^\infty C^\infty(X^2_0; \Omegazh) .
$$
We next remove the error terms at $\FL$. This is a straightforward Taylor series calculation at the left boundary. 

We wish to solve this error away, up to $O(x^\infty)$, at $x=0$, with a correction term of the form 
$$
G_5 = \sum_{j=0}^\infty  x^{n/2 +1  -i/h + j} G_{5, j}(y, z') |dg dg'|^{1/2}.
$$
The left operator takes the form (reverting to the connection annihilating $|dg dg'|^{1/2}$) 
$$
h^{-2} P_L = (x D_x)^2 + n x \partial_x + x^2 R - \big(h^{-2} + \frac{n^2}{4} \big),
$$
where $R$ is a b-differential operator, that is, a combination of $x D_x$ and $D_y$ with smooth coefficients. Let the leading part of $E_4$ at $x=0$ be $x^{n/2 + 1 -i/h} E_{4, 0}(y, z')$. To solve this away modulo $O(x^{n/2 + 2 -i/h})$, we require that
$$
- \big(\frac{n}{2} + 1 + \frac{i}{h} \big)^2 + n \big(\frac{n}{2} + 1 + \frac{i}{h} \big) - \big(h^{-2} + \frac{n^2}{4} \big) G_{5, 0} = E_{4, 0}.
$$
This simply requires that 
$$
G_{5, 0} = -(1 - \frac{2i}{h})^{-1} E_{4, 0}.
$$
Notice that $E_{4, 0} = O(h^\infty)$ implies that also $G_{5, 0} = O(h^\infty)$. 

Inductively, suppose that the error at $x=0$ has been reduced to $E_{4, j} \in x^{n/2 + 1 -i/h + j} C^\infty$. Then we choose $G_{5, j}$ such that 
$$
- \big(\frac{n}{2} + 1 + j + \frac{i}{h} \big)^2 + n \big(\frac{n}{2} + 1 +j + \frac{i}{h} \big) - \big(h^{-2} + \frac{n^2}{4} \big) G_{5, j} = E_{4, j},
$$
that is, 
$$
G_{5, 0} = -\Big((1+j)^2  - \frac{2i}{h}(1+j)\Big)^{-1} E_{4, 0}.
$$
Again, $E_{4, j} = O(h^\infty)$ implies that also $G_{5, j} = O(h^\infty)$. Let $G_5'$ be an  asymptotic sum  of the $G_{5, j}$. Then $G_5 = G_4 + G_5'$ solves the equation up to an error that is in $x^\infty h^\infty {x'}^{n/2-i/h} C^\infty(X^2; \Omegazh)$.

\subsection{Resolvent from parametrix}

We have found a left parametrix $G_5$ so that the error term, $E_5 = P G_5 - \Id$, is a kernel in $x^N h^N {x'}^{n/2-i/h} C^\infty(X^2, \Omegazh)$ for every $N$. We want to invert $\Id + E_5$; if this is possible, then we have constructed the resolvent kernel in the form $G_5(\Id + E_5)^{-1}$. 

We first show that for small $h$, $\Id + E_5$ is invertible on a weighted $L^2$ space; equivalently, we can conjugate $E_5$ by a power of $x$ so that it becomes invertible on $L^2$ (with respect to the Riemannian measure $dg$, of course). Let $E_5^c$ denote $x^{-1}  E_5  x$ (we write this in operator notation; in kernel notation, it is $x^{-1} E_5 x'$), where the $c$ indicates conjugation. This kernel is in $x^N h^N {x'}^{n/2+1-i/h} C^\infty(X^2, \Omegazh)$ for every $N$, and is therefore in $L^2(X^2)$, since $x^{n/2+1}$ is square-integrable with respect to the Riemannian density which is a smooth multiple of $x^{-(n+1)} dx dy$ near $x=0$. Moreover, its $L^2$ norm is $O(h^N)$ for every $N$. Therefore, $E_5^c$ is a Hilbert-Schmidt kernel with small Hilbert-Schmidt norm for small $h$. Therefore $\Id + E_5^c$ is invertible for sufficiently small $h$; moreover, the inverse can be written in the form $\Id + S^c$, where $S^c$ is Hilbert-Schmidt   with Hilbert-Schmidt norm $O(h^N)$, as follows from expressing $S^c$ as a  Neumann series. 

Using a standard argument, we show that in fact, $S^c$ has the form $x^N h^N {x'}^{n/2+1 -i/h} C^\infty(X^2, \Omegazh)$ for every $N$. To see this, we use the equations 
$$
(\Id + E_5^c)(\Id + S^c) = \Id = (\Id + S^c)(\Id + E_5^c)
$$
to write
$$
S^c = - E_5^c - E_5^cS^c = - E_5^c - E_5^c(- E_5^c - S^cE_5^c) = -E_5^c + (E_5^c)^2 + E_5^cS^cE_5^c. 
$$
It is easy to check that $ -E_5^c + (E_5^c)^2$ has the claimed form. As for the final term $E_5^cS^cE_5^c$, we express this as an integral:
$$
E_5^cS^cE_5^c(z, z') = \int_X \int_X E_5^c(z, z'') S^c(z'', z''') E_5^c(z''', z') \; dg(z'') \, dg(z'''). 
$$
We write the left factor of $E_5^c = x^N h^N A(z, z'')$ where $A(z, z'')$ is $C^\infty$ in $z$ with values in $L^2(X_{z''})$, and the right factor of $E_5^c$ in the form ${x'}^{n/2 + 1 -i/h} B(z''', z')$ where $B(z''', z')$ is an $L^2$ function of $z'''$ with values in $C^\infty(X_{z'})$. Then it is evident that the integral expression for $E_5^cS^cE_5^c$ has the claimed form. 

We finally observe that $(\Id + E_5)^{-1} = \Id + x S^c x^{-1}$, so we can write the inverse in the form $\Id + S$ where $S = x S^c x^{-1}$ has the form $x^N h^N {x'}^{n/2 -i/h} C^\infty(X^2, \Omegazh)$ for every $N$. The final step is to express the true resolvent by
$$
R_h = G_5 + G_5 S.
$$
Thus it remains to determine the nature of the kernel $G_5 S$.

\begin{lemma}
The kernel $G_5 S$ is in the space
$$
x^{n/2-i/h} {x'}^{n/2-i/h} h^\infty C^\infty(X^2 \times [0, h_0]) |dg dg'|^{1/2}.
$$
\end{lemma}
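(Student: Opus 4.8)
The plan is to compute the composition directly as
\begin{equation*}
(G_5 S)(z, z') = \int_X G_5(z, z'') \, S(z'', z') \, dg(z''),
\end{equation*}
and exploit the structure of the two factors. By what was just proved, $S \in x^N h^N {x'}^{n/2 - i/h} C^\infty(X^2; \Omegazh)$ for every $N$; hence, for each fixed $z'$, the function $z'' \mapsto S(z'', z')$ lies in $h^\infty \dot{C}^\infty(X)$, uniformly in $z'$ and in $h \in [0, h_0]$, and depends smoothly on the parameter $z'$ with an overall factor ${x'}^{n/2 - i/h}$. Therefore, once we know that $G_5$, regarded as an operator, maps $h^\infty \dot{C}^\infty(X)$ into $h^\infty x^{n/2 - i/h} C^\infty(X)$ uniformly in $h$, the $z''$-integration (with $z'$ a smooth parameter) places $G_5 S$ in the asserted space $x^{n/2 - i/h} {x'}^{n/2 - i/h} h^\infty C^\infty(X^2 \times [0, h_0]) \, |dg\,dg'|^{1/2}$. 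So it remains to establish this mapping property of $G_5$.

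For this I would use the decomposition $G_5 = G_1 + G_2' + G_3' + G_4' + G_5'$ from the previous subsections. The contributions of $G_4'$ and $G_5'$ are immediate: they lie respectively in $\rho_L^{n/2 + 1 - i/h} \rho_R^{n/2 - i/h} \rho_F^\infty h^\infty C^\infty(X^2_0; \Omegazh)$ and $x^{n/2 + 1 - i/h} h^\infty C^\infty(X^2; \Omegazh)$, so applied to a function $f \in h^\infty \dot{C}^\infty(X)$ --- which pairs against the $\rho_R$- (resp.\ $x'$-) factor to leave a constant --- they produce an element of $h^\infty x^{n/2 + 1 - i/h} C^\infty(X) \subset h^\infty x^{n/2 - i/h} C^\infty(X)$. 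The operator $G_1$ is a semiclassical $0$-pseudodifferential operator whose kernel vanishes to infinite order at $\FL$ and $\FR$, so it maps $\dot{C}^\infty(X)$ continuously into $\dot{C}^\infty(X) = x^\infty C^\infty(X)$; and $h^\infty x^\infty C^\infty(X) \subset h^\infty x^{n/2 - i/h} C^\infty(X)$, the oscillatory phase $x^{-i/h}$ being harmless once it multiplies something vanishing to infinite order in $x$ and in $h$ (each $x$-derivative costs at most a power of $h^{-1}$, absorbed by choosing $N$ large).

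The essential case is the pair of Lagrangian distributions $G_2'$ and $G_3'$. Apply, say, $G_3'$ to $f := S(\cdot, z') \in h^\infty \dot{C}^\infty(X)$. Since $\mathrm{WF}_h(f) = \emptyset$ and the semiclassical canonical relation of $G_3'$ lies in $N^* \diagz \cup \FBR$, we get $\mathrm{WF}_h(G_3' f) = \emptyset$, so $G_3' f$ is $O(h^\infty)$ and smooth on the interior $X^\circ$, uniformly. To control its behaviour as $x \to 0$, note that $f$ vanishes to infinite order at $\partial X$, so, modulo a term that is $O(x^\infty h^\infty)$, only the part of the kernel of $G_3'$ over $\{ x'' \geq \delta \}$ contributes; for $x$ small and $x'' \geq \delta$ we are in Region 2a of $X^2_0$ (away from $\diagz$ and $\FF$), where, by Proposition~\ref{prop:K} together with the transversality of $\tL_+$ and of $\tHL/\rho_L$ to $\FL$ established in Lemma~\ref{lem:shifted-vf} and Proposition~\ref{prop:near-ff}, the kernel of $G_3'$ is $x^{n/2 - i/h}$ times a semiclassical Lagrangian distribution associated to $\tL_+$ whose symbol is smooth up to $x = 0$ and smooth in $h$. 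Pairing this against the rapidly decaying, microlocally trivial test function $f$ thus yields $x^{n/2 - i/h}$ times a function of $(z, h)$ that extends smoothly to $x = 0$ and $h = 0$ and is $O(h^\infty)$. The same argument applies to $G_2'$, whose microsupport near $N^* \diagz$ plays no role near $\FL$, since $\diagz$ does not meet $\FL$. Summing the five pieces gives the mapping property of $G_5$, and hence the lemma.

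I anticipate that the real difficulty is concentrated in the last paragraph: verifying that applying the semiclassical Lagrangian distributions $G_2'$, $G_3'$ to the $h^\infty$-small, boundary-vanishing test function produces precisely the weight $x^{n/2 - i/h}$ --- and nothing worse --- uniformly down to $x = 0$ and $h = 0$. This is exactly where the transversality statements of Lemma~\ref{lem:shifted-vf} and the boundary-regularity conclusion of Proposition~\ref{prop:K} do the work; the remaining estimates (including the $\{ x'' < \delta \}$ contribution, controlled by the local integrability of the kernel of $G_5$ near its diagonal and front-face singularities against the factor $O(x''^\infty h^\infty)$ coming from $S$) are routine.
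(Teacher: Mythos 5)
Your overall strategy (reduce to a mapping property of $G_5$ on $h^\infty \dot C^\infty(X)$, with $z'$ as a smooth parameter, and treat the pieces of $G_5$ separately) is reasonable, but the decisive step for $G_2'$ and $G_3'$ rests on a false claim. You assert that, since $f = S(\cdot, z')$ vanishes to infinite order at $\partial X$, the contribution to $G_3' f$ from $\{ x'' < \delta \}$ is $O(x^\infty h^\infty)$. It is not. The kernel of $G_3'$ behaves like $\rho_L^{n/2-i/h}$ at the left face, with $\rho_L \sim x/(x + x'' + |y-y''|)$ near the front face, so on $\{ x \le x'' < \delta \}$ one only has $|K| \lesssim h^{-M} (x/x'')^{n/2} (x'')^{n/2}$; integrating this against $O((x'')^k h^N)$ yields a term of size $x^{n/2} h^\infty$ --- exactly the critical order the lemma is about --- not $x^\infty h^\infty$ (only the subregion $x'' < x$ gives rapid vanishing in $x$). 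Hence the reduction to Region 2a alone is invalid: the near-front-face and corner ($\FL \cap \FR$, Regions 3, 4a, 5) contributions are of leading size and you must show that they, too, are $x^{n/2-i/h}$ times a function smooth down to $x=0$ and $h=0$, jointly with the Region 2a piece. Your argument never addresses this, and it is precisely where the difficulty of the lemma lies.

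This is what the paper's proof supplies, by a different route: it lifts $G_5$ and $S$ to $X^2_0(z,z'') \times X(z') \times [0,h_0]$, notes that the lift of $S$ vanishes to infinite order at $\FF$, $\FR$ and at $h=0$ (since $x''$ is a product of defining functions for $\FR$ and $\FF$, and $S = O((x'')^N h^N)$ for all $N$), so the product of the lifted kernels is $x^{n/2-i/h}{x'}^{n/2-i/h} h^\infty \rho_F^\infty \rho_R^\infty$ times a smooth factor, and then pushes forward; the infinite-order vanishing at $\FF$ and $\FR$ makes the pushforward converge and ensures that only the $\FL$-index $n/2-i/h$ survives in the expansion at $x=0$. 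If you wish to keep your operator-theoretic formulation, the repair is to run this lift-and-pushforward argument (or an equivalent uniform estimate in the coordinates of Regions 3, 4a and 5) on the $\{ x'' < \delta \}$ part instead of discarding it; the $\{ x'' \ge \delta \}$ analysis you give is then the easy half of the proof.
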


\begin{proof} 
To prove this, we can view the composition as a pushforward. More precisely, we consider the map $\Upsilon: X^2_0 \times X \times [0, h_0]\to X^2\times [0, h_0]$, which is the composition of a blowdown map $X^2_0(z, z'') \times X(z') \times [0, h_0]\to X^3(z, z'', z')\times [0, h_0]$ followed by projection $X^3(z, z'', z')\times [0, h_0] \to X^2(z, z')\times [0, h_0]$ (we indicate the coordinate variables, valid at least in the interiors of these spaces, to indicate how the maps operator on the various factors of $X$). Then the kernel of the composition $G_5 S$ can be realized by 
\begin{itemize}
\item 
lifting $G_5$ to $X^2_0(z, z'') \times X(z') \times [0, h_0]$ via the projection to the left factor, 
\item 
lifting $S$ to $X^2_0(z, z'') \times X(z')\times [0, h_0]$ by first lifting to $X^3\times [0, h_0]$ by the projection $X^3(z, z'', z') \to X^2(z'', z')$ and then lifting to $X^2_0 \times X \times [0, h_0]$ by the blowdown map to $X^2_0 \times X \to X^2 \times X \equiv X^3$,
\item 
multiplying these kernels together, and
\item
pushing forward by $\Upsilon$. 
\end{itemize}
We remark that the product of the two half-density factors gives a full density in the $z''$ variable, which can be pushed forward invariantly by the map $\Upsilon$. 

The space $X^2_0(z, z'') \times X(z') \times [0, h_0]$ has five boundary hypersurfaces, which we will denote $\FL, \FR$ and $\FF$, arising from the $X^2_0$ factor, $\mathrm{FX}$, arising from the $X(z')$ factor, and $\mathrm{FH}$, at $\{ h = 0 \}$. Notice that the lift of $S$ vanishes to infinite order at $\FF$, $\FR$ and $\mathrm{FH}$. It follows that the product $G_5 S$ has the form $x^{n/2-i/h} {x'}^{n/2-i/h} h^\infty \rho_F^\infty \rho_R^\infty B \, |dg|^{1/2} |dg''| |dg'|^{1/2}$, where $B$ is smooth on $X^2_0\times X$. 
Due to the rapid vanishing at $\FF$ and $\FR$, the pushforward is well-defined (in the sense that the integral converges) and the result has the form $x^{n/2-i/h} {x'}^{n/2-i/h} h^\infty b  \,  |dg|^{1/2}  |dg'|^{1/2}$, where $b$ is smooth on $X^2$. This completes the proof.
\end{proof}

\subsection{Summary}
We have shown that, for sufficiently small $h$,  the outgoing resolvent $(h^2 \Delta - h^2n^2/4 - (1 + i0))^{-1}$ can be expressed as $G_1 + G'_2 + G'_3 + G'_4 + G'_5 + G_5 S$, where
\begin{itemize}
\item $G_1 \in {}^0 \Psi^{-2, 0}(X, \Omegazh)$, that is, a semiclassical 0-pseudodifferential operator of semiclassical order $0$ and differential order $-2$;
\item $G_2' \in (\rho_L \rho_R)^{n/2}I^{1/2}(N^* \diagz, \Lambda_+^{nd}; \Omegazh)$, that is, a semiclassical Lagrangian distribution of order $1/2$ associated to $N^* \diagz$ and to the Lagrangian submanifold $\Lambda_+^{nd}$;
\item $G'_3 \in (\rho_L \rho_R)^{n/2} I^{1/2}(X^2_0, \Lambda_+^{nd}; \Omegazh) + (xx')^{n/2 -i/h} I^{1/2}(X^2, \tL_+^*; \Omegazh)$, that is, a semiclassical Lagrangian distribution of order $1/2$ associated to $\Lambda_+$;
\item $G_4' \in  \rho_{L}^{n/2 + 1 -i/h} \rho_R^{n/2 -i/h} h^\infty C^\infty(X^2_0\times [0, h_0]; \Omegazh)$, 
\item $G_5' \in x^{n/2+1 -i/h} {x'}^{n/2 -i/h} h^\infty C^\infty(X^2\times [0, h_0]; \Omegazh)$, and
\item $G_5 S \in  x^{n/2-i/h} {x'}^{n/2-i/h} h^\infty C^\infty(X^2 \times [0, h_0]; \Omegazh)$.
\end{itemize}
We notice that the $G_4$ term (which can be taken to have support in a small neighbourhood of $\FF$) can be regarded as an element of $(\rho_L \rho_R)^{n/2} I^{-\infty}(X^2_0, \Lambda_+^{nd}; \Omegazh)$. Also, the $G_5'$ and $G_5 S$ terms can be combined as an  element of $x^{n/2-i/h} {x'}^{n/2-i/h} h^\infty C^\infty(X^2 \times [0, h_0]; \Omegazh)$. 
Moreover, the summand of $G_3'$ lying in $(\rho_L \rho_R)^{n/2} I^{1/2}(X^2_0, \Lambda_+^{nd}; \Omegazh)$ may be regarded as an element of $(\rho_L \rho_R)^{n/2}I^{1/2}(N^* \diagz, \Lambda_+^{nd}; \Omegazh)$. Collecting terms in this way, we can express our result as follows.

\begin{theorem}\label{asymptotically hyperbolic resolvent} The semiclassical operator $P_h = h^2 \Delta - h^2 n^2 /4 - 1$ is inverted by an operator that is the sum of the following terms:
\begin{itemize}
\item an element of ${}^0\Psi^{-2, 0}_h(X)$, that is, a semiclassical $0$-pseudodifferential operator of differential order $-2$ and semiclassical order $0$; 
\item A semiclassical intersecting Lagrangian distribution 
in  $(\rho_L \rho_R)^{n/2} I^{1/2}(N^* \diagz, \Lambda_+^{nd}; \Omegazh)$, where $N^* \diagz$ is the conormal bundle of the $0$-diagonal in $X^2_0$, and $\Lambda_+^{nd}$ is a subset of the Lagrangian $\Lambda_+$, generated by bicharacteristic flowout from the intersection of $N^* \diagz$ and the zero set of the symbol of $P_h$; 
\item a kernel lying in $(x x')^{n/2-i/h} I^{1/2}(X^2, \tL_+^*, \Omegazh)$, also associated to the bicharacteristic flowout, as above, but living on $X^2$ rather than $X^2_0$;
\item an element of 
$(xx')^{n/2-i/h} h^\infty C^\infty(X^2\times [0, h_0]; \Omegazh)$.
\end{itemize}
 \end{theorem}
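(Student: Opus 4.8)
The plan is to prove the theorem by assembling the staged parametrix construction carried out in the preceding subsections (elliptic construction through left boundary behaviour) and then inverting the resulting benign error term, exactly as in the Summary. I would proceed in the following order. First, using the semiclassical $0$-calculus of Mazzeo--Melrose developed in Section~\ref{sec:calculus}, construct an elliptic parametrix $G_1 \in {}^0\Psi^{-2,0}_h(X)$ with symbol supported where $P_h$ is elliptic, so that the error $E_1 = P_h G_1 - \Id$ has compactly supported semiclassical wavefront set sitting in a small neighbourhood of $N^*\diagz \cap \Sigma_L$. Next, invoke the semiclassical Melrose--Uhlmann intersecting-Lagrangian calculus (Appendix~\ref{sec:appB}) to produce $G_2' \in I^{1/2}(N^*\diagz, \Lambda_+^{nd})$ killing the error along $N^*\diagz$; the geometric hypotheses required (clean intersection of $N^*\diagz$ with $\Lambda_+^{nd}$, transversality of $H^L$ to $N^*\diagz$ there, tangency to $\Lambda_+^{nd}$) are precisely Lemma~\ref{lem:commute}(iii) together with the very definition of $\Lambda_+$ as a flowout. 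Then solve the transport equations iteratively and forward along $\Lambda_+$, starting at $\partial_{\diagz}\Lambda_+$, splitting the work into the near-diagonal piece $\Lambda_+^{nd}$ and the global piece $\Lambda_+^*$ and using property~\eqref{flowlineproperty} to avoid having to cut off repeatedly; this produces $G_3'$. The key input here is Proposition~\ref{prop:K}, which guarantees that after the shift \eqref{tL1} the resulting Lagrangian distribution is smooth (with the stated $\rho_F^{-(n+1)/2}$ weight) up to every boundary face of $X^2_0$ resp.\ $X^2$.

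Second, remove the remaining error at the front face order by order in $\rho_F$, solving iterated normal-operator equations and using the $h$-uniform mapping properties of the hyperbolic resolvent recorded in Proposition~\ref{normal operator}; this yields $G_4'$, reducing the error to $O(\rho_F^\infty)$. Then remove the error at $\FL$ by a Taylor expansion in $x$, solving the indicial equation at each order (which is invertible and produces $O(h^\infty)$ coefficients because the incoming error is itself $O(h^\infty)$); this yields $G_5'$. With $G_5 = G_1 + G_2' + G_3' + G_4' + G_5'$ the error $E_5 = P_h G_5 - \Id$ lies in $x^N h^N {x'}^{n/2-i/h} C^\infty(X^2;\Omegazh)$ for all $N$; conjugating by $x$ makes it Hilbert--Schmidt with small norm, so $\Id + E_5^c$ is invertible by a Neumann series, and a bootstrap on that series shows the correction $S^c$, hence $S = xS^c x^{-1}$, has the same structure. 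Then $R_h = G_5 + G_5 S$, and the composition (pushforward) lemma shows $G_5 S \in (xx')^{n/2-i/h} h^\infty C^\infty$. Finally I collect terms: $G_4'$ is absorbed into $(\rho_L\rho_R)^{n/2} I^{-\infty}(X^2_0, \Lambda_+^{nd}; \Omegazh)$, the $\Lambda_+^{nd}$-part of $G_3'$ merges with $G_2'$ into a single element of $(\rho_L\rho_R)^{n/2} I^{1/2}(N^*\diagz, \Lambda_+^{nd}; \Omegazh)$, the $\tL_+^*$-part of $G_3'$ is the third bullet, and $G_5'$ together with $G_5 S$ forms the smooth $O(h^\infty)$ remainder; comparing the constructed kernel with the Mazzeo--Melrose resolvent identifies it as the outgoing resolvent $(h^2\Delta - h^2n^2/4 - (1+i0))^{-1}$.

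The main obstacle is the boundary regularity of the Lagrangian distribution on $\Lambda_+$, i.e.\ Proposition~\ref{prop:K}: one must check, region by region in the list of Lemma~\ref{lem:shifted-vf}, that after the shift to $\tL_+$ the symbol obtained from the transport equations extends smoothly across $\FL$, $\FR$ and $\FF$. This rests on the algebraic fact (Lemma~\ref{lem:Ql}) that the conjugated, rescaled operator $\rho_R Q_L$ is a genuine differential operator with smooth coefficients whose Hamilton vector field is transverse to $\FL$ and tangent to $\FF$, $\FR$ (Lemma~\ref{lem:shifted-vf}), so that the Lagrangian solution simply continues smoothly through the boundary; and it also relies on the decomposition of $\Lambda_+$ into $\Lambda_+^{nd}$ and $\Lambda_+^*$ established in Section~\ref{sec:flowout}, which is forced upon us by interior geodesics whose initial and final directions coincide — a phenomenon that does not arise in \cite{Melrose-Sa Barreto-Vasy} and is the reason the $\tL_+^*$ piece must be viewed on $T^*X^2$ rather than $T^*X^2_0$. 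The remainder of the proof is bookkeeping, combining the pieces and tracking the weights in $\rho_L$, $\rho_R$, $\rho_F$ and $h$.
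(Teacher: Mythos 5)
Your proposal follows the paper's own proof essentially step for step: the staged construction $G_1, G_2', G_3', G_4', G_5'$ with the boundary regularity of the transported symbol resting on Lemma~\ref{lem:Ql}, Lemma~\ref{lem:shifted-vf} and Proposition~\ref{prop:K}, then the Hilbert--Schmidt/Neumann-series inversion of $\Id + E_5$, the pushforward argument for $G_5 S$, the regrouping of terms as in the Summary, and the final identification with the outgoing resolvent. This matches the paper's argument, so no further comparison is needed.
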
 

We finally justify our claim that we have constructed $R(n/2 - i/h)$. Denote the operator constructed above by $R_h$. Consider the difference, $R(n/2 - i/h) -  R_h$. This satisfies the homogeneous equation $P_h(R(n/2 - i/h) - R_h) = 0$. Moreover, the regularity properties of $R(n/2 - i/h)$ (see \eqref{MM})  and $R_h$ show that, for each fixed $z' \in X^\circ$, the Schwartz kernel of the difference has an expansion 
$$
(R(n/2 - i/h) - R_h)(z, z') =  x^{n/2 - i/h} f_{z'}(z), \quad f_{z'} \in  C^\infty(X).
$$
Using \cite[Proposition 3.2]{Graham-Zworski}, we see that the restriction of $f_{z'}$ to $\partial X$ vanishes. Then, following the argument in \cite[proof of Proposition 3.4]{Graham-Zworski}, we see that $f_{z'}$ vanishes identically. Indeed, a Taylor series analysis of $f_{z'}$ at $\partial X$ shows that $f_{z'}$ vanishes to all orders there. But if $f_{z'}$ did not vanish identically, then $x^{n/2 - i/h} f_{z'}$ would be an $L^2$ eigenfunction for $P_h$ with eigenvalue $n^2/4 + h^{-2}$, which is impossible \cite{Mazzeo-1991}. It follows that $x^{n/2 - i/h} f_{z'}$ vanishes identically, and therefore, $R(n/2 -i/h) = R_h$. 

%
%
%
%
%
%
%
%
%

\begin{appendix}

\section{Semiclassical Lagrangian distributions}\label{app:sld}

In this section, we shall investigate the basic semiclassical analysis of Fourier integral operators and Lagrangian distributions. The ideas in this appendix are (minor) variations of ideas introduced by  H\"{o}rmander \cite{Hormander-Acta-1971} and Duistermaat and H\"{o}rmander \cite{Duistermaat-Hormander-Acta-1972}. Alternatively different expositions appear in \cite{Hormander4}, \cite{Duistermaat}, \cite{sogge} and \cite{Egorov}.

\begin{remark}Following the convention of H\"{o}rmander, we denote the dimension of manifold by $d$ in the appendix, rather than $n + 1$, which denotes the dimension of asymptotically hyperbolic space.\end{remark}

	\subsection{Lagrangian distributions}
	Suppose $X$ is an $d$-dimensional manifold, and $\Lambda$, associated with a non-degenerate phase function $\phi$ defined on $X \times \mathbb{R}^N$, is a Lagrangian submanifold of $(T^\ast X, dx \wedge d\xi)$. A phase function $\phi(x, \theta)$, $x \in U \subset X$, $\theta \in \RR^N$ is a local nondegenerate parametrization of $\Lambda$ near $(x_0, \xi_0) \in \Lambda$ if 
	\begin{itemize}
	\item the differentials $d(\partial \phi/\partial \theta_i)$ are linearly independent whenever $d_\theta \phi = 0$, $i = 1 \dots N$; and 
	\item the map from $C_\phi = \{(x, \theta): \phi^\prime_\theta = 0\}$ to $T^* X$ given by 
	$$
	C \ni (x, \theta) \mapsto (x, d_x \phi) \in T^* X
	$$
	is a local diffeomorphism from $C$ to a neighbourhood of $(x_0, \xi_0)$ in $\Lambda$.
	(Notice that $C$ is a submanifold of dimension $n$ of $U \times \RR^N$ as a consequence of the first condition.) 
	\end{itemize}

	Then we define the space $I^k(X, \Lambda, \Omega^{1/2}) \subset \mathcal{D}^\prime(\mathbb{R}^d, \Omega^{1/2})$ of half-density Lagrangian distributions associated to $\Lambda$ as follows.  Let $J$ be an index set, such that for any $j \in J$ \begin{enumerate}\renewcommand{\labelenumi}{$($\roman{enumi}$)$}
	\item there is a local coordinate patch $X^\prime_j$ of $X$ with local coordinates $(x_1, \dots, x_d) \in \mathbb{R}^d$; 
	\item there is a positive integer $N_j$ and a non-degenerate phase function $\phi_j$ defined in an open subset $U_j$ of $X_j^\prime \times \mathbb{R}^N $ such that the map from $C_j \cap U_j$ to $\Lambda$, $$(x, \theta) \longmapsto (x, \phi^\prime_x)$$ is a diffeomorphism on an open subset $U_j^\Lambda$ of $\Lambda$.\end{enumerate}
	
	\begin{definition}The Lagrangian distribution $A \in I_k(X, \Lambda, \Omega^{1/2})$ is a locally finite sum of $A_j$ in $J$ with $$\langle A_j, u \rangle = (2\pi h)^{- k - (d + 2N_j)/4} \int_{\mathbb{R}^{N_j}}\int_{X_j^\prime} e^{i\phi_j(x, \theta) / h} a_j(x, \theta, h) u(x) \,dx d\theta,$$ where  $a_j \in S(U_j)$ is compactly supported in $\theta$, and where $u = u(x) |dx|^{1/2}$ is a smooth half-density. \end{definition}
	
	We remark that $a_j \in S(U_j)$ means that $a_j$ is smooth in $U_j$, with uniform bounds on all its derivatives as $h \to 0$. 
	
	%
	
	\vskip 5pt 
	
	Every Lagrangian submanifold $\Lambda \subset T^* X$ can be locally parametrized. 
	Given local coordinates $x$ on $X$, and dual fibre coordinates $\xi$, we can decompose $x = (x', x'')$ and correspondingly $\xi = (\xi', \xi'')$ such that $(x', \xi'')$ locally furnish coordinates on $\Lambda$. Since $d(\xi \cdot dx) = 0$ on $\Lambda$,  Poincar\'{e}'s lemma gives a smooth function $f(x^\prime, \xi^{\prime\prime})$ such that $df = \xi \cdot dx$. We assert 
	
	\begin{lemma}\label{new parametrization}  The phase function $\Phi$ defined by  
	$$\Phi(x, \xi^{\prime\prime}) = \langle x^{\prime\prime}-X^{\prime\prime}(x^\prime, \xi^{\prime\prime}), \, \xi^{\prime\prime} \rangle + f(x^\prime, \xi^{\prime\prime})$$
	locally parametrizes $\Lambda$. 
	\end{lemma}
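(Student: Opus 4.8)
\textit{Proof proposal.} This is the standard generating-function construction of H\"ormander, specialized to the mixed coordinates $(x', \xi'')$, and the plan is simply to verify the two conditions in the definition of a non-degenerate local parametrization by a direct computation. First I would fix notation for the parametrization of $\Lambda$: write $x'' = X''(x', \xi'')$ and $\xi' = \Xi'(x', \xi'')$, so that a point of $\Lambda$ is $\big(x', X''(x',\xi''), \Xi'(x',\xi''), \xi''\big)$, and note that $X''$, $\Xi'$, $f$ are functions of $(x', \xi'')$ only. Expanding the tautological one-form $\xi \cdot dx = \xi'\cdot dx' + \xi''\cdot dx''$ along $\Lambda$ and substituting $x'' = X''$ gives $df = \Xi'\cdot dx' + \xi''\cdot dX''$; comparing the coefficients of $dx'_j$ and of $d\xi''_j$ yields the two identities $\partial_{x'_j} f = \Xi'_j + \sum_i \xi''_i \partial_{x'_j} X''_i$ and $\partial_{\xi''_j} f = \sum_i \xi''_i \partial_{\xi''_j} X''_i$. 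These are the only facts about $f$ that will be used.

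Next, treating $\theta = \xi''$ as the fibre variable, so $N = \dim x''$, I would compute the partial derivatives of $\Phi(x, \xi'') = \langle x'' - X''(x',\xi''), \xi'' \rangle + f(x',\xi'')$. Using the two identities above, a short computation shows that the $\sum_i \xi''_i \partial X''_i$ terms cancel in each case, leaving $\partial_{\xi''_j}\Phi = x''_j - X''_j$, \ $\partial_{x''_j}\Phi = \xi''_j$, and $\partial_{x'_j}\Phi = \Xi'_j$. From these three formulas everything follows. The critical set $C_\Phi = \{ d_{\xi''}\Phi = 0\}$ is exactly $\{ x'' = X''(x', \xi'')\}$, which is a smooth submanifold of dimension $d$ naturally diffeomorphic to the $(x', \xi'')$-parameter domain. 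Non-degeneracy holds because $d(\partial_{\xi''_j}\Phi) = dx''_j - dX''_j$, and since the $dx''_j$ occur in no other term these $N$ one-forms are everywhere linearly independent (a fortiori on $C_\Phi$). Finally, the map $C_\Phi \ni (x', \xi'') \mapsto (x, d_x\Phi)$ equals $(x', \xi'') \mapsto \big(x', X''(x',\xi''), \Xi'(x',\xi''), \xi''\big)$, which is precisely the given parametrization of $\Lambda$ and hence a diffeomorphism onto a neighbourhood in $\Lambda$. This establishes that $\Phi$ is a non-degenerate phase function parametrizing $\Lambda$.

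There is no genuine obstacle here; the lemma is a routine verification. The only points requiring care are bookkeeping ones: keeping straight that $\partial_{x''}$ annihilates $X''$, $\Xi'$ and $f$, and correctly reading off the two partial-derivative identities for $f$ from the one-form equation $df = \Xi'\cdot dx' + \xi''\cdot dX''$, since it is exactly those identities that produce the cancellations in $\partial_{x'}\Phi$ and $\partial_{\xi''}\Phi$. I would present the computation of the three partials as the heart of the proof and leave the submanifold/diffeomorphism conclusions as immediate consequences.
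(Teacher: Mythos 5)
Your proposal is correct and follows essentially the same route as the paper: derive $\partial_{x'}f = \Xi' + \xi''\,\partial_{x'}X''$ and $\partial_{\xi''}f = \xi''\,\partial_{\xi''}X''$ from $df = \Xi'\cdot dx' + \xi''\cdot dX''$ on $\Lambda$, conclude that $\{\Phi'_{\xi''}=0\} = \{x''=X''\}$ and that $\Phi'_x = (\Xi',\xi'')$ there, so the critical-set map recovers the given parametrization of $\Lambda$. The only difference is that you also spell out the non-degeneracy of the $d(\partial_{\xi''_j}\Phi)$, which the paper leaves implicit; this is a harmless (indeed welcome) addition.
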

	
	\begin{proof}
	We must justify that $\Lambda$ is given locally by $\{(x, \Phi^\prime_x) : \Phi^\prime_{\xi^{\prime\prime}} = 0\}$. Restriction $\Phi^\prime_{\xi^{\prime\prime}} = 0$ amounts to \begin{equation}\label{restriction} x^{\prime\prime} - X^{\prime\prime} -  \xi^{\prime\prime} \frac{\partial X^{\prime\prime}}{\partial \xi^{\prime\prime}}  + \frac{\partial f}{\partial \xi^{\prime\prime}} = 0.\end{equation}
	
	The differential of $f$ on $\Lambda$ is \begin{eqnarray*}
	df = \Xi^\prime dx^\prime + \xi^{\prime\prime} dX^{\prime\prime} = \bigg(\Xi^\prime + \xi^{\prime\prime}  \frac{\partial X^{\prime\prime}}{\partial x^\prime}\bigg) dx^\prime + \xi^{\prime\prime} \frac{\partial X^{\prime\prime}}{\partial \xi^{\prime\prime}} d\xi^{\prime\prime},\end{eqnarray*} which gives 
	\begin{equation}\label{restriction2}\frac{\partial f}{\partial x^\prime} = \Xi^\prime + \xi^{\prime\prime}  \frac{\partial X^{\prime\prime}}{\partial x^\prime} \quad \text{and} \quad \frac{\partial f}{\partial \xi^{\prime\prime}} = \xi^{\prime\prime} \frac{\partial X^{\prime\prime}}{\partial \xi^{\prime\prime}}.
	\end{equation}
	
	Combining it with (\ref{restriction}), we get $$x^{\prime\prime} = X^{\prime\prime},\quad \quad \Phi_x^\prime = (\Phi_{x^{\prime}}^\prime, \Phi_{x^{\prime\prime}}^\prime) = \bigg(- \xi^{\prime\prime} \frac{\partial X^{\prime\prime}}{\partial x^\prime} + \frac{\partial f}{\partial x^\prime},\, \xi^{\prime\prime}\bigg) = (\Xi^\prime, \xi^{\prime\prime}) \quad\quad \text{on $\{\Phi^\prime_{\xi^{\prime\prime}} = 0\}$},$$ which completes the proof.
	\end{proof}

	\vskip 5pt 
	
	It is legitimate to expect the  semiclassical wave front set of a Lagrangian distribution is $$ \Lambda = \{(x, \phi^\prime_x ) | \phi_\theta^\prime = 0\}.$$ Indeed, the semiclassical Fourier transform of Lagrangian distribution $$(2 \pi h)^{-(d + 2N)/4} \int e^{i(\phi(x, \theta) - \langle x, \xi \rangle) / h} a(x, \theta, h) \, d\theta dx$$ is rapidly decreasing when $\xi$ is off an open neighbourhood of $\{\phi^\prime_x | \phi^\prime_\theta (x, \theta) = 0\}$, which amounts to showing that for such $\xi$, $$\int e^{i(\phi(x, \theta) - \langle x, \xi \rangle)/h} a(x, \theta, h) \, d\theta dx = O(h^\infty).$$ We work on the new phase, say $\Phi = \phi(x, \theta) - \langle x, \xi \rangle$, then $$|\nabla_{(x, \theta)} \Phi| \approx |\phi_x^\prime - \xi| + |\phi^\prime_\theta|.$$ If $|\phi_\theta^\prime| = 0$ and $\text{dist} \, (\xi, \phi^\prime_x|_{\Lambda_\phi}) > 0$, then $\Phi^\prime_x \neq 0$. Hence the desired estimate follows as $\Phi$ is a non-stationary phase.

	\subsection{Half densities}
	Given a nondegenerate phase function $\phi(x, \theta)$ locally parametrizing a Lagrangian submanifold $\Lambda$, we define a density $d_C$ on $$C = \{(x, \theta) : \phi^\prime_\theta(x, \theta) = 0\} $$
	as follows. Let $\lambda_1, \cdots, \lambda_d$ be local coordinates on $C$ extended to a neighbourhood of $C$. Then we define 
	$$
	d_C = 
	\big| d\lambda_1 \cdots d\lambda_d \big|  \bigg| \frac{\partial (\lambda, \phi^\prime_\theta)}{\partial (x, \theta)} \bigg|^{-1},$$
	which is clearly independent of the choice of $\lambda$. 
	
	We shall study the invariance of half densities under change of phase function, $$\tilde{\phi}(\tilde{x}, \tilde{\theta}) = \phi(x, \theta)$$ where we perform change of variables $x = x(\tilde{x}), \theta = \theta(\tilde{x}, \tilde{\theta}).$ To make $\langle \tilde{A}, \tilde{u} \rangle = \langle A, u \rangle$, we write $$\tilde{u}(\tilde{x}) = \bigg|\frac{\partial x}{\partial \tilde{x}}\bigg|^{1/2} u(x) \quad\mbox{and}\quad \tilde{a}(\tilde{x}, \tilde{\theta}, h) = a(x(\tilde{x}), \theta(\tilde{x}, \tilde{\theta}), h) \bigg|\frac{\partial x}{\partial \tilde{x}}\bigg|^{1/2}\bigg|\frac{\partial \theta}{\partial \tilde{\theta}}\bigg|.$$
	
	We claim that the pushforward of $a\sqrt{d_C}$ under the map from $C$ to $\Lambda$ (which we still denote $a\sqrt{d_C}$) is invariant under changes of phase function  for the same Lagrangian:
	\begin{equation}
	a \sqrt{d_C} = \tilde a \sqrt{d_{\tilde C}}.
	\label{adC}\end{equation}
	To prove this, we must show that 
	$$\bigg|\frac{\partial (\lambda, \phi^\prime_\theta)}{\partial (x, \theta)}\bigg|^{-1/2} a(x, \theta, h) = \bigg|\frac{\partial ({\lambda}, \tilde{\phi}^\prime_{\tilde{\theta}})}{\partial (\tilde{x}, \tilde{\theta})}\bigg|^{-1/2} \tilde{a}(\tilde{x}, \tilde{\theta}, h),$$ 
	
	Indeed, using facts $\tilde{\phi}^\prime_{\tilde{\theta}} = \phi^\prime_\theta \frac{\partial \theta}{\partial \tilde{\theta}} $ and $ \phi^\prime_\theta = 0 \,\mbox{on}\, C,$ we have \begin{eqnarray*}\bigg|\frac{\partial ({\lambda}, \tilde{\phi}^\prime_{\tilde{\theta}})}{\partial (\tilde{x}, \tilde{\theta})}\bigg| & = &\left|\begin{array}{cc}\frac{\partial x}{\partial \tilde{x}}&\frac{\partial \theta}{\partial \tilde{x}}\\\frac{\partial x}{\partial \tilde{\theta}}&\frac{\partial \theta}{\partial \tilde{\theta}}\end{array}\right| \left|\begin{array}{cc}\frac{\partial \lambda}{\partial x}&\frac{\partial \tilde{\phi}^\prime_{\tilde{\theta}}}{\partial x}\\\frac{\partial \lambda}{\partial \theta}&\frac{\partial \tilde{\phi}^\prime_{\tilde{\theta}}}{\partial \theta}\end{array}\right|\\& = &\left|\begin{array}{cc}\frac{\partial x}{\partial \tilde{x}}&\frac{\partial \theta}{\partial \tilde{x}}\\0&\frac{\partial \theta}{\partial \tilde{\theta}}\end{array}\right| \left|\begin{array}{cc}\frac{\partial \lambda}{\partial x}&\sum_{k = 1}^N \phi^{\prime\prime}_{\theta_kx}\frac{\partial \theta_k}{\partial \tilde{\theta}} + \phi^\prime_{\theta_k}\frac{\partial^2 \theta_k}{\partial \tilde{\theta} \partial x}\\\frac{\partial \lambda}{\partial \theta}&\sum_{k = 1}^N \phi^{\prime\prime}_{\theta_k\theta}\frac{\partial \theta_k}{\partial \tilde{\theta}} + \phi^\prime_{\theta_k}\frac{\partial^2 \theta_k}{\partial \tilde{\theta} \partial \theta}\end{array}\right|\\ & = & \bigg|\frac{\partial\theta}{\partial\tilde{\theta}}\bigg|^2 \bigg|\frac{\partial x}{\partial \tilde{x}}\bigg| \bigg|\frac{\partial (\lambda, \phi^\prime_\theta)}{\partial (x, \theta)}\bigg|,\end{eqnarray*} which  combined with the definition of $\tilde{a}$ proves the assertion.

	\subsection{Equivalence of phase functions}
	We say that two phase functions $\phi$ and $\tilde \phi$ locally parametrizing the Lagrangian submanifold $\Lambda \subset T^* X$ are (locally) equivalent if there is a change of variables  $\theta = \theta(x, \tilde \theta)$ such that, locally, 
	$$
	\tilde\phi(x, \tilde \theta) = \phi(x, \theta(x, \tilde \theta)).
	$$
	It follows then that Lagrangian distributions written with phase function $\phi$ may equally well be written with phase function $\tilde \phi$. 
	
	\begin{proposition}\label{prop:equivalence}
	Suppose that $\phi$ and $\tilde \phi$ are two phase functions, defined in a neighbourhood of $(x_0, \theta_0) \in U \times \RR^N$ and $(x_0, \tilde\theta_0) \in U \times \RR^{\tilde N}$ parametrizing the Lagrangian submanifold $\Lambda \subset T^* X$ locally near $(x_0, \xi_0) \in \Lambda$. Then they are equivalent if and only if
	
	(i) $N = \tilde N$, and
	
	(ii) $\phi''_{\theta \theta}(x_0, \theta_0)$ and $\tilde \phi''_{\tilde \theta \tilde \theta}(x_0, \tilde \theta_0)$ have the same signature.
	\end{proposition}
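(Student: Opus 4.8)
The plan is to follow the classical equivalence-of-phase-functions theorem of H\"ormander \cite[Section~3.1]{Hormander-Acta-1971} essentially verbatim: the semiclassical parameter $h$ never enters, since the fibre change of variables constructed below is independent of $h$ and transports the amplitudes exactly as in \eqref{adC}. The necessity of (i) and (ii) is immediate. For $\tilde\theta \mapsto \theta(x,\tilde\theta)$ to be a local diffeomorphism of fibres one needs $N = \tilde N$, and differentiating $\tilde\phi(x,\tilde\theta) = \phi(x,\theta(x,\tilde\theta))$ twice in $\tilde\theta$ at the fibre-critical point $(x_0,\tilde\theta_0)$, every term containing $\phi'_\theta$ drops out since $\phi'_\theta(x_0,\theta_0) = 0$ there; one is left with
\[
\tilde\phi''_{\tilde\theta\tilde\theta}(x_0,\tilde\theta_0) = {}^{t}J\,\phi''_{\theta\theta}(x_0,\theta_0)\,J, \qquad J = \tfrac{\partial\theta}{\partial\tilde\theta}(x_0,\tilde\theta_0)\ \text{invertible},
\]
so the Hessians are congruent and share the same signature by Sylvester's law of inertia.

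For sufficiency I would argue in two steps. \emph{Step 1 (Morse reduction).} Set $r = \operatorname{rank}\phi''_{\theta\theta}(x_0,\theta_0)$; the standard identity for non-degenerate phase functions gives $r = N - N_0$, where $N_0$ is the corank of $d(\pi|_{\Lambda})$ at $(x_0,\xi_0)$ and hence intrinsic to the germ of $\Lambda$. After a linear change in $\theta$ assume the lower-right $r\times r$ block of $\phi''_{\theta\theta}$ is invertible and split $\theta = (\theta',\theta'')$ with $\theta'' \in \RR^{r}$. Solving $\phi'_{\theta''} = 0$ for $\theta'' = \Theta''(x,\theta')$ via the implicit function theorem, then applying Taylor's theorem with integral remainder and the Morse lemma with parameters (in $\theta''$, with $(x,\theta')$ as parameters), one obtains a fibre change of variables $\theta'' = \Theta''(x,\theta') + \Phi(\eta'',x,\theta')$ after which
\[
\phi(x,\theta) = \psi(x,\theta') + \langle B\eta'',\eta''\rangle, \qquad B = \operatorname{diag}(\pm 1), \quad \psi(x,\theta') = \phi(x,\theta',\Theta''(x,\theta')).
\]
Here $\psi''_{\theta'\theta'}(x_0,\theta'_0)$ is the Schur complement of the invertible $r\times r$ block, so it has rank $r - r = 0$; thus $\psi$ is a \emph{reduced} phase function, it parametrizes the same germ of $\Lambda$, and it has exactly $N_0$ fibre variables. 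Comparing Hessians, $B$ has $N - N_0$ variables and signature $\operatorname{sgn}\phi''_{\theta\theta}(x_0,\theta_0)$. Performing the same reduction on $\tilde\phi$ and using $N = \tilde N$ together with hypothesis (ii), the two quadratic parts have equal size and signature, so a further linear change makes them coincide. It remains to show the reduced parts $\psi$, $\tilde\psi$ are equivalent.

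\emph{Step 2 (rigidity of reduced phase functions).} Choose a splitting $x = (x',x'')$, $\xi = (\xi',\xi'')$ with $\dim x'' = \dim\xi'' = N_0$ so that $(x',\xi'')$ are coordinates on $\Lambda$ near $(x_0,\xi_0)$, with generating function $S$; then $\Phi_{\mathrm{can}}(x,\xi'') = \langle x'',\xi''\rangle + S(x',\xi'')$ is a generating-function phase for $\Lambda$ of the type in Lemma~\ref{new parametrization}. It suffices to prove that any reduced phase function $\psi(x,\theta')$ with $N_0$ fibre variables parametrizing this germ is equivalent to $\Phi_{\mathrm{can}}$, for then $\psi \sim \Phi_{\mathrm{can}} \sim \tilde\psi$. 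The reducedness of $\psi$ together with non-degeneracy forces the $N_0\times N_0$ mixed block $\psi''_{\theta'x''}(x_0,\theta'_0)$ to be invertible: differentiating the relation $\xi'' = \psi'_{x''}$ along $\Lambda$ and using that $\partial x''/\partial\xi''$ vanishes at the base point (which is just a restatement of the corank being $N_0$) produces a right inverse. Hence $\psi'_{\theta'} = 0$ can be solved for $x''$ as a function of $(x',\theta')$, and the change of fibre variable $\theta' \mapsto \psi'_{x''}(x,\theta')$, legitimate by this invertibility, followed by the standard identification of the resulting phase with the generating function of $\Lambda$, puts $\psi$ into the form $\Phi_{\mathrm{can}}$.

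I expect the main obstacle to be the bookkeeping in Step 1 — checking that the Morse lemma with parameters genuinely delivers a phase function $\psi$ that is non-degenerate, parametrizes the \emph{same} Lagrangian, has $N_0$ fibre variables, and has vanishing fibre-Hessian — together with the rigidity assertion of Step 2; both are classical but require care (cf. \cite[Section~3.1]{Hormander-Acta-1971}). Neither step involves $h$, so once they are in place the semiclassical amplitudes are carried along verbatim, which is why this appendix can present the semiclassical theory as a routine adaptation of the homogeneous one.
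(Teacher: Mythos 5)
Your proposal is correct and is essentially the paper's own approach: the proof given in the paper consists solely of the remark that H\"ormander's homogeneous argument (\cite{Hormander-Acta-1971}, Section 3.1) applies verbatim because homogeneity --- and likewise the parameter $h$ --- plays no role, and your necessity computation together with the Morse-lemma reduction and the reduced-case rigidity is precisely that classical argument, with the same observation that the fibre change of variables is $h$-independent. One small caution: in Step 2 the substitution $\eta'' = \psi'_{x''}(x,\theta')$ makes the transformed phase agree with the canonical generating-function phase only on the fibre-critical set (since $\chi'_{x''} = \eta''$ holds only where $\psi'_{\theta'}=0$), so the final identification is not immediate and still requires the deformation/homotopy step of the cited section rather than a direct reading-off.
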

	
	We refer to \cite[Section 3.1]{Hormander-Acta-1971} for the proof, in which homogeneity plays no particular role. 
	
	We now want to define an invariant semiclassical principal symbol for a Lagrangian distribution $A \in I^k(X, \Lambda; \Omega^{1/2})$. To do this we need the following 
	
	\begin{lemma}\label{lem:Nrank}
	Let $\phi$ be a non-degenerate phase function in a neighbourhood of $(x_0, \theta_0)$ in $X \times \mathbb{R}^N$ with $\phi^\prime_\theta (x_0, \theta_0) = 0$ and $\xi_0 = \phi^\prime_x(x_0, \theta_0)$. Then we have $$N - \text{rank}\, \phi^{\prime\prime}_{\theta\theta}(x_0, \theta_0) = d - \text{rank}\, d\pi_\Lambda(x_0, \xi_0),$$ where $\pi_\Lambda$ is the restriction to $\Lambda$ of the projection $T^\ast X \rightarrow X$.
	\end{lemma}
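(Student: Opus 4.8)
The plan is to reduce the identity to an infinitesimal/linear-algebra statement at the point $(x_0,\theta_0)$, using the defining property of a nondegenerate phase function.  First I would set $C_\phi=\{(x,\theta):\phi'_\theta=0\}$, which by the nondegeneracy of $\phi$ is a smooth submanifold of $X\times\RR^N$ of dimension $d$, cut out by the $N$ functions $\phi'_{\theta_1},\dots,\phi'_{\theta_N}$ whose differentials are independent along $C_\phi$.  Let $\iota:C_\phi\to T^*X$ be the parametrization map $(x,\theta)\mapsto(x,\phi'_x)$, which is a diffeomorphism onto a neighbourhood of $(x_0,\xi_0)$ in $\Lambda$.  The key observation is that $\pi_\Lambda\circ\iota$ is just the restriction to $C_\phi$ of the projection $(x,\theta)\mapsto x$.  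So $\operatorname{rank} d\pi_\Lambda(x_0,\xi_0)=\operatorname{rank} d(\pi_\Lambda\circ\iota)(x_0,\theta_0)=\operatorname{rank}$ of the linear map $T_{(x_0,\theta_0)}C_\phi\to\RR^d$, $(\dot x,\dot\theta)\mapsto\dot x$, and the corank of this map is $\dim\{(\dot x,\dot\theta)\in T_{(x_0,\theta_0)}C_\phi:\dot x=0\}$, i.e. the dimension of the $\theta$-tangent space to $C_\phi$ at that point.

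Next I would compute that tangent space explicitly.  A vector $(0,\dot\theta)$ lies in $T_{(x_0,\theta_0)}C_\phi$ iff it annihilates all the differentials $d\phi'_{\theta_i}$, that is iff $\phi''_{\theta x}(x_0,\theta_0)\cdot 0+\phi''_{\theta\theta}(x_0,\theta_0)\dot\theta=0$, i.e. iff $\dot\theta\in\ker\phi''_{\theta\theta}(x_0,\theta_0)$.  Hence
$$
d-\operatorname{rank} d\pi_\Lambda(x_0,\xi_0)=\dim\ker\phi''_{\theta\theta}(x_0,\theta_0)=N-\operatorname{rank}\phi''_{\theta\theta}(x_0,\theta_0),
$$
which is exactly the claimed identity.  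The only point requiring a little care is the very first reduction: I should check that $d(\pi_\Lambda\circ\iota)$ and $d\pi_\Lambda$ (restricted to $\Lambda$) have the same rank, which is immediate because $\iota$ is a diffeomorphism onto an open subset of $\Lambda$, so the statement about ranks on $\Lambda$ transports verbatim to a statement about the explicitly parametrized model $C_\phi$.

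I do not expect a serious obstacle here; the argument is entirely linear algebra once the right identification is made.  If anything, the mildly delicate step is bookkeeping: making sure that "rank of the projection $T^*X\to X$ restricted to $\Lambda$" is interpreted as the rank of $d\pi_\Lambda$ as a map $T_{(x_0,\xi_0)}\Lambda\to T_{x_0}X$ (so that its corank is $d$ minus this rank, a nonnegative integer bounded by $d$), and that the map $(\dot x,\dot\theta)\mapsto\dot\theta$ genuinely identifies $\ker\bigl(T_{(x_0,\theta_0)}C_\phi\to\RR^d\bigr)$ with $\ker\phi''_{\theta\theta}(x_0,\theta_0)$ — injectivity of $\dot\theta\mapsto(0,\dot\theta)$ into $T C_\phi$ is clear, and surjectivity onto the kernel follows from the computation of $TC_\phi$ above.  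One may also remark that this is the semiclassical analogue of \cite[Lemma 3.2.1]{Hormander-Acta-1971}, with the same proof, homogeneity playing no role.
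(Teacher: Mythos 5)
Your argument is correct and is essentially the same as the paper's proof: both reduce the statement to the composite $C_\phi \ni (x,\theta) \mapsto (x,\phi'_x) \mapsto x$, use that the first map is a diffeomorphism onto $\Lambda$ locally, and identify the kernel of the differential of the composite with vectors $(0,\dot\theta)$, $\dot\theta \in \ker \phi''_{\theta\theta}(x_0,\theta_0)$. Nothing further is needed.
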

	
	\begin{proof} Consider the maps 
	$$C \ni (x, \theta) \longrightarrow (x, \phi^\prime_x) \longrightarrow x \in X.$$
	The right hand side is the dimension of the kernel of the differential of the right hand arrow, where $(x, \phi^\prime_x)$ is  restricted to the $d$-dimensional manifold $\Lambda = \{(x, \phi^\prime_x) : \phi^\prime_\theta = 0\}$. On the other hand, this dimension is the same as the dimension of the kernel of the differential of the composite map, since the first arrow is a diffeomorphism. This kernel is 
	 the set of tangent vectors $v = a \cdot \partial_x + b \cdot \partial_\theta$ such that $ d\phi^\prime_\theta(v) = 0$ and $dx(v) = 0$. This requires $a = 0$ and $b$ is in the kernel of  the matrix $\phi^{\prime\prime}_{\theta\theta}$, whose dimension is the left hand side.
	\end{proof}
	
	\subsection{Principal symbol}
	In general, if we have a $k$th order Lagrangian distribution
	$$
	 A = (2\pi h)^{-k- (d + 2N)/4} \int\int e^{i \phi(x, \theta) / h} a(x, \theta, h) \, d\theta |dx|^{1/2},
	$$ 
	which can be defined by $\phi$ and $a \in S$, respectively, $\tilde{\phi}$ and $\tilde{a} \in S$, then we will show that 
	\begin{equation}
	e^{ i \pi \text{sgn}\, \phi^{\prime\prime}_{\theta\theta}/4}a(x, \theta, h) \sqrt{d_C}  - e^{ i \pi \text{sgn}\, \tilde{\phi}^{\prime\prime}_{\theta\theta}/4} \tilde{a}(x, \tilde{\theta}, h) \sqrt{d_{\tilde{C}}} \in h S(\Lambda, \Omega^{1/2}).
	\end{equation}
	This allows us to define an invariant principal symbol for $A$, which is a half-density on $\Lambda$ with values in the Maslov bundle. 
	
	To do this, we increase the number of variables in the phase functions by adding a nondegenerate quadratic form to each:
	$$
	\Phi(x, \theta, \theta') = \phi(x, \theta) + Q(\theta', \theta'), \quad \tilde\Phi(x, \tilde\theta, \tilde\theta') = \tilde\phi(x, \tilde\theta) + \tilde Q(\tilde\theta', \tilde\theta').
	$$
	We may do this in such a way that $\Phi$ and $\tilde\Phi$ have the same number of total extra variables, and the same signature (note this requires a mod 2 compatibility between the number of $\theta$ variables and the signature of the Hessian of the phase function in those variables; this is guaranteed by Lemma~\ref{lem:Nrank}). 
	
	We now invoke Proposition~\ref{prop:equivalence} and assert that the phase functions $\Phi, \tilde\Phi$ are equivalent. We write $A$ using the these functions; the amplitude  is now $a |\det Q|^{1/2} e^{-i\pi \sgn Q/4}$ for $\Phi$, respectively $\tilde a |\det \tilde Q|^{1/2} e^{-i\pi \sgn \tilde Q/4}$, for $\tilde \Phi$, to cancel the effect of the quadratic form in the phase. 
	Applying \eqref{adC}, we find that 
	$$
	a |\det Q|^{1/2} e^{-i\pi \sgn Q/4} \sqrt{d_{C_{\Phi}}} = \tilde a |\det \tilde Q|^{1/2} e^{-i\pi \sgn \tilde Q/4} \sqrt{d_{C_{\tilde \Phi}}}.
	$$
	On the other hand, it is easy to see that $\sqrt{d_{C_{\Phi}}}$ and $\sqrt{d_{C_{\phi}}}$ are related by 
	$$
	\sqrt{d_{C_{\Phi}}} = |\det Q|^{-1/2} \sqrt{d_{C_{\phi}}},
	$$
	and similarly for $\sqrt{d_{C_{\tilde\Phi}}}$ and $\sqrt{d_{C_{\tilde\phi}}}$. We deduce that 
	$$ e^{ i \pi \sigma/4} a(x, \theta, h) \sqrt{d_C} -   \tilde{a}(x, \tilde{\theta}, h) \sqrt{d_{\tilde{C}}} \in h S(\Lambda, \Omega^{1/2}),$$
	where we use the notation 
	$$\sigma = \text{sgn}\, \phi^{\prime\prime}_{\theta\theta} - \text{sgn}\, \tilde{\phi}^{\prime\prime}_{\theta\theta} = (\text{rank}\, \phi^{\prime\prime}_{\theta\theta} - \text{rank}\, \tilde{\phi}^{\prime\prime}_{\theta\theta})\mod 2,$$ which is an even number. 
	
	By  Lemma \ref{lem:Nrank} we have $\sigma = (N - \tilde{N})\mod 2$. Hence $\sigma^\prime = (\sigma - N + \tilde{N})/2 \in \mathbb{Z}.$ Then we have
	$$i^{\sigma^\prime} e^{i\pi N/4} a(x, \theta, h) \sqrt{d_C}  - e^{i\pi \tilde{N}/4} \tilde{a}(x, \tilde{\theta}, h) \sqrt{d_{\tilde{C}}}  \in hS(\Lambda, \Omega^{1/2}).$$ 
	Following \cite[Section 3]{Hormander-Acta-1971}, we interpret the discrepancy factor $i^{\sigma^\prime}$ as a transition function for a line bundle, the so-called Maslov bundle, defined over $\Lambda$. 
	We obtained the same Maslov transition functions as for classical (homogeneous) Lagrangian distributions. 
	This shows that $e^{i\pi N/4} a(x, \theta, h) \sqrt{d_C}$ has invariant meaning as a section of the Maslov bundle defined in \cite[Section 3]{Hormander-Acta-1971}. 
	
	\begin{definition}
	The principal symbol of $A$ is defined by 
	$$
	\sigma^k(A) = e^{ i \pi N/4} a(x, \theta, h) \sqrt{d_C} \in S(\Lambda, L \otimes \Omega^{1/2})/ h S(\Lambda, L \otimes \Omega^{1/2})
	$$
	where $L$ is the Maslov bundle over $\Lambda$, and $(a, C, N)$ are the data corresponding to any local oscillatory integral representation of $A$ as above. 
	\end{definition}

	\subsection{Exact sequence}
	We have the exact sequence $$0 \hookrightarrow I^{k - 1}(X, \Lambda; \Omega^{1/2}) \hookrightarrow I^k(X, \Lambda; \Omega^{1/2}) \xrightarrow{\ \ \sigma_k \ \ } S(\Lambda, L \otimes \Omega^{1/2})/ h S(\Lambda, L \otimes \Omega^{1/2}) \longrightarrow 0.$$
	This is equivalent to saying that $\sigma^k$ is surjective and its kernel is $I^{k-1}(X, \Lambda; \Omega^{1/2})$. To show surjectivity, let $s \in S(\Lambda, L \otimes \Omega^{1/2})$ with $s_j \in S (U^\Lambda_j, \Omega^{1/2})$ be given. One may pull back $s_j$ to $a_j \sqrt{d_{C_j}}$ under the map from $C_j \cap U_j$ to $U^\Lambda_j$, so $ a_j \in S(C_j)$. By taking a homogeneous $C^\infty$ retraction to $C_j$, one may extend $a_j$ to $U_j$. Using the global definition, we define a semiclassical Lagrangian distribution in $I^k$. Different choices of the extension off $C_j$ cause an error in $I^{k - 1}$. Therefore the map is well-defined.
	Injectivity is shown using oscillatory testing, as in \cite[Section 3.2]{Hormander-Acta-1971}. 
	
	\subsection{Canonical relation}
	
	To establish the calculus,  we connect semiclassical Fourier integral operators with canonical relations.
	
	A semiclassical Fourier integral operator with kernel $A \in I_k(\Lambda)$ is a map $$\mathcal{S}(Y, \Omega^{1/2}) \longrightarrow \mathcal{S}^\prime(X, \Omega^{1/2}),$$ where $X$ and $Y$ are two manifolds of dimension $n_X$ and $n_Y$ respectively, and $\Lambda$ is a Lagrangian of $(T^\ast X \times T^\ast Y , \sigma_X + \sigma_Y)$. The canonical relation $\mathcal{C}$ is a Lagrangian of $(T^\ast X \times T^\ast Y , \sigma_X - \sigma_Y)$.
	
	The Lagrangian $\Lambda$ can be parametrized by the phase function of $A$, say $\phi(x, y, \theta)$, as $$\{(x, \phi^\prime_x, y, \phi^\prime_y) | \phi^\prime_\theta = 0\}.$$ We have local coordinates on corresponding canonical relation $$\mathcal{C} = \Lambda^\prime = \{(x, \phi^\prime_x, y, -\phi^\prime_y) | \phi^\prime_\theta = 0\}.$$
	
	In particular, the canonical relation $\mathcal{C}$ from $T^\ast Y$ to $T^\ast X$ is a local canonical graph if the projection $\mathcal{C} \rightarrow T^\ast Y$, consequently $T^\ast X$, is a local diffeomorphism.

	\subsection{Composition}
	
	H\"{o}rmander's proof still holds for following theorem of semiclassical Lagrangian distributions.
	
	\begin{theorem}\label{composition-semiFIO}Let $\mathcal{C}_1$ be a canonical relation from $T^\ast (Y)$ to $T^\ast (X)$ and $\mathcal{C}_2$ another from $T^\ast (Z)$ to $T^\ast (Y)$ with three manifolds $X, Y, Z$, $\mathcal{C}_1 \times \mathcal{C}_2$ intersects the diagonal in $T^\ast (X) \times T^\ast (Y) \times T^\ast (Y) \times T^\ast (Z)$ transversally, and the projection from the intersection to $T^\ast (X) \times T^\ast (Z)$ is proper, then $\mathcal{C}_1 \circ \mathcal{C}_2$ is a homogeneous relation from $T^\ast (Z)$ to $T^\ast (X)$. Moreover, $$A_1A_2 \in I^{k_1 + k_2} (X \times Z, (\mathcal{C}_1 \circ \mathcal{C}_2)^\prime),$$ provided properly supported $A_1 \in I^{k_1}(X \times Y, \mathcal{C}_1^\prime)$ and $A_2 \in I^{k_2} (Y \times Z, \mathcal{C}_2^\prime)$.\end{theorem}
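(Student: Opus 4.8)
The plan is to follow H\"ormander's proof of the composition theorem in \cite[Section~4]{Hormander-Acta-1971} essentially verbatim; the only modifications are the bookkeeping of the powers of $h$ in the normalizing constants and the replacement of ``conic symbol'' by ``symbol with bounds uniform as $h\to 0$''. No step of the argument uses homogeneity in an essential way, which is why H\"ormander's proof carries over.

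First I would reduce to a purely local statement. Using partitions of unity on $\mathcal{C}_1$ and $\mathcal{C}_2$ and the properness hypothesis (which makes $A_1A_2$ a locally finite sum of such compositions), it suffices to treat a single pair of local pieces. Choose non-degenerate phase functions $\phi_1(x,y,\theta_1)$, $\theta_1\in\RR^{N_1}$, parametrizing $\mathcal{C}_1'$ near a given point of $\mathcal{C}_1$, and $\phi_2(y,z,\theta_2)$, $\theta_2\in\RR^{N_2}$, parametrizing $\mathcal{C}_2'$, with amplitudes $a_1\in S(U_1)$, $a_2\in S(U_2)$ compactly supported in the fibre variables, so that $A_1$ has normalizing constant $(2\pi h)^{-k_1-(n_X+n_Y+2N_1)/4}$ and $A_2$ has $(2\pi h)^{-k_2-(n_Y+n_Z+2N_2)/4}$, where $n_X=\dim X$, etc. The Schwartz kernel of $A_1A_2$ is obtained by integrating the product of the two kernels over $y\in Y$: the product of the half-density factors in $y$ coming from $A_1$ and from $A_2$ forms a full density in $y$, integrated invariantly, and the $y$-integral converges because of the support conditions provided by properness. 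The result is an oscillatory integral over the fibre variables $(y,\theta_1,\theta_2)\in\RR^{n_Y+N_1+N_2}$ with phase
$$
\Phi(x,z,y,\theta_1,\theta_2)=\phi_1(x,y,\theta_1)+\phi_2(y,z,\theta_2)
$$
and amplitude $a_1a_2$, which lies in $S$ with bounds uniform as $h\to0$ since $S$ is an algebra.

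Next I would carry out the geometric part, which is $h$-independent. The transversality hypothesis --- that $\mathcal{C}_1\times\mathcal{C}_2$ meets the diagonal of $T^*Y\times T^*Y$ transversally --- implies, by standard symplectic linear algebra, that $\mathcal{C}_1\circ\mathcal{C}_2$ is a smooth canonical relation of the expected dimension and that the natural projection is an immersion onto it; this is \cite[Section~4.1]{Hormander-Acta-1971}. The heart of the matter is to show that $\Phi$ above is a non-degenerate phase function parametrizing $(\mathcal{C}_1\circ\mathcal{C}_2)'$ near the relevant point. One computes
$$
\partial_{\theta_1}\Phi=\partial_{\theta_1}\phi_1,\qquad \partial_{\theta_2}\Phi=\partial_{\theta_2}\phi_2,\qquad \partial_y\Phi=\partial_y\phi_1+\partial_y\phi_2,
$$
so the critical set $C_\Phi=\{\partial_y\Phi=\partial_{\theta_1}\Phi=\partial_{\theta_2}\Phi=0\}$ consists of points lying over $C_{\phi_1}$ and $C_{\phi_2}$ for which the $Y$-covectors produced by the two relations match --- precisely the fibre-product condition defining the composition --- and the linear independence of $d(\partial_{\theta_1}\Phi),d(\partial_{\theta_2}\Phi),d(\partial_y\Phi)$ on $C_\Phi$ is equivalent to the transversality of that intersection. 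The map $C_\Phi\ni(x,z,y,\theta_1,\theta_2)\mapsto(x,\partial_x\Phi,z,-\partial_z\Phi)$ is then a local diffeomorphism onto $(\mathcal{C}_1\circ\mathcal{C}_2)'$. All of this is copied word for word from H\"ormander, since it makes no use of the conic structure.

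Finally I would check the order. The composite phase has $N:=N_1+N_2+n_Y$ fibre variables, and the product of the two normalizing constants is
$$
(2\pi h)^{-(k_1+k_2)-(n_X+n_Z)/4-(N_1+N_2)/2-n_Y/2}=(2\pi h)^{-(k_1+k_2)-(n_X+n_Z+2N)/4},
$$
which is exactly the normalization for a distribution in $I^{k_1+k_2}(X\times Z,(\mathcal{C}_1\circ\mathcal{C}_2)')$. Hence $A_1A_2\in I^{k_1+k_2}(X\times Z,(\mathcal{C}_1\circ\mathcal{C}_2)')$. The only point genuinely requiring attention is the verification that $\Phi$ is a non-degenerate phase function via the transversality hypothesis and that $\mathcal{C}_1\circ\mathcal{C}_2$ is a smooth canonical relation; but these are purely symplectic-geometric facts with no $h$-dependence, so H\"ormander's argument applies unchanged, and the semiclassical content is confined to the elementary bookkeeping of powers of $h$ and the preservation of $h$-uniform symbol estimates under multiplication. (Should one wish to compute the principal symbol of the composition, one would in addition apply stationary phase in the $(y,\theta_1,\theta_2)$ variables along $C_\Phi$, exactly as in the homogeneous case; this is not needed for the statement above.)
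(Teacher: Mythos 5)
Your proposal is correct and coincides with the paper's approach: the paper offers no argument beyond the assertion that H\"ormander's composition proof from \cite{Hormander-Acta-1971} carries over verbatim to the semiclassical setting, which is exactly what you carry out, and your bookkeeping of the normalizing powers of $h$ (the composite phase having $N_1+N_2+n_Y$ fibre variables matching the product $(2\pi h)^{-(k_1+k_2)-(n_X+n_Z+2N)/4}$) is consistent with the paper's definition of $I^k$. Nothing further is needed.
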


	 Standard parametrix construction by symbol calculus for elliptic operators applies to semiclassical Fourier integral operators with elliptic symbols as follows.
	
	\begin{theorem}Suppose $A \in I^k(X \times Y, \mathcal{C}^\prime)$ has an elliptic symbol, provided $\mathcal{C}$ is a canonical graph from $T^\ast(Y)$ onto $T^\ast(X)$. There is a two sided parametrix $B \in I^{- k}(Y \times X, (\mathcal{C}^{-1})^\prime)$ of $A$.\end{theorem}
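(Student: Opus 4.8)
The plan is to run the classical elliptic parametrix construction, with Theorem~\ref{composition-semiFIO} and the symbol exact sequence of the previous subsection playing the roles that composition and the symbol calculus play for pseudodifferential operators. First I would record the relevant geometry: since $\mathcal{C}$ is a canonical graph from $T^*Y$ onto $T^*X$, say the graph of the symplectomorphism $\chi \colon T^*Y \to T^*X$, the inverse relation $\mathcal{C}^{-1}$ is the canonical graph of $\chi^{-1}$, and the two compositions $\mathcal{C}^{-1}\circ\mathcal{C}$ and $\mathcal{C}\circ\mathcal{C}^{-1}$ are the diagonals of $T^*Y\times T^*Y$ and $T^*X\times T^*X$ respectively. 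Graphs always compose transversally, and, having replaced all kernels by properly supported representatives (which changes them only by residual, $O(h^\infty)$, terms), the properness hypothesis of Theorem~\ref{composition-semiFIO} also holds. Hence the composition of any element of $I^{m}(X\times Y,\mathcal{C}')$ with any element of $I^{m'}(Y\times X,(\mathcal{C}^{-1})')$ is a semiclassical pseudodifferential operator of order $m+m'$ on $X$ (resp.\ $Y$), to which the symbol exact sequence applies.

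Next, using the ellipticity of $\sigma^k(A)$ and the surjectivity of the symbol map, I would choose $B_0\in I^{-k}(Y\times X,(\mathcal{C}^{-1})')$, properly supported, whose principal symbol is the reciprocal of $\sigma^k(A)$ — read off via the natural identification of the Maslov line bundle and half-density line bundle of $\mathcal{C}^{-1}$ with the inverses of those of $\mathcal{C}$. By the composition theorem $B_0A$ is a semiclassical pseudodifferential operator on $Y$ of order $0$ whose principal symbol equals $\sigma^{-k}(B_0)\,\sigma^k(A)=1$; therefore $B_0A=\Id-R$ with $R\in I^{-1}$, i.e.\ $R=O(h)$ relative to the semiclassical filtration. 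One then iterates: since $R^j\in I^{-j}$, the Neumann series $\sum_{j\ge0}R^j$ can be asymptotically summed (Borel summation in $h$) to $\Id+R_\infty$ with $R_\infty\in I^{-1}$ and $(\Id+R_\infty)(\Id-R)-\Id$ residual. Setting $B^L=(\Id+R_\infty)B_0\in I^{-k}(Y\times X,(\mathcal{C}^{-1})')$ gives a left parametrix, $B^LA-\Id$ residual; the same argument on the other side yields a right parametrix $B^R$ with $AB^R-\Id$ residual. Finally, the usual algebra $B^L=(B^LA)B^R+B^L(\Id-AB^R)=B^R+(\text{residual})$ shows $B^L-B^R$ is residual, so $B:=B^L$ is a two-sided parametrix of order $-k$ associated to $(\mathcal{C}^{-1})'$.

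The step I expect to be the main obstacle is not conceptual but bookkeeping: verifying carefully that the geometric hypotheses of Theorem~\ref{composition-semiFIO} hold for the composition of a canonical graph with its inverse, and — more delicately — tracking the Maslov and half-density factors precisely enough to make sense of "$\sigma^k(A)^{-1}$" as a legitimate symbol for an element of $I^{-k}(Y\times X,(\mathcal{C}^{-1})')$ and to confirm that $\sigma^{-k}(B_0)\,\sigma^k(A)$ is genuinely the constant symbol $1$ of the identity in the appropriate quotient. Both points are routine adaptations of H\"ormander's arguments in which homogeneity plays no role; the semiclassical filtration by powers of $h$ does exactly what the order filtration does in the homogeneous theory, which is what guarantees that the Neumann iteration converges in the asymptotic sense and produces a parametrix with residual error.
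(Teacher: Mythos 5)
Your proposal is correct and is exactly the standard elliptic parametrix construction that the paper invokes without writing out: the paper states only that ``standard parametrix construction by symbol calculus for elliptic operators applies,'' and your argument --- inverting the principal symbol on the graph, composing via Theorem~\ref{composition-semiFIO} to land in the pseudodifferential calculus, asymptotically summing the Neumann series, and identifying left and right parametrices --- is precisely that construction, with the Maslov/half-density bookkeeping handled as in H\"ormander's homogeneous case.
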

	
	\subsection{Vanishing principal symbol calculus}
	
	Regarding the calculus for operators with a vanishing principal symbol, we shall use sub-principal symbol instead.
	
	Consider $0$-th order semiclassical pseudodifferential operator (expressed using left  quantization) defined on a manifold $X$ $$P_h  = \frac{1}{(2\pi h)^d} \int\int e^{i(x - y) \cdot \xi / h} P(x, \xi, h)  \, d\xi |dxdy|^{1/2},$$ with smooth amplitude $P$. One may do an oscillatory test $e^{-i \phi(x) / h} P_h (\omega e^{i \phi / h}) (x)$ given a smooth function $\phi$ and smooth half density $\omega = \omega(y) |dy|^{1/2}$ on $X$. \begin{eqnarray*}
	\lefteqn{e^{- i \phi(x) / h} P_h (\omega e^{i \phi / h})(x)}\\
	& = & \frac{e^{- i \phi(x) / h}}{(2\pi h)^d} \int\int e^{i(x - y) \cdot \xi / h} P(x, \xi, h) \omega(y) e^{i \phi(y) / h} \, d\xi dy \\ & = & \frac{1}{(2\pi h)^d} \int\int e^{\frac{i}{h} ( (x - y)\cdot\xi + \phi(y) - \phi(x) )} P(x, \xi, h) \omega(y) \, d\xi dy\\ & = & \frac{1}{(2\pi h)^d} \int\int e^{\frac{i}{h} ((x - y)\cdot(\xi - \phi^\prime(x)))} P(x, \xi, h) \omega(y) e^{\frac{i}{h}(\phi(y) - \phi(x) - (y - x)\cdot\phi^\prime(x))}\, d\xi dy.
	\end{eqnarray*} Stationary phase gives an asymptotic expansion, $$\sum_{|\alpha| \geq 0} \frac{(ih)^{|\alpha|}}{\alpha !} D_\xi^\alpha P(x, \xi, h)\bigg|_{\xi = \phi^\prime(x)} D_y^\alpha \Big(\omega(y) e^{\frac{i}{h}(\phi(y) - \phi(x) - (y - x)\phi^\prime(x))}\Big)\bigg|_{y = x}.$$ We just consider the leading and sub-leading terms $$P(x, \phi^\prime(x), h)\omega(x) + h\bigg( \sum_{j = 1}^d P^{(j)}(x, \phi^\prime(x), h) D_j \omega(x) + \frac{1}{2i} \sum_{j, l = 1}^d P^{(jl)}(x, \phi^\prime(x), h) \omega(x) \partial_{x_j x_l}^2 \phi\bigg),$$ where $P^{(j)}_{(l)}(x, \xi, h) = \partial_{x_l \xi_j}^2 P(x, \xi, h)$.
	 We write in terms of the Lie derivative $\mathcal{L}_{\nabla_{\xi} P(x, \phi^\prime(x), h)} \omega$, \begin{eqnarray*}\lefteqn{e^{-i\phi(x) / h}P_h(\omega e^{i\phi / h})(x)}\\&=& P(x, \phi^\prime_x, h) \omega(x)  + \frac{h}{2i}\bigg(\sum_{j, l = 1}^d P^{(jl)}(x, \phi^\prime(x), h) \omega(x) \partial_{x_j x_l}^2 \phi - \sum_{j = 1}^d  \partial_{x_j} P^{(j)}(x, \phi^\prime_x, h) \omega(x)\bigg) \\&& + h \sum_{j = 1}^d P^{(j)}(x, \phi^\prime(x), h) D_j \omega(x) + \frac{h}{2i} \sum_{j = 1}^d  \partial_{x_j} P^{(j)}(x, \phi^\prime_x, h) \omega(x) + o(h^2)\\&=& P(x, \phi^\prime_x, h) \omega(x)  - \frac{h}{2i} \sum_{j, l = 1}^d P^{(j)}_{(j)}(x, \phi^\prime(x), h) \omega(x)   + \frac{h}{i}\mathcal{L}_{\nabla_\xi P(x, \phi^\prime(x), h)} \omega + o(h^2).\end{eqnarray*}
	
	Therefore, for semiclassical pseudodifferential operator $P_h$, $$P(x, \xi, h)  - \frac{h}{2i} \sum_{j = 1}^d \frac{\partial P}{\partial x_j \xi_j}(x, \xi, h)  \in S / h^2 S,$$ is invariantly defined. 
	Moreover, if $P(x, \xi, h) = p(x, \xi) + O(h)$, then  we can uniquely determine a sub-principal symbol 
	\begin{equation}
	s(x, \xi, h) = P(x, \xi, h) - p(x, \xi)  - \frac{h}{2i} \sum_{j = 1}^d \frac{\partial P}{\partial x_j \xi_j}(x, \xi, h) \in hS.
	\label{subpr}\end{equation}
	We have the following calculus for vanishing symbols with this notion.
	
	\begin{theorem}\label{vanishing-symbol-calculus-1}Let $P_h \in \Psi^k(X)$ be a properly supported semiclassical pseudo-differential operator with full symbol $P(x, \xi, h)$, principal symbol $p(x, \xi)$ and sub-principal symbol $s(x, \xi, h)$, $\mathcal{C}$ be a canonical relation from $T^\ast(Y)$ to $T^\ast(X)$ such that $p$ vanishes on the projection of $\mathcal{C}$ in $T^\ast(X)$. For an $A_h \in I^{k^\prime} (X \times Y, \mathcal{C}^\prime)$ with principal symbol $a \in S(\mathcal{C}^\prime, L \otimes \Omega^{1/2})$, $P_h A_h \in I^{k + k^\prime} (X \times Y, \mathcal{C}^\prime)$ has a principal symbol \begin{equation*} \frac{h}{i} \mathcal{L}_{H_{p}} (a) + s a.\end{equation*} Here $H_{p}$ is the Hamilton field of $p$ lifted to a function on $T^\ast(X) \setminus 0 \times T^\ast(Y) \setminus 0$ via the projection onto the first factor and $\mathcal{L}$ denotes the Lie derivative of half densities.\end{theorem}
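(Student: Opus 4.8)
The statement is microlocal and the symbol map is local on $\mathcal{C}'$, so I would argue near an arbitrary point of $\mathcal{C}'$ and there represent $A_h$ by a single oscillatory integral
$$
A_h = (2\pi h)^{-k' - (n_X + n_Y + 2N)/4}\int_{\RR^N} e^{i\phi(x,y,\theta)/h}\, a(x,y,\theta,h)\, d\theta\,|dx\,dy|^{1/2},
$$
with $\phi$ a nondegenerate phase function parametrizing $\Lambda := \mathcal{C}'$ near that point, $a\in S$ compactly supported in $\theta$, and $\sigma^{k'}(A_h) = e^{i\pi N/4}\,a\,\sqrt{d_{C_\phi}}$ on $C_\phi = \{d_\theta\phi = 0\}$. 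The plan is to push $P_h$ (acting in the $x$ variables) under the $\theta$-integral and apply the stationary-phase expansion for the oscillatory test $e^{-i\psi/h}P_h(u\,e^{i\psi/h})$ established earlier in this section, with $\psi = \phi(\cdot,y,\theta)$ and $u = a(\cdot,y,\theta,h)$.

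This produces $P_h A_h$ as an oscillatory integral with the \emph{same} phase $\phi$ and amplitude $b = b_0 + h b_1 + O(h^2)$, where $b_0 = P(x, d_x\phi, h)\,a$ and $h b_1 = -\tfrac{h}{2i}\sum_j P^{(j)}_{(j)}(x, d_x\phi, h)\,a + \tfrac{h}{i}\big(\text{a first-order }\partial_x\text{ term acting on }a\big)$; in particular $P_h A_h \in I^{k+k'}(X\times Y, \mathcal{C}')$, which also follows from Theorem~\ref{composition-semiFIO} using the identity canonical relation on $X$. I would then restrict $b$ to $C_\phi$ and push it forward to $\Lambda$ along $(x,\theta)\mapsto(x,d_x\phi)$. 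On $C_\phi$ one has $d_x\phi = \xi$ with $(x,\xi)$ the $T^*X$-component of a point of $\mathcal{C}$, so $p(x,d_x\phi) = 0$ by hypothesis; hence $b_0|_{C_\phi} = (P-p)(x,d_x\phi,h)\,a$ is $O(h)$, and combining this with the $-\tfrac{h}{2i}\sum_j P^{(j)}_{(j)}$ term and the definition \eqref{subpr} of the sub-principal symbol yields the contribution $s\,a$. For the remaining $\partial_x$-derivative term I would identify, on $C_\phi$, the vector field $\sum_j P^{(j)}(x,d_x\phi,h)\partial_{x_j}$ with the Hamilton vector field $H_p$, lifted to $T^*X\times T^*Y$ via the first projection (legitimate since the $T^*X$-component of $\Lambda$ lies in the characteristic variety $\{p=0\}$, whence the lift of $H_p$ is tangent to the Lagrangian $\Lambda$), the divergence correction inside $h b_1$ being exactly what converts the coordinate derivative of $a$ into the Lie derivative of the half-density symbol $a\,\sqrt{d_{C_\phi}}$, with the Maslov prefactor $e^{i\pi N/4}$ carried along unchanged. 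Assembling these, the leading term of $\sigma^{k+k'}(P_h A_h)$ is $\tfrac{h}{i}\mathcal{L}_{H_p}(a) + s\,a$. Invariance under the choice of $\phi$ and $a$ is then automatic, since $s$, $a$ and $H_p$ are each intrinsically defined ($s$ via \eqref{subpr}, $a$ as a section of $L\otimes\Omega^{1/2}$ on $\Lambda$, $H_p$ as the intrinsic Hamilton vector field tangent to $\Lambda$).

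The main obstacle is the bookkeeping in that last step: verifying that the $\partial_x a$ terms, \emph{together with} the density $\sqrt{d_{C_\phi}}$ and the Maslov factor, reassemble into the coordinate-free Lie derivative $\mathcal{L}_{H_p}$ of the symbol rather than a bare coordinate derivative of $a$. This is precisely H\"ormander's computation in \cite[Sections 2, 3]{Hormander-Acta-1971}: one checks that the $H_p$-flow preserves $C_\phi$ (transported to $\Lambda$) and that $d_{C_\phi}$ transforms with the Jacobian that makes the divergence-corrected derivative a Lie derivative on half-densities. The semiclassical case adds nothing new — the homogeneity parameter of the classical theory is replaced by $h^{-1}$, and the stationary-phase expansion used is the one already derived above — so H\"ormander's argument carries over verbatim.
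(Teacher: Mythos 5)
There is a genuine gap at the heart of your sketch: the order-$(k+k'-1)$ symbol of $P_hA_h$ cannot be computed by ``restricting $b$ to $C_\phi$ and pushing forward''. The leading amplitude $b_0=P(x,d_x\phi,h)\,a$ vanishes \emph{on} $C_\phi$ (to leading order in $h$) but not identically in $(x,y,\theta)$, and an amplitude vanishing on $C_\phi$ is not negligible at the next order: it must first be written as a combination $p(x,d_x\phi)=\sum_j q_j\,\partial_{\theta_j}\phi$ and integrated by parts in $\theta$, which gains a factor of $h$ and produces first-order $\theta$-derivatives of $a$ (and of the $q_j$). Those terms are exactly the fiber components of $H_p$ acting on the symbol, together with part of the divergence correction; in the paper's computation they appear in \eqref{leading integral 2} after the Taylor expansion of $p$ about $z''=Z''$ and the integration by parts in $\xi''$. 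Your sketch discards this contribution (keeping only $(P-p)|_{C_\phi}\,a$ from $b_0$) and instead asserts that the $\partial_x$-term $\sum_j P^{(j)}(x,d_x\phi,h)\,\partial_{x_j}$ ``is'' $H_p$ on $\Lambda$. That identification is false: this coordinate vector field need not even be tangent to $C_\phi$, and its pushforward under $(x,\theta)\mapsto(x,d_x\phi)$ differs from $H_p$ precisely by the fiber components that the omitted integration-by-parts terms supply. Tangency of $H_p$ to $\Lambda$ (which does hold, since $p$ vanishes on the projection of $\mathcal{C}$) does not rescue the claim. Matching the two families of first-order terms, the second-derivative terms of $p$, and the half-density/divergence bookkeeping is the actual content of the proof, and it uses the symplectic relations on $\Lambda$ (the paper's \eqref{bbbb}) in an essential way.

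For comparison, the paper avoids the general-phase argument by first putting the phase in the normal form of Lemma~\ref{new parametrization} (no excess parameters, coordinates $(z',\xi'')$ on $\Lambda$), then performing stationary phase in $(x,\zeta)$, expanding $p$ about $z''=Z''$, integrating by parts in $\xi''$, and combining \eqref{leading integral 2} with \eqref{subleading integral} and the subprincipal symbol \eqref{subpr}; a separate lemma expresses $H_p|_\Lambda$ and its divergence in the $(z',\xi'')$ coordinates so the sum is recognized as $\frac{h}{i}\mathcal{L}_{H_p}a+sa$. Your general-phase route can be made to work, but only by carrying out the Duistermaat--H\"ormander integration-by-parts step you skipped (the relevant classical source is \cite{Duistermaat-Hormander-Acta-1972} rather than Sections 2--3 of \cite{Hormander-Acta-1971}, which concern the symbol/Maslov formalism, not the transport formula); appealing to ``carries over verbatim'' does not substitute for that step, since it is exactly where your sketch goes wrong.
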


	\begin{proof} We locally parametrize $\Lambda$ using Lemma~\ref{new parametrization}. Thus we can write
	$$\begin{gathered}
	\Phi(z', z'', \xi'') = \langle z'' - Z''(z', \xi''), \xi''\rangle + f(z', \xi''), \\
	\Lambda = \{ (z, d_z \Phi) \mid  d_{\xi''} \Phi = 0 \} =  \{ (z', Z''(z', \xi''), \Xi'(z', \xi''), \xi'') \}. 
	\end{gathered}$$
	
	The principal term of the composition we are studying takes the form 
	\begin{equation}(2 \pi h)^{- k - d - (d + 2N)/4} \int e^{i((z - x) \cdot \zeta + \Phi(x, \xi^{\prime\prime}))/h} (p(z, \zeta) + h r(z, \zeta, h)) a(x^\prime, \xi^{\prime\prime}, h) \, dx d\zeta d\xi^{\prime\prime},
	\label{comp-expr}\end{equation}
	 where we can let $a$ be independent of $x^{\prime\prime}$. To keep the symplectic structure, we integrate in the $(x, \zeta)$ variables and apply the stationary phase expansion. Consequently, it will leave $\xi^{\prime\prime}$ and replace $x$ by $z$.
	
	The phase function is stationary at $\{\zeta = \Phi^\prime_x, x = z\}$, where the Hessian, respectively its inverse, is $$\left( \begin{array}{cc} \Phi^{\prime\prime}_{zz}(z, \xi^{\prime\prime}) & - I\\ -I & 0  \end{array} \right), \quad\quad \left( \begin{array}{cc} 0 & - I\\ -I & - \Phi^{\prime\prime}_{zz}(z, \xi^{\prime\prime}) \end{array} \right).$$
	
	The leading term in the stationary phase expansion of \eqref{comp-expr} is 
	\begin{equation}\label{leading integral} 
	(2 \pi h)^{- k  - (d + 2N)/4} \int e^{i\Phi(z, \xi^{\prime\prime})/h} \Big( p(z^\prime, z^{\prime\prime}, \Phi_{z^\prime}^\prime, \Phi_{z^{\prime\prime}}^\prime) + h r(z^\prime, z^{\prime\prime}, \Phi_{z^\prime}^\prime, \Phi_{z^{\prime\prime}}^\prime) \Big) a(z^\prime, \xi^{\prime\prime}, h) \,  d\xi^{\prime\prime}.
	\end{equation} 
	We note that for this phase function, $\Phi'_{z'} = \Xi'$ and $\Phi'_{z''} = \xi''$, using \eqref{restriction2}. We expand $p(z', z'', \Xi', \xi'')$ around $z'' = Z''$: 
	\begin{equation}\begin{gathered}
	p(z', z'', \Xi', \xi'') = p(z', Z'', \Xi', \xi'') + (z''_j - Z''_j) \frac{\partial p}{\partial z''_j} (z', Z'', \Xi', \xi'') \\ + \frac1{2}{(z''_j - Z''_j)(z''_k - Z''_k)} \frac{\partial^2 p}{\partial z''_j \partial z''_k} (z', Z'', \Xi', \xi'') + 
	 \, \text{third order remainder}.
	\end{gathered} \end{equation} 
	 It can be reduced to $$\Phi^\prime_{\xi^{\prime\prime}_j}  \frac{\partial p}{\partial z^{\prime\prime}_j}  + \frac{\Phi^\prime_{\xi^{\prime\prime}_j} \Phi^\prime_{\xi^{\prime\prime}_k}}{2}  \frac{\partial^2 p}{\partial z^{\prime\prime}_j \partial z^{\prime\prime}_k} + \, \text{third order remainder}$$ on $\Lambda$, since by assumption, $p$ vanishes on $\Lambda$. Returning to the leading term (\ref{leading integral}), we integrate by parts and obtain 
	 \begin{equation}\begin{gathered} \frac{h}{i}  \,   (2 \pi h)^{- k  - (d + 2N)/4} \int e^{i\Phi(z, \xi^{\prime\prime})/h}  \Bigg( i r a - \frac{\partial}{\partial \xi^{\prime\prime}_j}\bigg( \frac{\partial p}{\partial z^{\prime\prime}_j}  \big(z^\prime, Z^{\prime\prime}(z^\prime, \xi^{\prime\prime}), \Xi^\prime(z^\prime, \xi^{\prime\prime}), \xi^{\prime\prime}\big) \, a(z^\prime, \xi^{\prime\prime}, h)  \\  \quad \quad + \frac{\Phi_{\xi^{\prime\prime}_k}^\prime}{2} \frac{\partial^2 p}{\partial z_j^{\prime\prime} \partial z_k^{\prime\prime}}   \big(z^\prime, Z^{\prime\prime}(z^\prime, \xi^{\prime\prime}), \Xi^\prime(z^\prime, \xi^{\prime\prime}), \xi^{\prime\prime}\big) \, a(z^\prime, \xi^{\prime\prime}, h)\bigg) \, \Bigg) \,  d\xi^{\prime\prime},\end{gathered}\label{bbb}\end{equation}
	where the amplitude expands as 
	\begin{equation}\label{leading integral 2} - \frac{h}{i} \bigg(\frac{\partial p}{\partial z^{\prime\prime}_j} \frac{\partial a}{\partial \xi^{\prime\prime}_j} + \frac{1}{2} \frac{\partial Z^{\prime\prime}_k}{\partial \xi^{\prime\prime}_j} \frac{\partial^2 p}{\partial z^{\prime\prime}_j \partial z^{\prime\prime}_k} a  + \frac{\partial \Xi^\prime_k}{\partial \xi^{\prime\prime}_j}\frac{\partial^2 p}{\partial \xi^\prime_k \partial z^{\prime\prime}_j} a + \frac{\partial^2 p}{\partial \xi^{\prime\prime}_j \partial z^{\prime\prime}_j} a\bigg) + O(h^2).\end{equation}
	(Here, the second term with the factor $1/2$ arises both from the first and second terms of \eqref{bbb}, with coefficient $1$ from the first term and $-1/2$ from the second term, using $\Phi'_{\xi''_k} = (z''_k - Z''_k)$.)
	
	Next we treat the subleading term in the stationary phase expansion of \eqref{comp-expr}. 
	Inserting the Hessian inverse, we get  $$(2 \pi h)^{- k - (d + 2N)/4} \int e^{i \Phi(z, \xi^{\prime\prime})/h} \frac{h}{i} \bigg( \frac{\Phi^{\prime\prime}_{x_j^\prime x_k^\prime}}{2} \frac{\partial^2}{\partial \zeta_j^\prime \partial \zeta_k^\prime} + \frac{\partial^2 }{\partial x_j \partial \zeta_j}\bigg)\bigg|_{\zeta = \Phi^\prime_z, x = z} \big(p(z, \zeta)\, a(x^\prime, \xi^{\prime\prime}, h)\big) \, d\xi^{\prime\prime}.$$ The amplitude restricted to $\Lambda$ is \begin{equation}\label{subleading integral} \frac{h}{i} \bigg(\frac{1}{2} \frac{\partial \Xi^\prime_j}{\partial z^\prime_k} \frac{\partial^2 p}{\partial \xi^{\prime}_j \partial \xi^{\prime}_k} (z^\prime, Z^{\prime\prime}, \Xi^\prime, \xi^{\prime\prime}) a(z^\prime, \xi^{\prime\prime}, h) + \frac{\partial p}{\partial \xi_j^\prime}(z^\prime, Z^{\prime\prime}, \Xi^\prime, \xi^{\prime\prime})  \frac{\partial a}{\partial z_j^\prime}(z^\prime, \xi^{\prime\prime}, h) \bigg) \end{equation}
	
	Combining (\ref{leading integral 2}) and (\ref{subleading integral}), we get the principal symbol of the composition operator on $\Lambda$ as $$r a + \frac{h}{i}\bigg(\frac{1}{2} \frac{\partial \Xi^\prime_j}{\partial z^\prime_k} \frac{\partial^2 p}{\partial \xi^{\prime}_j \partial \xi^{\prime}_k} a + \frac{\partial p}{\partial \xi_j^\prime}  \frac{\partial a}{\partial z_j^\prime} - \frac{\partial p}{\partial z^{\prime\prime}_j} \frac{\partial a}{\partial \xi^{\prime\prime}_j} - \frac{1}{2} \frac{\partial Z^{\prime\prime}_k}{\partial \xi^{\prime\prime}_j} \frac{\partial^2 p}{\partial z^{\prime\prime}_j \partial z^{\prime\prime}_k} a  - \frac{\partial \Xi^\prime_k}{\partial \xi^{\prime\prime}_j}\frac{\partial^2 p}{\partial \xi^\prime_k \partial z^{\prime\prime}_j} a - \frac{\partial^2 p}{\partial \xi^{\prime\prime}_j \partial z^{\prime\prime}_j} a\bigg).$$ 
	Noting the definition \eqref{subpr} of subprincipal symbol, the principal symbol of $P_hA_h$ can be written  \begin{eqnarray}
	\lefteqn{ sa + \frac{h}{i}\bigg(\frac{\partial p}{\partial \xi_j^\prime}  \frac{\partial a}{\partial z_j^\prime} - \frac{\partial p}{\partial z^{\prime\prime}_j} \frac{\partial a}{\partial \xi^{\prime\prime}_j}\bigg)} \nonumber \\ \label{principal symbol-vanishing} && + \frac{h}{i}\bigg( \frac{1}{2} \frac{\partial^2 p}{\partial \xi^{\prime}_j \partial z^{\prime}_j} a + \frac{1}{2} \frac{\partial \Xi^\prime_j}{\partial z^\prime_k} \frac{\partial^2 p}{\partial \xi^{\prime}_j \partial \xi^{\prime}_k} a - \frac{1}{2} \frac{\partial Z^{\prime\prime}_k}{\partial \xi^{\prime\prime}_j} \frac{\partial^2 p}{\partial z^{\prime\prime}_j \partial z^{\prime\prime}_k} a  - \frac{\partial \Xi^\prime_k}{\partial \xi^{\prime\prime}_j}\frac{\partial^2 p}{\partial \xi^\prime_k \partial z^{\prime\prime}_j} a - \frac{1}{2} \frac{\partial^2 p}{\partial \xi^{\prime\prime}_j \partial z^{\prime\prime}_j} a \bigg)
	\end{eqnarray}

	On the other hand, let us look at the Lie derivative of half densities on $\Lambda$. For local coordinates $\lambda$ on $\Lambda$, if $H_p = \sum_i \kappa_i \partial_{\lambda_i}$, we have 
	$$
	\mathcal{L}_{H_p}(a |d\lambda|^{1/2}) = \Big( H_p (a) + \text{div}(H_p) a/2 \Big) |d\lambda|^{1/2}, \quad \text{div} H_p = \sum_i \frac{\partial \kappa_i}{\partial \lambda_i} . 
	$$

	We consider the Hamilton vector field term first. As we use $(z^\prime, \xi^{\prime\prime})$ on $\Lambda$, \begin{eqnarray*}\frac{\partial}{\partial z^\prime_i} &=& \frac{\partial}{\partial z_i^\prime} + \frac{\partial Z_j^{\prime\prime}}{\partial z_i^{\prime}} \frac{\partial}{\partial z_j^{\prime\prime}} + \frac{\partial \Xi^\prime_k}{\partial z_i^\prime} \frac{\partial}{\partial \xi_k^\prime}  \\
	\frac{\partial}{\partial \xi^{\prime\prime}_i} &=& \frac{\partial}{\partial \xi_i^{\prime\prime}} + \frac{\partial Z_j^{\prime\prime}}{\partial \xi_i^{\prime\prime}} \frac{\partial}{\partial z_j^{\prime\prime}} + \frac{\partial \Xi^\prime_k}{\partial \xi_j^{\prime\prime}} \frac{\partial}{\partial \xi_k^\prime}.\end{eqnarray*} The principal symbol $p$ vanishing on the Lagrangian $\Lambda$ implies more information. Differentiating identity $p(z^\prime, Z^{\prime\prime}, \Xi^\prime, \xi^{\prime\prime}) = 0$ gives \begin{eqnarray*}\frac{\partial p}{\partial z_i^\prime} + \frac{\partial Z_j^{\prime\prime}}{\partial z_i^\prime} \frac{\partial p}{\partial z_j^{\prime\prime}} + \frac{\partial \Xi^\prime_k}{\partial z_i^\prime} \frac{\partial p}{\partial \xi_k^\prime} &=& 0 \\ \frac{\partial p}{\partial \xi_i^{\prime\prime}} + \frac{\partial Z_j^{\prime\prime}}{\partial \xi_i^{\prime\prime}} \frac{\partial p}{\partial z_j^{\prime\prime}} + \frac{\partial \Xi^{\prime}_k}{\partial \xi_i^{\prime\prime}} \frac{\partial p}{\partial \xi_k^\prime} &=& 0 
	\end{eqnarray*} 
	Also as $dz^\prime_i \wedge d\Xi^\prime_i + dZ_j^{\prime\prime} \wedge d\xi_j^{\prime\prime} = 0$ on $\Lambda$, we have 
	\begin{equation}
	\frac{\partial Z^{\prime\prime}_j}{\partial \xi^{\prime\prime}_i}  =  \frac{\partial Z^{\prime\prime}_i}{\partial \xi^{\prime\prime}_j}, \quad\quad  \frac{\partial \Xi^\prime_j}{\partial z^{\prime}_i} = \frac{\partial \Xi^\prime_i}{\partial z^{\prime}_j},   \quad\quad   \frac{\partial \Xi_i^\prime}{\partial \xi_j^{\prime\prime}} + \frac{\partial Z_j^{\prime\prime}}{\partial z_i^\prime} = 0.
	\label{bbbb}\end{equation}
	 One thus can have \begin{eqnarray*}
	\lefteqn{\frac{\partial p}{\partial \xi^\prime_i} \frac{\partial}{\partial z^\prime_i} -  \frac{\partial p}{\partial z_i^{\prime\prime}} \frac{\partial}{\partial \xi^{\prime\prime}_i}}\\ &=&   \frac{\partial p}{\partial \xi^\prime_i} \bigg(\frac{\partial}{\partial z_i^\prime} + \frac{\partial Z_j^{\prime\prime}}{\partial z_i^{\prime}} \frac{\partial}{\partial z_j^{\prime\prime}} + \frac{\partial \Xi^\prime_j}{\partial z_i^\prime} \frac{\partial}{\partial \xi_j^\prime}\bigg)   -  \frac{\partial p}{\partial z_i^{\prime\prime}} \bigg(\frac{\partial}{\partial \xi_i^{\prime\prime}} + \frac{\partial Z_j^{\prime\prime}}{\partial \xi_i^{\prime\prime}} \frac{\partial}{\partial z_j^{\prime\prime}} + \frac{\partial \Xi^\prime_j}{\partial \xi_i^{\prime\prime}} \frac{\partial}{\partial \xi_j^\prime}\bigg)\\  &=&  \frac{\partial p}{\partial \xi^\prime_i} \frac{\partial}{\partial z_i^\prime} -  \frac{\partial p}{\partial z_i^{\prime\prime}} \frac{\partial}{\partial \xi_i^{\prime\prime}} + \bigg(\frac{\partial p}{\partial \xi^\prime_i} \frac{\partial \Xi^\prime_j}{\partial z_i^\prime}  - \frac{\partial p}{\partial z_i^{\prime\prime}} \frac{\partial \Xi^\prime_j}{\partial \xi_i^{\prime\prime}} \bigg) \frac{\partial}{\partial \xi_j^\prime}  + \bigg(\frac{\partial p}{\partial \xi^\prime_i} \frac{\partial Z_j^{\prime\prime}}{\partial z_i^{\prime}}  -  \frac{\partial p}{\partial z_i^{\prime\prime}}  \frac{\partial Z_j^{\prime\prime}}{\partial \xi_i^{\prime\prime}}  \bigg)\frac{\partial}{\partial z_j^{\prime\prime}}\\   &=&  \frac{\partial p}{\partial \xi^\prime_i} \frac{\partial}{\partial z_i^\prime} -  \frac{\partial p}{\partial z_i^{\prime\prime}} \frac{\partial}{\partial \xi_i^{\prime\prime}} + \bigg(\frac{\partial p}{\partial \xi^\prime_i} \frac{\partial \Xi^\prime_i}{\partial z_j^\prime}  + \frac{\partial p}{\partial z_i^{\prime\prime}} \frac{\partial Z^{\prime\prime}_i}{\partial z_j^{\prime}} \bigg) \frac{\partial}{\partial \xi_j^\prime}  - \bigg(\frac{\partial p}{\partial \xi^\prime_i} \frac{\partial \Xi_i^{\prime}}{\partial \xi_j^{\prime\prime}}  +  \frac{\partial p}{\partial z_i^{\prime\prime}}  \frac{\partial Z_i^{\prime\prime}}{\partial \xi_j^{\prime\prime}}  \bigg)\frac{\partial}{\partial z_j^{\prime\prime}}\\   &=&  \frac{\partial p}{\partial \xi^\prime_i} \frac{\partial}{\partial z_i^\prime} -  \frac{\partial p}{\partial z_i^{\prime\prime}} \frac{\partial}{\partial \xi_i^{\prime\prime}} - \frac{\partial p}{\partial z_j^\prime}\frac{\partial}{\partial \xi_j^\prime}  + \frac{\partial p}{\partial \xi^{\prime\prime}_j} \frac{\partial}{\partial z_j^{\prime\prime}}\\ &=& H_p,
	\end{eqnarray*} which shows following lemma.

	\begin{lemma}  The Hamilton vector of $p$, restricted to Lagrangian $\Lambda$, is $$\frac{\partial p}{\partial \xi^\prime_i}(z^\prime, Z^{\prime\prime}, \Xi^{\prime}, \xi^{\prime\prime}) \frac{\partial}{\partial z^\prime_i} -  \frac{\partial p}{\partial z_j^{\prime\prime}}(z^\prime, Z^{\prime\prime}, \Xi^{\prime}, \xi^{\prime\prime}) \frac{\partial}{\partial \xi^{\prime\prime}_j},$$ provided that $p$ vanishes on $\Lambda$.\end{lemma}
	
	Using above expression of Hamilton vector field, we have
	\begin{lemma} Using coordinates $(z', \xi'')$ on $\Lambda$, the divergence $\mathrm{div} H_p$ on $\Lambda$, times $(2i)^{-1} h$,  is \begin{eqnarray*}
	\frac{h}{2i} \Bigg( \lefteqn{\frac{\partial}{\partial z^\prime_i} \bigg( \frac{\partial p}{\partial \xi^\prime_i}(z^\prime, Z^{\prime\prime}, \Xi^{\prime}, \xi^{\prime\prime}) \bigg) -  \frac{\partial}{\partial \xi^{\prime\prime}_j} \bigg( \frac{\partial p}{\partial z_j^{\prime\prime}}(z^\prime, Z^{\prime\prime}, \Xi^{\prime}, \xi^{\prime\prime}) \bigg)\Bigg) } \\ 
	& = & \frac{h}{2i} \Bigg( \frac{\partial^2 p}{\partial z_i^\prime \partial \xi_i^\prime}  +  \frac{\partial Z^{\prime\prime}_j}{\partial z^\prime_i} \frac{\partial^2 p}{\partial z_j^{\prime\prime} \partial \xi_i^\prime} + \frac{\partial \Xi^\prime_j}{\partial z^\prime_i} \frac{\partial^2 p}{\partial \xi_j^\prime \partial \xi_i^\prime} -   \frac{\partial Z^{\prime\prime}_i}{\partial \xi^{\prime\prime}_j} \frac{\partial^2 p}{\partial z_i^{\prime\prime} \partial z_j^{\prime\prime}} - \frac{\partial \Xi^{\prime}_i}{\partial \xi^{\prime\prime}_j} \frac{\partial^2 p}{\partial \xi^\prime_i \partial z_j^{\prime\prime}} - \frac{\partial^2 p}{\partial \xi_j^{\prime\prime} \partial z_j^{\prime\prime}} \Bigg) .\end{eqnarray*}
	\end{lemma}
	Here, the second and fifth terms on the RHS are equal, using the last identity of \eqref{bbbb}. Using these lemmas and comparing with 
	(\ref{principal symbol-vanishing}) proves the theorem.
	\end{proof}

	\section{Semiclassical intersecting Lagrangian distribution}\label{sec:appB}
	
	In this section we shall adapt the work of intersecting Lagrangian distributions, due to Melrose and Uhlmann \cite{Melrose-Uhlmann-CPAM-1979}, and develop analogous semiclassical analysis for the use near the diagonal.
	
	\subsection{Lagrangian intersection}
	
	To construct symbolic global parametrices for semiclassical pseudodifferential operators of real principal type, we shall have a quick review of Lagrangian intersection introduced by Melrose and Uhlmann \cite{Melrose-Uhlmann-CPAM-1979}. Suppose $X$ is a $C^\infty$ manifold of dimension $d \geq 2$. A pair of Lagrangian manifolds $(\Lambda_0, \Lambda_1)$ of $T^\ast X$ , where $\Lambda_1$ has a boundary, is said to be an intersecting pair of Lagrangian manifolds, if $$\Lambda_0 \cap \Lambda_1 = \partial \Lambda_1 \quad \mbox{and} \quad T_\lambda(\Lambda_0) \cap T_\lambda(\Lambda_1) = T_\lambda(\partial \Lambda_1), \, \mbox{for any}\, \lambda \in \partial \Lambda_1.$$
	
	Consider Lagrangian manifolds $$\tilde{\Lambda}_0 = T^\ast_0 \mathbb{R}^d  \quad \mbox{and} \quad \tilde{\Lambda}_1 = \{ (x, \xi) \in T^\ast \mathbb{R}^d : (x_2, \dots, x_d) = 0, \xi_1 = 0, x_1 \geq 0 \}.$$ We introduce space $I^k(\mathbb{R}^d; \tilde{\Lambda}_0, \tilde{\Lambda}_1, \Omega^{1/2}) \subset \mathcal{S}^\prime(\mathbb{R}^d; \Omega^{1/2})$ consisting of distributions $A = A_1 + A_2$, provided $A_2 \in C_0^\infty(\mathbb{R}^d)$ and $$A_1(x, h) = (2\pi h)^{-k - 3d/4 - 1/2} \int_0^\infty \int_{\mathbb{R}^d} e^{i((x_1 - r)\xi_1 + x^\ast \cdot \xi^\ast)/h} a(r, x, \xi, h) \, d\xi \, dr  \ |dx|^{1/2}, $$ where $x^\ast = (x_2, \dots, x_d)$ respectively $\xi^\ast = (\xi_2, \dots, \xi_d)$, and $a \in S_0$ is a compactly supported amplitude.

	Let us consider the semiclassical wavefront set and rewrite $A$ as the pushforward of the product of Heaviside function $H(r)$ and  $$\tilde{A}(r, x) = (2\pi h)^{-k - 3d/4 - 1/2} \int_{\mathbb{R}^d} e^{i((x_1 - r) \xi_1 + x^\ast \cdot \xi^\ast)/h} a(r, x, \xi, h) \,dr$$ under the projection $(r, x) \longmapsto x$. We shall use the results on semiclassical wave front sets of products and pushforwards. First of all, $$\text{WF}\, (\tilde{A}(s, x)) = \{(x_1, x_1, 0, -\xi_1, \xi_1, \xi^\ast)\} \quad \mbox{and} \quad \text{WF}\, (\tilde{H}(s)) = \{(0, x_1, x^\ast, \sigma, 0, 0)\}.$$ Then the semiclassical wave front set of the product is $$\{(0, x_1, x^\ast, \sigma, 0, 0)\} \cup \{(x_1, x_1, 0, -\xi_1, \xi_1, \xi^\ast) | x_1 \geq 0\} \cup \{(0, 0, 0, \sigma, \xi_1, \xi^\ast)\}.$$ Pushforward theorem of semiclassical wave front sets gives the semiclassical wave front set of $A$ is in $$\{(x_1, 0, 0, \xi^\ast) | x_1 \geq 0\} \cup \{(0, 0, \xi_1, \xi^\ast)\}.$$

	\subsection{Intersecting Lagrangian distributions}
	Following Melrose and Uhlmann \cite{Melrose-Uhlmann-CPAM-1979} we make the 
	\begin{definition}Given an intersecting pair $(\Lambda_0, \Lambda_1)$ of Lagrangian manifolds in a smooth $n$-manifold $X$, the space $I^k(X, \Lambda_0, \Lambda_1, \Omega^{1/2})$ of $k$-th order semiclassical intersecting Lagrangian distributions  consists of distributional half-densities $A$, written in the form of a locally finite sum $$A_0 + A_1 + \sum_j F_j A_j$$modulo a smooth function, provided $$A_0 \in I^{k - 1/2}(X, \Lambda_0, \Omega^{1/2}), \ A_1 \in I^{k}(X, \Lambda_1 \setminus \partial \Lambda_1, \Omega^{1/2}), \ A_j \in I^k(\mathbb{R}^d, \tilde{\Lambda}_0, \tilde{\Lambda}_1, \Omega^{1/2}),$$ where $\{F_j\}$ are zero-th order semiclassical Fourier integral operators mapping $(\tilde{\Lambda}_0, \tilde{\Lambda}_1)$ to $(\Lambda_0, \Lambda_1)$.\end{definition}
	
	The Lagrangian distribution $A$ can be written with respect to any covering of $\Lambda$ with corresponding local parametrizations and elliptic Fourier integral operators $F_j$. To show the independence of choice of $F_j$, we must show that if $F \in I^0(\mathbb{R}^d, \Gamma^\prime)$ with transversal compositions $\Gamma \circ \tilde{\Lambda}_0 \subset \tilde{\Lambda}_0$ and $\Gamma \circ \tilde{\Lambda}_1 \subset \tilde{\Lambda}_1$, then \begin{equation}\label{independence-F} F: I^k(\mathbb{R}^d, \tilde{\Lambda}_0, \tilde{\Lambda}_1) \longrightarrow I^k(\mathbb{R}^d, \tilde{\Lambda}_0, \tilde{\Lambda}_1).
	\end{equation}
	
	Indeed, let $A \in I^k(\mathbb{R}^d, \tilde{\Lambda}_0, \tilde{\Lambda}_1)$, say $$A = (2\pi h)^{- k - 3d/4 - 1/2}\int_0^\infty \int e^{i(y_1 - r) \eta_1 + i y^\ast \cdot \eta^\ast} a(r, y, \eta, h) \,d\eta dr,$$ then the composition with $F$ would be $$FA = (2\pi h)^{- k - 7d/4 - 1/2} \int_0^\infty \int \bigg( \int e^{i(\phi(x, y, \theta) + (y_1 - r) \eta_1 +  y^\ast \cdot \eta^\ast)/h} b(x, y, \theta, h)   a(r, y, \eta, h) \,d\eta dy \bigg) d\theta dr,$$ where $\phi(x, y, \theta) \in C^\infty(\mathbb{R}^{3d})$ is a non-degenerate phase function defining $\Gamma^\prime$. We may integrate out the $y, \eta$ variables via stationary phase. Since the Hessian with respect to $(y, \eta)$ at the critical point $(r, 0, - d_y \phi)$ is non-degenerate, we get an expression
	$$FA = (2\pi h)^{- k - 3d/4 - 1/2} \int_0^\infty \int e^{i \phi(x, (r, 0), \theta)/h} c(r, x, \theta, h) d\theta dr.$$ 
	Borrowing the arguments due to Melrose and Uhlmann \cite{Melrose-Uhlmann-CPAM-1979}, we have the equivalence of phase functions, $\phi(x, (r, 0), \theta)$ and $\theta \cdot x - r \theta_1$, namely, this expression may be written with respect to the standard phase function $\theta \cdot x - r \theta_1$.  The independence of choice of $F_j$ follows.

	\subsection{Principal symbol}
	Let $A \in I^k(X, \Lambda_0, \Lambda_1, \Omega^{1/2})$. We define the principal symbol of $A$ initially by defining them on each Lagrangian separately, away from the  intersection, and examining their behaviour as we approach the intersection. 
	
	Let $B$ be a zeroth order semiclassical pseudodifferential operator. 
	If the operator wavefront set of $B$ is supported away from $\tilde{\Lambda}_0$, say $\text{supp}\, b(a) \in \{|x| \geq \epsilon\}$, we pick a cutoff function such that $$\mu(r) = 1 \quad \mbox{if $r \geq \epsilon/2$} \quad \mbox{and} \quad \mu(r) = 0 \quad \mbox{if $r < \epsilon/4$},$$ and write $BA = U_1 + U_2$ with $$U_1 = (2\pi h)^{-k - 3d/4 - 1/2} \int_0^\infty \int_{\mathbb{R}^d} e^{i((x_1 - r)\xi_1 + x^\ast \cdot \xi^\ast)/h} \mu(r)  b(a)(r, x, \xi, h)\, d\xi dr.$$ Since $B$ is of zero-th order, namely $b(a) \in S_0(\tilde{\Lambda}_1)$, then $U_1 \in I^{k}(\mathbb{R}^d, \tilde{\Lambda}_1)$. Since the phase is not stationary on the set $\{|x| \geq \epsilon\}\cap \{r < \epsilon / 4\}$, $U_2$ is a semiclassically smoothing operator, which gives $BA \in I^{k}(\mathbb{R}^d, \tilde{\Lambda}_1)$.
	
	If the operator wavefront set of $B$ is supported away from $\tilde{\Lambda}_1$, one may assume $\text{supp} \, b(a) \subset \{|x^\prime|^2 + \xi_1^2 / |\xi|^2 < \epsilon^2, x_1 > - \epsilon\}$. Thus the first order semiclassical differential operator $$L = \frac{-ih}{|x^\prime|^2 + \xi_1^2 / |\xi|^2} \bigg(x^\prime \cdot \frac{\partial}{\partial \xi^\prime} - \frac{\xi_1}{|\xi|^2}\frac{\partial}{\partial r}\bigg)$$ is smooth on $\text{supp} \, b(a)$. Noting $$BA = (2\pi h)^{-k - 3d/4 - 1/2} \int_0^\infty \int_{\mathbb{R}^n} L\big(e^{i((x_1 - r)\xi_1 + x^\ast \cdot \xi^\ast)/h }\big) b(a)(r, x, \xi, h) \, d\xi dr,$$ one can take integration by parts and have \begin{eqnarray*}\lefteqn{(2\pi h)^{k + 3d/4 + 1/2} BA} \\&=& \int_0^\infty \int_{\mathbb{R}^n} e^{i((x_1 - r)\xi_1 + x^\ast \cdot \xi^\ast)} L^t(b(a)) \,d\xi dr + \int_{\mathbb{R}^n} e^{i x \cdot \xi / h} \frac{ih\xi_1 b(a)(0, x, \xi, h)}{|\xi_1|^2 + |x^\ast|^2|\xi|^2} \,d\xi\\&=&  \int_0^\infty \int_{\mathbb{R}^d} e^{i((x_1 - r)\xi_1 + x^\ast \cdot \xi^\ast)} (L^t)^m(b(a)) \,d\xi dr + \sum_{j = 1}^m\int_{\mathbb{R}^d} e^{i x \cdot \xi / h} \frac{ih\xi_1 (L^t)^{j - 1}(b(a))(0, x, \xi, h)}{|\xi_1|^2 + |x^\ast|^2|\xi|^2} \,d\xi.\end{eqnarray*} Then $BA \in I^{k - 1/2}(\mathbb{R}^d, \tilde{\Lambda}_0)$, because the first term is semiclassically smoothing as $m$ goes to infinity.
	
	In general, we have
	
	\begin{proposition}\label{symboltheorem}If $B$ is a zeroth order semiclassical pseudodifferential operator with operator wavefront set away from $\Lambda_0$ respectively away from $\Lambda_1$, then $BA \in I^{k}(X, \Lambda_1)$ respectively $BA \in I^{k - 1/2}(X, \Lambda_0)$, provided $A \in I^{k}(X; \Lambda_0, \Lambda_1)$. \end{proposition}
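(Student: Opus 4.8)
The plan is to reduce the general statement to the two special cases already verified in the text by localizing via a microlocal partition of unity and then transporting the model situation to the general one by elliptic Fourier integral operators, using the invariance property \eqref{independence-F}. First I would observe that it suffices to prove the statement for $B$ whose operator wavefront set is contained in a small conic (semiclassical) neighbourhood of a single point $\lambda \in \Lambda_0 \cup \Lambda_1$, since any zeroth order $B$ can be written as a locally finite sum of such pieces modulo a semiclassically smoothing operator, and the claimed mapping properties are stable under such sums. If $\lambda$ lies away from both $\Lambda_0$ and $\Lambda_1$, then $B$ applied to any $A \in I^k(X; \Lambda_0, \Lambda_1, \Omega^{1/2})$ is semiclassically smoothing, because near $\lambda$ the kernel $A$ is $O(h^\infty)$. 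So the only cases are $\lambda \in \Lambda_1 \setminus \partial \Lambda_1$, $\lambda \in \Lambda_0 \setminus \partial \Lambda_1$, and $\lambda \in \partial \Lambda_1 = \Lambda_0 \cap \Lambda_1$.

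Next I would handle the first two cases. If $\lambda \in \Lambda_1 \setminus \partial \Lambda_1$ (in particular away from $\Lambda_0$), then, after composing with an elliptic $F_j$ bringing $(\Lambda_0, \Lambda_1)$ locally to the model $(\tilde\Lambda_0, \tilde\Lambda_1)$, the computation done just above in the text applies verbatim: one inserts a cutoff $\mu(r)$ which is $1$ on the relevant support, obtains $U_1 \in I^k(\RR^d, \tilde\Lambda_1)$ from the fact that $b(a)$ is a zeroth-order symbol, and shows the remainder $U_2$ is semiclassically smoothing by non-stationary phase. Pulling back by $F_j^{-1}$ gives $BA \in I^k(X, \Lambda_1; \Omega^{1/2})$. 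Symmetrically, if $\lambda \in \Lambda_0$ and away from $\Lambda_1$, the integration-by-parts argument with the vector field $L$ displayed above shows $BA \in I^{k-1/2}(X, \Lambda_0; \Omega^{1/2})$ after transport by an $F_j$. The only point requiring a word of care is that the composition $F_j^{-1} B F_j$ is again a zeroth order semiclassical pseudodifferential operator with operator wavefront set in the model neighbourhood; this is the standard Egorov-type statement, valid semiclassically by Theorem~\ref{composition-semiFIO} together with the elliptic parametrix result, and one also uses \eqref{independence-F} to see that the class $I^k(\RR^d, \tilde\Lambda_0, \tilde\Lambda_1)$ is preserved so that the decomposition of $BA$ into its $\Lambda_0$ and $\Lambda_1$ pieces is meaningful.

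Finally, the main obstacle is the case $\lambda \in \partial\Lambda_1 = \Lambda_0 \cap \Lambda_1$, where the hypothesis that the operator wavefront set of $B$ is away from $\Lambda_0$ (resp.\ $\Lambda_1$) must be used crucially: at the intersection point, the condition $\mathrm{WF}'_h(B) \cap \Lambda_0 = \emptyset$ means that on $\mathrm{supp}\, b$ one still has $|x'|^2 + \xi_1^2/|\xi|^2$ bounded below away from zero in the model coordinates, so the computation remains valid in a full neighbourhood of $\partial\tilde\Lambda_1$, not merely away from it; dually for the other statement. I would argue that, because $\mathrm{WF}'_h(B)$ is compact and disjoint from $\Lambda_0$, there is a uniform lower bound on this quantity over $\mathrm{supp}\, b$, so the operator $L$ (or the cutoff $\mu$ in the other case) is globally smooth on the support of the amplitude, and the previous argument goes through without modification right up to and including the intersection. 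This is where one sees that $BA$ lands in the single-Lagrangian space $I^k(X,\Lambda_1)$ (resp.\ $I^{k-1/2}(X,\Lambda_0)$) rather than merely in the intersecting class: the $F_jA_j$ terms in the expansion of $A$ are, after applying such a $B$, reduced to ordinary Lagrangian distributions because the cutoff kills the contribution from the other Lagrangian. Assembling the three cases via the partition of unity completes the proof.
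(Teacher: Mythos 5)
Your overall strategy --- decompose $A$ as $A_0 + A_1 + \sum_j F_j A_j$, microlocalize $B$, move the model terms back to $(\tilde\Lambda_0,\tilde\Lambda_1)$ by an Egorov-type conjugation with the elliptic $F_j$, and then quote the two model computations preceding the proposition --- is exactly what the paper intends (the paper offers no more detail than those computations plus the invariance statement \eqref{independence-F}), and your second paragraph pairs the hypotheses with the right mechanisms: away from $\Lambda_0$ $\to$ the cutoff $\mu(r)$ $\to$ $I^{k}(\Lambda_1)$; away from $\Lambda_1$ $\to$ the vector field $L$ $\to$ $I^{k-1/2}(\Lambda_0)$.

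The third paragraph, however, is wrong as written. First, the case $\lambda \in \partial\Lambda_1$ cannot occur: $\partial\Lambda_1 = \Lambda_0 \cap \Lambda_1$ lies in \emph{both} Lagrangians, so under either hypothesis $\mathrm{WF}'_h(B)$ avoids it; all that is needed near the intersection is uniformity, i.e.\ that the (closed, fibre-compact) set $\mathrm{WF}'_h(B)$ keeps a fixed positive distance from the avoided Lagrangian, so the model arguments run with a fixed $\epsilon$ up to and across $\partial\tilde\Lambda_1$. Second, the implication you assert is false and swaps the two mechanisms: $\mathrm{WF}'_h(B) \cap \Lambda_0 = \emptyset$ gives, in the model, a lower bound on $|x|$ over the support of the amplitude (since $\tilde\Lambda_0 = T^*_0\RR^d$), which is precisely what makes the $\mu(r)$-cutoff argument work; it does \emph{not} bound $|x^\ast|^2 + \xi_1^2/|\xi|^2$ from below --- for example $B$ may be microsupported at an interior point of $\tilde\Lambda_1$, where that quantity vanishes, so $L$ is singular on the support and the integration-by-parts argument cannot be run there (and its boundary terms at $r=0$ would in any case produce an element of $I^{k-1/2}(\tilde\Lambda_0)$, not the conclusion wanted in this case). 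The quantity $|x^\ast|^2 + \xi_1^2/|\xi|^2$ and the operator $L$ belong to the hypothesis $\mathrm{WF}'_h(B)\cap\Lambda_1=\emptyset$, together with a separate non-stationary-phase argument in $\xi_1$ on the region $x_1 \leq -\epsilon$ (where $x_1 - r$ is bounded away from zero since $r \geq 0$). Since your second paragraph already has the correct pairing, the fix is simply to delete the spurious third case and replace it by the uniformity remark above.
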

	
	Applying $B$ converts intersecting Lagrangian distributions to usual ones and thus defines principal symbols on $\Lambda_0$ and $\Lambda_1$ restrictively. More precisely, we have local principal symbols 
	\begin{eqnarray*}
	a_1|_{\Lambda_1 \setminus \partial \Lambda_1} = \sigma^{(1)}(A) = \sigma^{k}(BA) / \sigma(B)  \in S(\Lambda_1 \setminus \partial \Lambda_1, \Omega^{1/2} \otimes L_1) 
	\end{eqnarray*} 
	if $\text{WF}\,(B) \cap \Lambda_0 = \emptyset$ and
	\begin{eqnarray*} a_0 = \sigma^{(0)}(A) = \sigma^{k-1/2}(BA) / \sigma(B) \in S(\Lambda_0 \setminus \partial \Lambda_1, \Omega^{1/2} \otimes L_0)
	\end{eqnarray*} 
	if $\text{WF}\,(B) \cap \Lambda_1 = \emptyset$. It is easy to see that these definitions are independent of $B$.

	In addition, an examination of the model situation shows that the symbol at $\Lambda_1$ extends smoothly to $\partial \Lambda_1$, while the symbol $a_0$ at $\Lambda_0$ has the property that $g a_0$ extends smoothly across $\Lambda_0 \cap \Lambda_1$, for any smooth function $g$ on $\Lambda_0$ vanishing at $\Lambda_0 \cap \Lambda_1$. In other words, $a_0$ blows up at $\Lambda_0 \cap \Lambda_1$ at most to first order. Following \cite{Melrose-Uhlmann-CPAM-1979}, we define an invariant map $R$, defined on such symbols on $\Lambda_0 \setminus \Lambda_1$. We choose a function $g \in C^\infty(\Lambda_0)$ vanishing at $\Lambda_0 \cap \Lambda_1$, and with nonvanishing differential there; similarly, we choose a function $f \in C^\infty(\Lambda_1)$ vanishing at $\Lambda_0 \cap \Lambda_1 = \partial \Lambda_1$, and with nonvanishing differential there, and such that $\{ f, g \} < 0$ (this Poisson bracket is automatically nonzero, so this is just a choice of signs for $f$ and $g$). Write $a_0 = g^{-1} r |dh_1 \dots dh_{n-1} dg|^{1/2}$, where $r$ is a smooth section of $L_0$ and $h_1, \dots h_{d-1}$ are functions on $T^* X$ with independent differentials when restricted to $\Lambda_0 \cap \Lambda_1$. We map this to 
	$$
	Ra_0 := r |dh_1 \dots dh_{d-1} df|^{1/2} \{ g, f \}^{-1/2} |_{\Lambda_0 \cap \Lambda_1}.
	$$
	This is well-defined independent of choices. Moreover, it is shown in \cite{Melrose-Uhlmann-CPAM-1979} that $L_0$ and $L_1$ are canonically isomorphic over $\Lambda_0 \cap \Lambda_1$, so $Ra_0$ can be regarded as a section of $C^\infty(\Lambda_1, \Omega^{1/2}, L_1)$ restricted to $\partial \Lambda_1$. It is shown in \cite{Melrose-Uhlmann-CPAM-1979} in the homogeneous case (and the semiclassical case works just the same) that 
	\begin{equation}\label{0to1} a_1 |_{\partial \Lambda_1} = R a_0. \end{equation} Therefore, one may invariantly define $$S(\Lambda, \Omega^{1/2} \otimes L) \subset S(\Lambda_1 \setminus \partial \Lambda_1, \Omega^{1/2} \otimes L_1) \otimes S(\Lambda_0 \setminus \partial \Lambda_1, \Omega^{1/2} \otimes L_0)$$ consisting of the pairs of local principal symbols satisfying (\ref{0to1}), where $L$ denotes the global Maslov bundle well-defined on $\Lambda$ by $L_0$ and $L_1$.
	
	This principal symbol map gives rise to an exact sequence
	\begin{equation}\label{exactseq1} 0 \hookrightarrow I^{k - 1}(X, \Lambda_0, \Lambda_1) \hookrightarrow I^k(X, \Lambda_0, \Lambda_1) \longrightarrow S(\Lambda, \Omega^{1/2} \otimes L)/hS(\Lambda, \Omega^{1/2} \otimes L) \longrightarrow 0.
	\end{equation} 
	Likewise, the fact $I^{k - 1/2}(X, \Lambda_0) \subset I^k(X, \Lambda_0, \Lambda_1)$ gives another exact sequence \begin{equation}\label{exactseq2}0 \hookrightarrow I^{k - 1/2}(X, \Lambda_0) + I^{k - 1}(X, \Lambda_0, \Lambda_1) \hookrightarrow I^k(X, \Lambda_0, \Lambda_1) \longrightarrow S(\Lambda_1, \Omega^{1/2} \otimes L)/h S(\Lambda_1, \Omega^{1/2} \otimes L)\longrightarrow 0.\end{equation}

	\subsection{Symbol calculus}
	
	Noting (\ref{independence-F}), we apply Theorem \ref{composition-semiFIO} to semiclassical intersecting Lagrangian distribution $A \in I^{k_1}(X, \Lambda_0, \Lambda_1)$ and semiclassical Fourier integral operator $F \in I^{k_2}(X, Y, \mathcal{C})$ with $\mathcal{C}$ transverse to $\Lambda_0$ and $\Lambda_1$. Then \begin{equation}\label{composition-intersecting}F \circ A \in I^{k_1 + k_2}(Y, \mathcal{C} \circ \Lambda_0, \mathcal{C} \circ \Lambda_1).
	\end{equation}
	
	In particular, we are interested in the calculus of intersecting Lagrangian distributions with pseudodifferential operators of real principal type. More precisely,
	
	\begin{theorem}\label{principal symbol semiclassical intersecting}Let $P \in \Psi^{k}(X)$ be a semiclassical pseudodifferential operator of real principal type with sub-principal symbol $s$ and principal symbol $p$ vanishing on $\Lambda_1$ and $A \in I^{k^\prime}(X, \Lambda_0, \Lambda_1, \Omega^{1/2})$. 
	Then $PA$ can be written as a sum of $F \in I^{k + k^\prime - 1/2}(X, \Lambda_0)$ and $G \in I^{k + k^\prime - 1}(X, \Lambda_0, \Lambda_1)$ with a half density principal symbol on $\Lambda_1$ given by 
	\begin{equation}\label{vanishing-symbol-calculus} 
	\sigma^{k+k'-1}(G) = \frac{h}{i} \mathcal{L}_{H_{p}} (a) + s a.
	\end{equation} 
	Here $H_{p}$ is the Hamilton field of $p$ and $\mathcal{L}$ denotes the Lie derivative of half densities.
	\end{theorem}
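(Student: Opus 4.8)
The plan is to obtain the theorem as a short symbolic argument built on three facts already available in the excerpt: the composition result \eqref{composition-intersecting}; the exact sequence \eqref{exactseq2}, whose kernel, after replacing $k$ by $k+k'$, is exactly $I^{k+k'-1/2}(X,\Lambda_0) + I^{k+k'-1}(X,\Lambda_0,\Lambda_1)$; and Theorem \ref{vanishing-symbol-calculus-1}, which computes the symbol of $P_hA_h$ on a Lagrangian along which the principal symbol of $P_h$ vanishes. No new hard estimates are needed for the deduction itself.

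First I would note that $PA \in I^{k+k'}(X,\Lambda_0,\Lambda_1,\Omega^{1/2})$: this follows from \eqref{composition-intersecting}, since the semiclassical pseudodifferential operator $P$ of order $k$ is a semiclassical Fourier integral operator associated to the canonical graph of the identity, which is transverse to both $\Lambda_0$ and $\Lambda_1$ and composes with the pair $(\Lambda_0,\Lambda_1)$ to give $(\Lambda_0,\Lambda_1)$ again. Next I would check that the principal symbol of $PA$ on $\Lambda_1$ vanishes. Microlocalizing near any point of $\Lambda_1\setminus\partial\Lambda_1$, where $PA$ is an ordinary Lagrangian distribution in $I^{k+k'}(X,\Lambda_1)$ by Proposition \ref{symboltheorem}, its leading symbol is the restriction of $p$ to $\Lambda_1$ times the symbol of $A$; this is zero because $p$ vanishes on $\Lambda_1$ by the real principal type hypothesis. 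Since the $\Lambda_1$-symbol of an intersecting Lagrangian distribution extends smoothly up to $\partial\Lambda_1$, and it vanishes on the dense open subset $\Lambda_1\setminus\partial\Lambda_1$, it vanishes identically modulo $hS$. Hence $PA$ lies in the kernel of the symbol map in \eqref{exactseq2}, so it can be written $PA = F + G$ with $F \in I^{k+k'-1/2}(X,\Lambda_0)$ and $G \in I^{k+k'-1}(X,\Lambda_0,\Lambda_1)$, as asserted.

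It then remains to identify $\sigma^{k+k'-1}(G)$ on $\Lambda_1\setminus\partial\Lambda_1$. Since $F$ is microsupported on $\Lambda_0$, it is microlocally trivial away from $\Lambda_0$, in particular in a neighbourhood of any point of $\Lambda_1\setminus\partial\Lambda_1$, which is disjoint from $\Lambda_0$ as $\Lambda_0\cap\Lambda_1=\partial\Lambda_1$. Therefore $\sigma^{k+k'-1}(G)|_{\Lambda_1\setminus\partial\Lambda_1} = \sigma^{k+k'-1}(PA)|_{\Lambda_1\setminus\partial\Lambda_1}$, and the right-hand side is computed, microlocally, by Theorem \ref{vanishing-symbol-calculus-1} (with the canonical relation taken to be the graph of the identity) to equal $\frac{h}{i}\mathcal{L}_{H_p}(a) + sa$. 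Smooth extension of the symbol across $\partial\Lambda_1$, together with the compatibility \eqref{0to1}, then determines $\sigma^{k+k'-1}(G)$ on all of $\Lambda_1$ consistently, which is \eqref{vanishing-symbol-calculus}.

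The genuinely technical content lives in the ingredients rather than in this deduction: the composition statement \eqref{composition-intersecting}, the exact sequence \eqref{exactseq2}, the smooth extension of the $\Lambda_1$-symbol to $\partial\Lambda_1$, and the $R$-map relation \eqref{0to1} are the semiclassical analogues of the results of Melrose and Uhlmann \cite{Melrose-Uhlmann-CPAM-1979}, and their proofs require the model computation near the Lagrangian intersection, a stationary-phase argument, and careful bookkeeping of powers of $h$. The main obstacle, if one preferred a self-contained proof, would be to carry out this model analysis directly: reduce near $\partial\Lambda_1$ to the model pair $(\tilde\Lambda_0,\tilde\Lambda_1)$ by an elliptic semiclassical Fourier integral operator, conjugate $P$ via Egorov's theorem to a model operator whose principal symbol vanishes on $\tilde\Lambda_1$, and repeat the oscillatory-integral computation of \cite{Melrose-Uhlmann-CPAM-1979} with $h$ inserted, using the factorization $\tilde p = \xi_1 q_1 + x^\ast\cdot q^\ast$ of the vanishing symbol to extract the extra factor of $h$ on the $\tilde\Lambda_1$ part and to peel off the $\tilde\Lambda_0$ contribution $F$.
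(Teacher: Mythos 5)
Your proposal is correct and follows essentially the same route as the paper, which likewise deduces the theorem directly from the exact sequence \eqref{exactseq2} and the composition rule \eqref{composition-intersecting}, with the symbol formula \eqref{vanishing-symbol-calculus} supplied by Theorem~\ref{vanishing-symbol-calculus-1}. You have merely spelled out the vanishing of the $\Lambda_1$-symbol of $PA$ and the identification of $\sigma^{k+k'-1}(G)$, steps the paper leaves implicit in its one-line proof.
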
  
	
	Indeed, the theorem is an immediate result of  (\ref{exactseq2}) and \eqref{composition-intersecting}.
	
	We now construct a global parametrix for a semiclassical pseudodifferential operator $P \in \Psi^{k}(X)$ of real principal type on a manifold $X$. Let $(\Lambda_0, \Lambda_1)$ be an embedded intersecting Lagrangian pair of $T^\ast X$, and assume that 
	\begin{itemize}
	\item  $H_p$ is nowhere tangent to $\Lambda_0$ on $\Lambda_0 \cap \Sigma (P)$ with $\Sigma (P) = \{(x, \xi) \in T^\ast (X) \setminus 0 : p(x, \xi) = 0\}$, 
	\item $\Lambda_1$ is the forward flowout from $\Lambda_0 \cap \Sigma(P)$ by $H_p$, and 
	\item no complete bicharacteristic of $P$ lying in $\Lambda_1$ remains over a compact set in $X$. 
	\end{itemize}
	%
	
	\begin{theorem}\label{parametrix construction intersecting}Let a real principal type  pseudodifferential operator $P \in \Psi^k_h(X)$ and a pair of intersecting Lagrangian 
	submanifolds $(\Lambda_0, \Lambda_1)$ be given as above. For any Lagrangian distribution $F \in I^{k^\prime}(X, \Lambda_0, \Omega^{1/2})$, there is a solution $U \in I^{k^\prime - k + 1/2} (X, \Lambda_0, \Lambda_1,  \Omega^{1/2})$ to $P \circ U = F$ modulo $O(h^\infty)$.\end{theorem}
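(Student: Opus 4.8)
The plan is to build the solution $U$ iteratively, reducing the order of the error at each step by solving transport equations, first at the intersection $\Lambda_0 \cap \Lambda_1$ and along $\Lambda_1$, then at $\Lambda_0$ away from the intersection. First I would use the hypothesis that $H_p$ is transverse to $\Lambda_0$ on $\Lambda_0 \cap \Sigma(P)$ and tangent to $\Lambda_1$, so that Theorem~\ref{principal symbol semiclassical intersecting} applies: given $F \in I^{k'}(X, \Lambda_0, \Omega^{1/2})$, which we may regard as an element of $I^{k'}(X, \Lambda_0, \Lambda_1, \Omega^{1/2})$, I look for $U_0 \in I^{k' - k + 1/2}(X, \Lambda_0, \Lambda_1, \Omega^{1/2})$ so that the symbol of $P U_0$ on $\Lambda_0$ matches that of $F$. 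By the exact sequence \eqref{exactseq2}, since $P$ is elliptic off $\Sigma(P)$ and the relevant geometric conditions hold, the symbol of $P U_0$ on $\Lambda_0$ is, away from $\Lambda_0 \cap \Lambda_1$, simply $p$ times the symbol of $U_0$, while near the intersection one uses the relation $R$ between the symbols on $\Lambda_0$ and $\Lambda_1$ together with the transport equation \eqref{vanishing-symbol-calculus}: the symbol on $\Lambda_1$, initialized at $\partial \Lambda_1$ via $Ra_0 = a_1|_{\partial \Lambda_1}$, is propagated along $H_p$ by solving $\tfrac{h}{i} \mathcal{L}_{H_p}(a) + sa = 0$ forward. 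This matches the symbol of $F$ on $\Lambda_0$ modulo $h$, so the error $F - P U_0$ drops to order $k' - 1$ in $I^{k'-1}(X, \Lambda_0, \Lambda_1, \Omega^{1/2})$.

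Next I would iterate: having produced $U_0, \dots, U_{m}$ with $F - P(U_0 + \dots + U_m) \in I^{k' - m - 1}(X, \Lambda_0, \Lambda_1, \Omega^{1/2})$, I solve the same system of symbol equations at order $k' - m - 1$ to find $U_{m+1} \in I^{k' - k + 1/2 - m - 1}(X, \Lambda_0, \Lambda_1, \Omega^{1/2})$ reducing the error by one more order. The transport equation \eqref{vanishing-symbol-calculus} is a linear first-order ODE along the bicharacteristics of $H_p$; the third bulleted hypothesis --- that no complete bicharacteristic in $\Lambda_1$ stays over a compact set --- is precisely what guarantees that we can solve it \emph{globally} along $\Lambda_1$, choosing the forward solution so that the support of the correction is forward of the support of the error (this is the pseudoconvexity/non-trapping input, as in \cite[Section 6.5]{Duistermaat-Hormander-Acta-1972}). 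Finally, taking an asymptotic sum $U \sim \sum_{m \geq 0} U_m$ (using that $I^{j}(X, \Lambda_0, \Lambda_1, \Omega^{1/2})$ is, up to lower order terms, complete under such sums, just as for ordinary Lagrangian distributions), we obtain $U \in I^{k' - k + 1/2}(X, \Lambda_0, \Lambda_1, \Omega^{1/2})$ with $PU - F \in I^{-\infty}$, and since the symbols also carry uniform control in $h$, the error is in fact $O(h^\infty)$.

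I expect the main obstacle to be the careful handling of the symbol at the intersection $\Lambda_0 \cap \Lambda_1 = \partial \Lambda_1$, where the symbol on $\Lambda_0$ is allowed a first-order pole and the compatibility condition \eqref{0to1} between $a_0$ and $a_1$ via the map $R$ must be respected at every stage of the iteration. One must check that the transport equation on $\Lambda_1$, solved with the initial data $Ra_0$ read off from the $\Lambda_0$-symbol of the current error, actually produces a correction term $U_{m+1}$ whose $\Lambda_0$-symbol --- obtained by applying $P$ and reading off via \eqref{exactseq2} --- cancels the $\Lambda_0$-symbol of the error to the required order, including its behaviour near $\partial \Lambda_1$; this is where one invokes the model computation for $(\tilde\Lambda_0, \tilde\Lambda_1)$ near the intersection and transfers it via the Fourier integral operators $F_j$, using \eqref{independence-F} and the composition result \eqref{composition-intersecting}. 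The interior solvability away from the intersection is routine real-principal-type parametrix theory; the intersection bookkeeping, together with verifying that the forward-support convention is consistent across all iterates, is the delicate point.
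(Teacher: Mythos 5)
Your proposal is correct and follows essentially the same route as the paper: an iterative symbolic construction using the exact sequences \eqref{exactseq1}--\eqref{exactseq2}, the compatibility condition \eqref{0to1} via $R$ as initial data at $\partial\Lambda_1$, the transport equation \eqref{vanishing-symbol-calculus} along $H_p$ (homogeneous at the first step, with the $\Lambda_1$-component of the current error as inhomogeneity thereafter), global solvability of these ODEs from the non-trapping hypothesis, and an asymptotic sum of the corrections to obtain $PU - F = O(h^\infty)$. No substantive gap.
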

	
	\begin{proof} we shall solve $P \circ U = F$ symbolically.
	
	We seek $U_0 \in I^{k^\prime - k + 1/2} (X, \Lambda_0, \Lambda_1)$ such that $F - P U_0 = F_1 + G_1$ with  $F_1 \in I^{k^\prime - 1}(X, \Lambda_0)$   and $G_1 \in I^{k^\prime - 1 - 1/2}(X, \Lambda_0, \Lambda_1)$. Because the principal symbol of $F$ doesn't support on $\Lambda_1$, the second exact sequence (\ref{exactseq2}), symbol calculus (\ref{composition-intersecting}) and (\ref{vanishing-symbol-calculus}) require that 
	\begin{eqnarray*}&\sigma(U_0) = p^{-1} \sigma(F) &\quad\mbox{on $\Lambda_0$}\\ &0 = \frac{h}{i} \mathcal{L}_{H_{p}} (\sigma(U_0)) + s \sigma(U_0)  &\quad\mbox{on $\Lambda_1$}.
	\end{eqnarray*}  
	The relationship of symbols on an intersecting Lagrangian pair, (\ref{0to1}), requires that  the symbol at the boundary satisfies $$\sigma(U_0)|_{\partial \Lambda_1} = R(p^{-1} \sigma(F)) \quad\mbox{at $\partial \Lambda_1$}.$$
	This is a first order linear ODE on $\Lambda_1$ with initial condition at $\partial \Lambda_1$. 
	The geometric conditions listed above guarantee there is a unique solution to this ODE. Then there is a $U_0  \in I^{k^\prime - k + 1/2} (X, \Lambda_0, \Lambda_1)$ such that $$F - PU_0 = F_1 + G_1 \in I^{k^\prime - 1}(X, \Lambda_0) + I^{k^\prime - 1 - 1/2} (X, \Lambda_0, \Lambda_1).$$
	
	To gain $U_1 \in I^{k^\prime - k - 1 + 1/2} (X, \Lambda_0, \Lambda_1)$ such that $F_1 + G_1 - P U_1 = F_2 + G_2$ provided  $F_2 \in I^{k^\prime - 2}(X, \Lambda_0)$  and $G_2 \in I^{k^\prime - 2 - 1/2}(X, \Lambda_0, \Lambda_1)$,  we apply the first exact sequence (\ref{exactseq1}) and symbol calculus to get \begin{eqnarray*}&\sigma(U_1) = p^{-1} \sigma(F_1) &\quad\mbox{on $\Lambda_0$}\\ &\sigma(G_1) = \frac{h}{i} \mathcal{L}_{H_{p}} (\sigma(U_1)) + s \sigma(U_1)  &\quad\mbox{on $\Lambda_1$}.\end{eqnarray*} The boundary condition also holds $$\sigma(U_1)|_{\partial \Lambda_1} = R(p^{-1} \sigma(F_1)) \quad\mbox{at $\partial \Lambda_1$}.$$The geometric conditions again guarantee there is a unique solution to this system. Then there is a $U_1  \in I^{k^\prime - k - 1 + 1/2} (X, \Lambda_0, \Lambda_1)$ such that $$ F_1 + G_1 - PU_1= F_2 + G_2 \in I^{k^\prime - 2}(X, \Lambda_0) + I^{k^\prime - 2 - 1/2} (X, \Lambda_0, \Lambda_1).$$
	
	Repeating this procedure inductively, we have $$F - PU_0 = F_j + G_j \in I^{k^\prime - j}(X, \Lambda_0) + I^{k^\prime - j - 1/2} (X, \Lambda_0, \Lambda_1)$$ such that \begin{eqnarray*}&\sigma(U_j) = p^{-1} \sigma(F_j) &\quad \mbox{on $\Lambda_0$},\\ &\sigma(G_j) = \frac{h}{i} \mathcal{L}_{H_{p}} (\sigma(U_j)) + s \sigma(U_j) & \quad\mbox{on $\Lambda_1$}, \\ &\sigma(U_j) = R(p^{-1} \sigma(F_j)) &\quad\mbox{at $\partial \Lambda_1$}.
	\end{eqnarray*} Consequently, the parametrix $U$ constructed by asymptotically summing up $\{U_j\}$ satisfies  $$f - P\bigg(\sum_{j = 0}^N U_j\bigg) \in I^{k^\prime - N - 1}(X, \Lambda_0) + I^{k^\prime - N - 1 - 1/2} (X, \Lambda_0, \Lambda_1).$$
	We thus have $f - PU = O(h^\infty)$, as required. 
	\end{proof}

\end{appendix}

\begin{flushleft}
\vspace{0.3cm}\textsc{Xi Chen\\Mathematical Sciences
Institute\\Australian National University\\Canberra 0200 Australia}

\emph{E-mail address}: \textsf{xi.chen@anu.edu.au}

\end{flushleft}

\begin{flushleft}
\vspace{0.3cm}\textsc{Andrew Hassell\\Mathematical Sciences
Institute\\Australian National University\\Canberra 0200 Australia}

\emph{E-mail address}: \textsf{andrew.hassell@anu.edu.au}

\end{flushleft}


\begin{thebibliography}{99}

\bibitem{Arnold} V. I. Arnold, \emph{Mathematical methods of classical mechanics}, 2nd edition, translated by K. Vogtmann and A. Weinstein, Springer, 1989. 



\bibitem{Chen-Hassell3} X. Chen,  \emph{Resolvent and spectral measure on non-trapping asymptotically hyperbolic manifolds III:  Global-in-Time Strichartz Estimates without Loss}, preprint.

\bibitem{Chen-Hassell2} X. Chen and A. Hassell, \emph{Resolvent and spectral measure on non-trapping asymptotically hyperbolic manifolds II: Spectral Measure, Restriction Theorem, Spectral Multiplier}, preprint.





\bibitem{Duistermaat}J. J. Duistermaat, \emph{Fourier Integral Operators}, Progress in Math. \textbf{130}. Birkhäuser Boston, 1996.

\bibitem{Duistermaat-Hormander-Acta-1972}J. J. Duistermaat and L. H\"{o}rmander, \emph{Fourier Integral Operators II}, Acta Math. \textbf{128}(1972), 183-269.

\bibitem{Egorov}Y. V. Egorov, \emph{Microlocal Analysis}. Partial differential equations IV: Microlocal Analysis and Hyperbolic Equations (Translated from Russia by P. C. Sinha), 1–147, Springer-Verlag, 1993.



\bibitem{Graham-Zworski}C. R. Graham and M. Zworski, \emph{Scattering matrix in conformal geometry}, Invent. Math. \textbf{152}(2003), 89-118.

\bibitem{Guillarmou}C. Guillarmou, \emph{Meromorphic properties of the resolvent on asymptotically hyperbolic manifolds}, Duke Math. J. \textbf{129}(2005), No. \textbf{1}, 1-37.









\bibitem{Hassell-Vasy-FourierAnn}A. Hassell and A. Vasy, \emph{The resolvent for Laplace-type operators on asymptotically conic spaces}, Ann. Inst. Fourier (Grenoble) \textbf{51}(2001), No. \textbf{5}, 1229-1346.


\bibitem{Hassell-Wunsch-2005}A. Hassell and J. Wunsch, \emph{The Schr\"odinger  propagator for scattering metrics}, Ann. Math. \textbf{162}(2005), 487-523.

\bibitem{Hassell-Wunsch}A. Hassell and J. Wunsch, \emph{The semiclassical resolvent and the propagator for non-trapping scattering metrics}, Adv. Math. \textbf{217}(2008), 586-682.


\bibitem{Hormander1}L. H\"{o}rmander, \emph{The Analysis of Linear Partial Differential Operators I: Distribution Theory and Fourier Analysis (Second Edition)}, Springer-Verlag, 1990.

\bibitem{Hormander3}L. H\"{o}rmander, \emph{The Analysis of Linear Partial Differential Operators III: Pseudo-Differential Operators}, Springer-Verlag, 1985.


\bibitem{Hormander4}L. H\"{o}rmander, \emph{The Analysis of Linear Partial Differential Operators IV: Fourier Integral Operators}, Springer-Verlag, 1994.

\bibitem{Hormander-Acta-1971}L. H\"{o}rmander, \emph{Fourier integral operators I}, Acta Math. \textbf{127}(1971), 79-183.


\bibitem{Jost}J. Jost, \emph{Riemannian Geometry and Geometric Analysis (Second Edition)}, Springer-Verlag, 1998.


\bibitem{Martinez}A. Martinez, \emph{An Introduction to Semiclassical and Microlocal Analysis}, Springer-Verlag, 2002.


\bibitem{Mazzeo-JDG-1988}R. Mazzeo, \emph{The Hodge cohomology of a conformally compact metric},  J. Diff. Geom. \textbf{28}(1988), 309-339.

\bibitem{Mazzeo-1991}R. Mazzeo, \emph{Unique continuation at infinity and embedded eigenvalues for asymptotically hyperbolic manifolds},  
Amer. J. Math. \textbf{113} (1991), no. 1, 25--45. 

\bibitem{Mazzeo-Melrose}R. Mazzeo and R. B. Melrose, \emph{Meromorphic extention of the resolvent on complete spaces with asymptotically constant negative curvature}, J. Func. Anal. \textbf{75}(1987), 260-310.

\bibitem{DiffAnal}R. B. Melrose, \emph{Differential Analysis on Manifolds with Corners}, manuscript, http://www-math.mit.edu/~rbm/book.html. 

\bibitem{Melrose-book}R. B. Melrose, \emph{Geometric Scattering Theory}, Cambridge Univ. Press, 1995.





\bibitem{Melrose-Sa Barreto-Vasy}R. B. Melrose, A. S\'{a} Barreto and A. Vasy, \emph{Analytic continuation and semiclassical resolvent estimates on asymptotically hyperbolic spaces,}  Comm. Part. Diff. Equa. \textbf{39}(2014), 452-511.

\bibitem{Melrose-Uhlmann-CPAM-1979}R. B. Melrose and G. A. Uhlmann, \emph{Lagrangian intersection and the Cauchy problem}, Comm. Pure Appl. Math. \textbf{32}(1979), 483-519.



\bibitem{SBY} A. Sa Barreto, Y. Wang, \emph{The Scattering Relation on Asymptotically Hyperbolic Manifolds}, 
arXiv:1410.6936. 

\bibitem{sogge}C. D. Sogge, \emph{Fourier Integrals in Classical Analysis}, Cambridge, 1993.







\bibitem{Taylor2}M. Taylor, \emph{Partial Differential Equations II: Qualitative Studies of Linear Equations}, Springer-Verlag, 1996.


\bibitem{Vasy} A. Vasy, \emph{Microlocal analysis of asymptotically hyperbolic spaces and high energy resolvent estimates}, in  `Inverse problems and applications. Inside Out II', Gunther Uhlmann (ed.), Cambridge University Press, MSRI Publications, no. 60 (2012).

\bibitem{Vasy-invent-2013} A. Vasy, \emph{Microlocal analysis of asymptotically hyperbolic and Kerr-de Sitter spaces (with an appendix by Semyon Dyatlov)} \textbf{194}(2013), 381-513.

\bibitem{Wang}Y. Wang, \emph{Resolvent and radiation fields on asymptotically hyperbolic manifolds}, arXiv:1410.6936. 

\bibitem{zworski}M. Zworski, \emph{Semiclassical Analysis}, Grad. Stud. Math.\textbf{138}, Amer. Math. Soc., Providence, RI, 2012.







\end{thebibliography}
\end{document}